\theoremstyle{definition}
\newtheorem {Definition} {Definition}[section]
\theoremstyle{plain}
\newtheorem {Theorem} [Definition]  {Theorem}
\newtheorem {Proposition}  [Definition]  {Proposition}
\newtheorem {Corollary} [Definition] {Corollary}
\newtheorem {Definition/Theorem} [Definition]  {Definition/Theorem}
\theoremstyle{remark}
\newtheorem {Remark}  [Definition]  {Remark}
\begin{document}

\title{Lie Algebroids over Differentiable Stacks}

\date{}

{\LARGE    

\begin{center}
$$$$
{\Huge
Lie Algebroids over Differentiable Stacks  
 }
 $$$$

{\huge
James Waldron    $$$$  $$$$
 }

PhD   $$$$

   University of York  \\

    Mathematics $$$$

   September 2014
\end{center}

}

\newpage

\section*{Abstract}
We develop a theory of Lie algebroids over differentiable stacks that extends the standard theory of Lie algebroids over manifolds. In particular we show that Lie algebroids satisfy descent for submersions, define the category of Lie algebroids over a differentiable stack, construct a cohomology theory for these objects, and explain the relation to the theory of $\mathcal{LA}$-groupoids. We construct a number of examples.

\addcontentsline{toc}{section}{Abstract}

\newpage

\tableofcontents

\newpage

\section*{Acknowledgements}

I would like to thank my PhD supervisor Eli Hawkins for his guidance over the past four years, for patiently explaining a large amount of Mathematics to me, and for allowing me to explore a number of topics that I have become interested in. For providing a fun and productive working environment I am grateful to my fellow PhD students in the Department of Mathematics in York, and in particular to Neil Stevens with whom I have spent many hours discussing Mathematics, and Leon Loveridge who encouraged me to apply to York in the first place. 

Finally, I must thank my parents Anne and Brind, and my brother Max, for all their support and encouragement, and my late grandmother Maire Br\"uck, who encouraged my interest in Science since before I could read.

\addcontentsline{toc}{section}{Acknowledgements}

\newpage

\section*{Author's Declaration}

The work contained in this thesis is entirely my own, and has not been submitted elsewhere for any other award.

\addcontentsline{toc}{section}{Author's Declaration}

\newpage

\section{Introduction}

Differentiable stacks are a generalisation of smooth manifolds that include, as examples, orbifolds, quotients of manifolds by Lie group actions, and classifying stacks of principal bundles. The definition of a differentiable stack is formulated in terms of the theory of stacks over a Grothendieck site, which can be seen as a `categorification' of the theory of sheaves. In contrast to manifolds, differentiable stacks form a 2-category, and studying their geometry necessarily involves 2-categorical ideas.

Lie algebroids are vector bundles equipped with a Lie bracket on their space of sections and a map to the tangent bundle of the underlying manifold, satisfying some conditions. Basic examples include tangent bundles, Lie algebras and integrable distributions. Other examples arise from actions of Lie algebras, Poisson structures, and complex structures. There are notions of representations and cohomology for Lie algebroids, and these unify many seemingly different objects, such as flat vector bundles, Poisson modules and holomorphic structures on vector bundles, and correspondingly de Rham cohomology, Poisson cohomology and Dolbeault cohomology.
$$$$

In this thesis we develop a theory of Lie algebroids over differentiable stacks which includes notions of representations and cohomology of these objects. There exists some work in this direction already. In the algebro-geometric setting, the theory of Lie algebroids over algebraic stacks has been developed by Beilinson and Bernstein in \cite{BeBe1}. In particular, they prove that Lie algebroids satisfy smooth descent and define Lie algebroids over Artin stacks. (In fact, they mostly work not with the Lie algebroids themselves, but with their enveloping algebras. The two are related by an adjunction.) 

In \cite{Meh1}, it was suggested by Mehta that `$\mathcal{LA}$-groupoids'  might be used to describe Lie algebroids over orbifolds, but he remarks that one needs to show that this definition does not depend on the groupoid one chooses to represent a given stack ($\mathcal{LA}$-groupoids are groupoid objects in the category of Lie algebroids, see section \ref{Lie algebroids over Lie groupoids}). 

In \cite{Ro1}, Roggiero Ayala gives a definition of Lie algebroids over orbifolds in terms of \'etale Lie groupoids, and a definition of Lie algebroids over more general Lie groupoids that uses the notion of an   `\'etalification'. However, it is not clear that whether these notions are Morita invariant and therefore whether or not they represent structures over the stacks defined by these groupoids.

The way we have chosen to define Lie algebroids over stacks is analogous to the way they are defined over algebraic stacks by Beilinson and Bernstein. We prove that in the differential-geometric setting Lie algebroids form a stack $\mathcal{LA}$ that satisfies descent for submersions (Theorem \ref{submersive descent}). This allows us to define Lie algebroids over a stack $\mathfrak X$ in terms of maps to $\mathcal{LA}$ (Definition \ref{Definition of Lie algebroids over stacks}). Such a map is determined by giving a Lie algebroid over $U$ for every submersion $U \to \mathfrak X$ from a manifold $U$, together with some compatibility conditions (see section \ref{Lie algebroids via test spaces}). 

Given an atlas $X \to \mathfrak X$, one gets a Lie groupoid $X \times _\mathfrak X X \rightrightarrows X$, and we show that a Lie algebroid over $\mathfrak X$ defines a Lie algebroid over $X$ together with an `action' of this groupoid (Proposition \ref{Proposition on Lie algebroids via atlases}). In the case of \'etale stacks, we show that this gives a definition of an action of an \'etale groupoid on a Lie algebroid that is similar to that given by Roggiero Ayala, but slightly stricter (Proposition \ref{G-sheaves of Lie algebras...}). In the case of general stacks the corresponding notion seems to be unrelated to the approach using \'etalifications that he suggests. 

In section \ref{Lie algebroids over Lie groupoids} we show how $\mathcal{LA}$-groupoids do arise naturally from Lie algebroids over stacks. Moreover, we characterise the $\mathcal{LA}$-groupoids that arise this way (Definition \ref{!-vacant definition}) and prove a theorem that describes their structure (Theorem \ref{inverse functor}). The description of this class of $\mathcal{LA}$-groupoids is similar to the theory of `vacant' $\mathcal{LA}$-groupoids and `matched pairs' developed by Mackenzie in \cite{Ma1}.

$$$$

We now give an overview of this thesis. In section \ref{Differentiable Stacks} we give a brief introduction to the general notions of presheaves of categories, fibered categories, descent and stacks, and recall the basic definitions concerning differentiable stacks and Lie groupoids. In section \ref{Lie Algebroids} we recall the relevant notions from the theory of Lie algebroids. We have not provided any proofs in these two sections, but have suggested a number of references.

In section \ref{The stack of Lie algebroids} we prove that the category of Lie algebroids is a stack over the category $\mathbf{Man}_\text{sub}$ of manifolds and submersions equipped with the submersion topology. We prove an analogous statement for the category $\mathrm{Rep}$ of representations of Lie algebroids. 

In section \ref{Lie algebroids over differentiable stacks} we define Lie algebroids over differentiable stacks. We use the results of section \ref{The stack of Lie algebroids} to show that such objects are determined by certain data on an atlas of a given stack. We then define representations of Lie algebroids over stacks, and give the analogous result involving atlases. We note that a Lie algebroid over a stack is not a vector bundle in the usual sense, in particular it does not have a well defined rank. Next, we use the sheaf cohomology for differentiable stacks, developed by Behrend and Xu, and by Bunke, Schick, and Spitzweck, to define Lie algebroid cohomology for Lie algebroids over differentiable stacks. Under some reasonable conditions we show that it can be computed using certain double complexes. In the case of Deligne-Mumford stacks / orbifolds the columns of these double complexes are acyclic except in degree $0$, and the total cohomology is isomorphic to the cohomology of a single cochain complex. We prove several other results in the \'etale setting. In particular, we show that Lie algebroids over \'etale stacks do have a well defined rank, and we describe the relation between such objects and the definitions given by Roggiero Ayala.

In section \ref{Lie algebroids over Lie groupoids} show how one can construct an $\mathcal{LA}$-groupoid from a Lie algebroid over a differentiable stack. We define a class of $\mathcal{LA}$-groupoids that we call `!-vacant' and prove that it is exactly these that arise from Lie algebroids over stacks. This clarifies the relationship between these objects, as suggested by Mehta. We then use the $\mathcal{LA}$-groupoids we have constructed to construct differentiable stacks that represent the `total spaces' of Lie algebroids over stacks.

In section \ref{Examples} we discuss several examples of Lie algebroids over differentiable stacks, including  tangent bundle Lie algebroids and Lie algebroids associated to Poisson structures on \'etale stacks.

\newpage

\section{Differentiable Stacks}  \label{Differentiable Stacks} 

We give a brief overview of the theory of sheaves and stacks. For the general theory of sites and stacks see \cite{MaMo1},\cite{Vi1}, and for the theory of differentiable stacks see \cite{BeXu1},\cite{He1},\cite{Me1}. For the theory of Lie groupoids see \cite{Ma2},\cite{MoMr1},\cite{MoMr2}.

\subsection{Presheaves}

\subsubsection{Presheaves of sets and the Yoneda lemma}

Let $\mathscr C$ be a category. A \emph{presheaf} over $\mathscr C$ is a contravariant functor $\mathscr C \to \mathbf{Set}$. Morphisms between presheaves are natural transformations of functors. The category of presheaves over $\mathscr C$, which we'll denote by $\mathrm{PSh}(\mathscr C)$, is then the functor category $[\mathscr C ^\mathrm{op} , \mathbf {Set} ] $.

Assuming $\mathscr C$ is locally small (i.e. that $\mathrm{Hom}_\mathscr C (x,y)$ is a set for all pairs of objects $x,y$ of $\mathscr C$) then we have, for each object $x$, the presheaf: 
$$\underline x \equiv \mathrm{Hom}_\mathscr C ( - , x )$$
Explicitly:
\begin{align*}
y & \mapsto \mathrm{Hom}_\mathscr C (y,x) \\
(f: z \to y) & \mapsto f^\ast : \mathrm{Hom}_\mathscr C (y,x) \to \mathrm{Hom}_\mathscr C (z,x) 
\end{align*}
where $f^\ast$ means precomposition with $f$. Presheaves isomorphic to a presheaf of this form are called \emph{representable}.

The Yoneda lemma states that for any presheaf $F$ over $\mathscr C$, and any object $x$ in $\mathscr C$, there is a bijection
\begin{align*}
\mathrm{Hom}_{\mathrm{PSh}(\mathscr C)} (\underline x, F) & \cong F(x) \\
(\phi: \underline x \to F) & \mapsto \phi(\mathrm{id}_x) 
\end{align*}
and in particular we have
$$\mathrm{Hom}_{\mathrm{PSh}(\mathscr C)} (\underline x, \underline y) \cong \mathrm{Hom}_\mathscr C (x,y)$$
These bijections are natural in both variables. Using the Yoneda lemma, we get a fully-faithful embedding, called the \emph{Yoneda embedding}:
\begin{align*}
\mathscr C & \to \mathrm{PSh} (\mathscr C) \\
x & \mapsto \underline x
\end{align*}

\subsubsection{Weak presheaves of categories and the 2-Yoneda lemma}

One can `categorify' the notion of presheaf by considering functors that take values in categories rather than sets, and replacing maps by functors and associativity `on the nose' by associativity up to natural isomorphism. Let $\mathscr C$ be a category. Then a \emph{weak presheaf of categories} is a contravariant weak 2-functor $\mathscr C \to \mathbf{Cat}$. Weak presheaves form a 2-category, where the morphisms are natural transformations of 2-functors, and the 2-morphisms are modifications. In particular, given two weak presheaves $\mathcal F,\mathcal G : \mathscr C \to \mathbf{Cat}$, there is a category
$$\mathrm{Hom} (\mathcal F, \mathcal G)$$
of morphisms between $\mathcal F$ and $\mathcal G$, rather than just a set.

Explicitly, a weak presheaf $\mathcal F$ over $\mathscr C$ is given by the following data:
\begin{itemize}
\item For each object $x$ of $\mathscr C$ a category $\mathcal F(x)$.
\item For each morphism $f: x \to y$ in $\mathscr C$ a functor $\mathcal F (f) : \mathcal F(y) \to \mathcal F(x)$.
\item For each composable pair of morphisms 
$$\xymatrix{
x \ar[r] ^ f & y \ar[r]^g & z 
}$$
in $\mathscr C$ a natural isomorphism of functors $\mathcal F (g,f) : \mathcal F(gf) \to \mathcal F (f) \circ \mathcal F(g)$ 
\end{itemize}
\[
\xymatrix@C+2pc{
\mathcal F(z) \rtwocell^{\mathcal F (gf) }_{\mathcal F(f) \circ \mathcal F(g)}{\;\;\;\;\;\;\;\; \mathcal F(g,f)} &  \mathcal F (x) 
}
\]
This data must satisfy the condition that for any composable triple of morphisms 
$$\xymatrix{
x \ar[r]^f & y \ar[r]^g & z \ar[r]^h & w
}$$
in $\mathscr C$ the following diagram must commute:
$$\xymatrix{
\mathcal F (hgf)  \ar[d] _ {\mathcal F(h,gf)}  \ar[r]  ^{\mathcal F (hg,f)}  &   \mathcal F(f) \mathcal F (hg)     \ar[d]  ^{\mathrm{id}_{\mathcal F(h)} \bullet \mathcal F (h,g)} \\
\mathcal F (gf) \mathcal F (h)  \ar[r] _{\mathcal F(g,f) \bullet \mathrm{id}_{\mathcal F(h)}}   &   \mathcal F(f)\mathcal F(g) \mathcal F(h)
}$$
(where $\bullet$ denotes the horizontal composition of natural transformations). A weak presheaf $\mathcal F$ is called \emph{strict} if the natural isomorphisms $\mathcal F(g,f)$ are all identities. A weak presheaf $\mathcal F$ is a \emph{weak presheaf of groupoids} if for every $x$ the category $\mathcal F(x)$ is a groupoid.

A morphism $\phi: \mathcal F \to \mathcal G$ between weak presheaves over $\mathscr C$ is given by:
\begin{itemize}
\item For each object $x$ of $\mathscr C$ a functor $\phi(x) : \mathcal F(x) \to \mathcal G(x)$  
\item For each morphism $f: x \to y$ in $\mathscr C$ a natural isomorphism $\phi(f) : \phi(x) \circ \mathcal F(f) \to \mathcal G(f) \circ \phi(y)$
\end{itemize}
$$\xymatrix{
\mathcal F (y)   \ar[d]_{\mathcal F(f)} \ar[r]^{\phi(y)} &   \mathcal G (y) \ar[d]^{\mathcal G(f)}  \\
\mathcal F (x)   \ar@2{->}[ur]_{\phi(f)}  \ar[r]_{\phi(x)} &   \mathcal G (x)  
}$$
This data must satisfy the condition that for any composable pair of morphisms
$$\xymatrix{
x \ar[r] ^ f & y \ar[r]^g & z 
}$$
in $\mathscr C$, the following diagram commutes:
$$\xymatrix{
&  \phi(x) \mathcal F(gf)  \ar[dl]_{\mathrm{id}_{\phi(f)} \bullet \mathcal F(g,f) }    \ar[rr]^{\phi(gf)}  &   &      \mathcal G(gf) \phi(z)    \ar[dr]^{\mathcal G(g,f) \bullet \mathrm{id}_{\phi(z)}} \\
\phi(x) \mathcal F(f) \mathcal F(g) \ar[rr]_{\phi(f) \bullet \mathrm{id}_{\mathcal F(x)}}  & & \mathcal G(f) \phi(y) \mathcal F(g)   \ar[rr]_{\mathrm{id}_{\mathcal G(f)} \bullet \phi(g)}  & &  \mathcal G(f) \mathcal G(g) \phi(z) 
}$$

A 2-morphism $\alpha:\phi \to \psi$ between morphisms $\phi,\psi: \mathcal F \to \mathcal G$ is given by a natural transformation $\alpha(x) : \phi(x) \to \psi(x)$ for each object $x$ of $\mathscr C$ such that for every morphism $f:x \to y$ in $\mathscr C$ the following diagram commutes:
$$\xymatrix{
\phi(x) \mathcal F(f)  \ar[d]_{\alpha(x) \bullet \mathrm{id}_{\mathcal F(f)} } \ar[r]^{\phi(f)}   &   \mathcal G(f)  \phi(y)     \ar[d]^{\mathrm{id}_{\mathcal F(g)} \bullet \alpha(y)  }   \\
\psi(x)  \mathcal F(f)   \ar[r]_{\psi(f)}  &    \mathcal  G(f)  \psi(y) 
}$$

We can consider any set as a `discrete' category, i.e. as a category with only identity morphisms. This gives a fully-faithful embedding 
$$\mathbf{Set} \to \mathbf{Cat}$$
and for any category $\mathscr C$ a fully-faithful embedding of the category of presheaves of sets into the category of weak presheaves of categories. From now on we'll denote the category of weak presheaves over $\mathscr C$ by $\mathrm{PSh}(\mathscr C)$ and consider the category of presheaves of sets as a subcategory of $\mathrm{PSh}(\mathscr C)$.

The 2-Yoneda lemma states that for any object $x$ of $\mathscr C$ and any weak presheaf $\mathcal F$ there is an equivalence of categories:
\begin{align*}
\mathrm{Hom}_{\mathrm{PSh}(\mathscr C)} (\underline x, \mathcal F)  &  \simeq  \mathcal F(x)  \\
(\phi : \underline x \to \mathcal F)  & \mapsto  \phi (\mathrm{id}_x)
\end{align*}

\subsubsection{Fibered categories and the Grothendieck construction}  
\label{Fibred categories and the Grothendieck construction}

Weak presheaves can be described in a different way using fibred categories. For some statements and constructions this description can be easier to work with.

Let $\mathscr C$ be a category, then a \emph{category over} $\mathscr C$ is a functor $\pi_\mathscr D: \mathscr D \to \mathscr C$. We can depict a category over $\mathscr C$ as
$$\xymatrix{
\mathscr D \ar[d] _{\pi_\mathscr D} \\
\mathscr C
}$$
If $x$ is an object of $\mathscr C$, then the fibre $\mathscr D_x$ of $\pi_\mathscr D : \mathscr D \to \mathscr C$ over $x$ is the subcategory of $\mathscr D$ consisting of objects $d$ such that $\pi_\mathscr D (d) = x$ and morphisms $f$ such that $\pi_\mathscr D (f) = \mathrm{id}_x$.

Categories over $\mathscr C$ form a (strict) 2-category denoted $\mathbf{Cat} / \mathscr C$ (the slice 2-category of $\mathbf{Cat}$ over $\mathscr C$): a morphism 
$$\phi:  (\pi_\mathscr D: \mathscr D \to \mathscr C) \to (\pi_\mathscr E: \mathscr E \to \mathscr C)$$
is a functor 
$$\phi: \mathscr D \to \mathscr E$$
such that $\pi_\mathscr D = \pi_\mathscr E \circ \phi$, i.e. the diagram
$$\xymatrix{
\mathscr D \ar[dr]_{\pi_\mathscr D} \ar[rr]^\phi &  &  \mathscr E \ar[dl]^{\pi_\mathscr E} \\
& \mathscr C
}$$
commutes, and a 2-morphism between a pair of morphisms $\phi,\psi:  (\pi_\mathscr D: \mathscr D \to \mathscr C) \to (\pi_\mathscr E: \mathscr E \to \mathscr C)$ is a natural transformation
$$\alpha : \phi \to \psi$$
which is `vertical' in the sense that 
$$\alpha \bullet \mathrm{id}_{\pi_\mathscr E} = \mathrm{id}_{\pi_\mathscr D}$$
In terms of components, this last condition is the condition that for each object $x$ of $\mathscr C$ and each object $d$ of the fibre $\mathscr D_x$, the component $\alpha_d \in \mathrm{Hom}_\mathscr E (\phi(d),\psi(d))$ of $\alpha$ satisfies $\pi_\mathscr E (\alpha_d) = \mathrm{id}_x$, i.e. that $\alpha_d$ is a morphism in the fibre $\mathscr E_{x}$.

If $\pi_\mathscr D : \mathscr D \to \mathscr C$ is a category over $\mathscr C$ then a morphism $F':d' \to d$ in $\mathscr D$ is called \emph{Cartesian with respect to $\pi_\mathscr D$} (or just \emph{Cartesian} if $\pi_\mathscr D$ is understood), if it satisfies the following property: for all morphisms $F'': d'' \to d$ in $\mathscr D$, and all morphisms $f : \pi_\mathscr D (d'') \to \pi_\mathscr D (d')$ in $\mathscr C$ such that $\pi_\mathscr D (F'') = \pi_\mathscr D(F') \circ f$, there exists a unique morphism $F: d'' \to d$ in $\mathscr D$ such that $\pi_\mathscr D (F) = f$ and $F'' = F' \circ F$. The situation can be depicted as follows:
$$\xymatrix{
\\
d''  \ar@/^{2pc}/[rr] ^{F''} \ar@{.>}[r]^{F}  & d'     \ar[r]^{F'} &     d   \\
\pi_\mathscr D (d'')  \ar@/_{2pc}/[rr]  _{\pi_\mathscr D (F'')} \ar[r]_{f} &    \pi_\mathscr D (d')    \ar[r]_{\pi_\mathscr D(F')} &    \pi_\mathscr D (d)  
}$$

A category $\pi_\mathscr D : \mathscr D \to \mathscr C$ over $\mathscr C$ is called a \emph{category fibred over} $\mathscr C$ and the functor $\pi_\mathscr D$ is called a \emph{Grothendieck fibration}, if for all morphisms $f:x \to y$ in $\mathscr C$ and all objects $d \in \mathscr D_y$ there is a Cartesian morphism $F:d' \to d$ such that $\pi_\mathscr D (F) = f$. Such a morphism is called a pullback of $d$ along $f$. A morphism between fibred categories over $\mathscr C$ is a morphism of categories over $\mathscr C$ that maps Cartesian morphisms to Cartesian morphisms, and a 2-morphism between a pair of such morphisms is a 2-morphism between morphisms of categories over $\mathscr C$. We'll denote the 2-category of fibred categories over $\mathscr C$ by  $\mathrm{Fib}(\mathscr C)$. 

The \emph{Grothendieck construction}, denoted $\int_\mathscr C$, gives an equivalence of 2-categories: 
$$\int_\mathscr C : \mathrm{PSh}(\mathscr C) \to \mathrm{Fib}(\mathscr C)$$
Given a weak presheaf $\mathcal F : \mathscr C \to \mathbf{Cat}$, the category $\int_\mathscr C \mathcal F$ has as objects pairs $(x,d)$, where $x$ is an object in $\mathscr C$ and $d$ is an object in $\mathcal F (x)$. A morphism from $(x,d)$ to $(x',d')$ is a pair $(f,F)$, where $f:x \to x'$ is a morphism in $\mathscr C$ and $F: d \to \mathcal F(f)(d')$ is a morphism in $\mathcal F(x)$.  Projecting onto the first factor gives a functor 
$$\xymatrix{
\int_\mathscr C \mathcal F  \ar[d]_{\mathrm{pr}_\mathscr C} \\
\mathscr C 
}$$ 
which makes $\int_\mathscr C \mathcal F$ into a category fibred over $\mathscr C$. 

Conversely, if $\pi_\mathscr D : \mathscr D \to \mathscr C$ is a category fibred over $\mathscr C$ then one can construct a weak presheaf  that maps an object $x$ to the fibre $\mathscr D_x$ and by choosing a pullback $F:d' \to d$ for all $f: x \to y$ and $d$ in $\mathscr D_y$, one can construct a functor $f^\ast : \mathscr D_y \to \mathscr D_x$ for each morphism $f:x \to y$ in $\mathscr C$. The fact that this data then determines a weak presheaf follows from the universal property of Cartesian morphisms.

If $\mathcal F$ is a weak presheaf then we'll sometimes denote $\int_\mathscr C \mathcal F$ also by $\mathcal F$. Whether we are considering $\mathcal F$ as a weak presheaf or fibred category will usually be clear from the context.

\subsubsection{Representable morphisms}

A morphism $\phi: \mathcal F \to \mathcal G$ of weak presheaves over $\mathscr C$ is called \emph{representable} if for all morphisms $\psi: \underline x \to \mathcal G$ the fibre product
$$\xymatrix{
\mathcal F \times _ \mathcal G \underline x  \ar[d] \ar[r]  &  \underline x \ar[d]^\psi  \\
\mathcal F  \ar[r]_\phi &  \mathcal G 
}$$
is representable, i.e. that $\mathcal F \times _ \mathcal G \underline x$ is equivalent to $\underline y$ for some object $y$ of $\mathscr C$.

\subsection{Sheaves and Stacks}

If one considers a weak presheaf as a categorified presheaf, then a stack can be thought of as a categorified sheaf. In order to formulate the gluing condition that distinguishes sheaves from presheaves we need the notion of a Grothendieck topology.

\subsubsection{Grothendieck topologies and sites}

A \emph{Grothendieck topology} on a category $\mathscr C$ is given by specifying for each object $x$ of $\mathscr C$ a collection of families of morphisms $\{f_i : y_i \to x \}_{i \in I}$, called \emph{covering families} of $x$, such that:
\begin{itemize}
\item If $f: y \to x$ is an isomorphism then $\{f: y \to x \}$ is a covering family 
\item If $\{f_i : y_i \to x \}_{i \in I}$ is a covering family and $g: z \to x$ is a morphism then all of the fibre products $y_i \times _x z$ exist in $\mathscr C$ and the family of projections $\{y_i \times_x z \to z\}_{i \in I}$ is a covering family of $z$
\item If $\{f_i :y_i \to x\}_{i \in I}$ is a covering family of $x$ and for each $i \in I$ we are given a covering family $\{g_j : z_j \to y_i\}_{j \in J_i}$ of $y_i$, then $\{ (f_i \circ g_j) : z_j \to x \} _ {i \in I, j \in J_i}$ is a covering family of $x$.
\end{itemize}
A category together with a choice of Grothendieck topology is called a \emph{site}.  

The \emph{covering sieve} $U$ determined by a covering family $\{f: y \to x \}$ is the sub-presheaf $U \hookrightarrow \underline x$ of $\underline x$ consisting of morphisms $z \to x$ that factor through one of the $f_i$'s. Two Grothendieck topologies are called \emph{equivalent} if they determine the same covering sieves.

\subsubsection{Sheaves of sets}

Let $\mathscr C$ be a site. Then a presheaf of sets $\mathcal F : \mathscr C \to \mathbf{Set}$ is a sheaf if it satisfies either of the following equivalent conditions:
\begin{itemize}
\item For any covering family $\{f_i: y_i \to x \}_{i \in I}$  the map
\begin{align*}
\mathcal F (x)  &  \to \mathrm{Des}( \{f_i: y_i \to x \}_{i \in I} , \mathcal F)  \\
d & \mapsto \{ \mathcal F (f_i) (d) \}_{i \in I} 
\end{align*}
is a bijection. Here, $ \mathrm{Des}( \{f_i: y_i \to x \}_{i \in I} , \mathcal F) $ is the set consisting of collections $\{ d_i \in \mathcal F(y_i) \}_{i\in I}$ such that 
$$\mathcal F (\mathrm{pr}_{y_i}) (d_i)  = \mathcal F (\mathrm{pr}_{y_j}) (d_j) $$ 
for all $i,j \in I$, where $\mathrm{pr}_{y_i}$ is the projection map $y_i \times _x y_j \to y_i$.

\item For any covering family $\{f_i: y_i \to x \}_{i \in I}$ with associated covering sieve $U \hookrightarrow \underline x$, the restriction map 
$$\mathrm{Hom}_{\mathrm{PSh}(\mathscr C)} ( \underline x , \mathcal F )       \to    
\mathrm{Hom}_{\mathrm{PSh}(\mathscr C)} ( U , \mathcal F ) $$
is a bijection.

\end{itemize}

Morphisms of sheaves are morphisms of the underlying presheaves. The category of sheaves over a site $\mathscr C$ is denoted by $\mathrm{Sh}(\mathscr C)$. A Grothendieck topology is called \emph{subcanonical} if all representable presheaves are sheaves.

It follows from the second of the characterisations of sheaves that two equivalent topologies on a fixed category determine the same sheaves.

\subsubsection{Stacks of categories}

Let $\mathscr C$ be a site. Then a weak presheaf of categories $\mathcal F : \mathscr C \to \mathbf{Cat}$ is a \emph{stack} if it satisfies either of the following equivalent conditions:
\begin{itemize}
\item For any covering family $\{f_i: y_i \to x \}_{i \in I}$  the functor
\begin{align*}
\mathcal F (x)  &  \to \mathrm{Des}( \{f_i: y_i \to x \}_{i \in I} , \mathcal F)  \\
d & \mapsto  \left( \{ \mathcal F (f_i) (d) \}_{i \in I}   ,   \{ \mathcal F(f _i  ,\mathrm{pr}_i)  \circ \mathcal F ( f_j , \mathrm{pr}_j ) ^{-1}   \}  _{i,j \in I}   \right)   \\
(g: d \to d')  & \mapsto \{  \mathcal F (f_i) (g)  \}_{i \in I}
\end{align*}
is an equivalence of categories. Here, $ \mathrm{Des}( \{f_i: y_i \to x \}_{i \in I} , \mathcal F) $ is the category whose objects are collections 
$$\left( \{ d_i  \}_{i\in I} ,  \{\gamma_{ij}  \}_{i,j \in I} \right) $$
where $d_i$ is an object in $\mathcal F(y_i)$ and $\gamma_{ij}$ is an isomorphism $\gamma_{ij} : \mathcal F (\mathrm{pr}_{y_j}) (d_j) \to (\mathrm{pr}_{y_i}) (d_i)$ in the category $\mathcal F(y_i \times_x y_j)$, such that the \emph{cocycle condition}
$$(\gamma_{ij} \vert_{y_{ijk}} ) ( \gamma_{jk} \vert_{y_{ijk}} ) = \gamma_{ik} \vert_{y_{ijk}} $$
holds, where $y_{ijk} = y_i \times _x y_j \times _x  y_k$ and we have used the shorthand that $\gamma_{ij} \vert_{y_{ijk}} = \mathcal F(\mathrm{pr}_{ij}) (\gamma_{ij})$ and we have suppressed several isomorphisms of the form $\mathcal F(\mathrm{pr}_i, \mathrm{pr}_{ij})$.  A morphism 
$$ \{ g_i \}_{i \in I}   : \left( \{ d_i  \}_{i\in I} ,  \{\gamma_{ij}  \}_{i,j \in I} \right)  \to 
\left( \{ d'_i  \}_{i\in I} ,  \{\gamma'_{ij}  \}_{i,j \in I} \right)  $$
in the category $ \mathrm{Des}( \{f_i: y_i \to x \}_{i \in I} , \mathcal F) $ is a collection  $ \{ g_i \}_{i \in I}$ where $g_i : d_i \to d'_i$ is a morphism in $\mathcal F(y_i)$, such that
$$ \gamma'_{ij} \circ \mathcal F (\mathrm{pr}_j) ( g_j )  =   \mathcal F (\mathrm{pr}_i)(g_i) \circ \gamma_{ij} $$

An object $\left( \{ d_i  \}_{i\in I} ,  \{\gamma_{ij}  \}_{i,j \in I} \right) $ is called a \emph{descent datum} for $\mathcal F$.

\item For any covering family $\{f_i: y_i \to x \}_{i \in I}$ with associated covering sieve $U \hookrightarrow \underline x$, the restriction map 
$$\mathrm{Hom}_{\mathrm{PSh}(\mathscr C)} ( \underline x , \mathcal F )       \to    
\mathrm{Hom}_{\mathrm{PSh}(\mathscr C)} ( U , \mathcal F ) $$
is an equivalence of categories.

\end{itemize}

The category $\mathrm{Des}( \{f_i: y_i \to x \}_{i \in I} , \mathcal F)$ is called the \emph{category of descent data} for $\mathcal F$ and the covering family $\{f_i: y_i \to x \}_{i \in I}$. If the functor described above is an equivalence then $\mathcal F$ is said to \emph{satisfy descent} for the covering $\{f_i: y_i \to x \}_{i \in I}$.

If $x$ is an object of $\mathscr C$, then the slice category $\mathscr C / x$ inherits a Grothendieck topology from that of $\mathscr C$: we have the functor 
$$\xymatrix{
\mathscr C / x  \ar[d] _{\mathrm{pr}_\mathscr C}  \\
\mathscr C
}$$
that maps an object $y \to x$ of $\mathscr C / x$ to $y$, then we define a family of morphisms in $\mathscr C / x$ to be a covering family if and only if its image under $\mathrm{pr}_\mathscr C$ is a covering family for the Grothendieck topology on $\mathscr C$. We then have that a weak presheaf $\mathcal F : \mathscr C \to \mathbf {Cat}$ is a stack if and only if it satisfies the two conditions:
\begin{enumerate}
\item For all objects $x$ of $\mathscr C$ and all elements $d,d'$ of $\mathcal F(x)$, the presheaf of sets
\begin{align*}
\underline {\mathrm{Hom}} _ \mathcal F (d,d') : \mathscr C / x & \to \mathbf{Set}  \\
(f:y \to x) & \mapsto \mathrm{Hom}_{\mathcal F (y)} (\mathcal F(f) (d) , \mathcal F(f) (d') ) 
\end{align*}
is a sheaf.
\item For all covering families $\{f_i : y_i \to x \}_{i \in I}$ and all descent datum $\left( \{ d_i  \}_{i\in I} ,  \{\gamma_{ij}  \}_{i,j \in I} \right) $ there exists an object $d$ of $\mathcal F(x)$ and isomorphisms 
$$ g_i : d_i \to \mathcal F(\mathrm{pr}_i) (d)$$
such that 
$$ \gamma_{ij} \circ \mathcal F(\mathrm{pr}_j) (g_j) = \mathcal F(\mathrm{pr}_i) (g_i) $$
\end{enumerate}

If a stack satisfies the first of these conditions then it is said to be \emph{separated}. The second condition is that `descent datum are effective'.  There is a `stackification' functor $\mathrm{PSh} (\mathscr C) \to \mathrm{PSh} (\mathscr C)$ that maps weak presheaves to stacks \cite{Sta1}.

Morphisms of stacks are morphisms of weak presheaves. The 2-category of stacks over a site $\mathscr C$ is denoted by $\mathrm{St}(\mathscr C)$.  As in the case of sheaves (of sets) we have that two equivalent topologies on the same category determine the same stacks.

The conditions for a weak presheaf to be a stack can be reformulated in terms of fibered categories.

\subsection{Stacks over the category of manifolds}

\subsubsection{The sites $\mathbf{Man}$, $\mathbf{Man}_\text{sub}$ and $\mathbf{Man}_\text{\'et}$}

Most of the stacks we will consider will be stacks over the category $\mathbf{Man}$ of manifolds and smooth maps. We will also need to consider the subcategories $\mathbf{Man}_\text{sub}$ and $\mathbf{Man}_\text{\'et}$, consisting of manifolds and submersions, and manifolds and \'etale maps (local diffeomorphisms) respectively. 

There are several natural topologies on $\mathbf{Man}$. A family $\{f_i : U_i \to X \}$ of jointly surjective smooth maps is a covering family for:
\begin{itemize}
\item the \emph{open cover topology} if each $f_i$ is an open embedding,
\item the \emph{\'etale topology} if each $f_i$ is an \'etale map, 
\item the \emph{submersion topology} if each $f_i$ is a submersion.
\end{itemize}

These topologies are all equivalent as topologies on $\mathbf{Man}$, because submersions admit local sections. However, when restricted to $\mathbf{Man}_\text{sub}$ the submersion topology is strictly stronger than the \'etale and open cover topologies. When restricted to $\mathbf{Man}_\text{\'et}$ the open cover and \'etale topologies are equivalent.

The open cover topology has the property that a weak presheaf $\mathcal F$ is a stack with respect to it if and only if $\mathcal F$ satisfies descent for all covering families of the form $\{U_i \hookrightarrow X\}_{i \in I}$, where $\{U_i\}_{i \in I}$ is an open covering of a manifold $X$.

These topologies on $\mathbf{Man}$ are subcanonical, so that for any manifold $X$ the presheaf
$$ \underline X = \mathrm{Hom}_{\mathbf{Man}} ( - , X ) $$
is a sheaf.

\subsubsection{Stacks over $\mathbf{Man}$}

We write $\mathrm{St} (\mathbf{Man})$ for the 2-category of stacks over $\mathbf{Man}$, defined with respect to any of the equivalent topologies mentioned above.  When considering stacks over $\mathbf{Man}_\text{sub}$ we will specify which topology we are referring to. We'll usually denote stacks over $\mathbf{Man}$ by the letters $\mathfrak X, \mathfrak Y, \mathfrak Z$, and use $U,V,X,Y,Z$ for manifolds. 

Via the Yoneda embedding 
$$ X \mapsto \underline X = \mathrm{Hom}_{\mathbf{Man}} ( - , X ) $$
we can consider $\mathbf{Man}$ as a full subcategory of $\mathrm{St}(\mathbf{Man})$, so we will sometimes not distinguish between a manifold $X$ and the sheaf $\underline X$.

A morphism $\mathfrak X \to \mathfrak Y$ of stacks over $\mathbf{Man}$ is called a \emph{representable submersion} if for all morphisms $X \to \mathfrak Y$, where $X$ is a manifold, the fibre product $\mathfrak X \times_ \mathfrak Y X$ is representable (it is equivalent to $\underline Y$ for some manifold $Y$), and the projection morphism $\mathfrak X \times _ \mathfrak Y X \to X$ is a submersion. Similarly we have the notion of \emph{representable \'etale map}. 

We define the subcategories $\mathrm{St}(\mathbf{Man})_\text{sub}$ and $\mathrm{St}(\mathbf{Man})_\text{\'et}$ to be the sub 2-categories of $\mathrm{St}(\mathbf{Man})$ consisting of stacks over $\mathbf{Man}$ and representable submersions / representable \'etale morphisms respectively, and 2-morphisms between them.

\subsection{Lie groupoids}

\subsubsection{Lie groupoids}

Given manifolds $X,Y,Z$ and smooth maps $f:X \to Z$, $g:Y \to Z$, we define their fibre product $X_f \! \times _g Y$ by
$$ X_f  \! \times _g Y = \{ (x,y) \in X \times Y | f(x)=g(y) \} $$
If either $f$ or $g$ is a submersion, then $X_f \! \times _g Y$ is a manifold. When there is no risk of confusion, we will sometimes drop the labels $f,g$ and denote the fibre product by $X \times _Z Y$.

A groupoid is a (small) category where every morphism is an isomorphism. A \emph{Lie groupoid} is a groupoid object in $\mathbf{Man}$, where the source and target maps are submersions. More explicitly, a Lie groupoid consists of two manifolds $G$ and $X$, and smooth maps $s,t,m,u,i$: 
\begin{align*}
s,t: G & \to X \\
m:  G  _s \! \times _t G & \to G \\
u: X & \to G \\
i: G & \to G 
\end{align*}
called respectively the source, target, multiplication, unit and inverse maps, which satisfy the axioms of a groupoid:
\begin{align*}
s(m(g,h)) & = s(h) \\
t(m(g,h)) & = t(g) \\
m((g,h),k) & = m(g,m(h,k)) \\
s(u(x)) &  = t(u(x))  = x \\
m(u(t(g)),g) & = m(g, u(s(g)) )  = g \\
s(i(g)) & = t(g) \\
t(i(g)) & = s(g) \\
m(i(g),g) & = u(s(g)) \\
m(g, i(g) ) & = u(t(g))
\end{align*}
 and where $s$ and $t$ are submersions. We'll often denote a Lie groupoid by $G \rightrightarrows X$, with the structure maps understood, denote $G_s \! \times_t G$ by $G_2$, and denote $m(g,h)$ by $gh$.

A morphism of Lie groupoids from $G\rightrightarrows X$ to $G'\rightrightarrows X'$ consists of smooth maps $\phi:G\to G'$ and $f:X \to X'$, which satisfy the axioms of a functor.
Given a pair of morphisms 
$$(\phi,f) , (\phi',f') : G\rightrightarrows X \to G' \rightrightarrows X'$$
a 2-morphism from $(\phi,f)$ to $(\phi',f')$ is a smooth map $\alpha:X \to G'$ which satisfies the axioms of a natural transformation. With these notions of morphisms and 2-morphisms Lie groupoids form a strict 2-category, which we'll denote by $\mathrm{LieGpd}$. 

\subsubsection{Actions of Lie groupoids on manifolds}

If $Y$ is a manifold then a \emph{left action} of $G\rightrightarrows X$ on $Y$ along a smooth map $\epsilon: Y \to X$ is a map 
\begin{align*}
  G _s \! \times _\epsilon Y & \to Y   \\
(g,y) & \mapsto gy 
\end{align*}
satisfying the axioms of a left action, i.e.
\begin{align*}
\epsilon (gy)  &  = t(g)  \\
   u_{\epsilon(y)} y  &  =  y  \\
g(g'y) & = (gg')y
\end{align*}
Right actions are defined similarly. 

\subsubsection{Representations of Lie groupoids}

If $\pi: E \to X$ is a vector bundle, then a (linear) action or \emph{representation} of $G\rightrightarrows X$ on $E$ is an action of $G\rightrightarrows X$ on $E$ along $\pi$, such that each morphism $g\in G$ acts as a linear map
$$E_{s(g)} \to E_{t(g)}$$
Such an action is equivalent to the data of an isomorphism 
$$s^\ast E \to t^\ast E$$
of vector bundles over $G$, that satisfies the cocycle condition over $G_2 = G _s \times _t G$.

\subsection{Differentiable stacks}

\subsubsection{Stacks associated to Lie groupoids}

Given any manifold $X$ there is a Lie groupoid $X\rightrightarrows X$, where the structure maps are all identities, so that as a category $X\rightrightarrows X$ has only identity morphisms. This gives a fully-faithful embedding:
\begin{align*}
\mathbf{Man} & \to \mathrm{LieGpd}  \\
X & \mapsto X \rightrightarrows X 
\end{align*}
Given any Lie groupoid $G\rightrightarrows X$ we then have a strict presheaf of groupoids:
$$Y \mapsto \mathrm{Hom}_{\mathrm{LieGpd}} ( Y \rightrightarrows Y , G \rightrightarrows X ) $$
which is in fact separated. The stackification of this strict presheaf is a stack over $\mathbf{Man}$ called the classifying stack of $G \rightrightarrows X$, and denoted $[G \rightrightarrows X]$.

The stack $[G\rightrightarrows X]$ can be described more concretely as follows. If $Y$ is a manifold then a \emph{principal $G\rightrightarrows X$ bundle over $Y$ } is a manifold $P$, together with a submersion $\pi:P \to Y$ and a right action of $G\rightrightarrows X$ on $P$ along a smooth map $\epsilon: P \to X$, such that the map
\begin{align*}
P _\epsilon \times _ t G  & \to P _ \pi \times _\pi P \\
(p,g)  & \mapsto (p,pg) 
\end{align*}
is a diffeomorphism. A morphism between principal bundles $P$ and $P'$ over $Y$ is a smooth map $P \to P'$ that commutes with the maps to $Y$ and $X$ and with the actions of $G \rightrightarrows X$. Such a morphism is necessarily a diffeomorphism, so that the category $\mathrm{Bun}_{G \rightrightarrows X} (Y)$ of principal $G\rightrightarrows X$ bundles over $Y$ is a groupoid. $[G \rightrightarrows X]$ is then the stack which on objects is
$$ Y  \mapsto \mathrm{Bun}_{G \rightrightarrows X} (Y) $$

\subsubsection{Atlases and differentiable stacks}

If $\mathfrak X$ is a stack over $\mathbf{Man}$ then an \emph{atlas} of $\mathfrak X$ is a morphism $X \to \mathfrak X$ which is a representable submersion and an epimorphism in the category $\mathrm{St}(\mathbf{Man})$. If $X \to  \mathfrak X$ is an atlas then the fibre product $X\times_ \mathfrak X X$ has the structure of a Lie groupoid over $X$, and there is an equivalence of stacks
$$ [ X \times _\mathfrak X X \rightrightarrows X ] \simeq \mathfrak X $$
Conversely, if $G \rightrightarrows X$ is a Lie groupoid then the natural morphism $(X \rightrightarrows X) \to (G \rightrightarrows X)$ induces a morphism 
$$X \to [G \rightrightarrows X]$$
which is an atlas. 

A \emph{differentiable stack} is a stack of groupoids over $\mathbf{Man}$ for which there exists an atlas, or equivalently, a stack which is equivalent to the classifying stack of some Lie groupoid. 

\subsubsection{Some classes of differentiable stacks} \label{Some classes of differentiable stacks}

A differentiable stack $\mathfrak X$ is called \emph{\'etale} if there exists an atlas $X \to \mathfrak X$ which is a representable \'etale map, or equivalently $\mathfrak X$ is equivalent to the classifying stack of an \'etale Lie groupoid, which is a Lie groupoid whose structure maps are all \'etale.

A differentiable stack is called \emph{proper} if it is equivalent to the classifying stack of a proper Lie groupoid, which is a Lie groupoid $G \rightrightarrows X$ where the map $(s,t) : G \to X \times X$ is proper.

A differentiable stack is \emph{Deligne-Mumford}, or an \emph{orbifold}, if it is equivalent to the classifying stack of a proper \'etale Lie groupoid.

\subsubsection{Examples}

Some examples of differentiable stacks are:
\begin{itemize}
\item The sheaf $\underline X$ associated to a manifold $X$ by the Yoneda embedding. $\underline X$ is the classifying stack of the trivial Lie groupoid $ X \rightrightarrows X$.
\item The sheaf $X \mapsto \mathrm C^\infty (X)$, which is equal to $\underline {\mathbb R}$. 
\item The classifying stack $\mathrm BG$ of a Lie group $G$. $\mathrm BG(X)$ is the groupoid of principal $G$-bundles over $X$. If $\phi:X \to Y$ is a smooth map then the functor $\mathrm BG(Y) \to \mathrm BG(X)$ is given by the pullback of principal $G$-bundles along $\phi$. $\mathrm BG$ is \'etale if $G$ is a discrete group. $\mathrm BG$ is the classifying stack of the Lie groupoid $G\rightrightarrows \ast$, where $G$ is considered as a Lie groupoid over a point $\ast$. 
\item The stack $\mathrm{Vect}_n$ that maps $X$ to the groupoid of rank n real vector bundles over $X$ and isomorphims between them. If $\phi:X \to Y$ is a smooth map then the functor $\mathrm {Vect}_n(Y) \to \mathrm {Vect}_n(X)$ is given by the pullback of vector bundles along $\phi$. The associated bundle construction gives a morphism $\mathrm B GL_n \to \mathrm{Vect}_n$ that is an equivalence. 
\item The Deligne-Mumford stack associated to an orbifold. One can construct a proper \'etale groupoid from an orbifold atlas and then take its classifying stack. Up to equivalence, this does not depend on the choice of atlas.
\item The quotient stack $X//G$ associated to an action of a Lie group $G$ on $X$. A morphism from a manifold $Y$ to $X//G$ is given by a principal $G$-bundle $P$ over $Y$ and a $G$-equivariant map $P \to X$. If $G$ acts freely and properly on $X$, then $X//G$ is equivalent to (the sheaf represented by) the quotient manifold $X/G$. $X//G$ is Deligne-Mumford if $G$ is compact and acts locally freely with finite stabilisers. With respect to the trivial action of $G$ on a point $\ast$, we have $\ast // G = \mathrm B G$. $X//G$ is the classifying stack of the Lie groupoid $G\ltimes X \rightrightarrows X$ associated to the action of $G$ on $X$.
\end{itemize}

Some examples of stacks that are not differentiable are:
\begin{itemize}
\item The sheaf $X \mapsto \Omega^k (X)$, which is not representable if $k \geq 1$.
\item The stack $\mathrm{Vect}$ of all vector bundles and morphisms between them.
\item The stack of Lie algebroids $\mathcal{LA}$, see section \ref{The stack of Lie algebroids}.
\end{itemize}

\newpage

\section{Lie Algebroids}  \label{Lie Algebroids}

For the theory of Lie algebroids see \cite{Ma2}. See also \cite{Cr1} for details about representations of Lie algebroids and their pullbacks, \cite{Vai1} for the dual characterisation of Lie algebroid morphisms in terms of de Rham algebras, and  \cite{HiMa1} and \cite{St1} for the construction of fibre products of Lie algebroids.

\subsection{Lie algebroids and morphisms between them}

\subsubsection{Lie algebroids}  \label{Lie algebroids over smooth manifolds}

A \emph{Lie algebroid} over a manifold $X$ is a vector bundle $A$ over $X$ together with a Lie bracket $[,]$ on $\Gamma(A)$ and a morphism $a:A \to TX$ of vector bundles over $X$, which induces a Lie algebra morphism $\Gamma(A) \to \Gamma(TX)$, and
$$[\xi,f\nu] = a(\xi)(f)\nu + f[\xi,\nu]$$ 
for all $\xi,\nu \in \Gamma(A)$ and $f \in \mathrm C ^\infty (X)$. The map $a$ is called the \emph{anchor map} of $A$, and the last condition in the definition is called the \emph{Leibniz rule}. We'll often drop the anchor map and write $\xi(f)$ for $a(\xi)(f)$.

A \emph{morphism of Lie algebroids over $X$} is a morphism of vector bundles over $X$ which induces a Lie algebra morphism between spaces of sections and commutes with the anchor maps. We'll denote the category of Lie algebroids over $X$ by $\mathcal {LA}_X$.

A Lie algebroid is called \emph{transitive} if its anchor map is surjective. We'll denote the category of transitive Lie algebroids over $X$ by $\mathcal{LA}_X ^\mathrm{tr}$.

Complex Lie algebroids are defined similarly, by replacing real by complex vector bundles, $TX$ by $TX^ \mathbb C$, and $\mathrm C ^\infty (X)$ by $\mathrm C ^\infty (X,\mathbb C)$. We'll denote the category of complex Lie algebroids over $X$ by $\mathcal {LA}^{\mathbb C}_X$.

\subsubsection{Examples}

Some basic examples of Lie algebroids are:
\begin{itemize}

\item For any manifold $X$, the tangent bundle $TX$ is a Lie algebroid over $X$. The Lie bracket is the usual Lie bracket of vector fields, and the anchor map is the identity.

\item If $F\subset TX$ is an involutive distribution, then $F$ is a Lie algebroid with anchor map the inclusion $F \hookrightarrow TX$.

\item Finite dimensional $\mathbb R$-Lie algebras are Lie algebroids over a point.

\item A bundle $V \to X$ of (finite dimensional real) Lie algebras is a Lie algebroid with Lie bracket given pointwise and anchor map the zero map $V \to TX$. 

\item If $X$ is a complex manifold then $T^{0,1}X$ is a complex Lie algebroid with Lie bracket restricted from $TX^\mathbb C$ and anchor map the inclusion $T^{0,1}X \hookrightarrow TX^\mathbb C$.

\end{itemize}

\subsubsection{The locality of the Lie bracket}

It follows from the Leibniz rule that the Lie bracket on a Lie algebroid $A$ over $X$ is automatically local: for any sections $\xi,\nu \in \Gamma(A)$, the germ at $x \in X$ of $[\xi,\nu]$ only depends on the germs of $\xi$ and $\nu$ at $x$. It follows that for any open subset $U\subset X$ the vector bundle $A|_U$ over $U$ inherits the structure of a Lie algebroid over $U$, and the sheaf $\mathcal A = \Gamma( -, A)$ is a sheaf of Lie algebras.

\subsubsection{Base Changing Morphisms}

Let $A$ be a Lie algebroid over $X$ and $B$ be a Lie algebroid over $Y$. Consider a vector bundle morphism $(\phi,f)$:
$$\xymatrix{
A \ar[d]  \ar[r]^\phi & B \ar[d] \\
X \ar[r]_f  & Y 
}$$
This induces a map of vector bundles over $X$:
$$\xymatrix{
A\ar[dr] \ar[r] ^{\phi'} & f^\ast B   \ar[d] \\
&  X
}$$
and therefore a morphism of $\mathrm C ^\infty (X)$ modules:
$$\xymatrix{
\Gamma(A) \ar[r]^>>>>>{\phi'} & \mathrm C ^\infty (X) \otimes_{\mathrm C ^\infty (Y)} \Gamma(B)
}$$
where we have used (the inverse of) the isomorphism
\begin{align*}
\mathrm C ^\infty (X) \otimes _{\mathrm C ^\infty (Y)} \Gamma(B)  & \to \Gamma(f^\ast B)  \\
g \otimes \xi & \mapsto g (f^\ast \xi)
\end{align*}
Let $\xi,\nu \in \Gamma(A)$, and
\begin{align*}
\phi' \xi & = \sum_i g_i \otimes \xi'_i \\
\phi' \nu & = \sum_i h_i \otimes \nu'_j 
\end{align*}
for some functions $g_i,h_i \in \mathrm C ^\infty (X)$ and sections $\xi'_i,\nu'_i \in \Gamma(B)$. Then $(\phi,f)$ is a \emph{morphism of Lie algebroids} if
\begin{align*}
\phi' ( [\xi,\nu] )  &  =   \sum_{i,j} g_i h_j \otimes [\xi'_i,\nu'_j]  + \sum_j \xi(h_j) \otimes \nu'_j  -  \sum_i \nu(g_i) \otimes \xi'_i
\end{align*}
and 
$$b \circ \phi = f_\ast \circ a$$
that is, the following diagram commutes:
$$\xymatrix{
A \ar[d]_a \ar[r]^\phi &  B \ar[d]^b \\
TX \ar[r]_{f_\ast} & TY 
}$$
With this definition of morphisms, Lie algebroids form a category that we'll denote by $\mathcal{LA}$. Similarly, we have the category of transitive Lie algebroids $\mathcal {LA}^\mathrm{tr}$ and the category of complex Lie algebroids  $\mathcal {LA}^\mathbb C$. 

There is a functor $\mathcal {LA} \to \mathbf{Man}$ that maps a Lie algebroid $A$ over $X$ to the manifold $X$. The category $\mathcal{LA}_X$ (see \ref{Lie algebroids over smooth manifolds}) of Lie algebroids over $X$ is then the fibre (see \ref{Fibred categories and the Grothendieck construction}) over $X$ of this functor.

If $f: X \to Y$ is a smooth map, then the bundle map 
$$\xymatrix{
TX \ar[d]  \ar[r]^{f_\ast} & TY \ar[d] \\
X \ar[r]_f  & Y 
}$$
is a morphism of Lie algebroids. This gives a fully faithful embedding
$$\mathbf{Man} \hookrightarrow \mathcal{LA}$$
of the category of smooth manifolds into the category of Lie algebroids.

\subsection{The de Rham functor}

\subsubsection{The de Rham algebra of a Lie algebroid}

If $A$ is a Lie algebroid over $X$ then there is a degree 1 differential 
$$\mathrm d_A : \Gamma \left({\bigwedge}^k A^\ast \right) \to
\Gamma \left({\bigwedge}^{k+1} A^\ast \right) $$
on the graded algebra $\Gamma \left( {\bigwedge}^\bullet A^\ast \right)$ that makes it into a DGA (differential graded algebra). $\mathrm d_A$ is determined by
\begin{align*}
\mathrm df (\xi) & = \xi(r)  \\
\mathrm d\alpha (\xi,\nu) & =  \xi (\alpha(\nu)) -  \nu(\alpha(\xi)) - \alpha([\xi,\nu])
\end{align*}
where $f \in \mathrm C^\infty (X)$, $\xi,\nu \in \Gamma(A)$ and $\alpha \in \Gamma(A^\ast)$, and the graded Leibniz rule: 
$$\mathrm d (\beta \wedge \gamma) = \mathrm d \beta \wedge \gamma + (-1)^{|\beta|} \beta \wedge \mathrm d \gamma$$
where $\beta,\gamma \in \Gamma \left( {\bigwedge}^\bullet A^\ast \right)$. We'll write $\Omega^\bullet(A)$ for this DGA, and call it the \emph{de Rham algebra of $A$}. The differential $\mathrm d_A$ is local, so it induces a differential on the sheaf of graded algebras ${\bigwedge}^\bullet _{\mathrm C^\infty _X} \mathcal A^\ast$, where $\mathcal A^\ast$ is the sheaf of sections of $A^\ast$. We'll write $\Omega^\bullet _A$ for this sheaf of DGAs, for which we have
$$\Gamma \left( \Omega^\bullet _A \right)  =  \Omega^\bullet (A)$$

For any manifold $X$, the de Rham algebra $\Omega^\bullet(TX)$ of the Lie algebroid $TX$ is the usual de Rham complex of differential forms on $X$.

\subsubsection{The de Rham functor}

If $A$ is a Lie algebroid over $X$ and $B$ a Lie algebroid over $Y$, then given a morphism of vector bundles
$$\xymatrix{
A \ar[d] \ar[r]^\phi & B \ar[d]  \\
X \ar[r]_f & Y 
}$$
there is the associated morphism of graded algebras
$$\phi^\ast :  \Omega^\bullet (B) \to \Omega^\bullet(A)$$
It is shown in \cite{Vai1} that $(\phi,f)$ is a morphism of Lie algebroids if and only if $\phi^\ast$ is a morphism of DGAs, i.e. if we have
$$\mathrm d_A \circ \phi^\ast = \phi^\ast \circ \mathrm d_B$$
Therefore there is a faithful (contravariant) functor
$$ \Omega^\bullet (-) : \mathcal {LA}   \to \mathbb R \text{-DGA} $$
that maps a Lie algebroid $A$ to its de Rham algebra $\Omega^\bullet(A)$. We'll call this the \emph{de Rham functor}.

The de Rham functor is local in the sense that a morphism of Lie algebroids $(\phi,f) : A \to B$ as above gives a morphism of sheaves of algebras
$$ \Omega_B^\bullet \to f_\ast \Omega^\bullet_A$$
because for any open $U \subset Y$ we have Lie algebroids $B|_U$ and $A|_{f^{-1}(U)}$.

\subsection{Representations and Cohomology}

\subsubsection{Representations of Lie algebroids}

If $A$ is a Lie algebroid over a manifold $X$ then a \emph{representation} of $A$ is a smooth vector bundle $E$ over $X$ together with an $\mathbb R$-bilinear map
\begin{align*}
\nabla : \Gamma(A) \times \Gamma(E)  & \to \Gamma(E) \\
(\xi,e) & \mapsto \nabla_\xi e 
\end{align*}
such that
\begin{align*}
\nabla_{f\xi} e & = f \nabla_\xi e \\
\nabla_\xi (fe) & = \xi(f)e + f\nabla_\xi e \\
\nabla_{[\xi,\nu]} e & = \nabla_\xi \nabla_\nu e - \nabla_\nu \nabla_\xi e 
\end{align*}
for all $\xi,\nu \in \Gamma(A)$, $f \in \mathrm C ^\infty (X)$, $e \in \Gamma(E)$. A morphism between two representations of $A$ is a morphism of vector bundles over $X$ that commutes with the actions of $A$. Representations are also called `flat $A$-connections'. We'll denote the category of representations of a Lie algebroid $A$ by $\mathrm{Rep}_A$.

It follows from the definition that representations of Lie algebroids are automatically local: the germ of $\nabla_\xi e$ at $x \in X$ only depends on the value of $\xi$ at $x$ and the germ of $e$ at $x$. The map $\nabla$ therefore determines a map of sheaves of $\mathbb R$-modules
$$\nabla : \mathcal A \times \mathcal E \to \mathcal E$$
where $\mathcal A$ and $\mathcal E$ are the sheaves of sections of $A$ and $E$. In particular, a representation can be restricted to an open set $U \subset X$ to give a representation
$$\nabla | _ U : \Gamma (A |_U) \times \Gamma(E|_U) \to \Gamma(E|_U)$$
of $A|_U$ on $E|_U$. 

\subsubsection{The de Rham complex with coefficients in a representation}

A representation $\nabla$ of $A$ on $E$ determines a map
$$\mathrm d _ {A,\nabla} : \Gamma(E) \to \Gamma(A^\ast \otimes E)$$
by the formula
$$ \mathrm d_{A,\nabla} (e)  (\xi) = \nabla_\xi e $$
for $e \in \Gamma(E)$ and $\xi \in \Gamma(A)$. $\mathrm d_{A,\nabla}$ satisfies
$$ \mathrm d_{A,\nabla} (fe) = \mathrm d_A (f) e + f \otimes \mathrm d_{A,\nabla} (e) $$
for all $e \in \Gamma(E)$ and $f \in \mathrm C^\infty (X)$, and extends to a differential 
$$ \mathrm d_{A,\nabla} :  \Gamma \left ({\bigwedge}^k A^\ast \otimes E \right) 
\to \Gamma \left ({\bigwedge}^{k+1} A^\ast \otimes E \right)$$
by the formula
$$ \mathrm d_{A,\nabla} ( \beta \otimes e) = \mathrm d_A \beta \otimes e + (-1)^{|\beta|} \beta \wedge \mathrm d_{A,\nabla} (e)$$
This gives a complex of vector spaces
$$ \Gamma \left ({\bigwedge}^\bullet A^\ast \otimes E \right) \cong \Omega^\bullet (A) \otimes _{\mathrm C^\infty (X)} \Gamma(E)$$
which is a DG-module over $\Omega^\bullet (A)$. We'll write $\Omega^\bullet (A,E,\nabla)$ (or just $\Omega^\bullet (A,E)$ when $\nabla$ is understood) for this complex, and refer to it as the \emph{de Rham complex of $A$ with coefficients in $E$}.

The differential $\mathrm d_{A,\nabla}$ associated to a representation $\nabla$ is local, so there is a complex of sheaves
$$\Omega^\bullet _A \otimes _{\mathrm C^\infty _X} \mathcal E$$
and we have
$$\Gamma \left( \Omega^\bullet \otimes _{\mathrm C^\infty _X} \mathcal E \right) 
= \Omega^\bullet (A) \otimes _{\mathrm C^\infty (X)} \Gamma(E)
$$

\subsubsection{Lie algebroid Cohomology}

If $A$ is a Lie algebroid then the cohomology of the de Rham algebra $\Omega^\bullet (A)$ of $A$ is called the \emph{Lie algebroid cohomology of $A$}, and denoted $\mathrm H^\bullet (A)$. Since $\Omega^\bullet (A)$ is a DGA, $\mathrm H^\bullet (A)$ is a graded algebra. Lie algebroid cohomology is a contravariant functor
$$\mathrm H ^\bullet (-) : \mathcal{LA} \to \mathbb R \text{-GrAlg}$$
where $\mathbb R \text{-GrAlg}$ is the category of graded algebras over $\mathbb R$, because it is given by the composition of the functors 
$$\xymatrix{
\mathcal{LA} \ar[r]^>>>>>>{\text{ }\Omega^\bullet (-)} & \mathbb R \text{-DGA} \ar[r]^-{\mathrm H ^\bullet (-)} & \mathbb R \text{-GrAlg}
}$$

If $\nabla$ is a representation of a Lie algebroid $A$ on $E$ then the \emph{cohomology of $A$ with coefficients in $E$}, denoted $ \mathrm H ^\bullet (A,E,\nabla)$ or just $\mathrm H^\bullet(A,E)$, is defined to be the cohomology of the complex  $\Omega^\bullet (A) \otimes _{\mathrm C^\infty (X)} \Gamma(E)$. $\mathrm H^\bullet(A,E)$ is a graded module over $\mathrm H^\bullet (A)$.

\subsection{Pullbacks of Lie algebroids}

\subsubsection{The pullback construction} \label{The pullback construction}

If $\phi : X \to Y$ is a smooth map and $E \to Y$ a vector bundle then we have the pullback bundle $\phi^\ast E \to X$, with total space the fibre product
$$\xymatrix{
X \times _Y E \ar[r] \ar[d] & E \ar[d] \\
X \ar[r]_\phi & Y
}$$
and fibre $E_{\phi(x)}$ over $x \in X$.

Let $\phi:X \to Y$ be a smooth map and $\pi:A \to Y$ be a Lie algebroid. Then, if it exists, the \emph{pullback Lie algebroid} $\phi^!A$ is defined as:
\begin{align*}
\phi^!A & = \{ (v,w) \in TX \times A \vert \phi_\ast(v) = a(w) \} 
\end{align*}
or equivalently as the kernel of the map
\begin{align*}
( \phi_\ast \oplus  - \phi^\ast a ): TX\oplus \phi^\ast A & \to \phi^\ast TY \\
(v, (x,\xi) ) & \mapsto ( x, \phi_\ast(v) - a(\xi) )
\end{align*}
Therefore $\phi^! A$ exists whenever $(\phi_\ast \oplus  - \phi^\ast a )$ has constant rank or if $\phi_\ast$ and $a$ are transverse. In particular, $\phi^! A$ exists if $\phi$ is a submersion or if $A$ is transitive.

Assuming $\phi^! A$ exists, its vector bundle projection is the restriction of that of $TX\oplus \phi^\ast A$, and its anchor is the projection onto $TX$. Considering $\phi^!A$ as a subbundle of $TX\oplus\phi^\ast A$ and using the isomorphism of $C^\infty (X)$ modules
\begin{align*}
C^\infty(X) \otimes _{C^\infty (Y)} \Gamma(Y,A) & \to \Gamma(X,\phi^\ast A)    \\
f \otimes \xi & \mapsto f \phi^\ast (\xi)
\end{align*}
we can view sections of $\phi^! A$ as elements $v \oplus \sum_i (f_i \otimes \xi_i)$ of $\Gamma(TX)\oplus (C^\infty(X) \otimes _{C^\infty (Y)} \Gamma(Y,A))$ satisfying the condition:
\begin{align*}
\phi_\ast (v_x) & = \sum_i f_i(x) a( ({\xi_i})_{\phi(x)}) 
\end{align*}
for all $x \in X$, and the Lie bracket on sections of $\phi^! A$ is then defined as:
\begin{align*}
\left[ v \oplus \sum_i (f_i \otimes \xi_i) , v' \oplus \sum _j (f_j' \otimes \xi_j') \right] & = [v,v'] \oplus \left( \sum_{i,j} f_i f_j' \otimes [\xi_i,\xi_j'] 
+ \sum_j v(f_j') \otimes \xi_j'  
-  \sum_i v'(f_i) \otimes \xi_i \right)
\end{align*}

If $f:X \to Y$ is a submersion, then by choosing a splitting of the exact sequence
$$\xymatrix{
0 \ar[r]  &  \mathrm{Ker} f_\ast  \ar[r] &  TX  \ar[r] ^{f_\ast}  & f^\ast TY \ar[r] & 0
}$$
of vector bundles over $X$, one can check that there is an isomorphism
$$ f^! A \cong \mathrm{Ker} f_\ast \oplus f^\ast A $$
of vector bundles over $X$. In particular, if $f:X \to Y$ has relative dimension $k$, then we have
$$\text{rank} \left( f^! A \right) = \text{rank} \left( A \right) + k$$

\subsubsection{The universal property of pullbacks} \label{cartesian property}

If $\phi^!A$ is the pullback of a Lie algebroid along a smooth map $\phi:X \to Y$ then there is a Lie algebroid morphism $\phi^\sharp : \phi^! A \to A$ covering $\phi$
$$\xymatrix{
\phi^! A \ar[d] \ar[r] ^ {\phi^\sharp} & A \ar[d] \\
X \ar[r]_\phi & Y 
}$$
which is given by projecting onto the second factor:
\begin{align*}
\phi^\sharp: \phi^! A & \to A \\
( v, \xi ) & \mapsto \xi
\end{align*}
This morphism satisfies the usual universal property for pullbacks: if we have a morphism of Lie algebroids:
$$\xymatrix{
A' \ar[d] \ar[r] ^ {\tilde \psi} & A \ar[d] \\
Z \ar[r]_\psi & Y 
}$$
and $\psi = \phi \circ \psi'$ for some smooth map $\psi'$, giving a commutative triangle:
$$\xymatrix{
Z \ar[dr]_{\psi} \ar[r]^{\psi'} & X \ar[d]^{\phi} \\
& Y
}$$
then $\tilde \psi$ factors uniquely into $\phi^\sharp \circ \tilde \psi'$ for some Lie algebroid morphism $\tilde \psi'$ covering $\psi'$:
$$\xymatrix{
A' \ar@/^1.5pc/[rr]^{\tilde \psi}   \ar[d] \ar[r] ^ {\tilde \psi'} & \phi^! A \ar[d] \ar[r]^{\phi^\sharp}  & A \ar[d] \\
Z \ar@/_1.5pc/[rr]_\psi \ar[r]_{\psi'} & X \ar[r]_{\phi} & Y 
}$$
The morphism $\tilde \psi'$ is given by the formula
\begin{align*}
\tilde \psi' :  \xi & \mapsto \left( \psi'_\ast (a'(\xi)) ,  \tilde \psi (\xi)  \right)
\end{align*}
In short, the morphism $\phi^\sharp : \phi^! A \to A$ is cartesian with respect to the functor $\mathcal {LA} \to \mathbf{Man}$ which maps a Lie algebroid to its base manifold.

\subsubsection{Pullbacks along \'etale maps} \label{etale pullbacks}

If $\phi: X \to Y$ is \'etale and $A$ a Lie algebroid over $Y$ then there is a vector bundle isomorphism:
\begin{align*}
\phi^\ast A & \to \phi^! A \\
(x,w) & \mapsto ((\phi_\ast \vert_x ^{-1} \circ a)(w),w)
\end{align*}
This induces a Lie algebroid structure on $\phi^\ast A$ which is determined by the statement that for any open set $U \subset X$ for which $\phi\vert_U$ is a diffeomorphism onto its image, the vector bundle isomorphism:
$$\xymatrix{
(\phi^\ast A) \vert _U  \ar[r]^{\mathrm{pr}_A} \ar[d] & A\vert_{\phi(U)} \ar[d] \\
U \ar[r] _{\phi|_U} & \phi(U) 
}$$ 
is an isomorphism of Lie algebroids. If we denote the sheaf of sections of $A$ by $\mathcal A$, then this is equivalent to the statement that the canonical isomorphism of sheaves of vector spaces:
\begin{align*}
\Gamma(-,\phi^\ast A) & \to \phi^\ast \mathcal A 
\end{align*}
is an isomorphism of sheaves of Lie algebras, and that the anchor map on $\phi^\ast A$, denoted $a^\ast$, is given by:
\begin{align*}
a^\ast : \phi^\ast A & \to TX \\
(x,w) & \mapsto (\phi_\ast\vert_x ^{-1} \circ a)(w)
\end{align*}

\subsubsection{Pullbacks of representations}  \label{Pullbacks of representations}

Let $A$ be a Lie algebroid over $X$, $\nabla$ a representation of $A$ on a vector bundle $E$ over $X$, and $f:Y \to X$ a submersion (or more generally a smooth map $f$ such that $f^! A$ exists). Then there is a representation
$$f^! \nabla : \Gamma(f^! A) \times \Gamma(f^\ast E) \to \Gamma(f^\ast E)$$
of $f^!A$ on $f^\ast E$ defined by
$$(f^! \nabla)_{v \oplus (g \otimes \xi)} (g'\otimes e) = v(g') \otimes e + gg' \otimes \nabla_\xi e$$
where $v \oplus (g \otimes \xi) \in \Gamma(f^!A)$ and $g'\otimes e \in \Gamma(f^\ast E)$ as in \ref{The pullback construction}.

Combining the map $\Omega^\bullet(A) \to \Omega^\bullet(f^! A)$
associated to the pullback morphism $f^! A \to A$ by the de Rham functor, with the pull back map $\Gamma(E) \to \Gamma(f^\ast E)$, gives a map
$$\Omega^\bullet(A,E) \to \Omega^\bullet(f^! A, f^\ast E)$$
which is a morphism of cochain complexes \cite{Cr1}.

\subsection{Fibre products of Lie algebroids}

Given Lie algebroids $A_1, A_2,B$ over manifolds $X_1,X_2,Y$, and a diagram in $\mathcal {LA}$ of the form
$$\xymatrix{
 &  &  & X_2  \ar[d]^{f_1}   \\
&  A_2 \ar[urr]  \ar[d]_<<<{\phi_2} & X_1 \ar[r]_<<{f_2} & Y \\
A_1 \ar[urr]  \ar[r]_{\phi_1} & B  \ar[urr]
}$$
the fibre product 
$$\xymatrix{
 &  & X_1 \times _ Y X_2 \ar[d] \ar[r]  & X_2  \ar[d]^{f_1}   \\
A_1 \times _B A_2 \ar[d] \ar[r] \ar[urr] &  A_2 \ar[urr]  \ar[d]_<<<{\phi_2} & X_1 \ar[r]_<<<{f_2} & Y \\
A_1 \ar[urr]  \ar[r]_{\phi_1} & B  \ar[urr]
}$$
exists in the category $\mathcal {LA}$ whenever the fibre products $A_1 \times _B A_2$ and $X_1 \times _Y X_2$ exist as manifolds. In particular, the fibre product exists if either both $\phi_1$ and $f_1$ or both $\phi_2$ and $f_2$ are submersions (such morphisms are called Lie algebroid fibrations in \cite{Ma2}).

The pullback of a Lie algebroid $A \to Y$ along a map $f:X \to Y$ can be identified with the fibre product of the morphisms
$$\xymatrix{
 &  &  & Y  \ar[d]^{\mathrm{id}}   \\
&  A \ar[urr]  \ar[d]_<<<{a} & X \ar[r]_<<{f} & Y \\
TX \ar[urr]  \ar[r]_{f_\ast} & TY  \ar[urr]
}$$

\newpage

\section{The stack of Lie algebroids}  \label{The stack of Lie algebroids}

In this section we show that the categories $\mathcal{LA}$ of Lie algebroids, and $\mathrm{Rep}$ of representations of Lie algebroids, are stacks over $\mathbf{Man}_\text{sub}$ with respect to the submersion topology. We prove this in two steps: Theorem \ref{LA is a stack} is the statement that $\mathcal{LA}$ satisfies descent for open covers, and Theorem \ref{submersive descent} is the statement that $\mathcal{LA}$ satisfies descent for surjective submersions.

\subsection{The category of Lie algebroids}

\subsubsection{$\mathcal{LA}$ as a fibered category}

There is the functor from $\mathcal{LA}$ to $\mathbf{Man}$ which takes a Lie algebroid to the base manifold, therefore $\mathcal{LA}$ is a category over $\mathbf{Man}$:
$$\xymatrix{
\mathcal{LA} \ar[d] \\
\mathbf{Man}
}$$
As described in \ref{cartesian property}, the morphism
$$\xymatrix{
\phi^! A \ar[d] \ar[r]^{\phi^\sharp} &  A  \ar[d]   \\
X \ar[r]_\phi & Y 
}$$
induced by the pullback of a Lie algebroid $A \to Y$ along a map $\phi:X \to Y$ is cartesian with respect to this functor. Since pullbacks of transitive Lie algebroids always exist we can restrict the functor to $\mathcal{LA}^\text{tr}$ and we have:
\begin{Proposition}
The functor 
$$\xymatrix{
\mathcal{LA}^\mathrm{tr} \ar[d] \\
\mathbf{Man}
}$$
is a Grothendieck fibration.
\end{Proposition}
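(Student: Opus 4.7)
The plan is to assemble this from two facts that have essentially already been established in the excerpt, together with one small verification. The functor $\mathcal{LA}^{\mathrm{tr}} \to \mathbf{Man}$ is the restriction of $\mathcal{LA} \to \mathbf{Man}$ to the full subcategory of transitive Lie algebroids, so to prove it is a Grothendieck fibration I need to show that for every smooth map $\phi: X \to Y$ and every transitive Lie algebroid $A$ over $Y$, there is a cartesian morphism in $\mathcal{LA}^{\mathrm{tr}}$ covering $\phi$ with target $A$.

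First, I would invoke the construction of \ref{The pullback construction}: because $A$ is transitive, its anchor $a$ is surjective, hence $\phi_\ast$ and $a$ are automatically transverse, and so the pullback $\phi^! A$ exists as a Lie algebroid over $X$. The morphism $\phi^\sharp: \phi^! A \to A$ covering $\phi$ was shown in \ref{cartesian property} to be cartesian with respect to the functor $\mathcal{LA} \to \mathbf{Man}$.

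The one remaining point is that $\phi^! A$ itself is transitive, so that $\phi^\sharp$ is a morphism in $\mathcal{LA}^{\mathrm{tr}}$. This is straightforward: the anchor of $\phi^! A$ is the projection $(v,w) \mapsto v$ onto $TX$, and given any $v \in T_xX$, the surjectivity of $a$ produces some $w \in A_{\phi(x)}$ with $a(w) = \phi_\ast v$, so $(v,w) \in \phi^! A$ lies over $v$.

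Finally, I would observe that the cartesian property transfers from $\mathcal{LA}$ to $\mathcal{LA}^{\mathrm{tr}}$ for free: given any transitive $A'$ over $Z$, a morphism $\tilde\psi: A' \to A$ in $\mathcal{LA}^{\mathrm{tr}}$, and a factorisation $\psi = \phi \circ \psi'$ of the base map, the universal factorisation $\tilde\psi = \phi^\sharp \circ \tilde\psi'$ produced in $\mathcal{LA}$ has $\tilde\psi': A' \to \phi^! A$ automatically landing in $\mathcal{LA}^{\mathrm{tr}}$, since both source and target are transitive. Uniqueness in $\mathcal{LA}^{\mathrm{tr}}$ is inherited from uniqueness in $\mathcal{LA}$. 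I don't anticipate any real obstacle; the only place where care is needed is to notice that transitivity of $A$ (rather than, say, $\phi$ being a submersion) is what guarantees both the existence of $\phi^! A$ and its transitivity.
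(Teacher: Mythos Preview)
Your proposal is correct and follows the same approach as the paper: the paper simply notes, in the sentence preceding the proposition, that pullback morphisms are cartesian (as in \ref{cartesian property}) and that pullbacks of transitive Lie algebroids always exist, and states the proposition without further proof. Your argument fills in exactly the details one would want, in particular the verification that $\phi^! A$ is again transitive and that the cartesian property passes to the full subcategory $\mathcal{LA}^{\mathrm{tr}}$; these are left implicit in the paper.
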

In the general case we must restrict to the subcategory $\mathbf{Man}_{\text{sub}}$ of $\mathbf{Man}$ consisting of manifolds and submersions:
\begin{Proposition}
The functor 
$$\xymatrix{
\mathcal{LA} \ar[d] \\
\mathbf{Man}_\mathrm{sub}
}$$
is a Grothendieck fibration.
\end{Proposition}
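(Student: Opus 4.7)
The plan is to assemble the proposition from two facts that have already been established in Section 3.4: (i) for any submersion $\phi : X \to Y$ and any Lie algebroid $A \to Y$, the pullback Lie algebroid $\phi^!A$ exists (this is the content of \ref{The pullback construction}, where the transversality condition defining $\phi^!A$ is automatic when $\phi_\ast$ is fibrewise surjective, and one even has the concrete model $\phi^!A \cong \ker \phi_\ast \oplus \phi^\ast A$); and (ii) the morphism $\phi^\sharp : \phi^!A \to A$ covering $\phi$ is cartesian with respect to the functor $\mathcal{LA} \to \mathbf{Man}$ (this is \ref{cartesian property}).

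First I would interpret the functor in the statement: $\mathcal{LA} \to \mathbf{Man}_\mathrm{sub}$ means the restriction of $\mathcal{LA} \to \mathbf{Man}$ to the preimage of $\mathbf{Man}_\mathrm{sub}$, i.e.\ the subcategory of $\mathcal{LA}$ whose morphisms are Lie algebroid morphisms $(\phi, f)$ with $f$ a submersion. The fibres over any manifold $Y$ coincide with the fibres of the ambient functor, namely $\mathcal{LA}_Y$.

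Next, given a submersion $\phi : X \to Y$ in $\mathbf{Man}_\mathrm{sub}$ and a Lie algebroid $A$ over $Y$, step (i) produces $\phi^!A$ and the Lie algebroid morphism $\phi^\sharp : \phi^!A \to A$ covering $\phi$, which lives in $\mathcal{LA} \big|_{\mathbf{Man}_\mathrm{sub}}$ because $\phi$ is a submersion. It remains to verify the cartesian property in the restricted setting: given a morphism $\tilde{\psi} : A' \to A$ in $\mathcal{LA}\big|_{\mathbf{Man}_\mathrm{sub}}$ covering a submersion $\psi : Z \to Y$, and a factorisation $\psi = \phi \circ \psi'$ with $\psi'$ a morphism in $\mathbf{Man}_\mathrm{sub}$ (hence itself a submersion), step (ii) supplies a unique Lie algebroid morphism $\tilde{\psi}' : A' \to \phi^!A$ covering $\psi'$ such that $\tilde{\psi} = \phi^\sharp \circ \tilde{\psi}'$. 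Since $\psi'$ is a submersion, $\tilde{\psi}'$ is a morphism in $\mathcal{LA}\big|_{\mathbf{Man}_\mathrm{sub}}$, and its uniqueness in the larger category implies uniqueness here.

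There is essentially no obstacle: the only subtlety is the bookkeeping that a factorisation of a submersion through a submersion is witnessed by a second submersion, so that the lift provided by the universal property of \ref{cartesian property} automatically lies in the restricted category. The transitive case is identical except that one appeals to the unconditional existence of pullbacks of transitive Lie algebroids, which is why no restriction on the morphisms of the base is required there.
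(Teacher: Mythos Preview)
Your proposal is correct and matches the paper's approach exactly: the paper states the proposition without proof, having just remarked that the pullback morphism $\phi^\sharp:\phi^!A\to A$ is cartesian (by \ref{cartesian property}) and that pullbacks along submersions always exist (by \ref{The pullback construction}). Your write-up simply makes explicit the routine verification that cartesianness survives the restriction to $\mathbf{Man}_{\mathrm{sub}}$, which is immediate since the factoring morphism $\psi'$ is by hypothesis already in $\mathbf{Man}_{\mathrm{sub}}$.
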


We'll mostly work with $\mathcal{LA} \to \mathbf{Man}_\text{sub}$ and therefore restrict to $\mathbf{Man}_{\text{sub}}$, but all of our results apply to $\mathcal{LA}^\text{tr} \to \mathbf{Man}$ as well.

\subsubsection{$\mathcal{LA}$ as a weak presheaf of categories}  \label{LA as a weak presheaf of categories}

The pullback construction gives a `choice of pullbacks' in the sense of fibered categories, associated to which is a weak presheaf of categories 
$$\mathcal{LA} : \mathbf{Man}_\text{sub} \to \mathbf{Cat}$$
which we'll also denote by $\mathcal{LA}$. Explicitly, $\mathcal{LA}$ maps a manifold to its category of Lie algebroids:
$$X \mapsto \mathcal{LA}_X$$
and maps a submersion to the associated pullback functor:
$$ ( \phi: X \to Y )  \mapsto ( \phi^! : \mathcal{LA}_Y \to \mathcal{LA}_X ) $$
If $\rho: A \to B$ is a morphism of Lie algebroids over $Y$, then the functor $\phi^!$ maps $\rho$ to the morphism $\phi^! \rho : \phi^! A \to \phi^! B$ determined by the universal property of the pullback $\phi^! B$ and the morphism
$$\xymatrix{
\phi^! A \ar[d]  \ar[r]^{\rho \circ \phi^\sharp} & B \ar[d] \\
X \ar[r]_\phi & Y
}$$
and $\phi^! \rho$ is therefore given by the formula
$$(v,\xi)  \mapsto  (v, \rho(\xi))$$
For any composable pair of submersions:
$$\xymatrix{
X \ar[r]^{\phi_1} & Y \ar[r]^{\phi_2} & Z 
}$$
there is a natural isomorphism:
\begin{align*}
 c_{\phi_2,\phi_1}:(\phi_2\phi_1)^! & \to \phi_1^! \phi_2^! 
\end{align*}
such that for any composable triple of submersions:
$$\xymatrix{
X \ar[r]^{\phi_1} & Y \ar[r]^{\phi_2} & Z  \ar[r]^{\phi_3} & W 
}$$
the following diagram commutes:
$$\xymatrix{
(\phi_3\phi_2\phi_1)^! \ar[d]_{c_{\phi_3,\phi_2\phi_1}}  \ar[r]^{c_{\phi_3\phi_2,\phi_1}} &
 \phi_1^!(\phi_3\phi_2)^! \ar[d]^{\mathrm{id}_{\phi_1^!} \bullet c_{\phi_3,\phi_2}} \\
(\phi_2\phi_1)^!\phi_3^! \ar[r]_>>>>>{c_{\phi_2,\phi_1} \bullet \mathrm{id}_{\phi_3^!}} &
 \phi_1^!\phi_2^!\phi_3^! 
}$$
These natural isomorphisms are given by the formula:
\begin{align*}
c_{\phi_2,\phi_1}(A):(\phi_2\phi_1)^! A & \to \phi_1^! \phi_2^! A \\
(v,w) & \mapsto (v, ( (\phi_1)_\ast v, w) )
\end{align*}

\subsubsection{Restriction to \'etale maps and open inclusions} \label{restriction to etale maps}

In \ref{etale pullbacks} we showed that if $\phi:X \to Y$ is an \'etale map and $A \to Y$ is a Lie algebroid then there is a canonical isomorphism of vector bundles
$$\phi^\ast A \cong \phi^! A$$
Moreover, if $\rho:A \to B$ is a morphism of Lie algebroids over $Y$ then with respect to these canonical isomorphisms $\phi^! \rho$ corresponds to the map
$$\phi^\ast \rho : \phi^\ast A \to \phi^\ast B$$
It follows that there is a weak presheaf
$$\mathcal{LA}' : \mathbf{Man}_\text{\'et} \to \mathbf{Cat}$$
that agrees with $\mathcal{LA}$ on objects:
$$\mathcal{LA}' : X \mapsto \mathcal{LA}_X$$
and maps an \'etale map to the vector bundle pullback functor
$$\mathcal{LA}' : ( \phi: X \to Y) \mapsto ( \phi^\ast: \mathcal{LA}_Y \to \mathcal{LA}_X )$$
We then have:

\begin{Proposition} \label{equivalence over etale category}
The restriction of $\mathcal{LA}$ to the subcategory $\mathbf{Man}_\text{\emph{\'et}}$ of $\mathbf{Man}_\mathrm{sub}$ is equivalent to $\mathcal{LA}'$.
\end{Proposition}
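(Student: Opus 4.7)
The plan is to define an explicit morphism of weak presheaves $\Phi : \mathcal{LA}' \to \mathcal{LA}|_{\mathbf{Man}_\text{\'et}}$ whose components are equivalences (in fact isomorphisms) of categories, and then verify the coherence axioms. Since both presheaves agree on objects, sending $X$ to $\mathcal{LA}_X$, I would take $\Phi(X) = \mathrm{id}_{\mathcal{LA}_X}$ for every manifold $X$; this immediately makes each component an equivalence. The content of the proposition is then entirely in the construction and compatibility of the connecting natural isomorphisms.

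For each \'etale map $\phi : X \to Y$, I need a natural isomorphism $\Phi(\phi) : \phi^\ast \Rightarrow \phi^!$ of functors $\mathcal{LA}_Y \to \mathcal{LA}_X$. The component at a Lie algebroid $A$ over $Y$ is the canonical vector bundle map
\[
\phi^\ast A \to \phi^! A, \qquad (x,w) \mapsto \bigl((\phi_\ast|_x)^{-1}(a(w)),\, w\bigr),
\]
which was shown in \ref{etale pullbacks} to be an isomorphism of Lie algebroids. Naturality of $\Phi(\phi)$ in $A$ — i.e.\ commutativity of
\[
\xymatrix{
\phi^\ast A \ar[r] \ar[d]_{\phi^\ast \rho} & \phi^! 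A \ar[d]^{\phi^! \rho} \\
\phi^\ast B \ar[r] & \phi^! B
}
\]
for a morphism $\rho : A \to B$ in $\mathcal{LA}_Y$ — follows directly from the explicit formula $\phi^!\rho : (v,\xi) \mapsto (v,\rho(\xi))$ of \ref{LA as a weak presheaf of categories} combined with the identification in \ref{restriction to etale maps} that $\phi^!\rho$ corresponds under the canonical isomorphisms to $\phi^\ast \rho$.

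The genuine work, and the main (though still routine) obstacle, is verifying the coherence axiom for a morphism of weak presheaves: for each composable pair of \'etale maps $\phi_1 : X \to Y$, $\phi_2 : Y \to Z$ the hexagon comparing the associator $c_{\phi_2,\phi_1}$ of $\mathcal{LA}$ (given by $(v,w) \mapsto (v,((\phi_1)_\ast v, w))$ as in \ref{LA as a weak presheaf of categories}) with the corresponding associator for vector bundle pullbacks and the canonical isos $\Phi(\phi_2\phi_1)$, $\Phi(\phi_1)$, $\Phi(\phi_2)$ must commute. I would verify this by a direct pointwise calculation: starting from a local section $(z,w) \in (\phi_2\phi_1)^\ast A$, chase it around both sides using that $\phi_1$ is a local diffeomorphism, so $(\phi_1)_\ast$ is invertible on fibres, and check equality of the resulting elements of $\phi_1^! \phi_2^! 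A$. The required identity reduces to the chain rule $(\phi_2\phi_1)_\ast = (\phi_2)_\ast \circ (\phi_1)_\ast$ and the definition of the anchor on pullbacks.

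Finally, to upgrade this morphism of weak presheaves to an equivalence, it suffices (by general nonsense about 2-functors between weak presheaves of categories) to observe that each component $\Phi(X) = \mathrm{id}$ is an equivalence of categories and each $\Phi(\phi)$ is a natural isomorphism; an explicit inverse is given by the inverses of the canonical Lie algebroid isomorphisms $\phi^\ast A \to \phi^! A$, equipped with the analogous coherence data. This yields the claimed equivalence $\mathcal{LA}|_{\mathbf{Man}_\text{\'et}} \simeq \mathcal{LA}'$.
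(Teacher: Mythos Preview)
Your proposal is correct and follows essentially the same approach as the paper: both use the canonical isomorphisms $\phi^\ast A \to \phi^! A$, $(x,w)\mapsto ((\phi_\ast|_x)^{-1}(a(w)),w)$ from \ref{etale pullbacks} as the components of the equivalence and verify naturality in $A$ by the same direct computation using $a\circ\rho = a$ and the formula $\phi^!\rho:(v,\xi)\mapsto(v,\rho(\xi))$. The only difference is that you are more explicit about the coherence hexagon for composable \'etale maps, which the paper's proof leaves implicit; your pointwise verification via the chain rule is correct and fills in a detail the paper omits.
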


\begin{proof}
We just need to check that the isomorphisms 
\begin{align*}
F_A : \phi^\ast A & \to \phi^! A \\
( x, \xi )  & \mapsto ( (\phi_\ast) |_x^{-1} (a(\xi)) , \xi) 
\end{align*}
are natural for all \'etale maps $X \to Y$ and Lie algebroids $A$ over $Y$. Let $\rho:A \to B$ be a morphism of Lie algebroids over $Y$, and $(x,\xi) \in \phi^\ast A$, then we have:
\begin{align*}
( \phi^! \rho \circ F_A )  (x,\xi)  & = \phi^! \rho  \left ( (\phi_\ast) |_x^{-1} (a(\xi)) , \xi \right) \\
& = \left ( (\phi_\ast) |_x^{-1} (a(\xi)) , \rho(\xi) \right) \\
\\
(F_A \circ \phi^\ast \rho) (x,\xi) & = F_A \left( x, \rho(\xi) \right)  \\
& =  \left ( (\phi_\ast) |_x^{-1} (a(\rho(\xi))) , \rho(\xi) \right) \\
& =  \left ( (\phi_\ast) |_x^{-1} (a(\xi)) , \rho(\xi) \right) 
\end{align*}
\end{proof}

If $X$ is a manifold and $U\subset X$ an open subset then restricting Lie algebroids from $X$ to $U$ gives a functor
$$|_U : \mathcal{LA}_X \to \mathcal{LA}_U$$
This gives a \emph{strict} presheaf:
$$\mathcal{LA}_{\text{Open}_X} : \text{Open}_X \to \mathbf{Cat}$$
We can consider the category $\text{Open}_X$ as a subcategory of $\mathbf{Man}_\text{\'et}$ and therefore also of $\mathbf{Man}_\text{sub}$, then we have:

\begin{Proposition} \label{equivalence over open category}
For any manifold $X$, the restrictions of $\mathcal{LA}$ and $\mathcal{LA}'$ to $\text{\emph{Open}}_X$ are equivalent to $\mathcal{LA}_{\text{Open}_X}$.
\end{Proposition}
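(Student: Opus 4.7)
The plan is to reduce the statement to a single comparison and then exhibit an explicit natural isomorphism on the nose. By Proposition \ref{equivalence over etale category}, the restriction of $\mathcal{LA}$ to $\mathbf{Man}_\text{\'et}$ is equivalent to $\mathcal{LA}'$, so this equivalence restricts further to $\text{Open}_X \subset \mathbf{Man}_\text{\'et}$. It therefore suffices to construct an equivalence
\[
\Phi : \mathcal{LA}_{\text{Open}_X} \xrightarrow{\ \simeq\ } \mathcal{LA}'|_{\text{Open}_X}
\]
of weak presheaves of categories over $\text{Open}_X$.

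First, for each open $U \subset X$ both presheaves assign the category $\mathcal{LA}_U$, so I would take $\Phi(U) = \mathrm{id}_{\mathcal{LA}_U}$ on objects. Second, for an open inclusion $\iota : U \hookrightarrow V$ inside $X$ one has two parallel functors $\mathcal{LA}_V \to \mathcal{LA}_U$: the strict restriction $A \mapsto A|_U$ used by $\mathcal{LA}_{\text{Open}_X}$, and the vector bundle pullback $A \mapsto \iota^\ast A$ used by $\mathcal{LA}'$. There is a tautological vector bundle isomorphism
\[
\eta_A : A|_U \xrightarrow{\ \cong\ } \iota^\ast A, \qquad \xi_u \longmapsto (u,\xi_u),
\]
which is the identity on total spaces once one identifies $\iota^\ast A \subset U \times A$ with $\{(u,\xi) : \xi \in A_u\}$. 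Because the Lie algebroid structure on $\iota^\ast A$ transported from $\iota^! A$ (as in \ref{etale pullbacks}) coincides, via $\eta_A$, with the Lie algebroid structure that $A|_U$ inherits from $A$, the map $\eta_A$ is a Lie algebroid isomorphism. Naturality of $\eta$ in morphisms $\rho : A \to B$ of $\mathcal{LA}_V$ is immediate from the pointwise formulas for $\iota^\ast \rho$ and $\rho|_U$, so $\eta$ is a natural isomorphism $\Phi(\iota) : \iota^\ast \circ \Phi(V) \Rightarrow \Phi(U) \circ |_U$.

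Next I would verify the coherence condition required of a morphism of weak presheaves: for a composable pair $U \hookrightarrow V \hookrightarrow W$ of open inclusions in $X$, the hexagon involving $\eta$ and the associator $c_{\iota_2,\iota_1}$ of $\mathcal{LA}'$ from \ref{LA as a weak presheaf of categories} must commute. Since $\mathcal{LA}_{\text{Open}_X}$ is strict (the restrictions compose strictly: $(A|_V)|_U = A|_U$) and the associators $c_{\iota_2,\iota_1}$ for open inclusions send $(u,\xi)$ to $(u,(u,\xi))$, the hexagon reduces to the trivial identity $\eta_A = \eta_A$ after cancelling the identity components; this is a routine unwinding of the definitions. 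Finally, each $\Phi(U)$ is the identity functor and each $\Phi(\iota) = \eta$ is a natural isomorphism, so $\Phi$ is an equivalence of weak presheaves, proving the claim for $\mathcal{LA}'$. Composing with the equivalence of Proposition \ref{equivalence over etale category} yields the corresponding equivalence for $\mathcal{LA}$.

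The only place where one could go wrong is the coherence hexagon, but the strictness of open-inclusion composition makes it essentially automatic; no substantive geometric input is needed beyond what has already been set up.
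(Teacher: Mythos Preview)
Your proof is correct and follows essentially the same approach as the paper: reduce to $\mathcal{LA}'$ via Proposition \ref{equivalence over etale category}, then compare $\mathcal{LA}'|_{\text{Open}_X}$ with $\mathcal{LA}_{\text{Open}_X}$ using the sheaf-theoretic description of the Lie algebroid structure on $\iota^\ast A$ from \ref{etale pullbacks}. The paper's proof is a one-line appeal to that description, while you have unpacked it into the explicit natural isomorphism $\eta_A: A|_U \to \iota^\ast A$ and checked the coherence hexagon; this is exactly the content behind the paper's sentence, so there is no substantive difference in strategy.
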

\begin{proof}
By Proposition \ref{equivalence over etale category} it suffices to prove that the restriction of $\mathcal{LA}'$ to $\text{Open}_X$ is equivalent to $\mathcal{LA}_{\text{Open}_X}$, which follows from the sheaf-theoretic description in \ref{etale pullbacks} of the Lie algebroid structure on the pullback of a Lie algebroid along an \'etale map.
\end{proof}

\subsubsection{Lie algebroids vs vector bundles}   \label{Lie algebroids vs vector bundles}

If $X$ is a manifold then we have a pair of functors:
\begin{align*}
\mathcal{LA}_X & \to \mathrm{Vect}_X \\
\mathrm{Vect}_X & \to \mathcal{LA}_X 
\end{align*}
The first is the functor which forgets the Lie algebroid structure (the Lie bracket and anchor map) and maps a Lie algebroid to its underlying vector bundle, the second equips a vector bundle with the `zero' Lie algebroid structure, i.e. the zero Lie bracket and zero anchor map. However, in general Lie algebroid pullbacks are not isomorphic to vector bundle pullbacks and therefore these functors do not give us morphisms of weak presheaves / fibered categories over $\mathbf{Man}_\text{sub}$. One result of this will be that a Lie algebroid over a stack is not a vector bundle in the usual sense, see \ref{Lie algebroids over stacks are not vector bundles}.

From \ref{restriction to etale maps} we have that the functors $\mathcal{LA}_X  \to \mathrm{Vect}_X$  and $\mathrm{Vect}_X  \to \mathcal{LA}_X$ above do give morphisms of stacks over $\mathbf{Man}_\text{\'et}$. This will result in the fact that over \'etale stacks, Lie algebroids do have a well defined rank as vector bundles, see \ref{Lie algebroids over etale stacks}.

\subsection{Descent}

\begin{Theorem} \label{LA is a stack}
$\mathcal{LA}_{\text{\emph{Open}}_X}$ is a stack over $\text{\emph{Open}}_X$ with respect to the open cover topology.
\end{Theorem}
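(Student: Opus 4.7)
The plan is to verify the two standard conditions that characterise stacks over a site: (i) for each pair of objects, the presheaf of morphisms is a sheaf (separatedness), and (ii) descent data are effective. Since we are working over $\mathrm{Open}_X$ with the open cover topology, covering families are just open covers $\{U_i \hookrightarrow U\}_{i \in I}$ of open subsets $U \subset X$, and the fibre products $U_i \times_U U_j$ are simply the intersections $U_{ij} = U_i \cap U_j$. Throughout, I will lean on two known stack-theoretic facts: that smooth vector bundles form a stack over $\mathrm{Open}_X$ (so vector bundles and their morphisms satisfy open-cover descent on the nose), and the locality of the Lie bracket and anchor established earlier in the excerpt.

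For separatedness, fix Lie algebroids $A, B \in \mathcal{LA}_U$ and an open cover $\{U_i\}$ of $U$. A Lie algebroid morphism $\rho \colon A \to B$ is precisely a vector bundle morphism such that (a) the induced map on sections preserves brackets and (b) the anchor diagram commutes. Both conditions (a) and (b) are pointwise/local conditions on sections, so they can be checked on any open cover. Combined with the fact that $\underline{\mathrm{Hom}}(A,B)$ is a sheaf of $\mathrm C^\infty$-modules (the vector bundle stack condition), this shows that a family of compatible Lie algebroid morphisms $\rho_i \colon A|_{U_i} \to B|_{U_i}$ glues uniquely to a morphism $\rho \colon A \to B$.

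For effectiveness, suppose given a descent datum $(\{A_i\}_{i \in I}, \{\gamma_{ij}\})$ for the cover $\{U_i\}$, so each $A_i$ is a Lie algebroid over $U_i$ and $\gamma_{ij} \colon A_j|_{U_{ij}} \to A_i|_{U_{ij}}$ are Lie algebroid isomorphisms satisfying the cocycle condition. First apply the known vector bundle stack property: the underlying vector bundles and $\gamma_{ij}$ (viewed as vector bundle isomorphisms) assemble into a vector bundle $A \to U$ with canonical isomorphisms $\psi_i \colon A|_{U_i} \to A_i$. Next, since the anchor $a_i \colon A_i \to TU_i$ is a vector bundle morphism and the $\gamma_{ij}$ intertwine the anchors, the same descent argument provides a unique vector bundle morphism $a \colon A \to TU$ restricting to $a_i$ on each $U_i$. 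To produce the bracket on $\Gamma(U,A)$, use locality: any pair of sections $\xi, \nu \in \Gamma(U,A)$ restricts to $\xi_i, \nu_i \in \Gamma(U_i, A_i)$ via $\psi_i$, the local brackets $[\xi_i, \nu_i]_{A_i}$ agree on overlaps because $\gamma_{ij}$ is a Lie algebroid morphism, and so by the sheaf property of $\Gamma(-,A)$ they glue to a unique section of $A$ over $U$ which we declare to be $[\xi, \nu]$.

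The remaining step is to verify that the resulting $(A, [\,,\,], a)$ is a Lie algebroid and that the $\psi_i$ are Lie algebroid isomorphisms. All the axioms (antisymmetry, Jacobi, Leibniz rule, and the fact that $a$ induces a Lie algebra morphism on sections) are pointwise conditions on germs of sections, and by construction they hold on each $U_i$; hence by the sheaf property of $\mathcal A = \Gamma(-,A)$ they hold on $U$. The isomorphisms $\psi_i$ then automatically intertwine brackets and anchors, and the compatibility $\gamma_{ij} \circ \psi_j|_{U_{ij}} = \psi_i|_{U_{ij}}$ holds by construction. The main obstacle I anticipate is bookkeeping rather than substance: one must be careful that the local-to-global construction of the bracket genuinely yields a $\mathbb{R}$-bilinear operation on all of $\Gamma(U,A)$ (not merely on sections supported in a single $U_i$), but this follows from the fact that $\Gamma(-,A)$ is a sheaf and bracket is local, so any section can be written, after passing to a partition-of-unity refinement, as a locally finite sum of pieces where the bracket is already defined.
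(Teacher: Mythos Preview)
Your proposal is correct and follows essentially the same approach as the paper: both verify separatedness by gluing the underlying vector bundle morphism and then checking the anchor and bracket compatibility locally, and both verify effectiveness by first gluing the underlying vector bundle, then gluing the anchors as vector bundle maps, and finally defining the bracket via the sheaf property of $\Gamma(-,A)$ and checking the Lie algebroid axioms locally. The only minor remark is that your final worry about partitions of unity is unnecessary---the bracket is defined directly on arbitrary pairs of global sections by restricting, bracketing locally, and gluing via the sheaf property, with no need to decompose sections into locally supported pieces.
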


\begin{proof}
We first show that $\mathcal{LA}_{\text{\emph{Open}}_X}$ is a prestack. Let $U \in \text{Open}_X$ and $A,B \in \mathcal{LA}_U$. Then we must show that the presheaf of sets
\begin{align*}
  \mathrm{Hom}_U : \text{Open}_U & \to \mathbf{Set} \\
V & \mapsto \mathrm{Hom}_{\mathcal{LA}_V} (A |_V , B |_V)
\end{align*} 
is a sheaf. Let $\{U_i\}_{i \in I}$ be an open cover of $U$ and $\{\rho_i : A|_{U_i} \to B|_{U_i} \}_{i \in I}$ a family of morphisms that agree on intersections. As morphisms of Lie algebroids are in particular morphisms of vector bundles the $\rho_i$'s glue into a unique vector bundle morphism $\rho: A \to B$ such that $\rho|_{U_i}=\rho_i$ for all $i \in I$. We must show that $\rho$ is in fact a morphism of Lie algebroids. For each $i\in I$ we have that $\rho_i$ is a morphism of Lie algebroids and therefore
$$b|_{U_i} = (\rho_i \circ a|_{U_i})=(\rho|_{U_i} \circ a|_{U_i})=(\rho \circ a)|_{U_i}$$
and so
$$b = \rho \circ a$$
Associated to $\rho$ we have the morphism of sheaves of vector spaces
$$\mathcal A \equiv \Gamma ( - , A)   \to   \Gamma ( - , B) \equiv \mathcal B$$
which can be obtained by gluing the morphisms of sheaves $\mathcal A |_{U_i} \to \mathcal B|_{U_i}$ associated to the vector bundle maps $\rho_i$. Since each $\rho_i$ is a morphism of Lie algebroids we have that each sheaf morphism $\mathcal A |_{U_i} \to \mathcal B|_{U_i}$ is a morphism of sheaves of Lie algebras, and therefore so is $\mathcal A \to \mathcal B$. In particular, the map 
$$\Gamma(A) \to \Gamma(B)$$
induced by $\rho$ is a morphism of Lie algebras. Therefore $\rho:A \to B$ is a morphism of Lie algebroids.

It remains to show that $\mathcal{LA}_{\text{\emph{Open}}_X}$ is in fact a stack - that is, that we can glue objects. Let $U \in \text{Open}_X$ and $\{U_i \}_{i \in I}$ be an open cover of $U$, and let $U_{ij} = U_i \cap U_j$ and $U_{ijk}=U_i \cap U_j \cap U_k$. Consider a descent datum $(\{A_i\}_{i \in I}, \{ \theta_{ij}\}_{i,j \in I})$:
\begin{align*}
A_i  & \in \mathcal{LA}_{U_i} \\
\theta_{ij} & \in \mathrm{Isom}_{\mathcal{LA}_{U_{ij}}} ( A_j \vert_{U_{ij}} ,  A_i \vert _{U_{ij}} )
\end{align*}
such that
$$(\theta _ {ij} \vert_{U_{ijk}}) \circ (\theta _ {jk}\vert_{U_{ijk}}) = \theta _{ik}\vert_{U_{ijk}}$$
for all $i,j,k \in I$. Denote the vector bundle projections and anchors of the $A_i$'s by $\pi:A_i \to U_i$ and $a_i:A_i \to TU_i$ respectively. Since $ \{ \theta_{ij}\}_{i,j \in I}$  is in particular a family of vector bundle isomorphisms satisfying the cocycle conditions we have the vector bundle 
$$A \equiv (\bigsqcup_{i \in I} A_i) / \sim$$ 
over $U$, where if we represent an element of $\bigsqcup_{i \in I} A_i$ by a pair $(i,\xi)$ for $i \in I$ and $\xi \in A_i$, then the equivalence relation is given by $(i,\xi) \sim (j,\theta_{ij}(\xi))$. Denote the equivalence class of an element $(i,\xi)$ by $[ (i,\xi) ]$. The vector bundle projection $\pi: A \to U$ is the map $\pi [(i,\xi)] = \pi_i(\xi)$. For each $i \in I$ we have a vector bundle isomorphism 
\begin{align*}
\psi_i:A_i  &  \to A \vert_{U_i} \\
\psi_i (\xi) & =  [(i,\xi)]
\end{align*}
and these isomorphisms satisfy 
$$\psi_i^{-1}\vert_{U_{ij}} \circ \psi_j\vert_{U_{ij}} = \theta_{ij}$$
For each $i \in I$ we can pushforward the Lie algebroid structure from $A_i$ to $A\vert_{U_i}$ using the vector bundle isomorphism $\psi_i$. Explicitly, we define 
$$\widetilde a_i \equiv a_i \circ (\psi_i)^{-1} : A\vert_{U_i} \to TX\vert_{U_i}$$ 
and for any two sections $\nu,\nu' \in \Gamma(U_i,A)$ we define 
$$\widetilde{[\nu,\nu']}_i \equiv \psi_i \left( [\psi_i^{-1}(\nu),\psi_i ^{-1}(\nu')] \right) $$
We need to show that the Lie algebroid structures defined this way agree on the intersections $U_{ij}$. Firstly, over $U_{ij}$ we have:
\begin{align*}
\widetilde a_i \vert_{U_{ij}} & = (a_i \circ \psi_i ^{-1})\vert_{U_{ij}} \\
& = a_i \vert_{U_{ij}} \circ \psi_i ^{-1} \vert_{U_{ij}} \\
& = a_i \vert_{U_{ij}} \circ \theta_{ij} \circ \psi_j ^{-1} \vert_{U_{ij}} \\
& = a_j \circ \psi_j ^{-1} \vert_{U_{ij}} \\
& = \widetilde a_j  \vert_{U_{ij}} 
\end{align*}
where between the third and fourth lines we have used the fact that $\theta_{ij}$ is a Lie algebroid morphism so commutes with the anchor maps. The anchor maps $\widetilde a_i$ therefore glue together to form a vector bundle map 
$$\widetilde a : A \to TU$$
Let $\nu,\nu' \in \Gamma(A)$, and put $\nu_i = \nu|_{U_i}$ and $\nu_{ij}=\nu|_{U_{ij}}$, similarly for $\nu'$, then:
\begin{align*}
\left( \widetilde{ [\nu_i,\nu_i'] }_i   \right) \vert_{U_{ij}} & =  \left(\psi_i [\psi_i^{-1}(\nu_i),\psi_i ^{-1}(\nu_i')]     \right)\vert_{U_{ij}} \\
& =  \psi_i\vert_{U_{ij}} \left[ (\psi_i^{-1}\vert_{U_{ij}})(\nu_{ij}),(\psi_i ^{-1}\vert_{U_{ij}})(\nu'_{ij}) \right]  \\
& =   (\psi_i\vert_{U_{ij}} \circ \theta_{ij} \circ \theta_{ji}) \left[ (\psi_i^{-1}\vert_{U_{ij}})(\nu_{ij}),(\psi_i ^{-1}\vert_{U_{ij}})(\nu'_{ij}) \right]      \\
& =   (\psi_i\vert_{U_{ij}} \circ \theta_{ij}) \left[ (\theta_{ji} \circ \psi_i^{-1}\vert_{U_{ij}})(\nu_{ij}), (\theta_{ji} \circ \psi_i ^{-1}\vert_{U_{ij}})(\nu'_{ij}) \right]     \\
& =   (\psi_j\vert_{U_{ij}}) \left[ (\psi_j^{-1}\vert_{U_{ij}})(\nu_{ij}), (\psi_j ^{-1}\vert_{U_{ij}})(\nu'_{ij}) \right]    \\
& = \left( \widetilde{ \left[ \nu_j,\nu'_j \right] }_j   \right)   \vert _ {U_{ij}} 
\end{align*}
where we have used the fact that $\theta_{ij}$ is a Lie algebroid morphism and so is a Lie algebra morphism on sections. There is therefore a Lie bracket on $\Gamma(A)$ defined by 
$$\left( \left [  \nu,\nu' \right]  \right) |_{U_i} = \widetilde{[\nu _i ,\nu' _i]}_i $$
and the Leibniz identity holds because it holds over each $U_i$. We therefore have a Lie algebroid $A$ over $U$ and by construction the maps $\psi_i : A_i \to A|_{U_i}$ are isomorphisms of Lie algebroids and satisfy $\psi_i^{-1} \circ \psi_j = \theta_{ij}$ over $U_{ij}$. 
\end{proof}

\begin{Corollary} \label{LA is a stack 2}
$\mathcal{LA}$ is a stack over $\mathbf{Man}_\text{\emph{sub}}$ with respect to the open cover and \'etale topologies.
\end{Corollary}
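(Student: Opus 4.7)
The plan is to deduce the corollary from the already-established Theorem \ref{LA is a stack}, by reducing both topologies on $\mathbf{Man}_\text{sub}$ to the case of open covers of a single manifold.

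For the open cover topology, observe that a covering family of $X$ in $\mathbf{Man}_\text{sub}$ consists of open embeddings $\{U_i \hookrightarrow X\}_{i \in I}$, which is exactly the data of a covering family in $\text{Open}_X$. By Proposition \ref{equivalence over open category}, the restriction of $\mathcal{LA}$ to $\text{Open}_X$ is equivalent to $\mathcal{LA}_{\text{Open}_X}$, so both the separatedness condition and the effectiveness of descent data follow immediately from Theorem \ref{LA is a stack}. Showing that the Hom-presheaves $\underline{\mathrm{Hom}}_{\mathcal{LA}}(A,B)$ on the full slice $\mathbf{Man}_\text{sub}/X$ are sheaves reduces, via the same Proposition, to the fact (established in the proof of Theorem \ref{LA is a stack}) that morphisms of Lie algebroids glue along open covers, since morphisms into $X$ from arbitrary submersions can be tested locally on the source.

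For the \'etale topology, I would use Proposition \ref{equivalence over etale category} to replace $\mathcal{LA}|_{\mathbf{Man}_\text{\'et}}$ by $\mathcal{LA}'$, whose transition functors are the ordinary vector bundle pullbacks $\phi^{\ast}$. Given an \'etale covering family $\{f_i: U_i \to X\}_{i \in I}$, I would refine it to an open cover of $X$: cover each $U_i$ by opens $V_{i,\alpha}$ on which $f_i|_{V_{i,\alpha}}$ is a diffeomorphism onto an open subset $W_{i,\alpha} \subset X$. The fibre product $V_{i,\alpha} \times_X V_{j,\beta}$ then maps diffeomorphically onto $W_{i,\alpha} \cap W_{j,\beta}$ via each projection, and so a descent datum $(\{A_i\},\{\theta_{ij}\})$ for the \'etale cover restricts and transports along these diffeomorphisms to a descent datum for the open cover $\{W_{i,\alpha} \hookrightarrow X\}$. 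Gluing in the open cover case produces a Lie algebroid $A$ over $X$ together with canonical isomorphisms $f_i^{\ast} A|_{V_{i,\alpha}} \cong A_i|_{V_{i,\alpha}}$, and the original $\theta_{ij}$ ensures these patch to a global isomorphism $f_i^{\ast} A \cong A_i$ realising $(A_i,\theta_{ij})$ as the descent of $A$.

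The main obstacle is the bookkeeping in the refinement step: one must verify that the transported descent datum on $\{W_{i,\alpha}\}$ satisfies the cocycle condition on triple overlaps $W_{i,\alpha} \cap W_{j,\beta} \cap W_{k,\gamma}$ (which corresponds under the diffeomorphisms to a piece of $U_i \times_X U_j \times_X U_k$, where the original cocycle condition for $\theta_{ij}$ holds), and that the locally defined isomorphisms $f_i^{\ast} A|_{V_{i,\alpha}} \cong A_i|_{V_{i,\alpha}}$ agree on the overlaps $V_{i,\alpha} \cap V_{i,\alpha'}$. Both checks are essentially formal consequences of the cocycle conditions already in the descent datum, together with the naturality inherent in the identification of \'etale and vector bundle pullbacks from Proposition \ref{equivalence over etale category}.
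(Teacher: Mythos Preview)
Your proposal is correct but takes a more hands-on route than the paper. The paper's proof is essentially one sentence: it invokes the general fact (recorded earlier in the paper) that two equivalent Grothendieck topologies on the same category determine the same stacks, together with the observation that the open cover and \'etale topologies on $\mathbf{Man}_\text{sub}$ are equivalent (since \'etale maps admit local sections, every \'etale covering sieve is also an open-cover covering sieve). Hence once Theorem~\ref{LA is a stack} gives descent for open covers, descent for \'etale covers follows for free, with no need to refine covers or transport descent data by hand. Your refinement-and-transport argument for the \'etale case is effectively a direct proof of this equivalence-of-topologies principle in the particular case of $\mathcal{LA}$; it is correct, but it reproves machinery the paper already has in place. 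The payoff of your approach is that it is self-contained and makes the reduction from \'etale to open-cover descent completely explicit; the paper's approach is shorter and conceptually cleaner once the background fact about equivalent topologies is granted.
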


\begin{proof}
This follows from Theorem \ref{LA is a stack}, the fact that the open cover and \'etale topologies are equivalent over $\mathbf{Man}_\text{sub}$, and that with respect to the open cover topology it is sufficient to check descent for open covers of manifolds.
\end{proof}

\subsection{Descent along submersions}

Since the submersion and \'etale topologies are not equivalent over $\mathbf{Man}_\text{sub}$ it does not automatically follow from Theorem \ref{LA is a stack} that $\mathcal{LA}$ satisfies descent for submersions. We prove this directly by roughly the same method as the analogous statement (Lemma 1.5.4) in \cite{BeBe1}. We will need the following (fairly standard) results:

\begin{Proposition} \label{singleton coverings}
Let $\{\phi_i : X_i \to Y \}_{i \in I}$ be a cover in the submersion topology. Let $X=\coprod_i X_i $ and $\phi = \coprod_i \phi_i$, then $\phi:X \to Y$ is a cover of which $\{\phi_i : X_i \to Y \}_{i \in I}$ is a refinement. For any weak presheaf $\mathcal F$ the refinement induces a functor between the categories of descent data:
$$\mathrm{Des}(\{\phi_i : X_i \to Y \}_{i \in I} , \mathcal F) \to \mathrm{Des}( \phi: X \to Y , \mathcal F)$$
which is an equivalence of categories.
\end{Proposition}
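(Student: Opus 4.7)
The plan is to reduce the descent problem for the family $\{\phi_i\}$ to the singleton cover $\phi$ by exploiting the fact that disjoint unions commute with fiber products over $Y$.

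First, I would verify the two structural claims. Since each $\phi_i$ is a submersion and the family is jointly surjective, $\phi = \coprod_i \phi_i : X \to Y$ is a surjective submersion, hence a covering family (consisting of a single morphism) in the submersion topology. That $\{\phi_i\}$ refines $\{\phi\}$ is witnessed by the canonical open inclusions $\iota_i : X_i \hookrightarrow X$, which satisfy $\phi \circ \iota_i = \phi_i$, so every member of the original family factors through $\phi$.

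The heart of the proof is the identification of the descent categories. The key observation is that finite limits in $\mathbf{Man}$ distribute over disjoint unions, so
\[
X \times_Y X \;=\; \coprod_{i,j \in I} X_i \times_Y X_j, \qquad X \times_Y X \times_Y X \;=\; \coprod_{i,j,k \in I} X_i \times_Y X_j \times_Y X_k,
\]
with projection maps restricting to the evident projections on each component. Because $\mathcal F$ sends coproducts of manifolds to products of categories (equivalently, fibers over disjoint unions decompose componentwise in the fibered category picture), we obtain canonical equivalences $\mathcal F(X) \simeq \prod_i \mathcal F(X_i)$, $\mathcal F(X \times_Y X) \simeq \prod_{i,j} \mathcal F(X_i \times_Y X_j)$, and similarly on triple products, compatible with the functors induced by the various projections.

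Using these equivalences, I would unpack a descent datum $(d, \gamma)$ for $\phi$: the object $d \in \mathcal F(X)$ corresponds to a family $\{d_i\}_{i \in I}$ with $d_i \in \mathcal F(X_i)$, and the isomorphism $\gamma$ in $\mathcal F(X \times_Y X)$ corresponds to a family $\{\gamma_{ij}\}_{i,j \in I}$ of isomorphisms in the $\mathcal F(X_i \times_Y X_j)$. The cocycle condition on $X \times_Y X \times_Y X$ decomposes into the cocycle conditions for the $\gamma_{ij}$ on each triple product, which is exactly the cocycle condition defining $\mathrm{Des}(\{\phi_i\}, \mathcal F)$; a parallel analysis applies to morphisms of descent data. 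The refinement functor, traced through these identifications, is precisely this decomposition, so it is an equivalence with inverse given by assembling $\{d_i\}$ and $\{\gamma_{ij}\}$ back into a datum over the disjoint union.

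The main obstacle here is not conceptual but bookkeeping: one must carefully track the natural isomorphisms $\mathcal F(g,f)$ arising from the weak presheaf structure, since the projection maps on $X \times_Y X$ only agree with those on the $X_i \times_Y X_j$ up to canonical isomorphism coming from the composition $X_i \times_Y X_j \hookrightarrow X \times_Y X \to X$. I expect these coherences to match automatically because both sides arise from the universal property of the coproduct, but making this precise is where the care lies.
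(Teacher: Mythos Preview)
Your approach is essentially the same as the paper's: both arguments hinge on the decomposition $X \times_Y X \cong \coprod_{i,j} X_i \times_Y X_j$ (and its triple analogue) together with the fact that $\mathcal F$ splits over disjoint unions, so that a descent datum for $\phi$ unpacks into exactly a descent datum for the family $\{\phi_i\}$. The paper's proof is terser and omits the coherence bookkeeping you flag at the end, but the underlying argument is identical.
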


\begin{proof}

Since $X=\coprod_i X_i$ there is a bijection between objects of $\mathcal F (X)$ and collections $\{E_i \in \mathcal{F}(X_i)\}_{i \in I}$. We have a commutative diagram: 
$$\xymatrix{
\left(\coprod_i X_i \right) \times_Y \left(\coprod_i X_i\right) \ar[d]_{\mathrm{pr}_1} \ar[r]^\cong & \coprod_{i,j} \left(X_i \times _Y X_j\right) \ar[d]^{\coprod_{i,j} \mathrm{pr}_1} \\
X \ar[r]_{\mathrm{id}} & \coprod_i X_i 
}$$
and similarly with $\mathrm{pr}_2$ in place of $\mathrm{pr}_1$. For any $E \in \mathcal F(X)$ with corresponding $\{E_i \in \mathcal{F}(X_i)\}_{i \in I}$ there is therefore a bijection between isomorphisms
$$\mathrm{pr}_1 ^ \ast E  \to \mathrm{pr}_2 ^\ast E$$
over $X\times_Y X$ and collections of isomorphisms
$$\{ (\mathrm{pr}_1 ^\ast E_i \to \mathrm{pr_2}^\ast E_j) \in \mathrm{Isom}(\mathcal F(X_i \times_Y X_j)) \}_{i,j \in I}$$
It is easy to see that the cocycle condition for an isomorphism $\mathrm{pr}_1 ^ \ast E  \to \mathrm{pr}_2 ^\ast E$ is equivalent to the cocycle condition for the corresponding collection of isomorphisms.

\end{proof}

\begin{Proposition}
Let $\mathcal F$ be a stack over $\mathbf{Man}_{\mathrm{sub}}$ with respect to the open cover topology. Then $\mathcal F$ is a stack with respect to the submersion topology if and only if it satisfies descent for surjective submersions that admit global sections.
\end{Proposition}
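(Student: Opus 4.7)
The forward direction is immediate: surjective submersions with global sections form a subclass of submersion covers, so any stack for the submersion topology automatically satisfies descent for them. For the converse, I would use Proposition \ref{singleton coverings} to reduce the problem to singleton covers. That is, it suffices to show that for every surjective submersion $\phi : X \to Y$, the canonical functor
$$\mathcal F(Y) \to \mathrm{Des}(\phi : X \to Y , \mathcal F)$$
is an equivalence of categories. The key geometric input is that every submersion admits local sections, so there exists an open cover $\{U_i\}_{i \in I}$ of $Y$ such that for each $i$ the restricted submersion $\phi_i : \phi^{-1}(U_i) \to U_i$ admits a global section $s_i$.

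Given a descent datum $(E, \theta)$ for $\phi$, I would restrict it along the pullback squares to obtain descent data $(E_i, \theta_i)$ for each $\phi_i$; here $E_i = E|_{\phi^{-1}(U_i)}$ and $\theta_i$ is the restriction of $\theta$. By the hypothesis on submersions with sections, each of these descends to an object $F_i \in \mathcal F(U_i)$ together with a canonical isomorphism $\phi_i^\ast F_i \cong E_i$ compatible with $\theta_i$. Restricting further to the double overlaps, the descent data for $\phi$ restricted to $\phi^{-1}(U_i \cap U_j) \to U_i \cap U_j$ (which also admits a section, namely $s_i|_{U_i \cap U_j}$ or $s_j|_{U_i \cap U_j}$) gives canonical isomorphisms $\gamma_{ij} : F_j|_{U_i \cap U_j} \to F_i|_{U_i \cap U_j}$. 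Restricting once more to triple overlaps and using the uniqueness part of the descent equivalence for the $\phi_i$'s on $U_i \cap U_j \cap U_k$, the cocycle condition for $\theta$ forces the cocycle condition on the $\gamma_{ij}$'s. Because $\mathcal F$ is a stack for the open cover topology, the collection $(\{F_i\}, \{\gamma_{ij}\})$ glues to a global object $F \in \mathcal F(Y)$ whose pullback along $\phi$ reproduces $(E, \theta)$ up to canonical isomorphism.

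For morphisms between descent data, a parallel (and easier) argument works: apply the separatedness of $\mathcal F$ with respect to the open cover topology together with the equivalence $\mathcal F(U_i) \simeq \mathrm{Des}(\phi_i, \mathcal F)$ supplied by the sections $s_i$, so that a morphism of descent data restricts to morphisms between the $F_i$'s that agree on overlaps and hence glue uniquely. The main obstacle is purely bookkeeping: one must verify that the various canonical isomorphisms produced by the descent equivalence for each $\phi_i$ really do assemble into a cocycle matching the original $\theta$, which amounts to a diagram chase using the functoriality of restriction and the uniqueness clauses built into the assumed equivalences. Nothing in the argument depends on which particular sections $s_i$ are chosen, since the resulting $F_i$'s are well-defined up to unique isomorphism in $\mathcal F(U_i)$.
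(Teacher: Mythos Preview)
Your argument is correct, but the paper takes a slightly different and more conceptual route. Rather than restricting the descent datum to each $\phi^{-1}(U_i) \to U_i$, descending there, and then gluing the resulting $F_i$'s by hand, the paper passes to a single pullback: with $U = \coprod_i U_i$ and $g : U \to Y$ the induced \'etale surjection, it forms the fibre product
$$\xymatrix{
U \times_Y X \ar[r]^{\bar g} \ar[d]_{\bar\phi} & X \ar[d]^{\phi} \\
U \ar[r]_{g} & Y
}$$
and observes that the local sections $f_i$ assemble into a single global section of $\bar\phi$. Since $g$ and $\bar g$ are \'etale surjections and $\mathcal F$ already satisfies descent for those, descent for $\phi$ is equivalent to descent for $\bar\phi$, and the latter holds by hypothesis.

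The trade-off is this: the paper's argument is shorter and avoids all the cocycle bookkeeping you describe, but it relies on a general descent-theoretic principle (that descent along a map is unchanged after pulling back along a cover for which descent already holds) that is left implicit. Your approach is more explicit and self-contained---it unpacks exactly that principle in this specific situation---at the cost of the diagram chase you mention at the end. Both are valid; the paper's version is the standard ``reduce to a single cover by base change'' manoeuvre, while yours is the direct local-to-global construction.
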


\begin{proof}
One direction is clear. For the converse it is sufficient to check that $\mathcal{F}$ satisfies descent for surjective submersions by the Proposition \ref{singleton coverings}. Let $\phi:X \to Y$ be a surjective submersion. Then there exists an open cover $\{ U_i \}_{i \in I}$ of $Y$ and local sections $\{f_i : U_i \to X\}_{i \in I}$ of $\phi$. Let $U= \coprod_{i \in I} U_i$, $f=\coprod_{i \in I} f_i$, and $g: U \to Y$ be the map induced by the inclusions $U_i \rightarrow Y$. Then we have a commutative diagram:
$$\xymatrix{
U \ar[dr]_g \ar[r] ^ f &  X \ar[d]^\phi \\
& Y
}$$
Note that $f$ is an immersion (since each $f_i$ is a section of a submersion) and $g$ is \'etale and surjective. We can form the fibre product:
$$\xymatrix{
U \times _Y X  \ar[d]_{\bar{\phi}} \ar[r]^{\bar{g}} & X \ar[d] ^\phi \\
U \ar[r]_g & Y 
}$$
where $\bar{\phi}$ and $\bar{g}$ are the pullbacks of $\phi$ and $g$, and therefore $\bar{\phi}$ is a surjective submersion and $\bar{g}$ is surjective and \'etale. As $f \circ\phi = g$ the maps
\begin{align*}
\mathrm{id} : U & \to U \\
f: U & \to X
\end{align*}
induce a map
\begin{align*}
\bar{f}: U & \to U \times_Y X \\
y & \mapsto (y,f(y))
\end{align*}
which is a global section of $\bar{\phi}$. As $\mathcal{F}$ is a stack for the open covers topology it satisfies descent for surjective \'etale maps. Therefore $\mathcal{F}$ satisfies descent along $\phi$ if and only if it satisfies descent along $\bar{\phi}$.

\end{proof}
We can now prove:

\begin{Theorem} \label{submersive descent}
$\mathcal{LA}$ is a stack over $\mathbf{Man}_\mathrm{sub}$ with respect to the submersion topology.
\end{Theorem}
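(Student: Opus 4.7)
By Propositions \ref{singleton coverings} and the one immediately preceding this theorem, together with Corollary \ref{LA is a stack 2}, it suffices to verify descent for a single surjective submersion $\phi: X \to Y$ equipped with a global section $\sigma: Y \to X$ (so that $\phi \sigma = \mathrm{id}_Y$). A descent datum for such a cover consists of a Lie algebroid $A$ over $X$ together with an isomorphism of Lie algebroids $\theta: \mathrm{pr}_1^! A \to \mathrm{pr}_2^! A$ over $X \times_Y X$ satisfying the cocycle condition over $X \times_Y X \times_Y X$.

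The candidate algebroid over $Y$ is $B := \sigma^! A$, following the standard descent trick available once a section exists. To produce an isomorphism $\phi^! B \cong A$, consider the map $\tau := (\mathrm{id}_X, \sigma \phi): X \to X \times_Y X$, which is well-defined because $\phi \circ \mathrm{id}_X = \phi = \phi \circ \sigma \circ \phi$. Pulling $\theta$ back along $\tau$ and composing with the coherence isomorphisms $c_{-,-}$ of the weak presheaf $\mathcal{LA}$ described in \ref{LA as a weak presheaf of categories} yields an isomorphism $\Theta: A \cong (\mathrm{id}_X)^! A \longrightarrow (\sigma\phi)^! A \cong \phi^! \sigma^! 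A = \phi^! B$ of Lie algebroids over $X$.

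The remaining task — and the main obstacle — is to verify that $\Theta$ intertwines $\theta$ with the canonical descent isomorphism $\theta_{\mathrm{can}}: \mathrm{pr}_1^! \phi^! B \to \mathrm{pr}_2^! \phi^! B$ of $\phi^! B$ (the one built purely from the coherence data $c_{-,-}$ together with the equality $\phi \mathrm{pr}_1 = \phi \mathrm{pr}_2$), i.e.\ that $(\mathrm{pr}_2^! \Theta) \circ \theta = \theta_{\mathrm{can}} \circ (\mathrm{pr}_1^! \Theta)$ over $X \times_Y X$. This identity will be obtained by pulling back the cocycle relation for $\theta$ along the map $X \times_Y X \to X \times_Y X \times_Y X$, $(x, x') \mapsto (x, x', \sigma\phi(x'))$, and rearranging the resulting equality using naturality of the coherence isomorphisms and the compatibility pentagon they satisfy. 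This step is essentially bookkeeping once one has chosen the right test map into the triple fibre product, but it is the only place where the cocycle hypothesis on $\theta$ is actually consumed.

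Finally, separatedness — that $\underline{\mathrm{Hom}}_{\mathcal{LA}_Y}(B, B')$ is a sheaf for the submersion topology — reduces to an analogous and simpler argument: any morphism $\rho: \phi^! B \to \phi^! B'$ compatible with the canonical descent isomorphisms gives, via $\sigma^!$ and the coherence isomorphisms, a morphism $\sigma^! \rho: B \to B'$ whose pullback along $\phi$ recovers $\rho$, so the restriction functor is essentially surjective; faithfulness is immediate since $\phi^! f = \phi^! g$ forces $f = \sigma^! \phi^! f = \sigma^! \phi^! g = g$. Combined with the open-cover result of Corollary \ref{LA is a stack 2}, this completes the verification of descent for the submersion topology.
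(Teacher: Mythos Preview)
Your overall strategy matches the paper's: reduce to a surjective submersion with a section, pull the descent datum back along the section, and use the map $(\mathrm{id}_X,\sigma\phi)$ (the paper uses the swapped $(\sigma\phi,\mathrm{id}_X)$) to build the comparison isomorphism. The cocycle bookkeeping you outline is exactly what the paper does.

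There is, however, one genuine gap. You write $B:=\sigma^! A$ and then proceed, but the section $\sigma:Y\to X$ is \emph{not} a submersion, so the Lie-algebroid pullback $\sigma^! A$ is not guaranteed to exist. Recall from \ref{The pullback construction} that $\sigma^! A$ exists only when $\sigma_\ast$ and the anchor $a:A\to TX$ are transverse (or when the relevant map has constant rank). For an arbitrary Lie algebroid $A$ over $X$ this can certainly fail. The point --- and this is the one place in the paper's argument that is not formal --- is that the descent datum itself forces the anchor of $A$ to be large enough. Concretely, the paper uses the isomorphism $\theta:\mathrm{pr}_1^! A\to \mathrm{pr}_2^! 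A$ restricted to $\ker(\mathrm{pr}_1)_\ast\subset \mathrm{pr}_1^! A$, composed with the projection $\mathrm{pr}_2^! A\to A$, to produce a Lie algebroid morphism $\ker(\mathrm{pr}_1)_\ast\to A$ covering $\mathrm{pr}_2$; chasing the anchor shows that the image of $a$ contains $\ker\phi_\ast$. Since $\sigma$ is a section, $\sigma_\ast(T_yY)\oplus(\ker\phi_\ast)_{\sigma(y)}=T_{\sigma(y)}X$, so $\sigma_\ast$ is transverse to $a$ and $\sigma^! A$ exists. Without this step your candidate $B$ is not even defined, and the rest of the argument cannot start. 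Once you insert this transversality argument, your proof is essentially identical to the paper's.
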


\begin{proof}

By the previous proposition it is enough to check that $\mathcal{LA}$ satisfies descent along surjective submersions that admit global sections. Let $\phi:X \to Y$ be a surjective submersion and $f:Y \to X$ a section of $\phi$. We'll construct a quasi-inverse functor to the descent functor
$$\phi^! :  \mathcal{LA}(Y) \to \mathrm{Des}(\phi:X \to Y, \mathcal{LA})$$
Denote the two projections $X\times_Y X \rightrightarrows X$ by $s$ and $t$. Let $(A,\psi)$ be a descent datum. We will show that the pullback $f^! A$ exists. We have the following diagram of Lie algebroid morphisms:
$$\xymatrix{
\mathrm{ker} s_\ast \ar[d] \ar@{^{(}->}[r] & s^! A \ar[d] \ar[r] ^\psi & t^! A  \ar[d]  \ar[r]^{t^!} & A  \ar[d] \\
X\times_Y X  \ar[r]_{\mathrm{id}}  & X\times_Y X \ar[r]_{\mathrm{id}} & X\times_Y X \ar[r]_t & X
}$$
The composition $\rho$ of these morphisms is a morphism covering $t$ and therefore
$$
\xymatrixcolsep{3pc}
\xymatrix{
\mathrm{ker}s_\ast \ar@{^{(}->}[d] \ar[r] ^\rho & A \ar[d]^ a \\
T(X\times_Y X) \ar[r]_-{t_\ast} & TX 
}$$
commutes.
If we identify $T(X\times_Y X)$ with $TX\times_{TY} TX$ then we have
$$( \mathrm{ker}s_\ast )_{(x,x')} = \{ (0,v) | \phi_\ast (v) = 0 \}$$
and
$$t_\ast (0,v) = v$$
It follows that the image of $a:A \to TX$ contains $\mathrm{ker}\phi_\ast$. Since $f:Y \to X$ is a section of $\phi$ we have
$$f_\ast (T_y Y) \oplus (\mathrm{ker}\phi_\ast)_{f(y)} = T_{f(y)}X$$
for all $y \in Y$. It follows that $f_\ast$ is transverse to $a$ and therefore the Lie algebroid $f^! A$ exists. We therefore have a functor
\begin{align*}
f^! :  \mathrm{Des}(\phi:X \to Y, \mathcal{LA}) &  \to \mathcal{LA}(Y) \\
(A,\psi) & \mapsto f^! A \\
\left( \chi: (A,\psi) \to (A',\psi') \right) & \mapsto f^! \chi : f^! A \to f^! A' 
\end{align*}
and
$$f^! \phi^! \simeq (\phi \circ f)^! = (\mathrm{id}_Y)^! \simeq \mathrm{id}_{\mathcal{LA}(Y)}$$
It remains to show that $\phi^! f^! \simeq \mathrm{id}_{\mathrm{Des} (\phi:X \to Y, \mathcal{LA})}$. We have 
$$\phi \circ (f \circ \phi)=(\phi \circ f) \circ \phi = \phi$$ 
and so the maps
\begin{align*}
f \circ \phi : X & \to X \\
\mathrm{id} : X & \to X
\end{align*}
induce a map
$$(f \circ \phi,\mathrm{id}) : X \to X\times_Y X$$
Since
\begin{align*}
s \circ (f \circ \phi, \mathrm{id}) & = f \circ \phi \\
t \circ (f \circ \phi, \mathrm{id}) & = \mathrm{id}
\end{align*}
and for any descent datum $(A,\psi)$ the pullbacks $(f \circ \phi)^! A, s^!A$ and $t^! A$ all exist, we have that $(f \circ \phi,\mathrm{id})^! s^! A$ and $(f \circ \phi,\mathrm{id})^!t^! A$ exist. Composing the isomorphisms:
$$
\xymatrixcolsep{3.2pc}
\xymatrix{
\phi^!f^! A  \ar[r]^\cong &  (f\circ \phi)^!  \ar[r] ^-\cong &  (f \circ \phi,\mathrm{id})^! s^! A   \ar[r]^{(f\circ \phi,\mathrm{id})^! \psi} &  (f \circ \phi,\mathrm{id})^! t^! A  \ar[r]^-\cong  &  A
}$$
gives an isomorphism of Lie algebroids $\Sigma_{(A,\psi)}$:
$$\Sigma_{(A,\psi)} : \phi^!f^! A \to A$$
that is given explicitly by the formula
$$ ( v,\phi_\ast v, \xi) \mapsto  \tilde \psi \left(  ((f\circ \phi)_\ast (v), v) \xi  \right) $$
Moreover this is an isomorphism of descent datum
$$\Sigma_{(A,\psi)} : (\phi^! f^! A, \bar \psi) \to (A,\psi)$$
where $\bar\psi$ is the composition of the isomorphisms
$$s^! \phi^! f^! A \cong (\phi \circ s)^! f^! A = (\phi \circ t)^! f^! A \cong t^! \phi^! f^! A$$
That $\Sigma_{(A,\psi)}$ is a morphism of descent data is the condition that the diagram
$$\xymatrix{
s^! \phi^! f^! A  \ar[d]_{s^! \Sigma_{(A,\psi)}} \ar[r]^{\bar \psi} &   t^! \phi^! f^! A  \ar[d]^{s^! \Sigma_{(A,\psi)}} \\
s^! A \ar[r]_\psi &  t^! A 
}$$
commutes, which follows from a direct computation using the cocycle condition for $\psi$. Finally, given a morphism $\chi : (A,\psi) \to (A',\psi')$ of descent data a similar computation, using the fact that $\psi' \circ s^! \chi = t^!\chi \circ \psi$, shows that the diagram
$$\xymatrix{
\phi^!f^! A  \ar[d]_{\Sigma_{(A,\psi)}}  \ar[r] ^ {\phi^! f^! \rho}  &  \phi^! f^! A'  \ar[d]^{\Sigma_{(A',\psi')}}  \\
A  \ar[r]_{\chi} & A' 
}$$
commutes and therefore the isomorphisms $\Sigma_{(A,\psi)}$ give a natural isomorphism $\phi^!f^! \simeq \mathrm{id}_{\mathrm{Des} (\phi:X \to Y, \mathcal{LA})}$.
\end{proof}

\subsection{The stack $\mathrm{Rep}$ of representations} \label{The stack Rep}

It will be useful later to know that representations of Lie algebroids satisfy descent along submersions. We'll collect together all representations of Lie algebroids into a single weak presheaf over $\mathbf{Man}_\text{sub}$. The proof that it satisfies descent along submersions is similar to that of Theorem \ref{submersive descent}.

\subsubsection{The category $\mathrm{Rep}_X$}

Let $X$ be a manifold. Then we can consider the category $\mathrm{Rep}_X$, whose objects are triples 
$$(A,E,\nabla)$$
where $A$ is a Lie algebroid over $X$, $E$ is a vector bundle over $X$, and $\nabla$ is a representation of $A$ on $E$. A morphism in $\mathrm{Rep}_X$ is given by a pair of maps
$$(\phi,\psi) : (A,E,\nabla) \to (A',E',\nabla')$$
where 
$$\phi: A \to A'$$
is a morphism of Lie algebroids over $X$ and
$$\psi: E \to E'$$
is a morphism of vector bundles over $X$, such that 
$$\nabla'_{\phi(\xi)} \psi(e) = \psi \left(  \nabla_\xi e \right) $$
for all $\xi \in \Gamma(A), e \in \Gamma(E)$.

\subsubsection{The presheaf $\mathrm{Rep}$}  \label{Rep presheaf}

Let $f: X \to Y$ be a submersion and $(A,E,\nabla) \in \mathrm{Rep}_Y$. Then we have 
$$(f^! A, f^\ast E, f^! \nabla) \in \mathrm{Rep}_X$$
(see \ref{Pullbacks of representations}). Given a morphism 
$$(\phi,\psi) : (A,E,\nabla) \to (A',E',\nabla')$$
in $\mathrm{Rep}_Y$ we have the maps
\begin{align*}
f^! \phi : f^! A & \to f^! A' \\
f^\ast \psi : f^\ast E & \to f^\ast E'
\end{align*}
and if $v \oplus (g \otimes \xi) \in \Gamma(f^! A)$ and $g'\otimes e \in \Gamma(f^\ast E)$ (see \ref{The pullback construction} for the description of sections of $f^! A$) then we have:
\begin{align*}
(f^\ast \psi) \left(  (f^! \nabla)_{v \oplus (g \otimes \xi)} (g' \otimes e) \right)  &  = 
(f^\ast \psi)   \left(  v(g') \otimes e  +  gg'\otimes \nabla_\xi e  \right)  \\
& = v(g') \otimes \psi(e)  +  gg' \otimes \psi(\nabla_\xi e)   \\
& = v(g') \otimes \psi(e)  +  gg' \otimes \nabla'_{\phi(\xi)} \psi(e)  \\
& = (f^! \nabla') _ { v \oplus (g\otimes \phi(\xi))} ( g' \otimes \psi(e) )  \\
& = (f^! \nabla')_{(f^! \phi)(v\oplus (g\otimes \xi))} ( (f^\ast \psi)(g'\otimes e) )
\end{align*}
and therefore 
$$(f^! A, f^\ast E, f^! \nabla) \in \mathrm{Rep}_X$$
Since $f^!$ and $f^\ast$ are functors we have a functor 
$$\mathrm{Rep}_Y \to \mathrm{Rep}_X$$
and it follows from the fact that $\mathcal{LA}$ and $\mathrm{Vect}$ are weak presheaves that in a natural way we have a weak presheaf:
$$\mathrm{Rep} : \mathbf{Man}_\mathrm{sub} \to \mathbf{Cat}$$
There are obvious morphims of presheaves:
$$\xymatrix{
& \mathrm{Rep} \ar[dr] \ar[dl] &  \\
\mathrm{Vect} & & \mathcal{LA}
}$$
that map an object $(A,E,\nabla)$ to either $E$ or $A$.

\subsubsection{Descent for representations}

\begin{Proposition}
$\mathrm{Rep}$ is a stack over $\mathbf{Man}_\mathrm{sub}$ in the open cover and \'etale topologies.
\end{Proposition}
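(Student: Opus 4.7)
The plan is to mimic the proof of Theorem \ref{LA is a stack}, relying on three ingredients that are already in hand: $\mathcal{LA}$ satisfies descent for open covers (Theorem \ref{LA is a stack}, Corollary \ref{LA is a stack 2}); $\mathrm{Vect}$ satisfies descent for open covers (standard, and encoded in the restriction functor of \ref{Rep presheaf}); and representations of Lie algebroids are local (the action $\nabla_\xi e$ depends only on the germs of $\xi$ and $e$). Because of the equivalence of the open cover and \'etale topologies on $\mathbf{Man}_\mathrm{sub}$, and Proposition \ref{singleton coverings}, it is enough to check descent for an open cover $\{U_i\}_{i \in I}$ of a manifold $U$.

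First I would verify the prestack condition. Given two objects $(A,E,\nabla)$ and $(A',E',\nabla')$ in $\mathrm{Rep}_U$ and a family of morphisms $(\phi_i,\psi_i)$ on each $U_i$ that agree on intersections, I apply descent for $\mathcal{LA}$ to glue $\{\phi_i\}$ into a unique Lie algebroid morphism $\phi : A \to A'$, and descent for $\mathrm{Vect}$ to glue $\{\psi_i\}$ into a vector bundle morphism $\psi : E \to E'$. The compatibility identity $\nabla'_{\phi(\xi)}\psi(e) = \psi(\nabla_\xi e)$ is an equation of sections of $E'$; since it holds after restriction to each $U_i$ (because $(\phi_i,\psi_i)$ is a $\mathrm{Rep}$-morphism), and both sides are defined locally, locality of $\nabla'$ forces the identity to hold over $U$.

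Next I would establish effective descent. Let $(\{(A_i,E_i,\nabla_i)\}_{i \in I}, \{(\theta_{ij},\eta_{ij})\}_{i,j \in I})$ be a descent datum, where $\theta_{ij}$ is an isomorphism of Lie algebroids and $\eta_{ij}$ an isomorphism of vector bundles, each satisfying the cocycle condition and jointly intertwining $\nabla_i$ with $\nabla_j$. Apply Theorem \ref{LA is a stack} to the $\{A_i,\theta_{ij}\}$ to obtain a Lie algebroid $A$ over $U$ with isomorphisms $\Psi_i : A_i \to A|_{U_i}$ satisfying $\Psi_i^{-1} \circ \Psi_j = \theta_{ij}$; apply the analogous construction for vector bundles to the $\{E_i,\eta_{ij}\}$ to obtain $E$ over $U$ with isomorphisms $\Phi_i : E_i \to E|_{U_i}$. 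Define a local representation $\widetilde{\nabla}_i$ of $A|_{U_i}$ on $E|_{U_i}$ by transporting $\nabla_i$ along $\Psi_i$ and $\Phi_i$; the compatibility of $(\theta_{ij},\eta_{ij})$ with $\nabla_i,\nabla_j$ gives $\widetilde{\nabla}_i|_{U_{ij}} = \widetilde{\nabla}_j|_{U_{ij}}$, exactly as the anchor and bracket were glued in the proof of Theorem \ref{LA is a stack}. By locality the $\widetilde{\nabla}_i$ assemble into a single map $\nabla : \Gamma(A)\times \Gamma(E) \to \Gamma(E)$, and the representation axioms (Leibniz rule, $\mathrm C^\infty$-linearity in the first argument, flatness) hold on $U$ because they hold on each $U_i$.

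There is no serious obstacle here: the proof is essentially a bookkeeping exercise combining the already-established descent for $\mathcal{LA}$ and $\mathrm{Vect}$ with locality of $\nabla$. The only point that requires a moment's thought is the gluing of the representation $\nabla$ itself, and this is handled in the same way as the gluing of the bracket and anchor in Theorem \ref{LA is a stack}, once one observes that the intertwining condition on $(\theta_{ij},\eta_{ij})$ is precisely what is needed for the transported representations $\widetilde{\nabla}_i$ to agree on overlaps.
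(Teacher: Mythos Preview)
Your proposal is correct and follows essentially the same approach as the paper: reduce to open covers, use descent for $\mathcal{LA}$ and $\mathrm{Vect}$ separately to glue the algebroid and vector-bundle parts, and then invoke locality of $\nabla$ together with the intertwining condition on the transition isomorphisms to glue the representation. The paper additionally remarks that the analogues of Propositions~\ref{equivalence over etale category} and~\ref{equivalence over open category} hold for $\mathrm{Rep}$ (so that over $\mathrm{Open}_X$ one may work with restrictions rather than pullbacks), but this is implicit in your argument and does not change the strategy.
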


\begin{proof}
The main part of the proof is essentially the same as that of Theorem \ref{LA is a stack} and Corollary \ref{LA is a stack 2}. The analogues of Propositions \ref{equivalence over etale category} and \ref{equivalence over open category} hold for the presheaf $\mathrm{Rep}$, so that when restricted to the category $\mathrm{Open}_X$ for a manifold $X$, $\mathrm{Rep}$ is equivalent to the strict presheaf obtained by replacing the pullback functors by restrictions. 

Let $X$ be a manifold, $U \in \mathrm{Open}_X$, and 
$$(A,E,\nabla),(A',E',\nabla') \in \mathrm{Rep}_U$$ 
Let $\{U_i\}_{i \in I}$ be an open cover of $U$ and 
$$\{ (\phi_i,\psi_i) : (A|_{U_i},E|_{U_i},\nabla|_{U_i}), \to (A'|_{U_i},E'|_{U_i},\nabla'|_{U_i}) \}_{i \in I}$$ 
a family of morphisms that agree on intersections. Then since $\mathcal{LA}$ and $\mathrm{Vect}$ are stacks the $\phi_i$'s and $\psi_i$'s glue uniquely into a pair of morphisms:
\begin{align*}
\phi : A & \to A' \\
\psi : E & \to E'
\end{align*}
It follows from the fact that connections are local, i.e. that for $\xi \in \Gamma(A), e \in \Gamma(E)$ we have:
$$ \left( \nabla _ {\xi} e  \right) |_{U_i}  =   \left(\nabla |_{U_i}\right) _ {\xi |_{U_i}} (e |_{U_i})    $$
that
$$ \nabla' _ {\phi(\xi)} \psi(e)  =  \psi \left(  \nabla_\xi e    \right) $$
and therefore
$$(\phi,\psi) \in \mathrm{Hom}_\mathrm{Rep} \left(  (A,E,\nabla) , (A',E',\nabla') \right)$$
Now consider a descent datum
$$ \left( \{ (A_i , E_i , \nabla_i)  \}_{i \in I} , \{ (\phi_{ij},\psi_{ij}) \} _{i,j \in I}  \right)$$
Then, in particular, we have that 
$$ \left( \{A_i\}_{i \in I} ,  \{\phi_{ij} \}_{i,j \in I}  \right)$$
is a descent datum for $\mathcal{LA}$, and
$$ \left( \{E_i\}_{i \in I} ,  \{\psi_{ij} \}_{i,j \in I}  \right)$$
is a descent datum for $\mathrm{Vect}$. We therefore have objects
\begin{align*} 
A  &  \in \mathcal{LA}_U \\
E & \in \mathrm{Vect}_U
\end{align*}
Using the canonical identifications
\begin{align*}
\Gamma(U_i,A) & \cong \Gamma(A_i) \\
\Gamma(U_i,E) & \cong \Gamma(E_i) 
\end{align*}
we can define a representation of $A$ on $E$ by  
$$ \left( \nabla_\xi e  \right) |_{U_i}  = (\nabla_i)_{\xi | _{U_i}} ( e | _{U_i}) $$ 
for $\xi \in \Gamma(A)$ and $e \in \Gamma(E)$. This is well defined because over $U_{ij}$ we have
$$ \psi_i   \left(   (\nabla_i) _ {\xi_i} e_i  \right)  =  (\nabla_j) _ {\phi_i(\xi_i)} ( \psi_i(e_i))$$
for $\xi_i \in \Gamma(A_i)$ and $e_i \in \Gamma(E_i)$.
\end{proof}

\begin{Proposition}
$\mathrm{Rep}$ is a stack over $\mathbf{Man}_\mathrm{sub}$ with respect to the submersion topology.
\end{Proposition}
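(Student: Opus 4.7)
The plan is to follow the same strategy as in the proof of Theorem \ref{submersive descent} line by line, keeping track of the vector bundle and the representation alongside the Lie algebroid. Since we have already shown $\mathrm{Rep}$ is a stack in the open cover (and hence \'etale) topology, the proposition preceding Theorem \ref{submersive descent} (which applies to any stack for the open cover topology over $\mathbf{Man}_\mathrm{sub}$) reduces the task to verifying descent along a surjective submersion $\phi : X \to Y$ that admits a global section $f : Y \to X$.

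Given such $\phi$, $f$, and a descent datum $((A,E,\nabla), \Psi)$ for $\mathrm{Rep}$ along $\phi$, the isomorphism $\Psi : s^!(A,E,\nabla) \to t^!(A,E,\nabla)$ in $\mathrm{Rep}(X \times_Y X)$ decomposes into a Lie algebroid isomorphism $\psi_A : s^! A \to t^! A$ and a vector bundle isomorphism $\psi_E : s^\ast E \to t^\ast E$, intertwining the pullback representations, and each satisfying the cocycle condition over $X \times_Y X \times_Y X$. In particular $(A, \psi_A)$ is a descent datum for $\mathcal{LA}$, and the transversality argument in the proof of Theorem \ref{submersive descent} shows that $f^! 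A$ exists. Since $f^\ast E$ always exists, the construction in \ref{Pullbacks of representations} gives a representation $f^! \nabla$ of $f^! A$ on $f^\ast E$. This defines the candidate quasi-inverse
\begin{align*}
f^! : \mathrm{Des}(\phi : X \to Y, \mathrm{Rep}) & \to \mathrm{Rep}(Y) \\
((A,E,\nabla), \Psi) & \mapsto (f^! A,\, f^\ast E,\, f^! \nabla),
\end{align*}
which is functorial because both $f^!$ on $\mathcal{LA}$ and $f^\ast$ on $\mathrm{Vect}$ are functorial and their combination preserves morphisms of representations by the computation already done in \ref{Rep presheaf}.

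The isomorphism $f^! \phi^! \simeq \mathrm{id}_{\mathrm{Rep}(Y)}$ follows from $\phi \circ f = \mathrm{id}_Y$ together with the weak-presheaf structure of $\mathrm{Rep}$. For $\phi^! f^! \simeq \mathrm{id}_{\mathrm{Des}(\phi : X \to Y,\, \mathrm{Rep})}$, I would assemble component-wise the isomorphism $\Sigma_{((A,E,\nabla),\Psi)} : \phi^! f^! (A,E,\nabla) \to (A,E,\nabla)$ by the same chain used in Theorem \ref{submersive descent}, namely
\[
\phi^! f^!(A,E,\nabla) \xrightarrow{\cong} (f\circ\phi,\mathrm{id})^! s^!(A,E,\nabla) \xrightarrow{(f\circ\phi,\mathrm{id})^! \Psi} (f\circ\phi,\mathrm{id})^! t^!(A,E,\nabla) \xrightarrow{\cong} (A,E,\nabla),
\]
applied simultaneously to the Lie algebroid component (yielding exactly the $\Sigma$ of Theorem \ref{submersive descent}) and to the vector bundle component (yielding the analogous gluing for the descent datum $\psi_E$). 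Because the pullback of representations in \ref{Pullbacks of representations} is natural in the map of manifolds, these two components intertwine the representations, so $\Sigma$ is a morphism in $\mathrm{Rep}(X)$; that it is a morphism of descent data and that the $\Sigma$'s assemble into a natural transformation follow from the cocycle conditions on $\psi_A$ and $\psi_E$ by the same direct computation as in Theorem \ref{submersive descent}.

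The main obstacle is bookkeeping rather than substance: one must verify that the natural isomorphisms $c_{\phi_2,\phi_1}$ and the identification $\phi^! f^! \cong (f \circ \phi)^!$ behave compatibly on the Lie algebroid and vector bundle factors, so that the pair really intertwines $\phi^! f^! \nabla$ with $\nabla$. This reduces to the fact that the definition of $f^! \nabla$ is purely formal in the pullback data $v \oplus (g \otimes \xi)$ and $g' \otimes e$, so each diagram in question commutes already at the level of local generators.
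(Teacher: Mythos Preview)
Your proposal is correct and follows essentially the same approach as the paper: reduce to a surjective submersion with a global section, split the $\mathrm{Rep}$ descent datum into its $\mathcal{LA}$ and $\mathrm{Vect}$ components, invoke the transversality argument from Theorem \ref{submersive descent} to guarantee that $f^!A$ exists, and then check that $(f^!A, f^\ast E, f^!\nabla)$ defines the quasi-inverse by the same chain of isomorphisms. If anything, you supply more of the bookkeeping detail than the paper does, which simply says ``the same proof as that of Theorem \ref{submersive descent} then shows that this is quasi-inverse to the descent functor.''
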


\begin{proof}
The proof is similar to that of Theorem \ref{submersive descent}. We can restrict to the case where $g: X \to Y$ is a surjective submersion and $f: Y \to X$ a section of $g$. Denote the two projections $X\times_Y X \rightrightarrows Y$ by $s,t$. We need to construct a quasi-inverse to the descent functor
$$(g^! , g^\ast) :  \mathrm{Rep}_Y \to \mathrm{Des}(g:X \to Y, \mathrm{Rep})$$
Let $\left( (A,E,\nabla) , (\phi,\psi) \right)$ be a descent datum:
\begin{align*}
(A,E,\nabla) & \in \mathrm{Rep}_X \\ 
(\phi,\psi): (s^! A, s^\ast E, s^! \nabla) & \to (t^! A, t^\ast E, t^! \nabla) 
\end{align*}
Then, $(A,\phi)$ is a descent datum for $\mathcal{LA}$ and $(E,\psi)$ is a descent datum for $\mathrm{Vect}$. From the proof of Theorem \ref{submersive descent} the Lie algebroid $f^! A$ exists and so we have
$$ (f^! A, f^\ast E, f^! \nabla) \in \mathrm{Rep}_Y$$
This gives a functor
$$\mathrm{Des}(g:X \to Y, \mathrm{Rep}) \to \mathrm{Rep}_Y$$
The same proof as that of Theorem \ref{submersive descent} then shows that this is quasi-inverse to the descent functor.
\end{proof}

\newpage

\section{Lie algebroids over differentiable stacks}   \label{Lie algebroids over differentiable stacks}

\subsection{Restricting stacks to $\mathbf{Man}_\text{sub}$ and $\mathbf{Man}_\text{\'et}$}

In order to use $\mathcal {LA}$ as a classifying stack and define Lie algebroids over a stack $\mathfrak X$ as maps from $\mathfrak X$ to $\mathcal {LA}$ we need to restrict $\mathfrak X$ to $\mathbf{Man}_\text{sub}$. We have to do this in such a way that if $\mathfrak X$ is equivalent to a manifold $X$, then we recover the category of Lie algebroids over $X$.

\subsubsection{Preserving representability}

If $X$ is a manifold then we can restrict the presheaf $\underline X$ from $\mathbf{Man}$ to $\mathbf{Man}_\text{sub}$, but this does not preserve representability, i.e.  
$$ \underline X |_{\mathbf{Man}_\text{sub}}  \ncong \mathrm{Hom}_{ \mathbf{Man}_\text{sub}} ( - , X )  $$
Therefore we do not necessarily have that
$$ \mathrm{Hom}_{\mathrm{St}(\mathbf{Man}_\text{sub})} ( \underline X | _{\mathbf{Man}_\text{sub}} , \mathcal {LA} ) \simeq \mathcal{LA}_X$$
We could attempt to correct this by defining a functor
$$ - _\text{sub} : \mathrm{PSh}(\mathbf{Man}) \to \mathrm{PSh}(\mathbf{Man}_\text{sub}) $$ 
that maps represented presheaves to represented presheaves:
$$ \mathrm{Hom} _\mathbf{Man} ( - , X )  \mapsto \mathrm{Hom}_{ \mathbf{Man}_\text{sub}} ( - , X )  $$
and then extending to $\mathrm{PSh}(\mathbf{Man})$ by declaring that $ - _\text{sub}$ commutes with homotopy colimits (using the fact that every weak presheaf of groupoids is a colimit of representables). We will instead take a more concrete approach, which will amount to replacing a stack by the analogue of its smooth site in algebraic geometry. 

\subsubsection{The restriction functors  $-_\text{sub}$ and $-_\text{\'et}$ }

Let $\mathfrak X$ be a weak presheaf of groupoids over $\mathbf{Man}$. For any manifold $U$, we define $\mathfrak X _\text{sub} (U)$ to be the groupoid consisting of representable submersions from $U$ to $\mathfrak X$. By the Yoneda lemma we can consider  $\mathfrak X _\text{sub} (U)$ to be a full subgroupoid of $\mathfrak X (U)$. If $f: U \to V$ is a submersion of manifolds and $\phi: V \to \mathfrak X$ a representable submersion, then $(\phi \circ f) : U \to \mathfrak X$ is a representable submersion. It follows that we have a weak presheaf:
$$\mathfrak X  _\text{sub} : \mathbf{Man}_\text{sub} \to \mathbf{Gpd}$$
If $\phi : \mathfrak X \to \mathfrak Y$ is a representable submersion of weak presheaves over $\mathbf{Man}$, $U$ is a manifold, and $f : U \to \mathfrak X$ a representable submersion, then $(\phi \circ f): U \to \mathfrak Y$ is a representable submersion. It follows that we have a functor:
$$  - _\text{sub} :  \mathrm{PSh}(\mathbf{Man})_\mathrm{sub} \to \mathrm{PSh}(\mathbf{Man}_\text{sub})$$ 
where $\mathrm{PSh}(\mathbf{Man})_\mathrm{sub}$ is the subcategory of $\mathrm{PSh}(\mathbf{Man})$ consisting of weak presheaves and representable submersions.

Since morphisms of representable presheaves over $\mathbf{Man}$ are representable if and only if they correspond via the Yoneda lemma to submersions (this is proved in \cite{Me1}), we have that for any manifold $X$
$$\underline X  _\text{sub}  =  \mathrm{Hom}_{\mathbf{Man}_\text{sub}} ( - , X )$$
as desired. We define the functor
$$  - _\text{\'et} :  \mathrm{PSh}(\mathbf{Man})_\text{\'et} \to \mathrm{PSh}(\mathbf{Man}_\text{\'et}) $$
in exactly the same way as $-_\text{sub}$, except we replace representable submersions by representable \'etale morphisms.

\subsubsection{The submersion and \'etale sites} \label{The submersion and etale sites}

If $\mathfrak X$ is a weak presheaf over $\mathbf{Man}$ then the fibered category
$$\xymatrix{
 \int _{\mathbf{Man}_\mathrm{sub}} \mathfrak X  _ \text{sub}  \ar[d] \\
\mathbf{Man}_\text{sub}
}$$
corresponding to $\mathfrak X  _\mathrm{sub}$ is equivalent to the category whose objects $(U,u)$ are representable submersions:
$$u: U \to \mathfrak X$$ 
and whose morphisms $(f,\alpha):(U,u) \to (V,v)$ are 2-commutative triangles of representable submersions:
$$\xymatrix{
U \ar[d]_f \ar[r]^u  & \mathfrak X  \\
V \ar[ur]_v & \\
}$$
with $\alpha: u \to v \circ f$ an isomorphism (we use the notation of, for example, \cite{EySa1}). We'll equip this category with the open cover topology induced from $\mathbf{Man}_\text{sub}$ and call the resulting site the submersion site of $\mathfrak X$. This is the $\mathrm C^\infty$ analogue of the smooth site of an algebraic stack. Similarly we have the \'etale site of $\mathfrak X$ whose underlying category is the category fibred over $\mathbf{Man}_\text{\'et}$ corresponding to $\mathfrak X _ \text{\'et}$.

We'll usually just write $\mathfrak X_\text{sub}$ instead of  $\int _{\mathbf{Man}_\mathrm{sub}} \mathfrak X  _ \text{sub}$, but whether we are considering the weak presheaf or the corresponding fibered category will hopefully be clear.

\subsection{Lie algebroids over stacks}

We will define Lie algebroids over stacks essentially by declaring that they are classified by the stack $\mathcal{LA}$. We'll then show that a Lie algebroid over a stack $\mathfrak X$ is determined by a Lie algebroid over $U$ for every submersion $U \to \mathfrak X$ from a manifold $U$, together with isomorphisms corresponding to each 2-commutative triangle of such submersions. Using the fact that $\mathcal{LA}$ satisfies descent for submersions, we then show how the category of Lie algebroids over a differentiable stack can be described in terms of a Lie groupoid representing it.

\subsubsection{Definition via the classifying stack}

\begin{Definition} \label{Definition of Lie algebroids over stacks}
If $\mathfrak X$ is a differentiable stack, or more generally a weak presheaf of groupoids over $\mathbf{Man}$, then we define the category $\mathcal{LA}_\mathfrak X$, of Lie algebroids over $\mathfrak X$, as the hom category:
$$\mathcal{LA}_\mathfrak X \equiv \mathrm{Hom}_{\mathrm{PSh}(\mathbf{Man}_\text{sub})} ( \mathfrak X  _ \text{sub} , \mathcal{LA} )$$
\end{Definition} 
By this definition a Lie algebroid over a stack $\mathfrak X$ is a morphism $\mathbf A: \mathfrak X_\text{sub} \to \mathcal{LA}$ of stacks over $\mathbf{Man}_\text{sub}$, and a morphism between two Lie algebroids $\mathbf A$ and $\mathbf A'$ over $\mathfrak X$ is a 2-morphism $\psi: \mathbf A \to \mathbf A'$.
We have for any manifold $X$:
$$\mathcal{LA}_{\underline X} = \mathrm{Hom}_{\mathrm{PSh}(\mathbf{Man}_\text{sub})} (\underline X  _ \text{sub} , \mathcal{LA} )  \simeq  \mathcal{LA}_X $$

\subsubsection{Pullbacks}  \label{pullbacks along representable submersions}

We can pull back Lie algebroids along representable submersions: if $\phi: \mathfrak X \to \mathfrak Y$ is a representable submersion of weak presheaves over $\mathbf{Man}$ then applying the functor $- _\text{sub}$ gives a morphism 
$$\phi _\text{sub} : \mathfrak X _\text{sub} \to \mathfrak Y _\text{sub}$$
and therefore a functor
\begin{align*}
\left( \phi  _ \text{sub} \right) ^! : 
\mathrm{Hom}_{\mathrm{PSh}(\mathbf{Man}_\text{sub})} ( \mathfrak Y  _ \text{sub} , \mathcal{LA} )  & \rightarrow \mathrm{Hom}_{\mathrm{PSh}(\mathbf{Man}_\text{sub})} ( \mathfrak X  _ \text{sub} , \mathcal{LA} )  \\
\mathbf A  & \mapsto \mathbf A \circ \phi_\text{sub} 
\end{align*}
and so by the definition of $\mathcal{LA}_\mathfrak X$ and $\mathcal{LA}_\mathfrak Y$ we have a functor
$$\left( \phi _\text{sub} \right)^! : \mathcal{LA}_\mathfrak Y \to \mathcal{LA}_\mathfrak X$$
Under the equivalence $\mathcal{LA}_{\underline X} \simeq \mathcal{LA}_X$ these pullback functors are compatible with the standard pullback functors associated to submersions between manifolds.

\subsubsection{Base changing morphisms}

Using the pullback functors above we can define morphisms between Lie algebroids over different stacks, at least in the case that the underlying morphism of stacks is a submersion. Let $\mathbf A$ be a Lie algebroid over $\mathfrak X$ and $\mathbf B$ be a Lie algebroid over $\mathfrak Y$. Then a morphism from the pair $(\mathfrak X, \mathbf A)$ to the pair $(\mathfrak Y, \mathbf B)$ is a pair of morphisms $(\phi,\psi)$, where $\phi : \mathfrak X \to \mathfrak Y$ is a representable submersion and $\psi: \mathbf A \to \phi^\ast \mathbf B$ is a morphism in the category $\mathcal{LA}_\mathfrak X$. Diagrammatically we have:
\[
\xymatrix@C+2pc{
\mathfrak X \rtwocell^{\mathbf A}_{\mathbf B \circ \phi }{\;\;\;  \psi} &  \mathcal{LA} 
}
\]
If $X$ and $Y$ are manifolds then it follows from the universal property of pullbacks of Lie algebroids that under the equivalences $\mathcal{LA}_{\underline X} \simeq \mathcal{LA}_X$ and $\mathcal{LA}_{\underline Y} \simeq \mathcal{LA}_Y$ this definition agrees with the standard notion of base changing morphisms of Lie algebroids.

\subsubsection{The data determining a morphism $\mathfrak X_\text{sub} \to \mathcal{LA}$} \label{Lie algebroids via test spaces}

If we work with the corresponding fibered categories, then we have that a Lie algebroid over a stack $\mathfrak X$ is a morphism of categories fibered over $\mathbf{Man}_\text{sub}$:
$$\xymatrix{
\mathfrak X _\text{sub}  \ar[dr]  \ar[r]  &  \mathcal{LA}  \ar[d] \\
& \mathbf{Man}_\text{sub}
}$$
The data $\mathbf A$ of such a morphism consists of a Lie algebroid $A_{U,u}$ over $U$ for each object $(U,u)$ of $\mathfrak X_\text{sub}$, together with a morphism of Lie algebroids
$$\xymatrix{
A_{U,u} \ar[d]  \ar[r] ^ {\mathbf A_{f,\alpha}} &  A_{V,v} \ar[d] \\
U \ar[r] _f & V 
}$$
for each morphism $(f,\alpha):(U,u) \to (V,v)$ in $\mathfrak X_\text{sub}$, or equivalently, a morphism 
$$\xymatrix{
A_{U,u} \ar[dr] \ar[r] ^ { \bar{\mathbf A} _{f,\alpha}} & f^! A_{V,v} \ar[d] \\
& V 
}$$
The morphisms $\mathbf A_{f,\alpha}$ and $\bar{\mathbf A} _{f,\alpha}$ are related by:
$$\mathbf A_{f,\alpha} = f^! \circ \bar{\mathbf A} _{f,\alpha}$$ 
where $f^!$ is the pullback morphism $f^! A_{V,v} \to A_{V,v}$. Given a composable pair of morphisms in $\mathfrak X _\text{sub}$:
$$\xymatrix{
(U,u) \ar[r] ^ {(f,\alpha)} & (V,v) \ar[r] ^ {(g,\beta)} & (W,w)
}$$
the functoriality condition
$$\mathbf A_{gf, \beta \alpha} = \mathbf A_{g,\beta} \circ \mathbf A_{f,\alpha}$$
is then equivalent to the condition that the following diagram commutes:
$$\xymatrix{
A_{U,u}  \ar[d]_{\bar {\mathbf A} _{gf, \beta \alpha}} \ar[r]^{\bar {\mathbf A}_{f,\alpha}} & f^! A_{V,v} \ar[d]^{g^! (\bar {\mathbf A} _{v,\beta})} \\
(gf)^! A_{W,w}  \ar[r]_{c_{a,b}} &  f^! g^! A_{W,w} 
}$$
Morphisms between Lie algebroids over $\mathfrak X$ are by definition natural transformations that are vertical with respect to the projections to $\mathbf{Man}_\text{sub}$. In terms of the description above, such a morphism $\phi$ between two collections of data $\mathbf A$ and $\mathbf B$ consists of a morphism
$$\xymatrix{
A_{U,u} \ar[dr]  \ar[r] ^ {\phi_{U,u}} & B_{U,u} \ar[d] \\
& U
}$$
for each $(U,u)$. The naturality condition is equivalent to the condition that the diagram
$$\xymatrix{
A_{U,u} \ar[d] _ {\phi_{U,u}} \ar[r]^{\bar {\mathbf A}_{f,\alpha}} & f^! A_{V,v} \ar[d] ^{\phi_{U,u}} \\
B_{U,u}  \ar[r] _{\bar {\mathbf B} _ {f,\alpha}} & f^! B_{V,v} 
}$$
commutes for all $(f,\alpha) : (U,u) \to (V,v)$.

\subsubsection{Lie algebroids in terms of atlases}  \label{Lie algebroids in terms of atlases} 

Let $\mathfrak X$ be a differentiable stack and $X \to \mathfrak X$ be an atlas. We then have the Lie groupoid $X \times_\mathfrak X  X \rightrightarrows X$ and the 2-commutative diagram
$$\xymatrix{
X \times_\mathfrak X X \ar[d]_{\mathrm{pr}_1}  \ar[r]^>>>>>{\mathrm{pr}_2} &  X \ar[d] \\
X \ar[r] & \mathfrak X 
}$$
Let $\mathfrak Y$ be a stack over $\mathbf{Man}$. Then we can define a category:
$$\mathrm{Des} ( X \to \mathfrak X , \mathfrak Y )$$
The objects are pairs $(\phi,\alpha)$ where
$$ \phi : X \to \mathfrak Y$$ 
is a morphism of stacks and $\alpha$ is a 2-morphism:
\[
\xymatrix@C+2pc{
X \times _\mathfrak X X \rtwocell^{\phi \circ \mathrm{pr}_1}_{\phi \circ \mathrm{pr}_2}{\;\;\; \; \; \alpha} &  \mathfrak Y
}
\]
that satisfies the cocycle condition over $X \times _\mathfrak X \times X \times _\mathfrak X X$. A morphism $\beta : (\phi,\alpha) \to (\phi',\alpha')$ is a 2-morphism
\[
\xymatrix@C+2pc{
X \rtwocell^{\phi}_{\phi'}{\;\;\;  \beta} &  \mathfrak Y
}
\]
such that the diagram
$$\xymatrix{
\phi \circ \mathrm{pr}_1  \ar[d]_\alpha   \ar[r]^{\mathrm{pr}_1 ^\ast (\beta)} &   \phi' \circ \mathrm{pr}_1  \ar[d]^{\alpha'}  \\
\phi \circ \mathrm{pr}_2  \ar[r]_{\mathrm{pr}_2 ^\ast (\beta)}  &   \phi' \circ \mathrm{pr}_2   
}$$
commutes. The morphism $X \to \mathfrak X$ induces a functor 
$$\mathrm{Hom} _ {\mathrm{St}(\mathbf{Man})} ( \mathfrak X , \mathfrak Y )  \to 
\mathrm{Des} ( X \to \mathfrak X , \mathfrak Y )$$
which is an equivalence of categories \cite{He1}. By the Yoneda lemma, $\mathrm{Des}( X \to \mathfrak X , \mathfrak Y )$ is equivalent to the category whose objects are pairs $(\xi,\alpha)$, where $\xi \in \mathfrak Y (X)$ and $\alpha$ is an isomorphism $\alpha : \mathrm{pr}_1 ^\ast \xi \to \mathrm{pr}_2^* \xi$ that satisfies the cocycle condition. More generally, for any Lie groupoid $G \rightrightarrows Y$, we can define the category $\mathrm{Des} ( G \rightrightarrows Y , \mathfrak Y )$ as above. 

In our situation we have that $\mathcal{LA}$ is only a stack over $\mathbf{Man}_\text{sub}$. However, the key point of the proof of the above equivalence of categories is that if $U \to \mathfrak X$ is a morphism from a manifold $U$ to $\mathfrak X$, then an object $(\phi,\alpha)$ in $\mathrm{Des} ( X \to \mathfrak X , \mathfrak Y )$ induces a descent datum for $\mathfrak Y$ and the submersion $U\times_\mathfrak X X \to U$, and therefore, an object in $\mathfrak Y(U)$. Since $\mathcal {LA}$ satisfies descent for submersions (Theorem \ref{submersive descent}) the same proof works.

We define:
$$\mathcal{LA}_{G \rightrightarrows Y} = \mathrm{Des} ( G \rightrightarrows Y , \mathcal{LA} )$$
and applying the above to the case of Lie algebroids, and using the Yoneda lemma, we have:
\begin{Proposition} \label{Proposition on Lie algebroids via atlases}
If $\mathfrak X$ is a differentiable stack then any atlas $X \to \mathfrak X$ induces an equivalence of categories: 
$$\mathcal{LA}_\mathfrak X \simeq \mathcal{LA}_{X \times_\mathfrak X X \rightrightarrows X}$$
In particular, if $G \rightrightarrows X$ and $G' \rightrightarrows X'$ are Morita equivalent Lie groupoids then there is an equivalence of categories:
$$ \mathcal{LA}_{G \rightrightarrows X} \simeq \mathcal{LA}_{G' \rightrightarrows X'}$$
\end{Proposition}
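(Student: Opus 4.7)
The plan is to mimic the standard proof that morphisms to a stack are classified by descent data on an atlas, adapted to the fact that $\mathcal{LA}$ is only a stack over $\mathbf{Man}_\text{sub}$. The crucial observation is that because $X \to \mathfrak X$ is a representable submersion, for every representable submersion $u : U \to \mathfrak X$ from a manifold $U$ the fibre product $U \times_\mathfrak X X$ is a manifold and the projection $\pi : U \times_\mathfrak X X \to U$ is a surjective submersion, so Theorem \ref{submersive descent} applies to $\pi$.

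First I would construct a restriction functor $R : \mathcal{LA}_\mathfrak X \to \mathcal{LA}_{X \times_\mathfrak X X \rightrightarrows X}$. Given $\mathbf A : \mathfrak X_\text{sub} \to \mathcal{LA}$, set $A := \mathbf A(X \to \mathfrak X) \in \mathcal{LA}_X$. The two projections $\mathrm{pr}_1, \mathrm{pr}_2 : X \times_\mathfrak X X \to X$ together with the tautological 2-isomorphism between the two compositions $X \times_\mathfrak X X \to X \to \mathfrak X$ provide two morphisms in $\mathfrak X_\text{sub}$ between isomorphic objects; evaluating $\mathbf A$ and using the description of a morphism in section \ref{Lie algebroids via test spaces} yields a Lie algebroid isomorphism $\alpha : \mathrm{pr}_1^! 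A \to \mathrm{pr}_2^! A$. The cocycle condition over $X \times_\mathfrak X X \times_\mathfrak X X$ is precisely the functoriality axiom for $\mathbf A$.

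Next I would construct a quasi-inverse $S : \mathcal{LA}_{X \times_\mathfrak X X \rightrightarrows X} \to \mathcal{LA}_\mathfrak X$. Given a descent datum $(A,\alpha)$ and an object $u : U \to \mathfrak X$ of $\mathfrak X_\text{sub}$, pull $(A,\alpha)$ back along the projection $U \times_\mathfrak X X \to X$; using the canonical identification
\[
(U \times_\mathfrak X X) \times_U (U \times_\mathfrak X X) \;\cong\; U \times_\mathfrak X (X \times_\mathfrak X X),
\]
one obtains a descent datum for $\mathcal{LA}$ along the surjective submersion $\pi : U \times_\mathfrak X X \to U$. By Theorem \ref{submersive descent} this descent datum is effective and determines a Lie algebroid $S(A,\alpha)(u) \in \mathcal{LA}_U$, canonically up to unique isomorphism; its behaviour on morphisms of $\mathfrak X_\text{sub}$ is forced by the universal property of descent, and similarly for morphisms $(A,\alpha) \to (A',\alpha')$ of descent data.

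The remaining step is to verify that $R$ and $S$ are mutually quasi-inverse. The direction $R \circ S \simeq \mathrm{id}$ follows by evaluating $S$ at $u = (X \to \mathfrak X)$, where $\pi$ admits the diagonal $X \to X \times_\mathfrak X X$ as a section and the descent datum reconstructs $(A,\alpha)$ tautologically. For $S \circ R \simeq \mathrm{id}$, both $\mathbf A(u)$ and $(S \circ R)(\mathbf A)(u)$ are effective descents of the same descent datum along $\pi$, so the uniqueness-up-to-canonical-isomorphism in Theorem \ref{submersive descent} supplies the required natural isomorphism. The main obstacle is the 2-categorical bookkeeping: one must check that the associator natural isomorphisms $c_{\phi_2,\phi_1}$ from section \ref{LA as a weak presheaf of categories} interact consistently with $\alpha$ under iterated pullbacks, so that the descent isomorphisms glue coherently in $u$. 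This is routine but tedious. Finally, Morita invariance is immediate: Morita equivalent Lie groupoids have equivalent classifying stacks $\mathfrak X$, so applying the main equivalence twice gives $\mathcal{LA}_{G \rightrightarrows X} \simeq \mathcal{LA}_\mathfrak X \simeq \mathcal{LA}_{G' \rightrightarrows X'}$.
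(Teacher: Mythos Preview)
Your proposal is correct and follows essentially the same approach as the paper. The paper's treatment is terser---it invokes the standard equivalence $\mathrm{Hom}(\mathfrak X,\mathfrak Y)\simeq\mathrm{Des}(X\to\mathfrak X,\mathfrak Y)$ from \cite{He1} and then observes that the key step in that proof (reconstructing an object over each $U\to\mathfrak X$ from the descent datum along $U\times_\mathfrak X X\to U$) only uses that $\mathfrak Y$ satisfies descent for surjective submersions, which holds for $\mathcal{LA}$ by Theorem~\ref{submersive descent}---whereas you have spelled out the restriction and reconstruction functors explicitly, but the underlying mechanism is identical.
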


This gives us a simple way to work with Lie algebroids over a differentiable stack $\mathfrak X$: choose a Lie groupoid $G\rightrightarrows X$ presenting $\mathfrak X$, then a Lie algebroid over $\mathfrak X$ is given by pair $(A,\alpha)$, where $A$ is a Lie algebroid over $X$, and $\alpha$ is an isomorphism $\alpha: s^! A \to t^! A$ that satisfies the cocycle condition. We'll use this in Section \ref{Examples} to describe the categories of Lie algebroids over some examples of differentiable stacks.

In general, for any Lie groupoid $G \rightrightarrows X$, we'll refer to pairs $(A,\alpha)$ as `Lie algebroids over $G\rightrightarrows X$'. We study these in Section \ref{Lie algebroids over Lie groupoids} and show how they are related to $\mathcal{LA}$-groupoids.

\subsubsection{Lie algebroids over stacks are not vector bundles}  \label{Lie algebroids over stacks are not vector bundles}

As discussed in \ref{Lie algebroids vs vector bundles} we do not have a natural morphism of stacks $\mathcal{LA} \to \mathrm{Vect}$, and so a Lie algebroid $\mathbf A :\mathfrak X_\text{sub} \to \mathcal{LA}$ over a stack $\mathfrak X$ does not determine a vector bundle over $\mathfrak X$ in any natural way. This is familiar from the case of the tangent bundle of a differentiable stack, see e.g. \cite{BeGiNoXu1} or \cite{He1}.

In fact, a Lie algebroid $\mathbf A$ over a differentiable stack $\mathfrak X$ does not even have a well defined rank: if $x:X \to \mathfrak X$ is an atlas, and $A_{X,x}$ has rank $r$ as a vector bundle over $X$, then for any submersion $f:Y \to X$, we have that $(Y \to X \to \mathfrak X)$ is an object of the site $\mathfrak X_\text{sub}$, and if $Y \to X$ has relative dimension $s$, then $A_{Y,x \circ f}$ has rank $r+s$. 

However, we'll show in \ref{Total stacks of Lie algebroids over stacks} that one can construct a stack $\mathfrak A$ together with a morphism $\mathfrak A \to \mathfrak X$, that represents the `total stack' of the Lie algebroid $\mathbf A$. As in the case of tangent stacks, the morphism $\mathfrak A \to \mathfrak X$ is not representable in general.

\subsection{Representations}

\subsubsection{Defining representations}

\begin{Definition} \label{definition of representations}
Let $\mathbf A:\mathfrak X  _\text{sub} \to \mathcal{LA}$ be a Lie algebroid over a stack $\mathfrak X$. Then the category $\mathrm{Rep}_\mathbf A$, of representations of $\mathbf A$, is the full subcategory of $\mathrm{Hom}_{\mathrm{PSh}(\mathbf{Man}_\text{sub})} (\mathfrak X _ \text{sub} , \mathrm{Rep} )$ consisting of morphisms $\mathfrak X_\text{sub} \to \mathrm{Rep}$ such that the diagram
$$\xymatrix{
&   \mathrm{Rep}  \ar[d]  \\
\mathfrak X _\text{sub} \ar[ur]   \ar[r] _ {\mathbf A} & \mathcal{LA}
}$$
commutes, where $\mathrm{Rep} \to \mathcal{LA}$ is the morphism described in \ref{Rep presheaf} that maps an object $(A,E,\nabla)$ to the Lie algebroid $A$.
\end{Definition}

Via the morphism $\mathrm{Rep} \to \mathrm{Vect}$, every representation $\mathfrak X_\text{sub} \to \mathrm{Rep}$ has an underlying vector bundle in the sense that it determines a morphism $\mathfrak X_\text{sub} \to \mathrm{Vect}$. We can therefore define a representation of a Lie algebroid $\mathbf A$ over $\mathfrak X$ on a fixed vector bundle $\mathbf E$:

\begin{Definition}
Given a Lie algebroid $\mathbf A: \mathfrak X_\text{sub} \to \mathcal{LA}$ and a vector bundle $\mathbf E: \mathfrak X_\text{sub} \to \mathrm{Vect}$ over a stack $\mathfrak X$, a representation of $\mathbf A$ on $\mathbf E$ is a morphism $\mathfrak X_\text{sub} \to \mathrm{Rep}$  such that the diagram
$$\xymatrix{
&  \mathcal{LA} \\
\mathfrak X _\text{sub} \ar[ur]^{\mathbf A}  \ar[dr]_{\mathbf E}  \ar[r] & \mathrm{Rep} \ar[u] \ar[d] \\
& \mathrm{Vect}
}$$
commutes.
\end{Definition}

\subsubsection{The data determining a representation}  \label{The data determining a representation}

If we think in terms of fibered categories, as in \ref{Lie algebroids via test spaces}, then we can describe representations as follows. Fix a stack $\mathfrak X$. Then a Lie algebroid $\mathbf A$ over $\mathfrak X$ is given by fixing a Lie algebroid $A_{U,u}$ over $U$ for each representable submersion $u: U \to \mathfrak X$, together with a morphism $\bar \chi _{f,\alpha}:A_{U,u} \to f^! A_{V,v}$ for each 2-commutative triangle $(f,\alpha) : (U,u) \to (V,v)$. Similarly, a vector bundle $\mathbf E$ over $\mathfrak X$ is given by fixing a vector bundle $E_{U,u}$ over $U$ for each representable submersion $u: U \to \mathfrak X$, together with a morphism $\bar \chi' _{f,\alpha}:E_{U,u} \to f^\ast E_{V,v}$ for each 2-commutative triangle $(f,\alpha) : (U,u) \to (V,v)$. The morphisms $\bar \chi _{f,\alpha}$ and $\bar \chi' _{f,\alpha}$ must satisfy the required compatibility condition. A representation of $\mathbf A$ on $\mathbf E$ is then determined by a representation
$$\nabla_{U,u} : \Gamma (A_{U,u}) \times \Gamma (E_{U,u}) \to \Gamma (E_{U,u})$$
for each $(U,u)$, and these representations must be compatible in the sense that the diagram  
$$\xymatrix{
\Gamma(A_{U,u}) \times \Gamma(E_{U,u})  \ar[d]_ {\bar \chi_{U,u} \times \bar \chi'_{U,u}}  \ar[r]^>>>>>>>{\nabla_{U,u}} &  \Gamma(E_{U,u})  \ar[d]^{\bar \chi' _{U,u}}   \\
\Gamma(f^! A_{V,v}) \times \Gamma(f^\ast E_{V,v})   \ar[r]_>>>>>{f^! \nabla_{V,v}} &  \Gamma(f^\ast E_{V,v})  
}$$
commutes for each $(f,\alpha) : (U,u) \to (V,v)$.

\subsubsection{Representations in terms of atlases}

As in \ref{Lie algebroids in terms of atlases} we can work with representations in terms of atlases: the functor 
$$\mathrm{Hom}_{\mathrm{PSh}(\mathbf{Man}_\text{sub})}(\mathfrak X_\text{sub},\mathrm{Rep}) \to \mathrm {Des}(X \to \mathfrak X, \mathrm{Rep})$$
is an equivalence for any differentiable stack $\mathfrak X$ and atlas $X \to \mathfrak X$ because the weak presheaf $\mathrm{Rep}$ satisfies descent for submersions. If $X \to \mathfrak X$ is an atlas of a differentiable stack and $\mathbf A : \mathfrak X_\text{sub} \to \mathcal{LA}$ is a Lie algebroid over $\mathfrak X$ then we have a Lie algebroid $A_X$ over $X$ and an isomorphism $\psi: s^! A_X \to t^! A_X$, where $s,t$ are the projections $X \times _\mathfrak X X \rightrightarrows X$. A representation $\mathfrak X_\text{sub} \to \mathcal{LA}$ of $\mathbf A$ determines a vector bundle $E_X$ over $X$, an isomorphism $\psi': s^\ast E_X \to t^\ast E_X$, and a representation
$$\nabla_X : \Gamma(A_X) \times \Gamma(E_X) \to \Gamma(E_X)$$
which satisfies the condition that the diagram
$$\xymatrix{
\Gamma(s^! A_X) \times \Gamma(s^\ast E_X)  \ar[d]_ {\psi \times \psi'}  \ar[r]^>>>>>{s^! \nabla_X} &  \Gamma(s^\ast E_X)  \ar[d]^{\psi'}   \\
\Gamma(t^! A_X) \times \Gamma(t^\ast E_X)   \ar[r]_>>>>>{t^! \nabla_X} &  \Gamma(t^\ast E_X)  
}$$
commutes. A morphism $\rho: \mathbf E \to \mathbf E'$ between representations $\mathbf E$ and $\mathbf E$ then determines a morphism of vector bundles $\rho_X : E_X \to E'_X$ which commutes with the actions of $A_X$ and $X\times_\mathfrak X X \rightrightarrows X$, i.e. that the diagrams
$$\xymatrix{
\Gamma(A_X) \times \Gamma(E_X) \ar[d]_{\mathrm{id}\times\rho} \ar[r]^>>>>>{\nabla_X} &  \Gamma(E_X)  \ar[d]^\rho \\
\Gamma(A_X) \times \Gamma(E'_X)  \ar[r]_>>>>>{\nabla'_X} &  \Gamma(E'_X)  
}$$
and
$$\xymatrix{
s^\ast E_X  \ar[d] \ar[r]^{s^\ast \rho}  &  s^\ast E'_X  \ar[d] \\
t^\ast E_X \ar[r] _ {t^\ast \rho} &  t^\ast E'_X 
}$$
commute. 

For an arbitrary Lie groupoid $G\rightrightarrows X$ and a Lie algebroid $(A,\psi)$ over $G\rightrightarrows X$, we then define the category $\mathrm{Rep}_{(A,\psi)}$ to be the category whose objects are tuples $((E,\psi'),\nabla)$, where $E$ is a vector bundle over $X$, $\psi : s^\ast E \to t^\ast E$ is an isomorphism satisfying the cocycle condition, and $\nabla$ is a representation of $A$ on $E$ that satisfies the above conditions. 

The above discussion shows that if $\mathbf A$ is a Lie algebroid over $\mathfrak X$ and $X \to \mathfrak X$ is an atlas, then we have a Lie algebroid $(A_X,\psi)$ over $X \times _\mathfrak X X \rightrightarrows X$ and an equivalence of categories:
$$\mathrm{Rep}_\mathbf A \simeq \mathrm{Rep}_{(A_X,\psi)}$$

\subsection{Cohomology}

\subsubsection{Sheaves over stacks}

We will use the sheaf cohomology for differentiable stacks developed in \cite{BeXu1} and \cite{BuShSp1}, which we summarise below (\ref{Sheaf cohomology for differentiable stacks}). Recall from \ref{The submersion and etale sites} that for any stack $\mathfrak X$ over $\mathbf{Man}$ we have a site called the submersion site of $\mathfrak X$, which we'll denote by $\mathfrak X_\text{sub}$. For a stack $\mathfrak X$ we therefore have the category $\mathrm{Sh}(\mathfrak X_\text{sub})$ of sheaves (of sets) over $\mathfrak X_\text{sub}$, and the category $\mathrm{Sh}_{\mathbb R }(\mathfrak X_\text{sub})$ of sheaves of $\mathbb R$-vector spaces over $\mathfrak X_\text{sub}$. 

If $F$ is a sheaf over $\mathfrak X_\text{sub}$ and $(U,u)$ is an object of $\mathfrak X_\text{sub}$ then there is an induced sheaf (in the usual sense of sheaves over topological spaces) $F_{U,u}$ over $U$: if $i:V \hookrightarrow U$ is an open subset of $U$ then we define
$$F_{U,u} (V) = F( V , u \circ i )$$
$F_{U,u}$ is called the `small sheaf' over $U$ associated to $F$ (and the map $u: U \to \mathfrak X)$. A sheaf $F$ over $\mathfrak X_\mathrm{sub}$ can then be defined by a collection of small sheaves $F_{U,u}$ together with morphisms 
$$F_{V,v} \to f_\ast F_{U,u}$$ 
for each morphism $(f,\alpha) : (U,u) \to (V,v)$ in $\mathfrak X_\text{sub}$ (these morphisms must satisfy a certain compatibility condition).

$\mathfrak X_\text{sub}$ has a structure sheaf $\mathrm C ^\infty _\mathfrak X$ defined by
\begin{align*}
(U,u)  &   \mapsto \mathrm C ^\infty (U)  \\
\left( (f,\alpha) : (U,u) \to (V,v) \right)  &  \mapsto \left( f^\ast : \mathrm C^\infty (V) \to \mathrm C^\infty (U) \right)
\end{align*}
This makes $(\mathfrak X_\mathrm{sub}, \mathrm C ^\infty _\mathfrak X)$ into a ringed site, and we therefore have the category $\mathrm C ^\infty _\mathfrak X \text{-mod}$ of sheaves of $\mathrm C ^\infty _\mathfrak X$-modules. We can take tensor products of $\mathrm C ^\infty _\mathfrak X$-modules `objectwise', this makes $\mathrm C ^\infty _\mathfrak X \text{-mod}$ into a monoidal category. Given a vector bundle $\mathbf E$ over $\mathfrak X$, considered as a functor $\mathbf E : \mathfrak X_\text{sub} \to \mathrm {Vect}$, we have a sheaf $\mathcal E$ of $\mathrm C ^\infty _\mathfrak X$-modules which associates to an object $(U,u)$ the $\mathrm C ^\infty (U)$-module $\Gamma(E_{U,u})$.

\subsubsection{Sheaf cohomology for differentiable stacks} \label{Sheaf cohomology for differentiable stacks}

We summarise the relevant results from \cite{BeXu1} and \cite{BuShSp1}. The category $\mathrm{Sh}_{\mathbb R }(\mathfrak X_\text{sub})$ is abelian and has enough injectives. We'll denote by $\mathrm{Ch} ^ + (\mathrm{Sh}_{\mathbb R }(\mathfrak X_\text{sub}))$  the category of bounded below chain complexes in $\mathrm{Sh}_{\mathbb R }(\mathfrak X_\text{sub})$, and by $\mathrm D ^ +  (\mathrm{Sh}_{\mathbb R }(\mathfrak X_\text{sub}))$ its derived category (see \cite{Wei1} for the relevant concepts from homological algebra). We define the global sections functor $\Gamma(\mathfrak X_\text{sub} ,-)$ by
\begin{align*}
\Gamma(\mathfrak X_\text{sub} , -) : \mathrm{Sh}_{\mathbb R }(\mathfrak X_\text{sub}) & \to \mathbb R \text{-mod}  \\
\mathrm F & \mapsto \mathrm{Hom}_{\mathrm{Sh}_{\mathbb R }(\mathfrak X_\text{sub})} ( \mathbb R _ \mathfrak X ,  F ) 
\end{align*}
where $\mathbb R_\mathfrak X$ is the sheaf that associates to an object $(U,u)$ the $\mathbb R$-vector space of locally constant functions $U \to \mathbb R$. If $X\to \mathfrak X$ is an atlas then
$$\Gamma(\mathfrak X_\text{sub} ,F) = \mathrm{Ker}\left(F(X) \rightrightarrows F(X\times_\mathfrak X X)\right)$$
where the right hand side is the kernel of the alternating sum of the two maps $F(X) \to F(X\times_\mathfrak X X)$ given by the two projections $X\times _\mathfrak X X \rightrightarrows X$. The global sections functor is left exact and so has right derived functors
$$ \mathrm H^i ( \mathfrak X_\text{sub} , - ) = \mathrm R^i \Gamma :  \mathrm{Sh}_{\mathbb R }(\mathfrak X_\text{sub})  \to \mathbb R \text{-mod}$$
If $F$ is a sheaf of $\mathbb R$-vector spaces over $\mathfrak X_\text{sub}$ then $\mathrm H^i ( \mathfrak X_\text{sub} , F )$ is the $i$'th sheaf cohomology group of $F$. More generally, there is a total right derived functor
$$\mathrm R \Gamma :  \mathrm D ^ +  (\mathrm{Sh}_{\mathbb R }(\mathfrak X_\text{sub}))  \to \mathrm D ^ +  ( \mathbb R \text{-mod} )$$
If $F^\bullet$ is a bounded below complex of sheaves of $\mathbb R$-vector spaces over $\mathfrak X _\text{sub}$ then we define
$$ \mathbb H ^ i (\mathfrak X_\text{sub}, F^\bullet )  = \mathrm H ^ i ( \mathrm R \Gamma (F^\bullet) )$$
and call it the $i$'th hypercohomolgy group of $F^\bullet$.

If $\mathfrak X$ is a differentiable stack and $X \to \mathfrak X$ is an atlas then we have the simplicial manifold $X_\bullet$:
$$\xymatrix{
\dots \ar@<1.5ex>[r]\ar@<.5ex>[r]\ar@<-.5ex>[r]\ar@<-1.5ex>[r] & X_2 \ar@<1ex>[r]\ar[r]\ar@<-1ex>[r] & X_1 \ar@<0.5ex>[r] \ar@<-.5ex>[r]  & X 
 }$$
where $X_n$ is the {n+1}-fold fibre product of $X$ over $\mathfrak X$ ($X_\bullet$ is the nerve of the Lie groupoid $X \times _\mathfrak X X \rightrightarrows X$). If $F$ is a sheaf of $\mathbb R$-vector spaces over $\mathfrak X_\text{sub}$ then (by choosing for each $n$ one of the maps $X_n \to \mathfrak X$) we get a diagram
$$\xymatrix{
\dots &  \ar@<1.5ex>[l]\ar@<.5ex>[l]\ar@<-.5ex>[l]\ar@<-1.5ex>[l]  F(X_2) & \ar@<1ex>[l]\ar[l]\ar@<-1ex>[l]  F(X_1) & \ar@<0.5ex>[l] \ar@<-.5ex>[l]   F(X) 
 }$$
which is a semi-cosimplicial $\mathbb R$-vector space, associated to which is its (un-normalized) cochain complex $\check {\mathrm C} ( X_\bullet , F)$ (see \cite{Wei1}). The n'th term in $\check{\mathrm C} ( X_\bullet , F)$ is $F(X_n)$, and the differentials are the alternating sums of the maps $F(X_n) \to F(X_{n+1})$. The cohomology groups $\Check{\mathrm H} ^ i  (X_\bullet, F)$ of $\check {\mathrm C} ( X_\bullet , F)$ are called the \v{C}ech cohomology groups of $F$ with respect to the atlas $X \to \mathfrak X$.

More generally, if $F^\bullet$ is a bounded below complex then we get a double complex $\check {\mathrm C} (X_\bullet , F^\bullet)$, with vertical differentials given by the simplicial structure of $X_\bullet$ and horizontal differentials coming from the complex $F^\bullet$. The cohomology groups $\Check { \mathbb H} ^ i  (X_\bullet , F^\bullet)$ of the total complex of $\check {\mathrm C} (X_\bullet , F^\bullet)$ are called the \v{C}ech cohomology groups of $F^\bullet$ with respect to the atlas $X \to \mathfrak X$. 

The main result we will use is that if, for all $n,m$, the small sheaf $(F^m)_{X_n}$ over $X_n$ induced by $F^m$ is acyclic (as a sheaf over the manifold $X_n$), then there is an isomorphism of cohomology groups:
$$\Check { \mathbb H} ^ i  (X_\bullet , F^\bullet) \cong \mathbb H ^i (\mathfrak X_\text{sub} , F^\bullet)$$

\subsubsection{The de Rham complex of a Lie algebroid over a stack}

Let $\mathbf A$ be a Lie algebroid over a stack $\mathfrak X$. We will construct a complex of sheaves over the submersion site of $\mathfrak X$. For $k \geq 0$ we set:
$$ \Omega ^ k _\mathbf A : (U,u) \mapsto \Omega^k (A_{U,u})$$
and 
$$\Omega ^ k _\mathbf A : \left(  (f,\alpha) : (U,u) \to (V,v) \right)  \mapsto (\mathbf A_{f,\alpha})^\ast : \Omega^k (A_{V,v}) \to \Omega^k (A_{U,u})$$
If
$$\xymatrix{
(U,u) \ar[r] ^ {(f,\alpha)} & (V,v) \ar[r] ^ {(g,\beta)} & (W,w)
}$$
is a composable pair of morphisms in $\mathfrak X_\text{sub}$ then we have 
\begin{align*}
(\mathbf A_{f,\alpha}) ^\ast \circ (\mathbf A_{g,\beta}) ^\ast   & = \left( \mathbf A_{g,\beta} \circ \mathbf A_{f,\alpha} \right) ^\ast \\
& = \left( \mathbf A _{gf,\beta \alpha}  \right) ^\ast 
\end{align*}
and therefore we have a presheaf of $\mathbb R$-vector spaces:
$$\Omega ^k _\mathbf A : \mathfrak X_\text{sub} \to \mathbb R \text{-mod}$$
In terms of `small sheaves' we have the following; 
$$ \Omega ^ k _\mathbf A  : (U,u) \mapsto \Omega^k _{A_{U,u}}$$
and 
$$\Omega ^ k _{\mathbf A} : \left(  (f,\alpha) : (U,u) \to (V,v) \right)  \mapsto (\mathbf A_{f,\alpha})^\ast : \Omega^k _{A_{V,v}} \to f_\ast \left( \Omega^k _{A_{U,u}} \right) $$
This shows that $\Omega ^k _\mathbf A$ is a sheaf over $\mathfrak X_\text{sub}$ in the open cover topology.

For each $(U,u)$ we have the differential
$$ d _ {A_{U,u}} : \Omega ^k (A_{U,u}) \to \Omega ^{k+1} ( A_{U,u}) $$
and for each $(f,\alpha):(U,u) \to (V,v)$ these commute with the pullback maps:
$$ d_ {A_{U,u}} \circ (\mathbf A _{f,\alpha} ) ^\ast  =  (\mathbf A _{f,\alpha} ) ^\ast \circ d_{A_{V,v}} $$
We therefore have morphisms of sheaves
$$\mathbf d _\mathbf A : \Omega^k _\mathbf A \to \Omega^{k+1} _\mathbf A$$
and a complex of sheaves over $\mathfrak X_\text{sub}$, which we'll denote by $( \Omega ^\bullet _\mathbf A , \mathbf d_\mathbf A)$ or just $\Omega^\bullet _\mathbf A$.

Note that if $\mathfrak X$ is a differentiable stack then the de Rham complex of $\mathbf A$ will always be concentrated in infinitely many degrees: as described in \ref{Lie algebroids over stacks are not vector bundles},
if $x:X \to \mathfrak X$ is an atlas, and $A_{X,x}$ has rank $r$ as a vector bundle over $X$, then for any submersion $f:Y \to X$, we have that $(Y \to X \to \mathfrak X)$ is an object of the site $\mathfrak X_\text{sub}$, and if $Y \to X$ has relative dimension $s$, then $A_{Y,x \circ f}$ has rank $r+s$. Therefore $\Omega^k  (A_{Y,x \circ f})$ is non-zero for all $k \leq r+s$, and there exist submersions $Y \to X$ of arbitrarily high relative dimension $s$. However, $\Omega^\bullet _\mathbf A$ is bounded below, so is an object in $\mathrm{Ch} ^ + (\mathrm{Sh}_{\mathbb R }(\mathfrak X_\text{sub}))$ and $\mathrm D ^ +  (\mathrm{Sh}_{\mathbb R }(\mathfrak X_\text{sub}))$.

\begin{Definition}
$( \Omega ^\bullet _\mathbf A , \mathbf d_\mathbf A)$ is the de Rham complex of $\mathbf A$. It is an object in $\mathrm D ^ +  (\mathrm{Sh}_{\mathbb R }(\mathfrak X_\text{sub}))$.
\end{Definition}

The construction above is functorial for morphisms of Lie algebroids over stacks, so for any stack $\mathfrak X$ there is a contravariant functor
$$\Omega ^\bullet  : \mathcal{LA}_\mathfrak X \to \mathrm D ^ +  (\mathrm{Sh}_{\mathbb R }(\mathfrak X_\text{sub}))$$

\subsubsection{The de Rham complex with coefficients a representation}

Let $\mathbf A$ be a Lie algebroid over $\mathfrak X$ and $(\mathbf E,  \nabla)$ be a representation of $\mathfrak X$. Then we define the sheaf $\Omega ^ k _{(\mathbf A,\mathbf E)}$ as the tensor product in the category of sheaves of $\mathrm C ^\infty _\mathfrak X$-modules:
$$\Omega ^ k _{(\mathbf A,\mathbf E)} \equiv \Omega ^ k _\mathbf A  \otimes 
 _ { \mathrm C ^\infty _\mathfrak X}  \mathbf {\mathcal E}$$
where $\mathcal E$ is the sheaf of sections of $\mathbf E$. Explicitly, we have for each object $(U,u)$ of $\mathfrak X_\text{sub}$ :
\begin{align*}
\Omega ^ k _{(\mathbf A,\mathbf E)} : (U,u) & \mapsto  \Omega^k (A_{U,u}) \otimes _{\mathrm C ^\infty (U)} \Gamma (E_{U,u}) \\
& = \Gamma ( {\bigwedge} ^k A_{U,u} ^\ast \otimes E_{U,u} )
\end{align*}
and for each morphism $(f,\alpha):(U,u) \to (V,v)$ we have the map (see  \ref{Pullbacks of representations}): 
$$ (\mathbf A_{f,\alpha}) ^ \ast \otimes (\mathbf E_{f,\alpha}) ^\ast :  \Omega^k (A_{V,v}) \otimes _{\mathrm C ^\infty (U)} \Gamma (E_{V,v}) \to 
\Omega^k (A_{U,u}) \otimes _{\mathrm C ^\infty (U)} \Gamma (E_{U,u})$$
The compatibility condition for the representations $\nabla_{U,u}$ (see \ref{The data determining a representation}) imply that the differentials
$$ \mathrm d_{ (  A_{U,u},\nabla_{U,u}  )  } :  \Omega^k (A_{U,u}) \otimes _{\mathrm C ^\infty (U)} \Gamma (E_{U,u}) 
\to \Omega^{k+1} (A_{U,u}) \otimes _{\mathrm C ^\infty (U)} \Gamma (E_{U,u}) $$
give a morphism of sheaves of vector spaces over $\mathfrak X_\text{sub}$:
$$\mathbf  d _ { ( \mathbf A , \nabla ) }  : \Omega ^ k _ {(\mathbf A, \mathbf E)}  \to \Omega ^ {k+1} _ {(\mathbf A, \mathbf E)} $$
We therefore have a bounded below complex which we'll denote by $(\Omega ^ \bullet _{(\mathbf A, \mathbf E)} , \mathbf d _ {(\mathbf A, \mathbf \nabla)} )$ or just $\Omega ^ \bullet _{(\mathbf A, \mathbf E)}$.

\begin{Definition}
$(\Omega ^ \bullet _{(\mathbf A, \mathbf E)} , \mathbf d _ {(\mathbf A, \mathbf \nabla)} )$ is the de Rham complex of $\mathbf A$ with values in the representation $(\mathbf E, \nabla)$. It is an object in $\mathrm D ^ +  (\mathrm{Sh}_{\mathbb R }\mathfrak X_\text{sub})$.
\end{Definition}

\subsubsection{Cohomology}

We can now define Lie algebroid cohomology for Lie algebroids over stacks:

\begin{Definition} \label{Lie algebroid cohomology over stacks}
If $\mathbf A$ is a Lie algebroid over a stack $\mathfrak X$ then we define the Lie algebroid cohomology $\mathrm H^\bullet (\mathbf A)$ of $\mathbf A$ as the hypercohomology 
$$\mathrm H^\bullet (\mathbf A)  = \mathbb H^\bullet (\mathfrak X_\text{sub} ,  \Omega ^\bullet _\mathbf A )$$
of the de Rham complex of $\mathbf A$. 

If $(\mathbf E, \nabla)$ is a representation of $\mathbf A$ then we define the Lie algebroid cohomology $\mathrm H ^\bullet (\mathbf A, \mathbf E, \nabla)$ with coefficients in $(\mathbf E, \nabla)$ as the hypercohomology $$\mathrm H ^\bullet (\mathbf A, \mathbf E, \nabla) = \mathbb H ^\bullet (\mathfrak X_\text{sub} ,  \Omega ^ \bullet _ {(\mathbf A, \mathbf E)}  )$$
of the de Rham complex of $\mathbf A$ with values in the representation $(\mathbf E, \nabla)$.
\end{Definition}

Since the de Rham functor $\Omega^\bullet$ and the hypercohomology functor $\mathrm R \Gamma $ are both functors, Lie algebroid cohomology is a contravariant functor
$$\mathrm H^\bullet : \mathcal{LA}_\mathfrak X \to \mathbb R \text{-mod}$$

Under some reasonable assumptions on a stack $\mathfrak X$ we can compute Lie algebroid cohomology groups in terms of the \v{C}ech cohomology described in \ref{Sheaf cohomology for differentiable stacks}:

\begin{Proposition}
Let $\mathbf A$ be a Lie algebroid over a differentiable stack $\mathfrak X$, and $X \to \mathfrak X$ be an atlas. If the manifolds $X_n$ are all Hausdorff then there are isomorphims of cohomology groups:
$$ \mathrm H^\bullet (\mathbf A)   \cong \Check{\mathbb H}^\bullet (X_\bullet ,  \Omega ^\bullet _\mathbf A )$$
Under the same assumptions, if $(\mathbf E,\nabla)$ is a representation of $\mathbf A$ then there are isomorphisms:
$$\mathrm H ^\bullet (\mathbf A, \mathbf E, \nabla)  \cong \Check{\mathbb H} ^\bullet (X_\bullet ,  \Omega ^ \bullet _ {(\mathbf A, \mathbf E)}  )  $$
\end{Proposition}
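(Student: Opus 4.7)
The plan is to apply the criterion recalled in section \ref{Sheaf cohomology for differentiable stacks}: for a bounded below complex $F^\bullet$ over $\mathfrak X_\text{sub}$, one has $\Check{\mathbb H}^\bullet(X_\bullet , F^\bullet) \cong \mathbb H^\bullet(\mathfrak X_\text{sub}, F^\bullet)$ as soon as the small sheaves $(F^m)_{X_n}$ on $X_n$ are acyclic for all $n,m$. Since both cohomology theories are defined by taking hypercohomology of the relevant complex, it suffices to check this acyclicity for $F^\bullet = \Omega^\bullet_\mathbf A$, and separately for $F^\bullet = \Omega^\bullet_{(\mathbf A, \mathbf E)}$.

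First I would identify the small sheaves concretely. For each $n$ choose a structure map $x_n : X_n \to \mathfrak X$, which is a submersion and hence an object of $\mathfrak X_\text{sub}$; the data of $\mathbf A$ determines a Lie algebroid $A_{X_n, x_n}$ over $X_n$, and by Proposition \ref{equivalence over open category} restriction of $\mathbf A$ along an open inclusion $V \hookrightarrow X_n$ agrees with the ordinary Lie algebroid restriction of $A_{X_n, x_n}$. It follows that
$$(\Omega^k_\mathbf A)_{X_n} \;=\; \Omega^k_{A_{X_n, x_n}},$$
the sheaf of sections of the vector bundle $\bigwedge^k A_{X_n, x_n}^\ast$ on $X_n$. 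Similarly, using the description of the restriction of $\mathrm{Rep}$ to $\mathrm{Open}_{X_n}$, the small sheaf $(\Omega^k_{(\mathbf A, \mathbf E)})_{X_n}$ is the sheaf of sections of the vector bundle $\bigwedge^k A_{X_n, x_n}^\ast \otimes E_{X_n, x_n}$ on $X_n$.

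Next I would invoke the standard fact that on a paracompact Hausdorff manifold the sheaf of smooth sections of any smooth vector bundle is fine: it is a module over the soft sheaf $\mathrm C^\infty_{X_n}$ and admits partitions of unity subordinate to any open cover. Fine sheaves on paracompact Hausdorff spaces have vanishing positive sheaf cohomology, so under the hypothesis that every $X_n$ is Hausdorff (paracompactness being part of our convention on manifolds), both $(\Omega^k_\mathbf A)_{X_n}$ and $(\Omega^k_{(\mathbf A, \mathbf E)})_{X_n}$ are acyclic for all $n,k$. The criterion from \cite{BeXu1}, \cite{BuShSp1} then yields the two claimed isomorphisms.

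The only delicate point, rather than a genuine obstacle, is the identification of the small sheaves: one must check that the presheaf $\Omega^k_\mathbf A$ on $\mathfrak X_\text{sub}$, when restricted to open subsets of $X_n$ via the atlas, reproduces the honest de Rham sheaf of the Lie algebroid $A_{X_n,x_n}$, and not merely something isomorphic to it after a choice of pullback. This is precisely what Proposition \ref{equivalence over open category} provides for Lie algebroids, and its analogue, established in the proof of the descent result for $\mathrm{Rep}$ in section \ref{The stack Rep}, provides it for representations. Once that identification is in hand the argument is purely an application of the comparison theorem for \v{C}ech and derived functor hypercohomology.
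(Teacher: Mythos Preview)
Your proposal is correct and follows essentially the same approach as the paper: identify the small sheaves $(\Omega^k_\mathbf A)_{X_n}$ as sheaves of sections of the vector bundles $\bigwedge^k A_{X_n}^\ast$ (and similarly with coefficients), note these are acyclic on Hausdorff manifolds, and apply the comparison result from \cite{BeXu1}, \cite{BuShSp1} recalled in section \ref{Sheaf cohomology for differentiable stacks}. Your version is somewhat more detailed in justifying the identification of the small sheaves via Proposition \ref{equivalence over open category}, but the argument is the same.
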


\begin{proof}
For each $n$ and $k$ the small sheaf $(\Omega^k _\mathbf A)_{X_n}$ over the manifold $X_n$ induced by $\Omega^k _\mathbf A$ is the sheaf of sections of the vector bundle ${\bigwedge}^k A_{X_n} ^\ast$. 
The result then follows from the results of \cite{BeXu1} and \cite{BuShSp1} described in \ref{Sheaf cohomology for differentiable stacks} and the fact that the sheaf of sections of a vector bundle over a Hausdorff manifold is acyclic. The case of cohomology with coefficients is the same.
\end{proof}

\subsection{Lie algebroids over \'etale stacks}  \label{Lie algebroids over etale stacks}

\subsubsection{The rank of a Lie algebroid over an \'etale stack} \label{Use of etale site}

If $\mathfrak X$ is an \'etale differentiable stack then it is sufficient to work over the \'etale site of $\mathfrak X$: if $X \to \mathfrak X$ is an \'etale atlas then we have equivalences of categories:
$$
\mathrm{Hom} _ {\mathrm{PSh}(\mathbf{Man}_\text{\'et})} ( \mathfrak X_\text{\'et} , \mathcal {LA} )  \simeq 
\mathrm{Des} ( X \to \mathfrak X , \mathcal{LA} ) 
\simeq
\mathrm{Hom} _ {\mathrm{PSh}(\mathbf{Man}_\text{sub})} ( \mathfrak X_\text{sub} , \mathcal {LA} )
$$
Over $\mathbf{Man}_\text{\'et}$ we have a natural morphism of stacks $\mathcal{LA} \to \mathrm{Vect}$  (see
\ref{Lie algebroids vs vector bundles}), and therefore a Lie algebroid over an \'etale stack $\mathfrak X$ determines a morphism $\mathfrak X_\text{\'et} \to \mathrm{Vect}$ in a natural way.

In particular, unlike the case of Lie algebroids over general stacks, a Lie algebroid $\mathbf A$ over an \'etale stack $\mathfrak X$ has a well defined rank - the rank of the vector bundle $A_{U,u}$ over $U$ for any \'etale map $u: U \to \mathfrak X$.

\subsubsection{Cohomology of Lie algebroids over \'etale stacks}  \label{Cohomology of Lie algebroids over etale stacks}

In the case of orbifolds / Deligne-Mumford differentiable stacks, the \v{C}ech complex associated to a Lie algebroid simplifies. Let $x:X \to \mathfrak X$ be an \'etale atlas of such a stack, so that $X_1 \rightrightarrows X$ is a proper \'etale Lie groupoid (recall that $X_1 = X\times _\mathfrak X X$). This implies that the simplicial maps
$$\xymatrix{
\dots \ar@<1.5ex>[r]\ar@<.5ex>[r]\ar@<-.5ex>[r]\ar@<-1.5ex>[r] & X_2 \ar@<1ex>[r]\ar[r]\ar@<-1ex>[r] & X_1 \ar@<0.5ex>[r] \ar@<-.5ex>[r]  & X 
 }$$
are all \'etale. Choose, for each $n$, one of the maps $X_n \to X$, and label it $x_n$. If $\mathbf A$ is a Lie algebroid over $\mathfrak X$ then since each map $x_n$ is \'etale we have:
$$A_{X_n , x \circ x_n} \cong (x_n)^! A_{X,x} \cong (x_n)^\ast A_{X,x}$$
Moreover, for each $n$ and each $k \geq 0$, the corresponding term in the \v{C}ech complex $\check {\mathrm C} (X_\bullet , \Omega_\mathbf A^\bullet)$ is
$$ \Omega ^ k (A_{X_n,(x\circ x_n)})  =  \Gamma \left( {\bigwedge }  ^k A^\ast_{X_n, x \circ x_n} \right) \cong   \Gamma \left(   (x_n)^\ast  {\bigwedge}^k A^\ast_{X,x}   \right)  $$
The same argument as given in \cite{TuXu1}, Lemma 3.1, now applies: under the isomorphisms above, the vertical differentials in the double complex $\check {\mathrm C} (X_\bullet , \Omega_\mathbf A^\bullet)$ correspond to pulling back sections of the vector bundles  $(x_n)^\ast  {\bigwedge}^k A^\ast_{X,x}$ along the maps $X_{n+1}  \to X_n$, and so the cohomology of the $k$'th column is isomorphic to the groupoid cohomology of $X_1 \rightrightarrows X$ with coefficients in the vector bundle ${\bigwedge}^k A_{X,x}^\ast$ (see \cite{Cr1} for the general definition of groupoid cohomology). It then follows from the fact, proved in \cite{Cr1}, that groupoid cohomology of proper Lie groupoids is always zero in positive degrees, and in degree zero is equal to the space of invariant sections of the coefficient vector bundle, and the proof given in \cite{TuXu1}, that there is an isomorphism of cohomology groups:
$$ \Check{\mathbb H}^\bullet (X_\bullet ,  \Omega ^\bullet _\mathbf A )  \cong  \mathrm H ^\bullet \left( \Omega^\bullet(A_{X,x})^{X_1 \rightrightarrows X} \right)$$ 
where the right hand side is the cohomology of the subcomplex of $\Omega^\bullet (A_{X,x})$ given by $X_1 \rightrightarrows X$ invariant sections of the ${\bigwedge}^k A_{X,x}^\ast$'s. Recalling the description of the global sections functor given in \ref{Sheaf cohomology for differentiable stacks} we then have:

\begin{Proposition} \label{Deligne-Mumford cohomology}
Let $\mathbf A$ be a Lie algebroid over a Deligne-Mumford differentiable stack $\mathfrak X$. If there exists an \'etale atlas $X \to \mathfrak X$ such that the manifolds $X_n$ are all Hausdorff then there are isomorphisms:
$$ \mathrm H ^\bullet ( \mathbf A)  \cong \mathrm H^\bullet \left( \Gamma \left( \mathfrak X_\mathrm{sub} ,\Omega^\bullet _\mathbf A \right)  \right)$$
where the right hand side is the cohomology of the complex of global sections of the de Rham complex of $\mathbf A$.
\end{Proposition}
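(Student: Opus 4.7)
The plan is to follow the strategy already indicated in the paragraph preceding the statement, making each step explicit, and then to spell out why the final identification with global sections on $\mathfrak X_\text{sub}$ is correct.

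First I would invoke the previous proposition: since the $X_n$ are all Hausdorff, the small sheaves $(\Omega^k_\mathbf A)_{X_n}$ are acyclic, so
\[
\mathrm H^\bullet(\mathbf A) = \mathbb H^\bullet(\mathfrak X_\text{sub},\Omega^\bullet_\mathbf A) \cong \check{\mathbb H}^\bullet(X_\bullet,\Omega^\bullet_\mathbf A).
\]
Thus it suffices to compute the cohomology of the total complex of the double complex $\check{\mathrm C}(X_\bullet,\Omega^\bullet_\mathbf A)$. Because $x:X \to \mathfrak X$ is \'etale, all face maps $X_{n+1} \to X_n$ are \'etale, and by the canonical identifications of \ref{etale pullbacks} we have $A_{X_n,x\circ x_n} \cong x_n^\ast A_{X,x}$ and hence
\[
\Omega^k(A_{X_n, x\circ x_n}) \cong \Gamma\bigl(x_n^\ast \textstyle\bigwedge^k A_{X,x}^\ast\bigr).
\]
Under these identifications the vertical (simplicial) differentials are exactly the alternating sums of the pullbacks along the face maps of the nerve.

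Next I would analyse the double complex column by column. Fixing $k$, the $k$th column is, by the identification above, the cochain complex computing the groupoid cohomology of the proper \'etale Lie groupoid $X_1 \rightrightarrows X$ with coefficients in the vector bundle $\bigwedge^k A_{X,x}^\ast$ (using the representation inherited from the isomorphism $\psi$ of section \ref{Lie algebroids in terms of atlases}, as in Crainic \cite{Cr1}). This is precisely the situation of \cite{TuXu1}, Lemma 3.1. By the vanishing theorem of \cite{Cr1} for groupoid cohomology of proper Lie groupoids, each column has cohomology concentrated in degree zero, where it equals the space of $X_1 \rightrightarrows X$-invariant sections of $\bigwedge^k A_{X,x}^\ast$. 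Consequently the spectral sequence of the double complex, filtered by columns, degenerates at $E_2$, and
\[
\check{\mathbb H}^\bullet(X_\bullet,\Omega^\bullet_\mathbf A) \cong \mathrm H^\bullet\bigl(\Omega^\bullet(A_{X,x})^{X_1 \rightrightarrows X}, \mathrm d_{A_{X,x}}\bigr).
\]

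Finally I would identify the subcomplex of invariant sections with $\Gamma(\mathfrak X_\text{sub},\Omega^\bullet_\mathbf A)$. By the description of the global sections functor in \ref{Sheaf cohomology for differentiable stacks}, for each $k$
\[
\Gamma(\mathfrak X_\text{sub},\Omega^k_\mathbf A) = \ker\bigl(\Omega^k_\mathbf A(X) \rightrightarrows \Omega^k_\mathbf A(X_1)\bigr),
\]
which is exactly the space of sections of $\bigwedge^k A_{X,x}^\ast$ invariant under the two pullbacks along $s,t:X_1 \to X$, i.e.\ the invariants under the $X_1 \rightrightarrows X$-action. Since the differentials $\mathbf d_\mathbf A$ restrict to the Lie algebroid differential on $\Omega^\bullet(A_{X,x})$, this identification is one of cochain complexes, yielding the desired isomorphism of cohomologies.

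The main obstacle will be the column-wise vanishing step: one has to verify that the column cochain complex really is the groupoid cohomology complex for a well-defined representation of $X_1 \rightrightarrows X$ on $\bigwedge^k A_{X,x}^\ast$, which requires checking that the descent data $\psi:s^! A_{X,x} \to t^! A_{X,x}$ for $\mathbf A$ restricts, via the \'etale identification $s^! \cong s^\ast$, $t^! \cong t^\ast$, to a genuine vector-bundle action of the groupoid on $A_{X,x}$ (and hence on its exterior powers), compatibly with the simplicial face maps. Once this compatibility is nailed down, the rest is an appeal to \cite{Cr1,TuXu1} and a degeneration argument.
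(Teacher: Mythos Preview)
Your proposal is correct and follows essentially the same line as the paper: reduce to \v{C}ech hypercohomology via the previous proposition, use the \'etale identifications to recognise each column as the groupoid cochain complex of a proper groupoid with coefficients in $\bigwedge^k A_{X,x}^\ast$, invoke Crainic's vanishing to collapse the double complex to its zeroth row of invariants, and then identify invariants with global sections via the kernel description in \ref{Sheaf cohomology for differentiable stacks}. Your explicit mention of the spectral-sequence degeneration and of the need to check that $\psi$ induces a genuine vector-bundle representation on $\bigwedge^k A_{X,x}^\ast$ is a useful elaboration of points the paper handles by reference to \cite{TuXu1} and \cite{Cr1}.
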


Applying this to the case of manifolds we get:

\begin{Proposition} \label{Manifold cohomology}
If $X$ is a Hausdorff manifold, then the canonical equivalence of categories $\mathcal{LA}_{\underline X} \simeq \mathcal{LA}_X$
induces isomorphisms between the stack-theoretic and standard Lie algebroid cohomology groups.
\end{Proposition}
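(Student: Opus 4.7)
The plan is to derive this as a special case of Proposition \ref{Deligne-Mumford cohomology}. For a Hausdorff manifold $X$, the associated stack $\underline X$ is Deligne-Mumford: it is the classifying stack of the trivial Lie groupoid $X \rightrightarrows X$, which is étale (all structure maps are identities) and proper (since $X$ is Hausdorff the diagonal $(s,t) = \Delta_X : X \to X \times X$ is a closed embedding, hence proper). The Yoneda map $\mathrm{id}_X : X \to \underline X$ is a representable étale atlas, and the associated simplicial manifold $X_\bullet$ is constant with $X_n = X$ and all face maps equal to $\mathrm{id}_X$. Each $X_n$ is therefore Hausdorff, so the hypotheses of Proposition \ref{Deligne-Mumford cohomology} are satisfied.

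Let $\mathbf A \in \mathcal{LA}_{\underline X}$, and let $A = A_{X,\mathrm{id}_X} \in \mathcal{LA}_X$ be its image under the canonical equivalence $\mathcal{LA}_{\underline X} \simeq \mathcal{LA}_X$. Proposition \ref{Deligne-Mumford cohomology} yields
$$\mathrm H^\bullet(\mathbf A) \cong \mathrm H^\bullet\bigl( \Gamma(\underline X_\text{sub}, \Omega^\bullet_\mathbf A) \bigr).$$
Using the formula recalled in \ref{Sheaf cohomology for differentiable stacks} applied to the atlas $\mathrm{id}_X$, the global sections group in degree $k$ is
$$\Gamma(\underline X_\text{sub}, \Omega^k_\mathbf A) = \mathrm{Ker}\!\left( \Omega^k_\mathbf A(X,\mathrm{id}_X) \rightrightarrows \Omega^k_\mathbf A(X, \mathrm{id}_X) \right),$$
in which both arrows are the identity, since the two projections $X \times_{\underline X} X = X \rightrightarrows X$ are both equal to $\mathrm{id}_X$ and pullback along the identity is the identity functor. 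Hence the kernel is all of $\Omega^k_\mathbf A(X, \mathrm{id}_X) = \Omega^k(A)$.

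It then remains only to check that under this identification the differential on $\Gamma(\underline X_\text{sub}, \Omega^\bullet_\mathbf A)$ agrees with the Lie algebroid differential $d_A$; this is immediate from the construction of $\mathbf d_\mathbf A$, whose component at $(X, \mathrm{id}_X)$ is $d_{A_{X, \mathrm{id}_X}} = d_A$. The argument for cohomology with values in a representation is entirely analogous, using the same atlas and the corresponding version of Proposition \ref{Deligne-Mumford cohomology} for $(\mathbf A, \mathbf E, \nabla)$. There is no genuine obstacle here: the result is a direct unwinding of the definitions once Proposition \ref{Deligne-Mumford cohomology} is available, and the only mild care required is in tracking how the equivalence $\mathcal{LA}_{\underline X} \simeq \mathcal{LA}_X$ is transported through the construction of the sheaf $\Omega^\bullet_\mathbf A$.
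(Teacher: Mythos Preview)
Your proof is correct and follows essentially the same route as the paper: both apply Proposition \ref{Deligne-Mumford cohomology} to the trivial groupoid $X \rightrightarrows X$ presenting $\underline X$, identify the $X_n$ with $X$, and observe that the invariant/global sections of $\Omega^\bullet_\mathbf A$ reduce to $\Omega^\bullet(A)$. You give a bit more detail (properness of the diagonal, the explicit kernel computation, the coefficient case), but the argument is the same.
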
 

\begin{proof}
The stack $\underline X$ is Deligne-Mumford and $X \to \underline X$ is an atlas, the Lie groupoid $X \times _{\underline X} X \rightrightarrows X$ is just the trivial Lie groupoid $X \rightrightarrows X$, and the manifolds $X_n \cong X$ are Hausdorff. If $\mathbf A$ is a Lie algebroid over $\underline X$ with induced Lie algebroid $A$ over $X$, then from Proposition \ref{Deligne-Mumford cohomology} we have
$$ \mathrm H^\bullet (\mathbf A )  
\cong  \mathrm H ^\bullet \left( \Omega^\bullet(A)^{X \rightrightarrows X} \right) 
= \mathrm H ^\bullet (A)$$
\end{proof}

\subsubsection{Lie algebroids over \'etale Lie groupoids}

If we fix an \'etale atlas $X \to \mathfrak X$ of an \'etale stack $\mathfrak X$ then we'll show that we get a description of Lie algebroids over $\mathfrak X$ that almost coincides with the definition given in \cite{Ro1} in terms of \'etale groupoids. 

Recall that if $E$ is a vector bundle with a left action of an \'etale groupoid then there is also a naturally defined action on the sheaf of sections of $E$. The tangent bundle $TX$ of the base of an \'etale groupoid $G\rightrightarrows X$ has a natural left action of $G\rightrightarrows X$ because each morphism $g \in G$ defines a germ of a diffeomorphism from $s(g)$ to $t(g)$, see for example \cite{MoMr1} and \cite{MoMr2} for this and more generally for the theory of sheaves over \'etale groupoids.

\begin{Proposition}  \label{G-sheaves of Lie algebras...}
Let $G\rightrightarrows X$ be an \'etale Lie groupoid, $A$ a Lie algebroid over $X$, and $\psi:s^\ast A \to t^\ast A$ an isomorphism of vector bundles that satisfies the cocycle condition. Then the following are equivalent:
\begin{itemize}
\item $\psi: s^\ast A \to t^\ast A$ is an isomorphism of Lie algebroids, where $s^\ast A$ and $t^\ast A$ are equipped with Lie algebroid structures via the isomorphisms of vector bundles $s^\ast A \cong s^! A$ and $t^\ast A \cong t^! A$.
\item The anchor map $a:A \to TX$ is equivariant with respect to the actions of $G\rightrightarrows X$, and the sheaf $\mathcal A$ of sections of $A$ is a $G \rightrightarrows X$-sheaf of Lie algebras.
\end{itemize}
\end{Proposition}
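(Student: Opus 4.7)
The plan is to split the Lie algebroid morphism condition on $\psi$ into its two defining pieces—compatibility with anchors and compatibility with brackets on sections—and show that each corresponds to one of the conditions in the second bullet. Throughout, I would use the sheaf-theoretic description of pullbacks along étale maps from \ref{etale pullbacks}: it identifies $\Gamma(-, s^\ast A)$ with $s^{-1}\mathcal{A}$ and $\Gamma(-, t^\ast A)$ with $t^{-1}\mathcal{A}$ as sheaves of Lie algebras on $G$, and gives the anchor on $s^\ast A$ (resp.\ $t^\ast A$) at a point $g\in G$ as $w\mapsto (s_\ast|_g)^{-1}(a(w))$ (resp.\ $w'\mapsto (t_\ast|_g)^{-1}(a(w'))$). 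Since $\psi$ is given as a vector bundle isomorphism satisfying the cocycle condition, only the two Lie-algebroid-specific conditions need to be analysed.

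For the anchor condition, write $\psi_g : A_{s(g)} \to A_{t(g)}$ for the linear map given by the restriction of $\psi$ to the fibre over $g$. The identity $a^{t^\ast}\circ \psi = a^{s^\ast}$ unpacks, after applying $t_\ast|_g$, to
$$a(\psi_g(w)) = (t_\ast|_g)\circ (s_\ast|_g)^{-1}(a(w)).$$
The composite $(t_\ast|_g)\circ (s_\ast|_g)^{-1}: T_{s(g)}X\to T_{t(g)}X$ is precisely the natural action of $g$ on $TX$: choosing a local bisection through $g$ yields a germ of diffeomorphism from $s(g)$ to $t(g)$ whose differential is exactly this composite. Hence the anchor condition holds if and only if $a:A\to TX$ is $G$-equivariant.

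For the bracket condition, the sheaf-theoretic description identifies $\psi$, viewed as a vector bundle isomorphism $s^\ast A \to t^\ast A$, with an isomorphism of sheaves of vector spaces $\bar\psi: s^{-1}\mathcal{A} \to t^{-1}\mathcal{A}$; the cocycle condition on $\psi$ translates verbatim to the cocycle condition for $\bar\psi$, which is by definition a $G$-action on the sheaf of vector spaces $\mathcal{A}$. Moreover, because the Lie bracket on any Lie algebroid is local (as recalled in the section on locality of the Lie bracket), $\psi$ induces a Lie algebra morphism on global sections if and only if $\bar\psi$ is a morphism of sheaves of Lie algebras, which is by definition the statement that $\mathcal{A}$ is a $G$-sheaf of Lie algebras.

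Combining the two equivalences gives the proposition. The only genuinely non-formal step is identifying the map $(t_\ast|_g)\circ (s_\ast|_g)^{-1}$ with the canonical $G$-action on $TX$, which I expect to be the main conceptual point to verify carefully; it follows from the standard description of $G$-sheaves on the base of an étale groupoid via germs of bisections, as in the references cited in the excerpt.
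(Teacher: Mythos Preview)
Your proposal is correct and follows essentially the same strategy as the paper: split the Lie algebroid morphism condition on $\psi$ into its anchor part and its bracket part, and match each to the corresponding condition in the second bullet. The paper organizes the argument slightly differently, working uniformly through bisections $W\subset G$ and the associated vector bundle isomorphisms $L_W: A|_{s(W)}\to A|_{t(W)}$, showing that $\psi$ is a Lie algebroid isomorphism iff every $L_W$ is, and then reading off the two conditions from $L_W$; you instead work pointwise for the anchor and directly with the sheaf pullbacks $s^{-1}\mathcal A$, $t^{-1}\mathcal A$ for the bracket, but this amounts to the same computation.
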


\begin{proof}
Let $W \subset G$ be a bisection, by which we mean an open subset of $G$ such that the restrictions of $s$ and $t$ to $W$ are diffeomorphisms onto their images. Associated to $W$ is a diffeomorphism
$$l_W = t \circ {(s|_{s(W)})}^{-1}  : s(W) \to t(W)$$
The action of $G\rightrightarrows X$ on $A$ gives an isomorphism $L_W$ of vector bundles covering $l_W$:
$$\xymatrix{
A | _{s(W)} \ar[d] \ar[r] ^{L_W} & A |_{t(W)}  \ar[d] \\
s(W)  \ar[r] _{l_W} & t(W) 
}$$
where $L_W$ is defined as the composition of the diffeomorphisms
$$\xymatrix{
 A |_{s(W)}  \ar[r] &  (s^\ast A) |_W  \ar[r] ^\psi &  (t^\ast A)|_W \ar[r] & A|_{t(W)}
}$$
It then follows that $\psi:s^\ast A \to t^\ast A$ is an isomorphism of Lie algebroids over $G$ if and only if $L_W$ is an isomorphism of Lie algebroids for every bisection $W$. 

The action of $G\rightrightarrows X$ on $TX$ is given by 
\begin{align*} 
g : T_{s(g)} X & \to T_{t(g)} X  \\
v & \mapsto (l_W)_\ast (v) 
\end{align*}
where $g \in G$ and $W$ is a bisection containing $g$, and the action of $G\rightrightarrows X$ on $\mathcal A$ is given by
\begin{align*} 
g : \mathcal A_{s(g)}  & \to \mathcal A_{t(g)}   \\
[\xi]_{s(g)} & \mapsto [L_W(\xi)]_{t(g)}
\end{align*}
where $[\xi]_{s(g)}$ is an element of the stalk $\mathcal A_{s(g)}$ represented by a local section $\xi \in \Gamma(s(w),A)$, and $L_W(\xi) \in \Gamma(t(W) , A)$ is the section of $A$ over $t(W)$ corresponding to $\xi$ under the bijection $\Gamma(s(W),A) \cong \Gamma(t(W),A)$ induced by $L_W$.

The map $L_W : A|_{s(W)} \to A|_{t(W)}$ corresponding to a bisection $W$ is an isomorphism of Lie algebroids if and only if 
$$ a \circ L_W = (l_W)_\ast \circ a $$
and 
$$ [ L_W (\xi) , L_W (\xi') ] = L_W [\xi,\xi'] $$
for all $\xi,\xi' \in \Gamma(s(W),A)$. It is then clear that $L_W$ is an isomorphism of Lie algebroids for all bisections $W$ if and only if the anchor map $a:A \to TX$ is equivariant and for each $g \in G$, the map $\mathcal A_{s(g)} \to \mathcal A_{t(g)}$ is an isomorphism of Lie algebras, which is exactly the condition that $\mathcal A$ is a $G\rightrightarrows X$-sheaf of Lie algebras.
\end{proof}

\subsubsection{Comparison with the work of Roggiero Ayala}

In \cite{Ro1} a Lie algebroid over an \'etale groupoid $G\rightrightarrows X$ is defined to be a Lie algebroid $A$ over $X$ together with a left action of $G\rightrightarrows X$ such that the anchor map is equivariant and the space of $G\rightrightarrows X$ invariant sections is closed under the Lie bracket. The condition that $\mathcal A$ is a $G\rightrightarrows X$-sheaf of Lie algebras implies that the space of invariant sections is closed under the Lie bracket, but is in general a stronger condition. For example, if $G$ is a discrete group, considered as a Lie groupoid $G\rightrightarrows \ast$ over a point, and $\mathfrak g$ is a Lie algebra with a linear action of $G$, then the weaker condition is that $[\mathfrak g ^G, \mathfrak g ^G] \subset \mathfrak g ^G$, which is vacuous if $G$ acts irreducibly on $\mathfrak g$, whereas the stronger condition is that $G$ acts by Lie algebra automorphisms.

\newpage

\newpage

\section{Lie algebroids and $\mathcal{LA}$-groupoids}  \label{Lie algebroids over Lie groupoids}

In this section we explain the relationship between Lie algebroids over Lie groupoids, as defined in \ref{Lie algebroids in terms of atlases}, and $\mathcal{LA}$-groupoids, which are groupoid objects in the category of Lie algebroids. The main result is Theorem \ref{inverse functor}, which gives an equivalence between the category of Lie algebroids over Lie groupoids, and a certain subcategory $\text{!-}\mathbf{Gpd}\mathcal{LA}$ of the category $\mathbf{Gpd}\mathcal{LA}$ of $\mathcal{LA}$-groupoids.  The definition of this subcategory, and the details of the equivalence, are similar to the theory of `vacant' $\mathcal{LA}$-groupoids developed in \cite{Ma1}.

In particular, the results of this section show that a Lie algebroid $\mathbf A$ over a differentiable stack $\mathfrak X$ induces a $\mathcal{LA}$-groupoid with base $X\times_\mathfrak X X \rightrightarrows X$ for every atlas $X \to \mathfrak X$ of $\mathfrak X$. This clarifies the relationship between Lie algebroids over stacks and $\mathcal{LA}$-groupoids, as suggested in \cite{Meh1}.

Most of the material in this section is independent of the results of sections \ref{The stack of Lie algebroids} and \ref{Lie algebroids over differentiable stacks} and this section is more or less self-contained.

\subsection{Actions of Lie groupoids on Lie algebroids}  \label{Actions of Lie groupoids on Lie algebroids}

We first formulate the notion of action in terms of an isomorphism between the two pullbacks of a Lie algebroid along the source and target maps. Since Lie algebroid pullbacks are not vector bundle pullbacks this does not give an action of the groupoid on the underlying vector bundle in the usual sense. However, it turns out that there is an induced action of the tangent groupoid, and it is possible to formulate the definition purely in terms of this action.

\subsubsection{Actions in terms of $!$-cocycles}

\begin{Definition}
A Lie algebroid over a Lie groupoid $G\rightrightarrows X$ is a Lie algebroid $A$ over $X$ together with an isomorphism: 
$$\psi: s^! A \to t^! A$$ 
of Lie algebroids over $G$ that satisfies the cocycle condition: 
$$m^! \psi = \mathrm{pr}_1^! \psi \circ \mathrm{pr}_2 ^! \psi$$ 
over $G_2 = G_s \! \times _t G$, where $\mathrm{pr}_1,\mathrm{pr}_2$ are the projections
\begin{align*}
\mathrm{pr}_1,\mathrm{pr}_2 : G _s\! \times _t G & \to G \\
\mathrm{pr}_1 (g,h) & = g \\
\mathrm{pr}_2 (g,h) & = h
\end{align*}

 We'll denote a Lie algebroid over a Lie groupoid by a triple $(G\rightrightarrows X, A, \psi)$ or just $(A,\psi)$ or $A$ when $G\rightrightarrows X$ is clear.
\end{Definition}

\begin{Remark}
We have suppressed the natural isomorphisms in the cocycle condition. Written out in full, the cocycle condition is:
$$(c_{t,\mathrm{pr}_1} \circ c_{t,m}^{-1}) \circ m^! \psi \circ (c_{s,m} \circ c_{s,\mathrm{pr}_2}^{-1}) = \mathrm{pr}_1^! \psi \circ (c_{s,\mathrm{pr}_1} \circ c_{t,\mathrm{pr}_2}^{-1}) \circ \mathrm{pr}_2^! \psi$$
as maps from $\mathrm{pr}_2^! s^! A$ to $\mathrm{pr}_1^! t^! A$, where the maps of the form $c_{t,\mathrm{pr}_1}$ are the isomorphisms determined by the universal property of pullbacks of Lie algebroids, see \ref{LA as a weak presheaf of categories}.
\end{Remark}

\begin{Definition} \label{!-morphisms}
If $(A,\psi)$ is a Lie algebroid over $G\rightrightarrows X$ and $(A',\psi')$ is a Lie algebroid over $G'\rightrightarrows X'$, then a morphism from $(G\rightrightarrows X,A,\psi)$ to $(G'\rightrightarrows X',A',\psi')$ is a triple of smooth maps $(\phi,f,\rho)$ where:
$$\xymatrix{
G \ar@<-.5ex>[d] \ar@<.5ex>[d] \ar[r]^\phi  & G'  \ar@<-.5ex>[d] \ar@<.5ex>[d] \\
X \ar[r]_f  & X'
}$$
is a morphism of Lie groupoids, 
$$\xymatrix{
A \ar[d] \ar[r]^\rho  & A'  \ar[d] \\
X \ar[r]_f & X' 
}$$
is a morphism of Lie algebroids, and the diagram:
$$\xymatrix{
s^! A \ar[d]_\psi \ar[r]^{(\phi_\ast,\rho)} & s^! A' \ar[d]^{\psi'}  \\
t^! A \ar[r] _{(\phi_\ast, \rho)} & t^! A' 
}$$
commutes. We'll denote such a morphism by:
$$(\phi,f,\rho) : (G\rightrightarrows X, A, \psi) \to (G'\rightrightarrows X', A', \psi')$$
\end{Definition}

\begin{Remark} \label{square morphism}
The horizontal maps in the last diagram are the Lie algebroid morphisms constructed as follows. Applying the tangent functor to the groupoid morphism $(\phi,f)$ we get a morphism $(\phi_\ast,f_\ast)$ between tangent groupoids, which implies that the following diagram commutes:
$$\xymatrix{
& & (s')^! A \ar[d]  \ar[r] & A' \ar[d] \\
s^! A \ar[d] \ar[r]  & A \ar[d] \ar[urr]^<<<<<<<<<<<<<{\rho} & TG' \ar[r]^{(s')_\ast}  & TX' \\
TG \ar[r]_{s_\ast} \ar[urr]^>>>>>>>>>{\phi_\ast} & TX  \ar[urr]_{f_\ast}\\
}$$
where the vertical maps are the anchor maps. Viewing $(s')^!A'$ as a fibre product in the category of Lie algebroids, it follows that there is a Lie algebroid morphism from $s^!A$ to $(s')^!A'$ given by the formula: 
\begin{align*}
(\phi_\ast, \rho): s^!A & \to (s')^! A' \\
(v,\xi) & \mapsto (\phi_\ast (v), \rho(\xi))
\end{align*}
Similarly, there is a morphism:
\begin{align*}
(\phi_\ast, \rho): t^!A & \to (t')^! A' \\
(v,\xi) & \mapsto (\phi_\ast (v), \rho(\xi))
\end{align*}
\end{Remark}

\begin{Proposition}
Given two morphisms:
$$(\phi,f,\rho) : (G\rightrightarrows X, A, \psi) \to (G'\rightrightarrows X', A', \psi')$$
and:
$$(\phi',f',\rho') : (G'\rightrightarrows X', A', \psi') \to (G''\rightrightarrows X'', A'', \psi'')$$
as in Definition \ref{!-morphisms}, their composition, defined as:
$$(\phi' \circ \phi,f'\circ f,\rho' \circ \rho) : (G\rightrightarrows X, A, \psi) \to (G''\rightrightarrows X'', A'', \psi'')$$
is a morphism.
\end{Proposition}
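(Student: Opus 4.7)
The plan is to verify the three defining properties of a morphism $(G\rightrightarrows X, A, \psi) \to (G''\rightrightarrows X'', A'', \psi'')$ for the triple $(\phi'\circ \phi, f'\circ f, \rho'\circ \rho)$. The first two conditions — that $(\phi'\circ \phi, f'\circ f)$ is a morphism of Lie groupoids and that $(\rho'\circ \rho, f'\circ f)$ is a morphism of Lie algebroids — are immediate, since Lie groupoids and Lie algebroids each form a category and the morphisms in question are composites of morphisms in those categories. The substantive step is therefore to check the compatibility square with $\psi$ and $\psi''$.

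To do this I would first establish the following functoriality statement for the induced pullback morphisms of Remark \ref{square morphism}: given composable Lie groupoid / Lie algebroid morphisms as in the statement, the induced maps satisfy
\[
(\phi'_\ast, \rho')\circ (\phi_\ast,\rho) = \bigl((\phi'\circ \phi)_\ast,\, \rho'\circ \rho\bigr)
\]
as morphisms $s^!A \to (s'')^!A''$, and similarly for the $t$-pullbacks. This is immediate from the explicit formula $(v,\xi)\mapsto (\phi_\ast v, \rho(\xi))$ given in Remark \ref{square morphism}, together with functoriality of the tangent functor $\phi'_\ast\circ \phi_\ast = (\phi'\circ \phi)_\ast$ and the fact that $\rho, \rho'$ compose as Lie algebroid morphisms.

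With this observation in hand, the required commutativity is obtained by vertically stacking the two given commutative squares:
\[
\xymatrix{
s^!A \ar[d]_\psi \ar[r]^{(\phi_\ast,\rho)} & (s')^!A' \ar[d]^{\psi'} \ar[r]^{(\phi'_\ast,\rho')} & (s'')^!A'' \ar[d]^{\psi''} \\
t^!A \ar[r]_{(\phi_\ast,\rho)} & (t')^!A' \ar[r]_{(\phi'_\ast,\rho')} & (t'')^!A''
}
\]
Each inner square commutes by hypothesis on the respective morphism, so the outer rectangle commutes; by the functoriality observation above, its horizontal arrows are precisely the morphisms $((\phi'\circ\phi)_\ast,\rho'\circ\rho)$ induced by the composed triple.

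The only mild obstacle is bookkeeping the natural isomorphisms $c_{s,\phi}$, $c_{t,\phi}$ that mediate between the various iterated pullbacks (cf.\ Section \ref{LA as a weak presheaf of categories}); these are handled uniformly by the universal property of the pullback Lie algebroid described in Section \ref{cartesian property}, since the morphism $(\phi_\ast,\rho)$ is by construction the unique morphism to the fibre product $(s')^!A'$ determined by $\rho\circ s^\sharp$ and $\phi_\ast\circ a_{s^!A}$. Using this characterisation, the equality $(\phi'_\ast,\rho')\circ (\phi_\ast,\rho) = ((\phi'\circ \phi)_\ast,\rho'\circ \rho)$ follows at once from uniqueness, and the proof concludes with the diagram above.
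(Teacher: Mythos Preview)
Your proposal is correct and is essentially identical to the paper's proof: both observe that the groupoid and algebroid conditions are immediate, then concatenate the two given commuting squares and use the functoriality $(\phi'_\ast,\rho')\circ(\phi_\ast,\rho)=((\phi'\circ\phi)_\ast,\rho'\circ\rho)$ to identify the outer rectangle with the required compatibility square. Your additional remarks on the coherence isomorphisms $c_{-,-}$ and the universal-property characterisation are extra detail the paper omits, but they do not change the argument.
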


\begin{proof}
It is immediate that:
$$\xymatrix{
G \ar@<-.5ex>[d] \ar@<.5ex>[d] \ar[r]^{\phi'\circ\phi}  & G''  \ar@<-.5ex>[d] \ar@<.5ex>[d] \\
X \ar[r]_{f'\circ f}  & X''
}$$
is a morphism of Lie groupoids and:
$$\xymatrix{
A \ar[d] \ar[r]^{\rho'\circ \rho}  & A''  \ar[d] \\
X \ar[r]_{f' \circ f} & X'' 
}$$
is a morphism of Lie algebroids. Since $(\phi,f,\rho)$ and $(\phi',f',\rho')$ are morphisms the following diagram commutes:
$$\xymatrix{
s^! A \ar[d]_\psi \ar[r]^{(\phi_\ast,\rho)} & (s')^! A' \ar[d]_{\psi'} \ar[r] ^{(\phi'_\ast,\rho')} & (s'')^! A''  \ar[d]^{\psi''}  \\
t^! A \ar[r]_{(\phi_\ast,\rho)} & (t')^! A' \ar[r]_{(\phi'_\ast,\rho')} & (t'')^! A'' 
}$$
It then follows from the fact that $(\phi' \circ \phi)_\ast = \phi'_\ast \circ \phi_\ast$ that the diagram:
$$\xymatrix{
s^! A \ar[d]_{((\phi'\circ\phi)_\ast,(\rho'\circ\rho))} \ar[r]^{\psi}  & t^! A \ar[d] ^{((\phi'\circ\phi)_\ast,(\rho'\circ\rho))}\\
(s'')^! A' \ar[r]_{\psi''} & (t'')^! A''
}$$
commutes as required.
\end{proof}

\begin{Definition}
Lie algebroids over Lie groupoids and morphisms defined as in Definition \ref{!-morphisms} form a category, which we'll denote $\text{LieGpd}\ltimes \mathcal{LA}$. 
\end{Definition}

\begin{Definition}
For a fixed Lie algebroid $G\rightrightarrows X$, we define the subcategory $\mathcal{LA}_G$ of $\text{LieGpd}\ltimes \mathcal{LA}$ to consist of Lie algebroids over $G\rightrightarrows X$ and morphisms of the form $(\mathrm{id}_G, \mathrm{id}_X, \rho)$.
\end{Definition}

\begin{Remark}
There is an obvious forgetful functor from $\text{LieGpd}\ltimes \mathcal{LA}$ to the category of Lie groupoids, and the categories $\mathcal{LA}_G$ are exactly the fibres of this functor.
\end{Remark}

\subsubsection{Actions in terms of tangent groupoids}

We will show that Lie algebroids over a Lie groupoid $G\rightrightarrows X$ can be described in terms of actions of the tangent groupoid $TG\rightrightarrows TX$. Recall that if $G \rightrightarrows X$ is a Lie groupoid with structure maps $s,t,m,u,i$ then the tangent groupoid $TG\rightrightarrows TX$ consists of the manifolds $TG$ and $TX$, and has structure maps $s_\ast,t_\ast,m_\ast,u_\ast,i_\ast$. The tangent groupoid construction gives a functor
$$\mathrm{LieGpd} \to \mathrm{LieGpd}$$

\begin{Definition} \label{compatible actions}
Let $G \rightrightarrows X$ be a Lie groupoid and $A$ a Lie algebroid over $X$, with anchor map $a$. Then a \emph{compatible action} of $TG \rightrightarrows TX$ on $A$ is an action of the groupoid $TG \rightrightarrows TX$ on $A$ along $a:A \to TX$, such that the map: 
$$TG_{s_\ast} \! \!  \times _a A \to A$$ 
defining the action is a Lie algebroid morphism covering the target map $t: G \to X$, where we consider $TG_{s_\ast} \! \!  \times_a A$ as a Lie algebroid over $G$ by identifying it with the Lie algebroid pullback along the source map:
$$TG_{s_\ast} \! \! \times _a A = s^! A$$
\end{Definition}

\begin{Proposition} \label{tangent actions}
Let $G\rightrightarrows X$ be a Lie groupoid and $A$ be a Lie algebroid over $X$. Then there is a bijection between isomorphisms $s^! A \to t^! A$ satisfying the cocycle condition and compatible actions $TG_{s_\ast} \times _a A \to A$ of $TG$ on $A$.
\end{Proposition}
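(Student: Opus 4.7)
The plan is to unpack both sides of the correspondence using the concrete description of $s^!A$ and $t^!A$ as fibre products inside $TG \times A$. As manifolds, $s^!A = TG_{s_\ast}\!\times_a A$ and $t^!A = TG_{t_\ast}\!\times_a A$, with bundle projection to $G$ factoring through the projection $TG \to G$ and with anchor equal to the projection onto $TG$. Any Lie algebroid morphism $\psi : s^!A \to t^!A$ over $G$ must commute with these anchors, hence preserves the $TG$-coordinate, and therefore has the form $\psi(v,\xi) = (v, \Psi(v, \xi))$ for a uniquely determined smooth map $\Psi : TG_{s_\ast}\!\times_a A \to A$. The condition that $(v, \Psi(v,\xi))$ lie in $t^!A$ forces $a \circ \Psi = t_\ast \circ \mathrm{pr}_{TG}$, which is exactly the anchor condition for an action of $TG \rightrightarrows TX$ on $A$ along $a$; conversely, any such $\Psi$ determines $\psi$ by the same formula.

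For the Lie algebroid condition, I would use the universal property of the pullback recalled in \ref{cartesian property}. The composition $\Psi = t^\sharp \circ \psi$, where $t^\sharp : t^!A \to A$ is the Cartesian morphism, is automatically a Lie algebroid morphism $s^!A \to A$ covering $t : G \to X$. Conversely, the universal property shows that any Lie algebroid morphism $s^!A \to A$ covering $t$ factors uniquely through $t^\sharp$ as a Lie algebroid morphism $s^!A \to t^!A$ over $G$, namely the $\psi$ recovered by $\psi(v,\xi) = (v, \Psi(v,\xi))$. Thus $\psi$ is a Lie algebroid morphism over $G$ if and only if the associated $\Psi$ satisfies the compatibility condition of Definition \ref{compatible actions}.

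It remains to match the cocycle condition for $\psi$ with the associativity and unit of the action $\Psi$. After inserting the canonical isomorphisms $c_{-,-}$ of \ref{LA as a weak presheaf of categories}, both sides of $m^!\psi = \mathrm{pr}_1^!\psi \circ \mathrm{pr}_2^!\psi$ are Lie algebroid morphisms $\mathrm{pr}_2^!s^!A \to \mathrm{pr}_1^!t^!A$ over $G_2$ that preserve the $TG_2$-coordinate; equating the remaining components at composable pairs $(v,w) \in TG_2$ with $\xi \in A$ satisfying $a(\xi) = s_\ast w$ gives $\Psi(m_\ast(v,w),\xi) = \Psi(v,\Psi(w,\xi))$, which is associativity. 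The unit axiom $\Psi(u_\ast(a(\xi)),\xi) = \xi$ will follow by specialising the cocycle to pairs of the form $(u(t(g)),g)$ and using that $\psi$ is an isomorphism to force $\psi_{u(x)}$ to be the identity, after which invertibility of the individual action maps $\Psi_v$ comes for free from $\Psi_{i_\ast v}$. The main obstacle will be the bookkeeping in this last step: the natural isomorphisms $c_{f,g}$ must be tracked carefully so that both sides of the cocycle live in the same hom-space, after which the translation to associativity is a direct fibrewise computation.
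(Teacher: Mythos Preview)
Your proposal is correct and follows essentially the same route as the paper's proof: the correspondence $\psi(v,\xi) = (v,\tilde\psi(v,\xi))$ obtained from anchor-preservation and the universal property of $t^!A$, followed by the explicit computation translating the cocycle identity over $G_2$ into the associativity $\tilde\psi(m_\ast(v,w),\xi) = \tilde\psi(v,\tilde\psi(w,\xi))$. The paper also offers a slicker alternative---viewing $A$ as a space over $TX$ via the anchor so that $s^!A = (s_\ast)^\ast A$ and $t^!A = (t_\ast)^\ast A$, whereupon the $!$-cocycle becomes literally the standard $\ast$-cocycle for the groupoid $TG\rightrightarrows TX$ acting on $A$---but your explicit derivation of the unit axiom from the cocycle at $(u(t(g)),g)$ together with invertibility of $\psi$ is a point the paper's explicit computation leaves implicit.
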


\begin{proof}
Let $A$ be a Lie algebroid over $X$. Then by the universal property of pullbacks there is a bijection between morphisms $\psi:s^! A \to t^! A$ over $G$, and morphisms $\tilde \psi: s^! A \to A$ covering $t$. The correspondence is given by: 
$$\psi ( v, \xi ) = ( v, \tilde \psi (v,\xi) )$$
for $(v,\xi) \in s^! A$. We can view $A$ as a space over $TX$ via the anchor map $a:A \to TX$, then we have:
\begin{align*}
s^! A & = (s_\ast)^\ast A  \\
t^! A & = (t_\ast) ^ \ast A
\end{align*}
If $\psi: s^! A \to t^! A$ is an isomorphism of Lie algebroids over $G$ then it commutes with the anchor maps, so we can consider it as an isomorphism of spaces over $TG$:
$$\xymatrix{
(s_\ast)^\ast A  \ar[dr] \ar[r] ^\psi  &  (t_\ast)^\ast A  \ar[d] \\
&   TG
}$$
We can then write the !-cocycle condition for $\psi$
$$m^! \psi = \mathrm{pr}_1^! \psi \circ \mathrm{pr}_2 ^! \psi$$
as
$$(m_\ast)^\ast \psi = ((\mathrm{pr}_1)_\ast)^\ast \psi  \circ  ((\mathrm{pr}_2)_\ast)^\ast \psi$$
The canonical identification of $(TG)_2$ with $T(G_2)$ identifies the map
$$(\mathrm{pr}_1)_\ast : T(G_2)  \to TG$$
with the projection
$$\mathrm{pr}_1 : (TG)_2 \to TG$$
and similarly for $(\mathrm{pr}_2)_\ast$. Therefore the !-cocycle condition for $\psi$ is exactly the standard cocycle condition for $\psi$ to determine an action of $TG\rightrightarrows TX$ on $A$ along $a:A \to TX$.

Alternatively, if we set $\psi(v,\xi) = (v, \tilde{\psi}(v,\xi))$ as above, and use the identification $TG_{s_\ast} \times _{t_\ast} TG$, then if we write out the terms appearing in the cocycle condition explicitly we get:
\begin{align*}
\mathrm{pr}_2^! \psi : \mathrm{pr}_2 ^! s^! A & \to \mathrm{pr}_2 ^! t^! A  \\
( (v,w) , (w,\xi) ) & \mapsto ( (v,w) ,  \psi (w,\xi) ) \\
& = ( (v,w) , (w,\tilde \psi(w,\xi)) ) \\
\\
\mathrm{pr}_1^! \psi : \mathrm{pr}_1 ^! s^! A & \to \mathrm{pr}_1 ^! t^! A  \\
( (v,w) , (v,\xi) ) & \mapsto ( (v,w) ,  \psi (v,\xi) ) \\
& = ( (v,w) , (v,\tilde \psi(v,\xi)) ) \\
\\
m^! \psi : m ^! s^! A & \to m ^! t^! A  \\
( (v,w) , (m_\ast(v,w),\xi) ) & \mapsto ( (v,w) ,  \psi (m_\ast(v,w),\xi) ) \\
& = ( (v,w) , (m_\ast(v,w),\tilde \psi(m_\ast(v,w),\xi)) )
\end{align*}
Including the natural isomorphisms we then have:
\begin{align*}
\left(  (c_{t,\mathrm{pr}_1} \circ c_{t,m}^{-1}) \circ m^! \psi \circ (c_{s,m} \circ c_{s,\mathrm{pr}_2}^{-1}) \right)   ( (v,w) , (w,\xi) )  & =          
\left(  (c_{t,\mathrm{pr}_1} \circ c_{t,m}^{-1}) \circ m^! \psi  \right)  
((v,w),(m_\ast(v,w),(w,\xi)))     \\
& = \left(  c_{t,\mathrm{pr}_1} \circ c_{t,m}^{-1} \right) 
( (v,w) , (m_\ast(v,w),\tilde \psi(m_\ast(v,w),\xi)) ) \\ 
& = ( (v,w) , (v,\tilde \psi(m_\ast(v,w),\xi)) ) \\
\\
\left(  \mathrm{pr}_1^! \psi \circ (c_{s,\mathrm{pr}_1} \circ c_{t,\mathrm{pr}_2}^{-1}) \circ \mathrm{pr}_2^! \psi  \right)  
( (v,w) , (w,\xi) )  & = \left(  \mathrm{pr}_1^! \psi \circ (c_{s,\mathrm{pr}_1} \circ c_{t,\mathrm{pr}_2}^{-1})   \right) 
( (v,w) , (w, \tilde \psi (w, \xi) ) ) \\
& = (\mathrm{pr}_1 ^! \psi) ( (v,w) , (v, \tilde \psi (w,\xi) ) ) \\
& = ( (v,w) , (v, \tilde \psi (v, \tilde \psi (w,\xi)) ) )
\end{align*}
Therefore, the cocycle condition for $\psi$ is equivalent to the condition: 
\begin{align*}
\tilde \psi ( m_\ast (v,w) , \xi)   & = \tilde \psi (v, \tilde \psi (w, \xi) ) 
\end{align*}
which is exactly the condition that the map $\tilde \psi : TG_{s_\ast} \times _a A \to A$ is an action of $TG \rightrightarrows TX$ on $A$.
\end{proof}

\begin{Proposition}
Let $(A,\psi)$ and $(A',\psi')$ be Lie algebroids over Lie groupoids $G\rightrightarrows X$ and $G'\rightrightarrows X'$ respectively, and let $\tilde \psi$ and $\tilde \psi'$ denote the corresponding actions of $TG$ and $TG'$ as determined by Proposition \ref{tangent actions}. Let $(\phi,f,\rho)$ be a triple of smooth maps such that:
$$\xymatrix{
G \ar@<-.5ex>[d] \ar@<.5ex>[d] \ar[r]^\phi  & G'  \ar@<-.5ex>[d] \ar@<.5ex>[d] \\
X \ar[r]_f  & X'
}$$
is a morphism of Lie groupoids and
$$\xymatrix{
A \ar[d] \ar[r]^\rho  & A'  \ar[d] \\
X \ar[r]_f & X' 
}$$
is a morphism of Lie algebroids. Then the diagram:
$$
\xymatrix{
s^! A  \ar[d]_{\psi}   \ar[r]^-{(\phi_\ast,\rho)} & (s')^! A' \ar[d]^{\psi'}\\
t^! A \ar[r]_{(\phi_\ast,\rho)} & (t')^! A'
}$$
commutes, and therefore $(\phi,f,\rho)$ is a morphism in the sense of Definition \ref{!-morphisms}, if and only if $\rho$ is equivariant with respect to the actions of $TG$ and $TG'$ on $A$ and $A'$ respectively and the groupoid morphism:
$$\xymatrix{
TG \ar@<-.5ex>[d] \ar@<.5ex>[d] \ar[r]^{\phi_\ast}  & TG'  \ar@<-.5ex>[d] \ar@<.5ex>[d] \\
TX \ar[r]_{f_\ast}  & TX'
}$$
\end{Proposition}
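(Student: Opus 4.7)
The plan is to translate everything into the form given by Proposition \ref{tangent actions} and then observe that the two conditions coincide by a direct unwinding of definitions. The key identity is that under the bijection of Proposition \ref{tangent actions}, an isomorphism $\psi : s^! A \to t^! A$ takes the form $\psi(v,\xi) = (v,\tilde\psi(v,\xi))$, where $\tilde\psi$ is the compatible action of $TG \rightrightarrows TX$ on $A$ along the anchor $a$.

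First I would write down explicitly the horizontal maps in the square. By Remark \ref{square morphism}, the map $(\phi_\ast,\rho): s^!A \to (s')^!A'$ is given by $(v,\xi) \mapsto (\phi_\ast(v),\rho(\xi))$, and similarly $(\phi_\ast,\rho): t^!A \to (t')^!A'$ is given by the same formula. Given $(v,\xi) \in s^!A$ (so that $s_\ast(v) = a(\xi)$), chasing the square in one direction yields
\[
(v,\xi) \xmapsto{\psi} (v,\tilde\psi(v,\xi)) \xmapsto{(\phi_\ast,\rho)} (\phi_\ast(v),\rho(\tilde\psi(v,\xi))),
\]
while chasing in the other direction yields
\[
(v,\xi) \xmapsto{(\phi_\ast,\rho)} (\phi_\ast(v),\rho(\xi)) \xmapsto{\psi'} (\phi_\ast(v),\tilde\psi'(\phi_\ast(v),\rho(\xi))).
\]
Commutativity of the square is therefore equivalent to the identity
\[
\rho(\tilde\psi(v,\xi)) = \tilde\psi'(\phi_\ast(v),\rho(\xi)) \qquad \text{for all } (v,\xi) \in s^!A,
\]
which is precisely the statement that $\rho : A \to A'$ is equivariant with respect to the actions of the tangent groupoids $TG \rightrightarrows TX$ and $TG' \rightrightarrows TX'$ along the morphism $(\phi_\ast,f_\ast)$.

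Next I would verify that $(\phi_\ast,f_\ast) : TG \rightrightarrows TX \to TG' \rightrightarrows TX'$ is indeed a morphism of Lie groupoids, which is immediate by applying the tangent functor to the given morphism $(\phi,f)$; this is what makes the equivariance statement meaningful. Finally I would note that to invoke Remark \ref{square morphism} one must check that $(\phi_\ast,\rho)$ is a well-defined Lie algebroid morphism $s^!A \to (s')^!A'$, which follows from the universal property of the pullback $(s')^!A'$ applied to the Lie algebroid morphism $\rho \circ \mathrm{pr}_A : s^!A \to A'$ together with $\phi_\ast : TG \to TG'$, exactly as in Remark \ref{square morphism}.

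There is no real obstacle here: once the explicit formula $\psi(v,\xi) = (v,\tilde\psi(v,\xi))$ from the proof of Proposition \ref{tangent actions} is used, the equivalence of the two conditions is a one-line diagram chase. The only mildly delicate point is being careful with the implicit canonical isomorphisms $c_{-,-}$ relating iterated pullbacks (so that "$(\phi_\ast,\rho)$" really does denote the same universally induced map on both the source and target pullbacks), but this is handled uniformly by the same universal property used in Remark \ref{square morphism}.
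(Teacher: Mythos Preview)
Your proof is correct and follows essentially the same approach as the paper: both unwind the formula $\psi(v,\xi)=(v,\tilde\psi(v,\xi))$ from Proposition \ref{tangent actions} and perform the direct diagram chase to reduce commutativity to the equivariance identity $\rho(\tilde\psi(v,\xi))=\tilde\psi'(\phi_\ast(v),\rho(\xi))$. The paper also offers a one-line conceptual version (viewing $A$ and $A'$ as spaces over $TX$ and $TX'$ via the anchors so that $s^!A=(s_\ast)^\ast A$, etc.), but the explicit computation you give is exactly its alternative argument.
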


\begin{proof}
This follows immediately if we consider $A$ and $A'$ as spaces over $TX$ and $TX'$ respectively, so that the diagram
$$\xymatrix{
s^! A  \ar[d]_{\psi}   \ar[r]^-{(\phi_\ast,\rho)} & (s')^! A' \ar[d]^{\psi'}\\
t^! A \ar[r]_{(\phi_\ast,\rho)} & (t')^! A'
}$$
becomes
$$\xymatrix{
(s_\ast)^\ast A  \ar[d]_{\psi}   \ar[r]^-{(\phi_\ast,\rho)} & (s_\ast)^\ast A' \ar[d]^{\psi'}\\
(t_\ast)^\ast A \ar[r]_{(\phi_\ast,\rho)} & (t_\ast)^\ast A'
}$$
the commutativity of which is exactly the statement that $\rho$ is equivariant.

Alternatively, let $(v,\xi) \in s^! A$, then we compute:
\begin{align*}
\left( (\phi_\ast,\rho) \circ \psi  \right) (v,\xi) & = (\phi_\ast,\rho) (v,\tilde \psi (v,\xi) ) \\
& = (\phi_\ast (v) , (\rho \circ \tilde\psi) (v,\xi) ) \\
\\
\left(\psi' \circ (\phi_\ast,\rho) \right) (v,\xi) & = \psi' ( \phi_\ast(v) , \rho(\xi) ) \\
& = ( \phi_\ast(v) , \tilde \psi' (\phi_\ast(v), \rho(\xi) ) )
\end{align*}
Therefore we have the required commutativity if and only if
$$(\rho \circ \tilde\psi) (v,\xi)  = \tilde \psi' (\phi_\ast(v), \rho(\xi) )$$
which is exactly the condition that $\rho$ is equivariant.
\end{proof}

\subsection{$\mathcal{LA}$-groupoids}  \label{Lie algebroid groupoids section}

The previous proposition shows that given a morphism $(\phi,f,\rho)$ of Lie algebroids over Lie groupoids:
$$(\phi,f,\rho):(G\rightrightarrows X,A,\psi) \to (G'\rightrightarrows X',A',\psi')$$
there is a groupoid morphism $((\phi_\ast,\rho),\rho)$:
$$\xymatrix{
TG\ltimes A  \ar@<-.5ex>[d] \ar@<+.5ex>[d]  \ar[r] ^{(\phi_\ast,\rho)} & TG' \ltimes A'  \ar@<-.5ex>[d] \ar@<+.5ex>[d]  \\
A  \ar[r] _ \rho &  A' 
}$$
where the groupoids $TG \ltimes A$ and $TG'\ltimes A'$ are the action groupoids associated to the actions of $TG$ and $TG'$ on $A$ and $A'$ respectively. Since $s^! A = TG \ltimes A$ as manifolds, the manifold $s^!A$ has both the structure of a Lie groupoid over $A$ and of a Lie algebroid over $G$, and $(\phi_\ast,\rho)$ is a Lie algebroid morphism covering $\phi$  as described in Remark \ref{square morphism}.
We therefore have a commutative diagram:
$$\xymatrix{
& & (s')^! A' \ar[d]  \ar@<-.5ex>[r] \ar@<+.5ex>[r] & A' \ar[d] \\
s^! A \ar[d] \ar[urr]^{(\phi_\ast,\rho)} \ar@<-.5ex>[r] \ar@<+.5ex>[r]  & A \ar[d] \ar[urr]^<<<<<<<<<<<<<{\rho} & G' \ar@<-.5ex>[r] \ar@<+.5ex>[r]  & X' \\
G \ar@<-.5ex>[r] \ar@<+.5ex>[r] \ar[urr]^>>>>>>>>>>>{\phi} & X  \ar[urr]_{f}\\
}$$
each face of which is either a morphism of Lie groupoids or Lie algebroids.  We'll show that the squares $(s^!A, G, A, X)$ and $((s')^!A', G', A', X')$ form Lie algebroid groupoids, i.e. groupoid objects in the category of Lie algebroids. Furthermore, there is a fully-faithful embedding of the category $\text{LieGpd} \ltimes \mathcal{LA}$ of Lie algebroids over groupoids, into the category $\mathbf{Gpd} \mathcal{LA}$ of Lie algebroid groupoids. This embedding gives rise to an equivalence between $\text{LieGpd}\ltimes\mathcal{LA}$ and a certain subcategory $\text{!-}\mathbf{Gpd}\mathcal{LA}$ of $\mathbf{Gpd}\mathcal{LA}$ which we will define.

\subsubsection{$\mathcal{LA}$-groupoids}  \label{subsubsection of LA-groupoids}

For the general theory of $\mathcal{LA}$-groupoids see \cite{Ma1}.

\begin{Definition}
A Lie algebroid groupoid, or $\mathcal{LA}$-groupoid for short, is a groupoid object in the category of Lie algebroids. More explicitly, an $\mathcal{LA}$-groupoid is a square:
$$\xymatrix{
\Omega \ar[d] \ar@<-.5ex>[r]\ar@<.5ex>[r] &  A \ar[d]  \\
G \ar@<-.5ex>[r]\ar@<.5ex>[r] &  X 
}$$
where $\Omega \rightrightarrows A$ and $G\rightrightarrows X$ are Lie groupoids, $\Omega$ and $A$ are Lie algebroids over $G$ and $X$ respectively, and the groupoid structure maps of $\Omega$ are Lie algebroid morphisms covering the corresponding structure maps of $G$. If we denote the structure maps of $\Omega$ by $(\tilde s,\tilde t, \tilde m, \tilde u, \tilde i)$ and those of $G$ by $(s,t,m,u,i)$, then the domain of the multiplication map $\tilde m$ is the fibre product Lie algebroid:
$$
\xymatrixcolsep{2.5pc}
\xymatrix{
\Omega _{\tilde s} \times _{\tilde t} \Omega \ar[d]_{\mathrm{pr}_1}  \ar[r]^-{\mathrm{pr}_2} &   \Omega \ar[d]^{\tilde t} \\
\Omega \ar[r] _{\tilde s}  & A 
}$$ 
which is a Lie algebroid over $G_s \times _t G$. This fibre product exists because $\tilde s,\tilde t, s$ and $t$ are all surjective submersions, so that $(\tilde s,s)$ and $(\tilde t,t)$ are Lie algebroid fibrations. We'll usually denote an $\mathcal{LA}$-groupoid by $(\Omega,G,A,X)$, with the structure maps understood. (Note that we do not require that the `double source map' is a submersion, as defined in \cite{Ma1}).
\end{Definition}

\begin{Remark}
If $(\Omega,G,A,X)$ is an $\mathcal{LA}$-groupoid then one can show that the vector bundle projections $\Omega \to G$ and $A \to X$ form a groupoid morphism from $\Omega \rightrightarrows A$ to $G\rightrightarrows X$, and the anchor maps $\Omega \to TG$ and $A \to TX$ form a groupoid morphism from $\Omega \rightrightarrows A$ to $TG\rightrightarrows TX$ (see \cite{Ma1}).
\end{Remark}

\begin{Definition}
A morphism of $\mathcal{LA}$-groupoids from $(\Omega,G,A,X)$ to $(\Omega',G',A',X')$ is a morphism of groupoid objects in $\mathcal{LA}$, or more explicitly a quadruple of smooth maps $(\Phi,\phi,\rho,f)$:
$$\xymatrix{
& & \Omega' \ar[d]  \ar@<.5ex>[r]\ar@<-.5ex>[r] & A' \ar[d] \\
\Omega \ar[d] \ar[urr]^{\Phi} \ar@<.5ex>[r]\ar@<-.5ex>[r]  & A \ar[d] \ar[urr]^<<<<<<<<<<{\rho} & G' \ar@<.5ex>[r]\ar@<-.5ex>[r]  & X' \\
G \ar@<.5ex>[r]\ar@<-.5ex>[r] \ar[urr]^>>>>>>>>>{\phi} & X  \ar[urr]_{f}\\
}$$
such that $(\Phi,\rho)$ and $(\phi,f)$ are Lie groupoid morphisms, and $(\Phi,\phi)$ and $(\rho,f)$ are Lie algebroid morphisms. We'll denote such a morphism by:
$$(\Phi,\phi,\rho,f):(\Omega,G,A,X) \to (\Omega',G',A',X')$$
These morphisms can be composed in the obvious way, and we denote the resulting category by $\mathbf{Gpd}\mathcal{LA}$. 
\end{Definition}

Given any Lie groupoid $G\rightrightarrows X$, the tangent groupoid $TG\rightrightarrows TX$ forms an $\mathcal{LA}$-groupoid $(TG,G,TX,X)$ and for any $\mathcal{LA}$-groupoid $(\Omega,G,A,X)$ the anchor maps form a morphism of $\mathcal{LA}$-groupoids:
$$\xymatrix{
& & TG \ar[d]  \ar@<.5ex>[r]\ar@<-.5ex>[r] & TX \ar[d] \\
\Omega \ar[d] \ar[urr]^{\tilde a} \ar@<.5ex>[r]\ar@<-.5ex>[r]  & A \ar[d] \ar[urr]^<<<<<<<<<<<{a} & G \ar@<.5ex>[r]\ar@<-.5ex>[r]  & X \\
G \ar@<.5ex>[r]\ar@<-.5ex>[r] \ar[urr]^>>>>>>>>>>{\mathrm{id}} & X  \ar[urr]_{\mathrm{id}}\\
}$$

\subsubsection{$\mathcal{LA}$-groupoids from actions}

We will show how to construct an $\mathcal{LA}$-groupoid from a Lie  algebroid $(A,\psi)$ over a Lie groupoid $G\rightrightarrows X$. The key point, mentioned above, is that as manifolds:
$$s^! A = TG _{s_\ast} \times _a A $$
which is the total space of the action groupoid: 
$$TG \ltimes A \rightrightarrows TX$$
determined by the action of $TG \rightrightarrows TX$ on $A$. This means that $s^! A$ has both the structure of a Lie algebroid over $G$, and of a Lie groupoid over $A$.

\begin{Theorem} \label{s! LA groupoids}
Let $G\rightrightarrows X$ be a Lie groupoid and
$A$ be a Lie algebroid over $X$, with anchor $a:A \to TX$ and vector bundle projection $\pi:A \to X$. Denote the anchor and bundle projection of $s^! A$ by $\tilde a:s^! \to TG$ and $\tilde \pi:s^! A \to G$ respectively. 
Let $\tilde \psi : s^! A \to A$ be a compatible action of $TG\rightrightarrows TX$ on $A$. Then the square: 
$$\xymatrix{  
s^! A \ar[d] _{\tilde \pi}  \ar@<-.5ex>[r]\ar@<+.5ex>[r]^{\tilde s, \tilde t}  &  A \ar[d] ^\pi \\
G \ar@<.5ex>[r]\ar@<-.5ex>[r] _{s,t} & X
}$$
is an $\mathcal{LA}$ groupoid, where $s^! A \rightrightarrows A$ is the action groupoid for the action of $TG$ on $A$.
\end{Theorem}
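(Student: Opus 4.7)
The plan is to verify directly, using the universal property of the Lie algebroid pullback, that each of the five structure maps of the action groupoid $s^!A \rightrightarrows A$ is a Lie algebroid morphism covering the corresponding structure map of $G \rightrightarrows X$. Explicitly, these maps are
\begin{align*}
\tilde s(v,\xi) &= \xi, \\
\tilde t(v,\xi) &= \tilde\psi(v,\xi), \\
\tilde u(\xi) &= (u_\ast(a(\xi)), \xi), \\
\tilde i(v,\xi) &= (i_\ast(v), \tilde\psi(v,\xi)), \\
\tilde m((v,\xi),(v',\xi')) &= (m_\ast(v,v'), \xi'),
\end{align*}
where for $\tilde m$ composability is $\xi = \tilde\psi(v',\xi')$. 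That these cover $s,t,u,i,m$ is immediate, and the groupoid axioms for $s^!A \rightrightarrows A$ follow from those of the tangent groupoid $TG\rightrightarrows TX$ together with the action axioms for $\tilde\psi$, which hold by Proposition \ref{tangent actions}.

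The source $\tilde s$ is precisely the canonical pullback morphism $s^\sharp : s^!A \to A$ of \ref{cartesian property}, hence a Lie algebroid morphism by construction; the target $\tilde t = \tilde\psi$ is a Lie algebroid morphism by the compatibility hypothesis in Definition \ref{compatible actions}. For the remaining three maps I appeal to the universal property of the pullback: a Lie algebroid morphism $B \to s^!A$ covering a smooth map $f : Y \to G$ corresponds uniquely to a Lie algebroid morphism $\phi : B \to A$ covering $s \circ f$, via the formula $\eta \mapsto (f_\ast (b(\eta)), \phi(\eta))$, where $b$ is the anchor of $B$. Applying this, $\tilde u$ is identified with $\mathrm{id}_A : A \to A$ covering $s \circ u = \mathrm{id}_X$; $\tilde i$ is identified with $\tilde\psi : s^!A \to A$ covering $s \circ i = t$; and $\tilde m$ is identified with $\tilde s \circ \mathrm{pr}_2 : s^!A \times_A s^!A \to A$ covering $s \circ m = s \circ \mathrm{pr}_2$. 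In each case the right-hand side is already a Lie algebroid morphism, so the lift is too.

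The one subtle point, and the main bookkeeping obstacle, is ensuring that the domain $s^!A{}_{\tilde s} \!\times_{\tilde t} s^!A$ of $\tilde m$ exists as a Lie algebroid and has the expected anchor. Because $s$ is a submersion, so is $s_\ast : TG \to TX$, and hence so is the projection $\tilde s : s^!A = TG \times_{TX} A \to A$; the fibre product therefore exists in $\mathcal{LA}$ by the existence result for fibre products of Lie algebroid fibrations recalled at the end of Section \ref{Lie Algebroids}. One then checks that the anchor of this fibre product at $((v,\xi),(v',\xi'))$ is $(v,v') \in T(G \times_X G) = TG \times_{TX} TG$, so that the universal-property formula indeed yields $\tilde m((v,\xi),(v',\xi')) = (m_\ast(v,v'),\xi')$, completing the verification.
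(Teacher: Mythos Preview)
Your proof is correct and follows essentially the same strategy as the paper: $\tilde s$ is the canonical pullback morphism, $\tilde t=\tilde\psi$ is a Lie algebroid morphism by the compatibility hypothesis, and $\tilde u$, $\tilde i$ are obtained via the universal property of $s^!A$ exactly as in the paper. The only difference is in the treatment of $\tilde m$: the paper first constructs an explicit Lie algebroid isomorphism $(\chi_1,\chi_2)\colon m^! s^! A \to (s^!A)_{\tilde s}\!\times_{\tilde t}(s^!A)$ and then checks that $\tilde m\circ(\chi_1,\chi_2)$ is the pullback morphism $m^!s^!A\to s^!A$, whereas you apply the universal property of $s^!A$ directly to the morphism $\tilde s\circ\mathrm{pr}_2$ out of the fibre product. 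Your route is slightly more economical, avoiding the detour through $m^!s^!A$; the paper's route has the minor advantage of making the identification $m^!s^!A\cong (s^!A)_2$ explicit, which is used elsewhere. Both rely on the same underlying Cartesian property.
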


\begin{proof}
We need to prove that the groupoid structure maps of $s^! A \rightrightarrows A$ are Lie algebroid morphisms covering the corresponding structure maps of $G \rightrightarrows X$. Denote the structure maps of the action groupoid $TG \ltimes A \rightrightarrows A$ by $\tilde s, \tilde t, \tilde m, \tilde u$ and $\tilde i$. As  manifolds we have: 
$$TG \ltimes A = TG_{s_\ast} \times _a A = s^! A$$
The structure maps of $TG\ltimes A$ are:
\begin{align*}
\tilde s (v,\xi) & = \xi \\
\tilde t (v,\xi) & = \tilde \psi (v,\xi) \\
\tilde m \left( (v,\tilde \psi(v',\xi)) , (v', \xi) \right)  & = \left( m_\ast(v,v') , \xi\right) \\
\tilde u (\xi)  & = (u_\ast (a(\xi)) , \xi)  \\
\tilde i (v,\xi)  & = \left(  i_\ast (v) , \tilde \psi(v,\xi)  \right) 
\end{align*}
Firstly, $\tilde s$ is the projection $s^! A \to A$ so it is a morphism covering $s$. Secondly, $\tilde t$ is the map defining the action of $TG$ on $A$, which is a morphism covering $t$ by the definition of compatible actions (Definition \ref{compatible actions}). In order to show that the multiplication maps $(\tilde m ,m)$ constitute a Lie algebroid morphism we'll need to describe the Lie algebroid structure on $(TG\ltimes A)_2 = (s^! A)_{\tilde s} \times _{\tilde t} (s^! A)$. We will do this using the universal property of fibre products of Lie algebroids. We define two maps:
\begin{align*}
\chi_1 : m^! s^! A & \to s^! A \\
( (v,v') , (m_\ast(v,v'),\xi) )  & \mapsto (v, \tilde \psi (v',\xi) )  \\
\\
\chi_2 : m^! s^! A & \to s^! A  \\
( (v,v') , (m_\ast(v,v'),\xi) )  & \mapsto (v',\xi) 
\end{align*}
Both $\chi_1$ and $\chi_2$ are Lie algebroid morphisms because they can be written as compositions of certain morphisms:
\begin{align*}
\chi_1 & = \psi^{-1} \circ \mathrm{pr}_{t^! A} \circ c_{t,\mathrm{pr}_1} 
\circ c_{t,m}^{-1} \circ m^! \psi  \\
\chi_2 & = \mathrm{pr}_{s^! A} \circ c_{s,\mathrm{pr}_1} \circ c_{s,m}^{-1} 
\end{align*}
The maps $\chi_1$ and $\chi_2$ cover $\mathrm{pr}_1 : G_2 \to G$ and $\mathrm{pr}_2 : G_2 \to G$ respectively. Since $\tilde s \circ \chi_1 = \tilde t \circ \chi_2$ we have a commutative diagram:
$$\xymatrix{
m^! s^! A  \ar[d]_{\chi_1} \ar[r] ^ {\chi_2}  &  s^! A \ar[d]^{\tilde t}  \\
s^! A \ar[r]_{\tilde s} & A 
}$$
The universal property of fibre products then determines a canonical base preserving morphism of Lie algebroids $(\chi_1,\chi_2)$:
\begin{align*}
(\chi_1,\chi_2) : m^! s^! A & \to (s^! A)_{\tilde s} \times _{\tilde t} (s^! A) \\
( (v,v') , (m_\ast(v,v') ,\xi ) )  & \mapsto \left( \chi_1 ( (v,v') , (m_\ast(v,v') ,\xi ) ), \chi_2 ( (v,v') , (m_\ast(v,v') ,\xi ) ) \right)  \\
& = \left( (v,\tilde \psi(v',\xi)) , (v', \xi) \right) 
\end{align*}
The fibres over $(g,h) \in G_2$ of the vector bundles $m^!s^!A$ and $(s^! A)_{\tilde s} \times _{\tilde t} (s^! A)$ are both canonically isomorphic to the vector space $(T_g G)_{s_\ast} \times_{t_\ast} (T_hG)_{s_\ast} \times_a A$. Moreover, $(\chi_1,\chi_2)$ respects these isomorphisms which implies that it is a fibrewise isomorphism and therefore an isomorphism of Lie algebroids. 
Composing $\tilde m$ with $(\chi_1,\chi_2)$ gives:
\begin{align*}
(\tilde m \circ (\chi_1,\chi_2) ) : m^! s^! A & \to s^! A \\
( (v,v') , (m_\ast(v,v') ,\xi ) ) & \mapsto \left( m_\ast(v,v') , \xi\right) 
\end{align*}
which is the pullback morphism $m^!s^! A \to s^! A$. As $(\chi_1,\chi_2)$ is an isomorphism this implies that $\tilde m$ is a Lie algebroid morphism covering $m$.

It remains to show that the unit and inverse maps constitute Lie algebroid morphisms. Consider the identity morphism $A \to A$ covering the identity map on $X$. We can factor $\mathrm{id}_X$ as $\mathrm{id}_X = s \circ u$. Therefore, by the universal property of the pullback $s^! A$, $\mathrm{id}_A$ factors as $\mathrm{id}_A = \mathrm{pr}_A \circ \sigma$ for some unique morphism $\sigma:A \to s^! A$ covering $u$. The morphism $\sigma$ is given by:
\begin{align*}
\sigma : A & \to s^! A \\
\xi & \mapsto ( u_\ast (a(\xi)) , \xi)  \\
& = \tilde u (\xi)
\end{align*}
and therefore $\tilde u$ is a Lie algebroid morphism covering $u$. Similarly, the target map $\tilde t: s^! A \to A$ is a morphism covering $t$, and $t$ can be factored as $t=s \circ i$. Therefore $\tilde t$ factors into $\mathrm{pr}_{A} \circ \sigma' = \tilde s \circ \sigma'$ for some unique morphism $\sigma': s^! A \to s^! A$ covering $i$. The morphism $\sigma'$ is given by:
\begin{align*}
\sigma': s^! A & \to s^! A \\
(v,\xi) & \mapsto ( i_\ast (v,\xi), \tilde t (v,\xi) ) \\
& = ( i_\ast (v) , \tilde \psi (v,\xi))  \\
& = \tilde i (v,\xi)
\end{align*}
and therefore $\tilde i$ is a Lie algebroid morphism covering $i$. 
\end{proof}

\subsubsection{Functoriality}

The construction of an $\mathcal{LA}$-groupoid from a Lie algebroid over a Lie groupoid is functorial with respect to the morphisms of Definition \ref{!-morphisms}:

\begin{Proposition} \label{LieGpd-LA to GpdLA functor}
There is a fully-faithful functor:
$$\mathcal F _1 : \mathrm{LieGpd} \ltimes \mathcal{LA} \to \mathbf{Gpd}\mathcal{LA}$$
which maps an object $(G\rightrightarrows X,A,\psi)$ in $\mathrm{LieGpd}\ltimes\mathcal{LA}$ to the $\mathcal{LA}$-groupoid $( s^! A, G , A, X)$:
$$\xymatrix{
s^! A  \ar[d]  \ar@<-.5ex>[r] \ar@<+.5ex>[r] &  A \ar[d]  \\
G \ar@<-.5ex>[r] \ar@<+.5ex>[r] &  X  
}$$
and maps a morphism $(\phi,f,\rho):(G\rightrightarrows X,A,\psi) \to (G'\rightrightarrows X',A',\psi')$ in $\mathrm{LieGpd}\ltimes \mathcal{LA}$ to the morphism of $\mathcal{LA}$-groupoids $( (\phi_\ast,\rho) , \phi , \rho , f)$:
$$\xymatrix{
& & (s')^! A' \ar[d]  \ar@<-.5ex>[r] \ar@<+.5ex>[r] & A' \ar[d] \\
s^! A \ar[d] \ar[urr]^{(\phi_\ast,\rho)} \ar@<-.5ex>[r] \ar@<+.5ex>[r]  & A \ar[d] \ar[urr]^<<<<<<<<<<<<{\rho} & G' \ar@<-.5ex>[r] \ar@<+.5ex>[r]  & X' \\
G \ar@<-.5ex>[r] \ar@<+.5ex>[r] \ar[urr]^>>>>>>>>>>>>>{\phi} & X  \ar[urr]_{f}\\
}$$
\end{Proposition}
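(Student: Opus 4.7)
The plan is to verify the functor axioms and then address faithfulness and fullness. First, I would note that $\mathcal F_1$ is well-defined on objects by Theorem \ref{s! LA groupoids}, using Proposition \ref{tangent actions} to reinterpret the $!$-cocycle $\psi$ as the compatible action $\tilde\psi$ required as input to that theorem. For morphisms, I would check that the quadruple $((\phi_\ast,\rho),\phi,\rho,f)$ satisfies all four face-morphism conditions of an $\mathcal{LA}$-groupoid morphism: that $(\phi,f)$ is a Lie groupoid morphism and $(\rho,f)$ a Lie algebroid morphism is immediate from Definition \ref{!-morphisms}; that $(\phi_\ast,\rho):s^!A\to (s')^!A'$ is a Lie algebroid morphism covering $\phi$ is the content of Remark \ref{square morphism}; and that $((\phi_\ast,\rho),\rho)$ is a morphism of the action groupoids $s^!A\rightrightarrows A$ and $(s')^!A'\rightrightarrows A'$ amounts, after unpacking the structure maps, to the equivariance of $\rho$ with respect to the tangent-groupoid actions, which is precisely the content of the immediately preceding proposition applied to the commuting square in Definition \ref{!-morphisms}.

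Functoriality (compatibility with composition and identities) is then routine, since under $\mathcal F_1$ the horizontal component of the image morphism is given by the formula $(v,\xi)\mapsto(\phi_\ast(v),\rho(\xi))$, and this evidently composes with itself using $(\phi'\circ\phi)_\ast=\phi'_\ast\circ\phi_\ast$. Faithfulness is immediate: the triple $(\phi,f,\rho)$ appears literally as a sub-quadruple of $\mathcal F_1(\phi,f,\rho)=((\phi_\ast,\rho),\phi,\rho,f)$, so distinct morphisms in $\mathrm{LieGpd}\ltimes\mathcal{LA}$ have distinct images.

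The main work is fullness. Given an $\mathcal{LA}$-groupoid morphism $(\Phi,\phi,\rho,f):(s^!A,G,A,X)\to((s')^!A',G',A',X')$, I need to show $\Phi$ is forced to equal $(\phi_\ast,\rho)$ and that the underlying $(\phi,f,\rho)$ lies in $\mathrm{LieGpd}\ltimes\mathcal{LA}$. Writing $\Phi(v,\xi)=(v',\xi')$ with $(v',\xi')\in(s')^!A'\subset TG'\times A'$, the two identifications will come from the two groupoid-theoretic/algebraic roles of $\Phi$. Since $\Phi$ is a Lie algebroid morphism covering $\phi$ and the anchor of $s^!A$ is the projection to $TG$, anchor-compatibility forces $v'=\phi_\ast(v)$. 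Since $\Phi$ is a groupoid morphism covering $\rho$ and the source map of the action groupoid $s^!A\rightrightarrows A$ is the projection $(v,\xi)\mapsto\xi$, source-compatibility forces $\xi'=\rho(\xi)$. Hence $\Phi=(\phi_\ast,\rho)$ as required.

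Finally, the target-map compatibility $\tilde t'\circ\Phi=\rho\circ\tilde t$ for this forced $\Phi$ unpacks via the formula $\tilde t(v,\xi)=\tilde\psi(v,\xi)$ to $\tilde\psi'(\phi_\ast(v),\rho(\xi))=\rho(\tilde\psi(v,\xi))$, i.e.\ equivariance of $\rho$, which by the previous proposition is equivalent to the square in Definition \ref{!-morphisms} commuting; so $(\phi,f,\rho)$ is indeed a morphism in $\mathrm{LieGpd}\ltimes\mathcal{LA}$ and $\mathcal F_1$ is full. The main obstacle is not computational but conceptual: correctly matching each of the four face-morphism conditions of an $\mathcal{LA}$-groupoid morphism against the corresponding datum in Definition \ref{!-morphisms}, and leveraging the two universal-property style arguments in the fullness step to force $\Phi=(\phi_\ast,\rho)$.
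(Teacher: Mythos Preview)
Your proposal is correct and follows essentially the same approach as the paper: functoriality and faithfulness are declared routine, and fullness is proved by the same two observations (anchor-compatibility forces the $TG'$-component of $\Phi$ to be $\phi_\ast(v)$, source-compatibility forces the $A'$-component to be $\rho(\xi)$). Your final paragraph, verifying via target-compatibility that the recovered triple $(\phi,f,\rho)$ actually satisfies the commuting square of Definition~\ref{!-morphisms}, is a point the paper leaves implicit but which is indeed needed to conclude that the morphism lies in the image of $\mathcal F_1$; this makes your argument slightly more complete.
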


\begin{proof}
As discussed at the beginning of \ref{Lie algebroid groupoids section}, $( (\phi_\ast,\rho),\phi,\rho,f)$ is a morphism of $\mathcal{LA}$-groupoids, and the assignment:
$$(\phi,f,\rho) \mapsto ( (\phi_\ast,\rho),\phi,\rho,f)$$
is clearly functorial and faithful, so we need to show that it is also full. Let $(G \rightrightarrows X, A, \psi)$ and \mbox{$(G'\rightrightarrows X', A', \psi')$} be objects in $\mathrm{LieGpd}\ltimes\mathcal{LA}$, and let $(\Phi,\phi,\rho,f)$ be a morphism between the associated $\mathcal{LA}$ groupoids:
$$\xymatrix{
& & (s')^! A' \ar[d]  \ar@<-.5ex>[r] \ar@<+.5ex>[r] & A' \ar[d] \\
s^! A \ar[d] \ar[urr]^{\Phi} \ar@<-.5ex>[r] \ar@<+.5ex>[r]  & A \ar[d] \ar[urr]^<<<<<<<<<<<<{\rho} & G' \ar@<-.5ex>[r] \ar@<+.5ex>[r]  & X' \\
G \ar@<-.5ex>[r] \ar@<+.5ex>[r] \ar[urr]^>>>>>>>>>>>{\phi} & X  \ar[urr]_{f}\\
}$$
Let $(v,\xi) \in s^! A$. Since $(\Phi,\rho)$ is a morphism of groupoids it commutes with the source maps and we have:
\begin{align*}
\rho(\xi)  & =  (\rho \circ s) (v,\xi) \\
& = (s' \circ \Phi) (v,\xi) 
\end{align*}
and therefore $\Phi(v,\xi) = (v',\rho(\xi))$ for some $v' \in TX'$. But $(\Phi,\phi)$ is a morphism of Lie algebroids so it commutes with the anchor maps and we have:
\begin{align*}
\phi_\ast (v)  &  =   (\phi_\ast \circ \tilde a) (v,\xi)  \\
& = (\tilde a' \circ \Phi)  (v,\xi)
\end{align*}
and therefore $\Phi(v,\xi)=(\phi_\ast (v) , \xi')$ for some $\xi' \in A'$. Combining these, we have $\Phi = (\phi_\ast,\rho)$, which shows that 
$(\Phi,\phi,\rho,f) = ((\phi_\ast,\rho),\phi,\rho,f)$ is in the image of the functor.
\end{proof}

\subsubsection{!-vacant $\mathcal{LA}$-groupoids}  \label{!-vacant LA-groupoids}

The functor $\mathcal F_1$ of Proposition \ref{LieGpd-LA to GpdLA functor} embeds the category $\text{LieGpd}\ltimes\mathcal{LA}$ into $\mathbf{Gpd}\mathcal{LA}$. We will describe the $\mathcal{LA}$ groupoids which arise this way. 

\begin{Definition} \label{!-vacant definition}
An $\mathcal{LA}$ groupoid $(\Omega,G,A,X)$:
$$\xymatrix{  
\Omega \ar[d] _{\tilde \pi}  \ar@<-.5ex>[r]\ar@<.5ex>[r]^{\tilde s, \tilde t}  &  A \ar[d] ^\pi \\
G  \ar@<.5ex>[r]\ar@<-.5ex>[r] _{s,t} & X
}$$
is \emph{!-vacant} if the map:
\begin{align*}
(\tilde a, \tilde s):\Omega & \to s^! A \\
\omega & \mapsto (\tilde a(\omega), \tilde s(\omega) )
\end{align*}
is an isomorphism of Lie algebroids over $G$. The category $\text{!-}\mathbf{Gpd}\mathcal{LA}$ is the full subcategory of $\mathbf{Gpd}\mathcal{LA}$ consisting of !-vacant $\mathcal{LA}$-groupoids and morphisms between them.
\end{Definition}

\begin{Remark}
The condition that the map $(\tilde a, \tilde s):\Omega  \to s^! A$ is an isomorphism is exactly the condition that the Lie algebroid morphism $(\tilde s,s)$ is a pullback morphism in the category of Lie algebroids, or an `inductor' in the terminology of \cite{Ma2}. Compare this with the notion of a vacant $\mathcal{LA}$-groupoid, given in \cite{Ma1}, which is the condition that the map:
\begin{align*}
(\tilde \pi, \tilde s):\Omega & \to s^\ast A \\
\omega & \mapsto (\tilde \pi(\omega), \tilde s(\omega) )
\end{align*}
is an isomorphism, or equivalently, that $(\tilde s,s)$ is a pullback morphism in the category of vector bundles, or an `action morphism' of Lie algebroids.
\end{Remark}

By definition an $\mathcal{LA}$-groupoid of the form $(s^!A,G,A,X)$, where $A$ is a Lie algebroid over $G\rightrightarrows X$, is !-vacant. Therefore we can consider the functor $\mathcal F_1$ of Proposition \ref{LieGpd-LA to GpdLA functor} as a functor
$$\mathcal F_1 : \text{LieGpd}\ltimes\mathcal{LA} \to \text{!-}\mathbf{Gpd}\mathcal{LA}$$
We'll construct a quasi-inverse to this functor by associating to each !-vacant $\mathcal{LA}$-groupoid a certain action morphism of Lie groupoids.

\subsubsection{From !-vacant $\mathcal{LA}$-groupoids to actions}

\begin{Definition}
A morphism of Lie groupoids: 
$$\xymatrix{
G' \ar@<.5ex>[d]\ar@<-.5ex>[d]_{s', t'} \ar[r]^{\phi} & G \ar@<-.5ex>[d]\ar@<.5ex>[d]^{s,t} \\
X' \ar[r]_f & X 
}$$
is an \emph{action morphism} if the induced map:
\begin{align*}
(\phi,s'): G' & \to f^\ast G  \\
g' & \mapsto (\phi(g),s'(g))
\end{align*}
is a diffeomorphism.
\end{Definition}

\begin{Theorem}\label{action morphisms} \emph{(\cite{Ma2}).}
If $(\phi,f):G' \to G$ is an action morphism then the map:
\begin{align*}
t' \circ (\phi,s')^{-1}: G _ s \times _f X' & \to X'
\end{align*}
defines an action of $G$ on $X'$ along $f$. The map $(\phi,s')$ is a base preserving isomorphism of Lie groupoids from $G'$ to the action groupoid $G\ltimes X'$:
$$
\xymatrixcolsep{2.5pc}
\xymatrix{
G' \ar@<.5ex>[d]\ar@<-.5ex>[d]_{   } \ar[r]^-{(\phi,s')} & G \ltimes X' \ar@<.5ex>[d]\ar@<-.5ex>[d]^{  } \\
X' \ar[r]_{\mathrm{id}_{X'}} & X' 
}$$
Conversely, given an action of a Lie groupoid $G\rightrightarrows X$ on a manifold $X'$ along a map $f:X' \to X$, the map:
$$
\xymatrixcolsep{2.5pc}
\xymatrix{
G \ltimes X' \ar@<.5ex>[d]\ar@<-.5ex>[d]_{   } \ar[r]^-{\mathrm{pr}_G} & G \ar@<.5ex>[d]\ar@<-.5ex>[d]^{  } \\
X' \ar[r]_f & X 
}$$
is an action morphism and the induced action of $G$ on $X'$ is the original one.  
\end{Theorem}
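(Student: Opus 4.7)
The plan is to verify the two directions of the correspondence directly from the axioms, using the assumed diffeomorphism $(\phi,s'): G' \to f^\ast G$ to transport structure between $G'$ and the action groupoid.

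For the first direction, denote $\sigma = (\phi,s')^{-1}: G_s \! \times_f X' \to G'$ and define $\alpha(g,x') := t'(\sigma(g,x'))$. I would verify the three action axioms in turn. For the anchor condition $f(\alpha(g,x')) = t(g)$, I use that $(\phi,f)$ is a groupoid morphism so $f \circ t' = t \circ \phi$, giving $f(\alpha(g,x')) = t(\phi(\sigma(g,x'))) = t(g)$. For the unit axiom, $\phi(u'_{x'}) = u_{f(x')}$ and $s'(u'_{x'}) = x'$ by functoriality, so by uniqueness $\sigma(u_{f(x')},x') = u'_{x'}$ and hence $\alpha(u_{f(x')},x') = x'$. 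For associativity, given $(g,h)$ composable in $G$ with $s(h) = f(x')$, set $h' = \sigma(h,x')$ and $g' = \sigma(g,\alpha(h,x'))$. Then $s'(g') = t'(h')$, so $g'h'$ exists in $G'$, and functoriality of $\phi,s'$ gives $\phi(g'h') = gh$ and $s'(g'h') = x'$; uniqueness yields $\sigma(gh,x') = g'h'$, so $\alpha(gh,x') = t'(g')=\alpha(g,\alpha(h,x'))$.

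Next I would check that $(\phi,s')$ is a base-preserving isomorphism of Lie groupoids $G' \to G \ltimes X'$. Smoothness and bijectivity are given. The structure maps of $G \ltimes X'$ are $\tilde{s}(g,x') = x'$, $\tilde{t}(g,x') = \alpha(g,x')$, and $(g,\alpha(h,x')) \cdot (h,x') = (gh,x')$. Commutation with source is $\tilde{s} \circ (\phi,s') = s' = \mathrm{id}_{X'} \circ s'$; commutation with target is $\tilde{t}((\phi,s')(g')) = \alpha(\phi(g'),s'(g')) = t'(\sigma(\phi(g'),s'(g'))) = t'(g')$; and compatibility with multiplication is immediate by unwinding the definition of the product in $G \ltimes X'$ via $\sigma$.

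For the converse, given an action of $G\rightrightarrows X$ on $X'$ along $f$, the action groupoid $G \ltimes X'$ has underlying manifold $G_s \! \times_f X'$, with source $(g,x') \mapsto x'$, target $(g,x') \mapsto g\cdot x'$, and $\mathrm{pr}_G : G \ltimes X' \to G$ is a groupoid morphism covering $f$. The associated map $(\mathrm{pr}_G,\tilde{s}) : G \ltimes X' \to f^\ast G$ is literally the identity on the underlying manifold $G_s \! \times_f X'$, hence a diffeomorphism, so $\mathrm{pr}_G$ is an action morphism. The induced action of the first part is $\tilde{t} \circ \mathrm{id}^{-1}(g,x') = g \cdot x'$, recovering the original.

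I do not expect any serious obstacle: the content is a direct unwinding of the universal property of the action groupoid. The only care required is notational — keeping the structure maps of $G'$, $G$, and $G \ltimes X'$ straight — and observing that smoothness of the action $\alpha$ is automatic from the smoothness of $\sigma = (\phi,s')^{-1}$ and $t'$.
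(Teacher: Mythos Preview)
Your proof is correct. Note, however, that the paper does not actually prove this theorem: it is stated with a citation to Mackenzie's book \cite{Ma2} and used as a black box. Your direct verification of the action axioms and the groupoid-morphism identities is the standard argument one would find there, so there is nothing to compare against in the paper itself.
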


We can give an alternate characterisation of !-vacant $\mathcal{LA}$-groupoids in terms of action morphisms. Given an $\mathcal{LA}$-groupoid $(\Omega,G,A,X)$ the anchor maps of $\Omega$ and $A$ form a morphism of Lie groupoids:
$$\xymatrix{
\Omega \ar@<.5ex>[d]\ar@<-.5ex>[d]_{\tilde s,\tilde t} \ar[r]^{\tilde a} & TG \ar@<-.5ex>[d]\ar@<.5ex>[d]^{s_\ast,t_\ast} \\
A \ar[r]_a & TX 
}$$
We immediately have:
\begin{Proposition}
An $\mathcal{LA}$-groupoid is !-vacant if and only if the Lie groupoid morphism $(\tilde a,a)$ is an action morphism.
\end{Proposition}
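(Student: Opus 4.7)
The plan is to observe that the two conditions are essentially the same statement about the same underlying map. First I would unpack both sides in terms of the map $\omega \mapsto (\tilde a(\omega), \tilde s(\omega))$ from $\Omega$ into a fibre product. On the one hand, $s^! A = TG_{\,s_\ast}\!\times_a A$ as manifolds, so this is exactly the map that appears in the definition of !-vacancy. On the other hand, for the groupoid morphism $(\tilde a, a): (\Omega \rightrightarrows A) \to (TG \rightrightarrows TX)$, the pullback groupoid $a^\ast(TG)$ has underlying manifold $TG_{\,s_\ast}\!\times_a A$, and the induced map from the definition of an action morphism is precisely $\omega \mapsto (\tilde a(\omega), \tilde s(\omega))$. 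So both conditions concern the same set-theoretic/smooth map
$$ (\tilde a, \tilde s): \Omega \to TG_{\,s_\ast}\!\times_a A = s^! A. $$

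Next I would reconcile the apparent difference in strength: !-vacancy requires this to be an isomorphism of Lie algebroids over $G$, whereas the action-morphism condition requires only a diffeomorphism. The key observation is that $(\tilde a, \tilde s)$ is \emph{always} a Lie algebroid morphism over $G$: indeed $\tilde s: \Omega \to A$ is a Lie algebroid morphism covering $s: G \to X$ (by the definition of an $\mathcal{LA}$-groupoid, see \ref{subsubsection of LA-groupoids}), so the universal property of the pullback $s^! A$ described in \ref{cartesian property} produces a unique Lie algebroid morphism $\Omega \to s^! A$ over $G$, which is easily checked to be the map $(\tilde a, \tilde s)$.

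Once $(\tilde a, \tilde s)$ is known to be a Lie algebroid morphism between two Lie algebroids over the same base $G$, being an isomorphism in $\mathcal{LA}_G$ is equivalent to being a diffeomorphism: a fibrewise linear bijection of vector bundles over $G$ has a smooth inverse that is automatically a vector bundle morphism, and the bracket/anchor compatibility conditions transfer to the inverse. Combining the three equivalences,
$$\text{!-vacant} \;\Longleftrightarrow\; (\tilde a, \tilde s) \text{ is a Lie algebroid iso.\ over } G \;\Longleftrightarrow\; (\tilde a, \tilde s) \text{ is a diffeomorphism} \;\Longleftrightarrow\; (\tilde a, a) \text{ is an action morphism},$$
yields the proposition. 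I do not anticipate any real obstacle; the only point that requires care is the identification of the pullback $s^! A$ (as a manifold) with the groupoid-theoretic pullback $a^\ast(TG)$ and the verification that the map produced by the universal property of Lie algebroid pullbacks coincides with the one produced by the universal property of fibre products of manifolds.
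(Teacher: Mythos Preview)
Your proposal is correct and follows the same approach the paper intends: the paper simply says ``We immediately have'' before stating the proposition, treating it as an immediate consequence of the definitions once one identifies $s^! A = TG_{s_\ast}\!\times_a A = a^\ast(TG)$ as manifolds. Your additional observation---that $(\tilde a,\tilde s)$ is always a Lie algebroid morphism over $G$ by the universal property of $s^! A$ applied to $\tilde s$, so that ``diffeomorphism'' and ``Lie algebroid isomorphism'' coincide here---is exactly the point that makes the ``immediately'' honest, and is a useful detail to include.
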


Given a !-vacant $\mathcal{LA}$-groupoid $(\Omega,G,A,X)$, Theorem \ref{action morphisms} then determines an action of $TG\rightrightarrows TX$ on $A$:

\begin{Proposition} \label{TG action from !vacancy}
Let $(\Omega,G,A,X)$ be a !-vacant $\mathcal{LA}$-groupoid. Then the map:
\begin{align*}
\tilde t \circ (\tilde a, \tilde s)^{-1}  : s^! A & \to A 
\end{align*}
defines an action of $TG \rightrightarrows TX$ on $A$ along $a$, and is a Lie algebroid morphism covering $t$.
\end{Proposition}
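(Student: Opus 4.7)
The plan is to deduce both conclusions directly from the preceding proposition and Theorem~\ref{action morphisms}, with very little additional work required. I would split the statement into its two parts: the groupoid action claim and the Lie algebroid morphism claim.

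For the action claim, I would invoke the proposition immediately preceding (characterising !-vacancy as the statement that the pair of anchor maps $(\tilde a, a)$ is an action morphism from $\Omega \rightrightarrows A$ to $TG \rightrightarrows TX$). Theorem~\ref{action morphisms} applied to this action morphism then produces an action of $TG \rightrightarrows TX$ on $A$ along $a$, namely the map $\tilde t \circ (\tilde a, \tilde s)^{-1}$. The only thing to check is that under the identification $TG{}_{s_\ast}\!\times_a A = s^!A$ of manifolds (from Section~\ref{subsubsection of LA-groupoids}), this action map is the one asserted in the statement. This is immediate from the formula in Theorem~\ref{action morphisms}: the pullback $a^\ast (TG)$ appearing there is, as a manifold, the fibre product $TG{}_{s_\ast}\!\times_a A$, which by definition equals $s^! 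A$.

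For the Lie algebroid claim, I would argue purely by composition. The !-vacancy assumption says that $(\tilde a, \tilde s): \Omega \to s^! A$ is an isomorphism of Lie algebroids over $G$, so its inverse $(\tilde a, \tilde s)^{-1}$ is a Lie algebroid morphism covering $\mathrm{id}_G$. The target map $\tilde t: \Omega \to A$ is a structure map of the $\mathcal{LA}$-groupoid $(\Omega, G, A, X)$, hence a Lie algebroid morphism covering $t: G \to X$. Composing,
$$\tilde t \circ (\tilde a, \tilde s)^{-1} : s^! A \to A$$
is a Lie algebroid morphism covering $\mathrm{id}_G \circ t = t$.

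I do not anticipate a real obstacle here: the content of the proposition is essentially a translation from the language of action morphisms of Lie groupoids to the language of actions on vector bundles, together with the observation that !-vacancy automatically makes this action compatible with the Lie algebroid structure. The only point that needs careful bookkeeping is to confirm that the element-level description $\omega \mapsto (\tilde a(\omega), \tilde s(\omega))$ lands in $s^! A \subset TG \times A$, which follows from the fact that $(\tilde a, a)$ respects source maps: $s_\ast(\tilde a(\omega)) = a(\tilde s(\omega))$.
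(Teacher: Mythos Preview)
Your proposal is correct and follows essentially the same argument as the paper: invoke the preceding proposition to see that $(\tilde a, a)$ is an action morphism, apply Theorem~\ref{action morphisms} to get the $TG$-action, and then observe that $\tilde t \circ (\tilde a, \tilde s)^{-1}$ is a composition of Lie algebroid morphisms covering $t \circ \mathrm{id}_G = t$. Your additional bookkeeping remarks about the identification $TG_{s_\ast}\!\times_a A = s^!A$ are fine but not strictly necessary.
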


\begin{proof}
Since $(\Omega,G,A,X)$ is !-vacant the groupoid morphism $(\tilde a, a)$ is an action morphism, so by Theorem \ref{action morphisms} the map $\tilde t \circ (\tilde a, \tilde s)^{-1}$ defines an action of $TG$ on $A$ along $a$. The map $(\tilde a,\tilde s)^{-1}$ is an isomorphism of Lie algebroids over $G$, and $\tilde t$ is a Lie algebroid morphism covering $t$ because $(\Omega,G,A,X)$ is an $\mathcal{LA}$-groupoid.
\end{proof}

We can use this result to construct a functor from $\text{!-}\mathbf{Gpd}\mathcal{LA}$ to $\text{LieGpd}\ltimes \mathcal{LA}$.

\begin{Theorem} \label{inverse functor}
There is a functor 
$$\mathcal F_2 : \text{\emph{!-}}\mathbf{Gpd}\mathcal{LA} \to \text{\emph{LieGpd}}\ltimes \mathcal{LA}$$
which maps a !-vacant $\mathcal{LA}$-groupoid $(\Omega,G,A,X)$ to $(G\rightrightarrows X,A,\psi)$, where 
$$\psi: s^! A \to t^! A$$
is the isomorphism associated to the action of $TG\rightrightarrows TX$ on $A$
$$\tilde t \circ (\tilde a, \tilde s)^{-1}  : s^! A  \to A$$
determined by Proposition \ref{TG action from !vacancy}, and which maps a morphism
$$(\Phi,\phi,\rho,f) : (\Omega,G,A,X) \to (\Omega',G',A',X')$$
of !-vacant $\mathcal{LA}$-groupoids to the morphism
$$(\phi,f,\rho) : (G\rightrightarrows X,A,\psi) \to (G'\rightrightarrows X',A',\psi')$$
in the category $\text{\emph{LieGpd}}\ltimes\mathcal{LA}$. This functor is quasi-inverse to the functor $\mathcal F_1$ of Proposition \ref{LieGpd-LA to GpdLA functor}. In particular, there is an equivalence of  categories: 
$$\text{\emph{LieGpd}}\ltimes \mathcal{LA}\simeq\text{\emph{!-}}\mathbf{Gpd}\mathcal{LA}$$
\end{Theorem}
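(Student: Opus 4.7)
The plan is to verify, in order, four things: that $\mathcal F_2$ is well-defined on objects, that it is well-defined on morphisms, that it is functorial, and that it furnishes a quasi-inverse to $\mathcal F_1$. Well-definedness on objects is essentially a combination of earlier results. By Proposition \ref{TG action from !vacancy}, the composition $\tilde t \circ (\tilde a, \tilde s)^{-1}: s^! A \to A$ is a Lie algebroid morphism covering $t$ and is an action of $TG \rightrightarrows TX$ on $A$ along $a$; that is, a compatible action in the sense of Definition \ref{compatible actions}. Proposition \ref{tangent actions} then converts this into an isomorphism $\psi : s^! A \to t^! A$ of Lie algebroids over $G$ satisfying the $!$-cocycle condition, so $(G\rightrightarrows X, A, \psi)$ is an object of $\text{LieGpd}\ltimes\mathcal{LA}$.

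For morphisms, given $(\Phi, \phi, \rho, f)$ I would first invoke the computation already carried out in the proof of Proposition \ref{LieGpd-LA to GpdLA functor}: the fact that $\Phi$ intertwines source maps (a groupoid morphism condition) and anchor maps (a Lie algebroid morphism condition) forces $\Phi = (\phi_\ast, \rho)$ under the identification $\Omega \cong s^! A$ given by $(\tilde a, \tilde s)$. Commutativity of the square required by Definition \ref{!-morphisms} then reduces to the fact that $\Phi$ intertwines the target maps $\tilde t$ and $\tilde t'$, which is part of the hypothesis that $(\Phi, \rho)$ is a groupoid morphism. Functoriality is then immediate since the assignment $(\Phi, \phi, \rho, f) \mapsto (\phi, f, \rho)$ is projection onto a subset of the data.

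For the quasi-inverse property, $\mathcal F_1 \circ \mathcal F_2$ sends $(\Omega, G, A, X)$ to $(s^! A, G, A, X)$, and the !-vacancy isomorphism $(\tilde a, \tilde s) : \Omega \to s^! A$ is by definition a Lie algebroid isomorphism over $G$. The remaining point is that it is actually an isomorphism of $\mathcal{LA}$-groupoids: the groupoid structure on $s^! A \rightrightarrows A$ is the action groupoid for $\tilde t \circ (\tilde a, \tilde s)^{-1}$, so that source, target, unit, multiplication and inverse in $s^! A$ are pushed forward from those of $\Omega$ by $(\tilde a, \tilde s)$ essentially by construction. Naturality in $(\Omega, G, A, X)$ follows from the diagram chase in the previous paragraph. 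Going the other way, $\mathcal F_2 \circ \mathcal F_1$ applied to $(G \rightrightarrows X, A, \psi)$ first forms $(s^! A, G, A, X)$ via Theorem \ref{s! LA groupoids}, whose target map is $\tilde t = \tilde\psi$ by construction, so the extracted $TG$-action is again $\tilde\psi$, and the associated Lie algebroid isomorphism is $\psi$ by the bijection of Proposition \ref{tangent actions}.

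The main obstacle will be verifying that $(\tilde a, \tilde s) : \Omega \to s^! A$ intertwines all the groupoid structure maps, not merely the source maps. The source compatibility is tautological from the definition of $(\tilde a, \tilde s)$; target compatibility $\tilde t = \tilde t \circ (\tilde a, \tilde s)^{-1} \circ (\tilde a, \tilde s)$ is trivial. Unit compatibility reduces to $\tilde a \circ \tilde u = u_\ast \circ a$, which holds because the anchors define a Lie groupoid morphism $\Omega \rightrightarrows A \to TG \rightrightarrows TX$ (see \ref{subsubsection of LA-groupoids}). Multiplicativity of $(\tilde a, \tilde s)$ follows by comparing $\tilde m(\omega, \omega')$ with the action-groupoid multiplication of $(\tilde a(\omega), \tilde s(\omega))$ and $(\tilde a(\omega'), \tilde s(\omega'))$ and using multiplicativity of $\tilde a$; inverse compatibility is analogous. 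These checks are routine but tedious, and are the technical core of the equivalence.
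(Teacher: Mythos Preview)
Your proposal is correct and follows essentially the same overall architecture as the paper's proof: verify well-definedness on objects via Propositions~\ref{TG action from !vacancy} and~\ref{tangent actions}, on morphisms via the computation $(\tilde a',\tilde s')\circ\Phi=(\phi_\ast,\rho)\circ(\tilde a,\tilde s)$ (which is exactly the calculation from the fullness part of Proposition~\ref{LieGpd-LA to GpdLA functor}), note $\mathcal F_2\circ\mathcal F_1=\mathrm{id}$ on the nose, and exhibit $(\tilde a,\tilde s)$ as a natural isomorphism $\mathrm{id}\Rightarrow\mathcal F_1\circ\mathcal F_2$.

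The one substantive difference is in how you handle what you call ``the main obstacle'': verifying that $(\tilde a,\tilde s):\Omega\to s^!A$ is a morphism of Lie groupoids over $A$. You propose to check source, target, unit, multiplication and inverse by hand, and your sketch of those checks is sound. The paper avoids this entirely by observing that since $(\Omega,G,A,X)$ is !-vacant, the anchor morphism $(\tilde a,a):(\Omega\rightrightarrows A)\to(TG\rightrightarrows TX)$ is an action morphism, and then Theorem~\ref{action morphisms} (quoted from Mackenzie) states directly that $(\tilde a,\tilde s)$ is an isomorphism of Lie groupoids onto the action groupoid $TG\ltimes A=s^!A$. So what you flag as the technical core collapses to a single citation. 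Your hand verification would of course work and amounts to re-proving the relevant part of Theorem~\ref{action morphisms} in this special case, but the cleaner route is available and already set up earlier in the section.
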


\begin{proof}
We first need to show that the assignment $(\Phi,\phi,\rho,f) \mapsto (\phi,f,\rho)$ does indeed map morphisms in $\text{!-}\mathbf{Gpd}\mathcal{LA}$ to morphisms in $\text{LieGpd}\ltimes\mathcal{LA}$. Let $( \Phi , \phi, \rho, f)$ be a morphism in $\text{!-}\mathbf{Gpd}\mathcal{LA}$ from $(\Omega, G, A, X)$ to $(\Omega',G',A',X')$: 
$$\xymatrix{
& & \Omega' \ar[d]  \ar@<.5ex>[r]\ar@<-.5ex>[r] & A' \ar[d] \\
\Omega \ar[d] \ar[urr]^{\Phi} \ar@<.5ex>[r]\ar@<-.5ex>[r]  & A \ar[d] \ar[urr]^<<<<<<<<<{\rho} & G' \ar@<.5ex>[r]\ar@<-.5ex>[r]  & X' \\
G \ar@<.5ex>[r]\ar@<-.5ex>[r] \ar[urr]^>>>>>>>>>{\phi} & X  \ar[urr]_{f}\\
}$$
Then in particular, $(\phi,f): G \to G'$ is a morphism of groupoids, and $\rho: A \to A'$ is a morphism of Lie algebroids covering $f$. Therefore, to show that
$$(\phi,f,\rho) : (G \rightrightarrows X,A,\psi) \to (G'\rightrightarrows X', A',\psi')$$
is a morphism in $\text{LieGpd}\ltimes\mathcal{LA}$ we need to show that $\rho$ is equivariant with respect to the actions of $TG\rightrightarrows TX$ and $TG'\rightrightarrows TX'$ on $A$ and $A'$ respectively. The actions are given by:
\begin{align*}
\tilde \psi & = \tilde t \circ (\tilde a, \tilde s)^{-1}  \\
\tilde \psi' & = \tilde t' \circ (\tilde a', \tilde s')^{-1} 
\end{align*}
As maps from $\Omega$ to $(s')^! A'$ we have:
\begin{align*}
(\tilde a', \tilde s') \circ \Phi & = ( \tilde a' \circ \Phi  ,  \tilde s' \circ \Phi  ) \\
& = (\phi_\ast \circ \tilde a  , \rho \circ \tilde s  ) \\
& = (\phi_\ast , \rho) \circ (\tilde a , \tilde s) 
\end{align*}
where we have used the facts that $\Phi$ is a Lie algebroid morphism covering $\phi$ and $(\Phi,\rho)$ is a Lie groupoid morphism. Since $(\tilde a,\tilde s)$ and $(\tilde a', \tilde s')$ are diffeomorphisms, we then have: 
\begin{align*}
\Phi \circ (\tilde a,\tilde s)^{-1} & = (\tilde a',\tilde s')^{-1} \circ (\phi_\ast,\rho) 
\end{align*}
and therefore the upper square of the following diagram commutes:
$$\xymatrix{
s'^! A' \ar[d]_{(\tilde a' , \tilde s')^{-1}} \ar[r]^{(\phi_\ast,\rho)} & s^!A \ar[d]^{(\tilde a, \tilde s)^{-1}} \\
\Omega'  \ar[d]_{\tilde t'} \ar[r]^{\Phi} & \Omega   \ar[d] ^{\tilde t} \\
A' \ar[r]_\rho & A 
}$$
The lower square commutes because $(\Phi,\rho)$ is a groupoid morphism, and therefore we have that
$$\xymatrix{
s'^!A'  \ar[d]_{\tilde \psi'} \ar[r]^{(\phi_\ast,\rho)} & s^!  A \ar[d] ^{\tilde \psi} \\
A' \ar[r]_\rho & A 
}$$
commutes, which is the statement that $\rho$ is equivariant as required. It is clear that $(\phi,f,\rho)$ depends functorially on $(\Phi,\phi,\rho,f)$. 

It remains to show that the functors $\mathcal F_1$ and $\mathcal F_2$ are quasi-inverse to each other. Let $(G\rightrightarrows X, A,\psi)$ be an object of $\text{LieGpd}\ltimes \mathcal{LA}$, then
$$(\mathcal F_2 \circ \mathcal F_1)(G\rightrightarrows X, A,\psi) = (G\rightrightarrows X, A,\psi')$$
where $\psi'$ is the action of $TG\rightrightarrows TX$ on $A$ determined by the !-vacant $\mathcal{LA}$ groupoid $(s^! A, G,A,X)$. However, by Theorem \ref{action morphisms} this action coincides with the action $\psi$. Explicitly, we have:
\begin{align*}
\psi' = \tilde t \circ (\tilde a, \tilde s)^{-1} & = \tilde t \circ \mathrm{id}_{s^! A} = \tilde t = \psi
\end{align*}
Therefore $(\mathcal F_2 \circ \mathcal F_1)$ is the identity on objects, and it is also the identity on morphisms:
$$(\mathcal F_2 \circ \mathcal F_1)(\phi,f,\rho)  = \mathcal F_2( (\phi_\ast,\rho),\phi,\rho,f) ) = (\phi,f,\rho)$$
and so 
$$(\mathcal F_2 \circ \mathcal F_1) = \mathrm{id}_{\text{LieGpd}\ltimes \mathcal{LA}}$$
Now let $(\Omega,G,A,X)$ be an object of $\text{!-}\mathbf{Gpd}\mathcal{LA}$, then
$$(\mathcal F_1 \circ \mathcal F_2) (\Omega,G,A,X) = (s^! A, G,A,X)$$
with $\mathcal{LA}$-groupoid structure determined by Theorem \ref{s! LA groupoids}. Since $(\Omega,G,A,X)$ is !-vacant we have an isomorphism of $\mathcal{LA}$-groupoids:
\begin{align*}
((\tilde a,\tilde s),\mathrm{id}_G, \mathrm{id}_A,\mathrm{id}_X) : (\Omega,G,A,X) & \to   (s^! A,G,A,X) 
\end{align*}
$$\xymatrix{
& & s^! A \ar[d]  \ar@<-.5ex>[r]\ar@<0.5ex>[r] & A \ar[d] \\
\Omega \ar[d] \ar[urr]^{(\tilde a, \tilde s)} \ar@<-.5ex>[r]\ar@<0.5ex>[r]  & A \ar[d] \ar[urr]^<<<<<<<{\mathrm{id}_A} & G \ar@<-.5ex>[r]\ar@<0.5ex>[r]  & X \\
G \ar@<-.5ex>[r]\ar@<0.5ex>[r] \ar[urr]^>>>>>>>>{\mathrm{id}_G} & X  \ar[urr]_{\mathrm{id}_X}\\
}$$
This is indeed a morphism, because 
$$\xymatrix{
\Omega   \ar[d] \ar[r]^{(\tilde a,\tilde s)} &  s^! A   \ar[d]  \\
G \ar[r] _{\mathrm{id}_G}  & G 
}$$
is the morphism of Lie algebroids induced by $\tilde s : \Omega \to A$, and 
$$\xymatrix{
\Omega   \ar@<-.5ex>[d]\ar@<.5ex>[d] \ar[r]^{(\tilde a,\tilde s)} &  s^! A   \ar@<-.5ex>[d]\ar@<.5ex>[d] \\
A \ar[r] _{\mathrm{id}_A}  & A 
}$$
a morphism of Lie groupoids by Theorem \ref{action morphisms}. Lastly we need to check that these isomorphisms are natural. Let $$(\Phi,\phi,\rho,f):(\Omega,G,A,X) \to (\Omega',G',A',X')$$ be a morphism in $\text{!-}\mathbf{Gpd}\mathcal{LA}$, then the naturality square is: 
$$\xymatrix{
(\Omega,G,A,X)  \ar[d]_{ ((\tilde a,\tilde s),\mathrm{id}_G, \mathrm{id}_A,\mathrm{id}_X)}  \ar[rr]^{(\Phi,\phi,\rho,f)} &&  (\Omega',G',A',X') \ar[d]^{((\tilde a',\tilde s'),\mathrm{id}_G, \mathrm{id}_A,\mathrm{id}_X)}  \\
(s^! A,G,A,X)  \ar[rr]_-{((\phi_\ast,\rho),\phi,\rho,f)} && ((s')^! A',G',A',X') 
}$$
This commutes because $(\tilde a',\tilde s') \circ \Phi = (\phi_\ast,\rho)\circ (\tilde a,\tilde s)$, as shown above. 
\end{proof}

\subsubsection{Lie algebroids over a fixed base groupoid}

In a natural way we can define Lie algebroids and $\mathcal{LA}$-groupoids over a fixed base groupoid, and consider base preserving morphims:

\begin{Definition}
Let $G\rightrightarrows X$ be a Lie groupoid. Then $(\text{LieGpd}\ltimes\mathcal{LA})_{G\rightrightarrows X}$ is the subcategory of $\text{LieGpd}\ltimes\mathcal{LA}$ consisting of objects of the form $(G\rightrightarrows X,A,\psi)$, for varying $A$ and $\psi$, and morphisms of the form $(\mathrm{id}_G,\mathrm{id}_X,\rho)$. The subcategory $(\text{!-}\mathbf{Gpd}\mathcal{LA})_{G\rightrightarrows X}$ of $\text{!-}\mathbf{Gpd}\mathcal{LA}$ consists of !-vacant $\mathcal{LA}$-groupoids of the form $(\Omega,G,A,X)$, and morphisms of the form $(\Phi,\mathrm{id}_G,\psi,\mathrm{id}_X)$.
\end{Definition}

\begin{Theorem} \label{LA groupoid theorem}
For any Lie groupoid $G\rightrightarrows X$ the functors $\mathcal F_1$ and $\mathcal F_2$, defined in Proposition \ref{LieGpd-LA to GpdLA functor} and Theorem \ref{inverse functor} respectively, restrict to an equivalence of categories:
$$(\text{\emph{LieGpd}}\ltimes\mathcal{LA})_{G\rightrightarrows X}  \simeq (\text{\emph{!-}}\mathbf{Gpd}\mathcal{LA})_{G\rightrightarrows X}$$
\end{Theorem}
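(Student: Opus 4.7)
The plan is essentially to observe that this is a formal corollary of Theorem \ref{inverse functor}: the functors $\mathcal F_1$ and $\mathcal F_2$ already realize an equivalence of the larger categories, so we only have to check that each restricts to the specified subcategory, and then use the fact that a restriction of an equivalence to a pair of full subcategories matched by the functors is automatically an equivalence.

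First I would verify that $\mathcal F_1$ sends $(\text{LieGpd}\ltimes\mathcal{LA})_{G\rightrightarrows X}$ into $(\text{!-}\mathbf{Gpd}\mathcal{LA})_{G\rightrightarrows X}$. On objects this is immediate: $\mathcal F_1(G\rightrightarrows X,A,\psi)=(s^!A,G,A,X)$, which is !-vacant (with the isomorphism $(\tilde a,\tilde s)$ being the identity on $s^!A$) and has base groupoid $G\rightrightarrows X$. On morphisms $(\mathrm{id}_G,\mathrm{id}_X,\rho)$, applying $\mathcal F_1$ yields $(((\mathrm{id}_G)_\ast,\rho),\mathrm{id}_G,\rho,\mathrm{id}_X)$, which is visibly of the allowed form $(\Phi,\mathrm{id}_G,\rho,\mathrm{id}_X)$. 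Symmetrically, $\mathcal F_2$ sends an object $(\Omega,G,A,X)$ of $(\text{!-}\mathbf{Gpd}\mathcal{LA})_{G\rightrightarrows X}$ to $(G\rightrightarrows X,A,\psi)$, and a morphism $(\Phi,\mathrm{id}_G,\rho,\mathrm{id}_X)$ to $(\mathrm{id}_G,\mathrm{id}_X,\rho)$, staying inside the base-preserving subcategory.

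Next I would invoke Theorem \ref{inverse functor}: the natural isomorphisms exhibited there, namely $\mathcal F_2\circ\mathcal F_1=\mathrm{id}$ on the nose and the natural isomorphism $((\tilde a,\tilde s),\mathrm{id}_G,\mathrm{id}_A,\mathrm{id}_X)\colon(\Omega,G,A,X)\to(s^!A,G,A,X)$ realizing $\mathcal F_1\circ\mathcal F_2\simeq\mathrm{id}$, are themselves base preserving (both their groupoid and manifold components are identities), hence lie in the appropriate subcategory. The naturality squares checked in the proof of Theorem \ref{inverse functor} therefore remain valid when all morphisms involved are of the restricted form.

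There is no real obstacle here: the only thing to be careful about is that the subcategories as defined are full with respect to the prescribed form of morphisms, so that no morphisms are discarded by the restriction; this is built into the definition. Hence $\mathcal F_1$ and $\mathcal F_2$ restrict to quasi-inverse functors between $(\text{LieGpd}\ltimes\mathcal{LA})_{G\rightrightarrows X}$ and $(\text{!-}\mathbf{Gpd}\mathcal{LA})_{G\rightrightarrows X}$, giving the desired equivalence.
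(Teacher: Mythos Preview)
Your proposal is correct and takes essentially the same approach as the paper. The paper's own proof is the single line ``The proof is essentially the same as that of Theorem~\ref{inverse functor}'', and you have simply spelled out the routine verifications that $\mathcal F_1$, $\mathcal F_2$, and the natural isomorphism $((\tilde a,\tilde s),\mathrm{id}_G,\mathrm{id}_A,\mathrm{id}_X)$ all stay within the base-preserving subcategories.
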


\begin{proof}
The proof is essentially the same as that of Theorem \ref{inverse functor}.
\end{proof}

\subsubsection{Vacant vs !-vacant $\mathcal{LA}$-groupoids}

We will compare the notions of !-vacant and vacant. Recall from \ref{!-vacant LA-groupoids} that an $\mathcal{LA}$ groupoid $(\Omega,G,A,X)$:
$$\xymatrix{  
\Omega \ar[d] _{\tilde \pi}  \ar@<-.5ex>[r]\ar@<.5ex>[r]^{\tilde s, \tilde t}  &  A \ar[d] ^\pi \\
G  \ar@<.5ex>[r]\ar@<-.5ex>[r] _{s,t} & X
}$$
is called !-vacant if the vector bundle map $(\tilde s,s)$ is a pullback morphism in the category of Lie algebroids, and vacant if $(\tilde s,s)$ is a pullback morphism in the category of vector bundles. We then have:

\begin{Proposition}  \label{Prop on vacancy and etaleness 1}
If an $\mathcal{LA}$-groupoid $(\Omega,G,A,X)$ is both !-vacant and vacant then $G\rightrightarrows X$ is an \'etale groupoid. Conversely, if $G\rightrightarrows X$ is \'etale then $(\Omega,G,A,X)$ is !-vacant if and only if it is vacant.
\end{Proposition}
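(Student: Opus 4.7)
The plan is to compare the two pullback constructions and exploit the rank formula for Lie algebroid pullbacks along submersions.

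First I would recall from \ref{The pullback construction} that for a submersion $f \colon X \to Y$ of relative dimension $k$, the Lie algebroid pullback $f^! A$ has rank $\text{rank}(A) + k$, whereas the vector bundle pullback $f^\ast A$ always has rank $\text{rank}(A)$. In the setting of an $\mathcal{LA}$-groupoid $(\Omega, G, A, X)$, both $s^! A$ and $s^\ast A$ are vector bundles over $G$, and the source map $s$ is a submersion. If $(\Omega, G, A, X)$ is simultaneously !-vacant and vacant, then both $(\tilde a, \tilde s)\colon \Omega \to s^! A$ and $(\tilde \pi, \tilde s)\colon \Omega \to s^\ast A$ are vector bundle isomorphisms, forcing
$$\text{rank}(A) + k_s = \text{rank}(\Omega) = \text{rank}(A),$$
where $k_s$ is the relative dimension of $s$. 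Hence $k_s = 0$, so $s$ is \'etale; since $t = s \circ i$ and $i$ is a diffeomorphism, $t$ is also \'etale, and therefore $G \rightrightarrows X$ is an \'etale Lie groupoid.

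For the converse, assume $G \rightrightarrows X$ is \'etale, so that $s$ is an \'etale map. By \ref{etale pullbacks} there is a canonical vector bundle isomorphism
$$F_A \colon s^\ast A \xrightarrow{\;\cong\;} s^! A, \qquad (g, \xi) \mapsto \bigl((s_\ast|_g)^{-1}(a(\xi)),\, \xi\bigr),$$
and by Proposition \ref{equivalence over etale category} this is in fact an isomorphism of Lie algebroids over $G$. The key observation is that the diagram
$$\xymatrix{
\Omega \ar[r]^{(\tilde a, \tilde s)} \ar[dr]_{(\tilde \pi, \tilde s)} & s^! A \\
& s^\ast A \ar[u]_{F_A}^\cong
}$$
commutes: an element $\omega \in \Omega$ with $\tilde \pi(\omega) = g$ satisfies $(s_\ast|_g)^{-1}(a(\tilde s(\omega))) = \tilde a(\omega)$, because $(\tilde s, s)$ is a Lie algebroid morphism so it intertwines anchors, i.e.\ $s_\ast \circ \tilde a = a \circ \tilde s$. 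Commutativity of this triangle shows that $(\tilde a, \tilde s)$ is an isomorphism of Lie algebroids over $G$ if and only if $(\tilde \pi, \tilde s)$ is, which is precisely the statement that !-vacancy and vacancy coincide in the \'etale case.

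The only subtle point to be careful about is verifying that $F_A$ really is the isomorphism identifying $(\tilde a, \tilde s)$ with $(\tilde \pi, \tilde s)$ at the Lie algebroid level (not just as vector bundles) so that the equivalence of the two vacancy conditions holds in the appropriate category; this is ensured by Proposition \ref{equivalence over etale category}. No further obstacle is anticipated, since the first half is a rank count and the second half reduces to the commutativity of a triangle that follows from the basic compatibility of anchors in an $\mathcal{LA}$-groupoid.
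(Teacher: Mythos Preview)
Your proof is correct and follows the same idea as the paper's, which is the one-liner ``this follows immediately from the fact that $s^! A \cong s^\ast A$ if and only if $s$ is an \'etale map.'' You have simply unpacked that sentence: the rank count makes precise the forward direction, and the commutative triangle involving $F_A$ (together with the observation that $(\tilde a,\tilde s)$ is automatically a Lie algebroid morphism by the universal property of $s^!A$, so the only issue is bijectivity) makes precise the converse. The extra care you take with the Lie-algebroid-versus-vector-bundle distinction is not strictly necessary, since $(\tilde a,\tilde s)$ is always a Lie algebroid morphism over $G$ and hence is a Lie algebroid isomorphism as soon as it is a vector bundle isomorphism, but it does no harm.
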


\begin{proof}
This follows immediately from the fact that $s^! A \cong s^\ast A$ if and only if $s$ is an \'etale map.
\end{proof}

\begin{Proposition} \label{Prop on vacancy and etaleness 2}
Let $(\Omega, G, A, M)$ be an $\mathcal{LA}$ groupoid and the base groupoid $G \rightrightarrows M$ be \'etale. Then $(\Omega, G, A, M)$ is vacant if and only if the groupoid $\Omega \rightrightarrows A$ is also \'etale. If $G\rightrightarrows M$ is \'etale and proper, then $(\Omega, G, A, M)$ is vacant if and only if $\Omega \rightrightarrows A$ is also \'etale and proper. 
\end{Proposition}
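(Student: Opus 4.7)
Write $r = \mathrm{rank}(A)$ and denote by $\tilde s, \tilde t: \Omega \to A$ and $\tilde\pi: \Omega \to G$ the source, target, and bundle projection. The strategy is to isolate the first assertion by showing, under the hypothesis that $G \rightrightarrows M$ is \'etale, that vacancy is equivalent to $\tilde s$ being \'etale; once this is established, the full statement for $\Omega \rightrightarrows A$ follows because the inverse map swaps source and target, and source-\'etaleness of a Lie groupoid implies all structure maps are \'etale. Recall that vacancy means the vector bundle morphism $(\tilde\pi, \tilde s): \Omega \to s^\ast A$ over $G$ is an isomorphism.

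\textbf{Vacant implies $\tilde s$ \'etale.} If $(\tilde\pi, \tilde s)$ is a diffeomorphism onto $s^\ast A = G \times_M A$, then under this identification $\tilde s$ becomes the second projection, which is the base change of $s: G \to M$ along the vector bundle projection $\pi: A \to M$. Since $s$ is \'etale and \'etale maps are stable under base change (along the submersion $\pi$), $\tilde s$ is \'etale. Conversely, if $\tilde s: \Omega \to A$ is \'etale, then I will show $(\tilde\pi, \tilde s)$ is a fiberwise linear isomorphism. Both $\Omega$ and $s^\ast A$ have rank $r$ as vector bundles over $G$, and $\tilde s|_{\Omega_g}: \Omega_g \to A_{s(g)}$ is linear. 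Under the canonical identifications $T_\omega(\Omega_g) \cong \Omega_g$ and $T_{\tilde s(\omega)}(A_{s(g)}) \cong A_{s(g)}$, the derivative of a linear map is the map itself, so it suffices to show the derivative is an isomorphism. It sits in a morphism of short exact sequences
\[
0 \to T_\omega \Omega_g \to T_\omega \Omega \to T_g G \to 0, \qquad 0 \to T_{\tilde s(\omega)} A_{s(g)} \to T_{\tilde s(\omega)} A \to T_{s(g)} M \to 0,
\]
whose middle vertical arrow $\tilde s_\ast$ is an isomorphism (since $\tilde s$ is \'etale) and whose right vertical arrow $s_\ast$ is an isomorphism (since $s$ is \'etale). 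The short five lemma then forces the induced map on kernels to be an isomorphism, establishing vacancy.

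\textbf{The proper case.} Assume now $G \rightrightarrows M$ is both \'etale and proper. If $(\Omega, G, A, M)$ is vacant, $\Omega \rightrightarrows A$ is \'etale by the preceding step, and under the diffeomorphism $\Omega \cong s^\ast A$ the pair $(\tilde s, \tilde t)$ corresponds to the map $(g, \xi) \mapsto (\xi, g\cdot\xi)$, where $g \cdot \xi$ is the action of $G$ on $A$ induced by the $\mathcal{LA}$-structure. For compact $K \subseteq A \times A$, set $L_i = \pi(\mathrm{pr}_i K) \subset M$, both compact. Then $(\tilde s, \tilde t)^{-1}(K)$ is a closed subset of $(s,t)^{-1}(L_1 \times L_2) \times \mathrm{pr}_1 K$; the first factor is compact by properness of $G$ and the second is compact by hypothesis, so $(\tilde s, \tilde t)^{-1}(K)$ is compact and $\Omega \rightrightarrows A$ is proper. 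Conversely, if $\Omega \rightrightarrows A$ is \'etale and proper, vacancy already follows from the first assertion applied to $\Omega$ \'etale, so the properness hypothesis on $\Omega$ plays no further role in recovering vacancy.

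\textbf{Main obstacle.} The only nontrivial point is the reverse implication in the \'etale case: promoting the manifold-level statement that $\tilde s$ is a local diffeomorphism to the fiberwise linear-isomorphism statement required by vacancy. This is the step handled by the short five lemma applied to the tangent sequences, combined with the trick that the derivative of a linear map between vector spaces agrees with the map itself. Everything else -- the forward direction of the \'etale case, and both directions in the proper refinement -- reduces to standard stability of \'etale/proper maps under base change.
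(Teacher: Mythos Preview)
Your argument is correct and follows the same route as the paper's proof: vacancy is characterised via the identification $\Omega \cong s^\ast A$, from which \'etaleness and properness of $\Omega \rightrightarrows A$ are inherited from those of $G \rightrightarrows M$, and conversely \'etaleness of $\tilde s$ together with that of $s$ forces the fibrewise isomorphism defining vacancy. Where the paper asserts the fibrewise isomorphism tersely and handles properness by citing Mackenzie's identification of vacant $\mathcal{LA}$-groupoids with action groupoids, you supply the short-five-lemma justification and a direct compactness check for $(\tilde s,\tilde t)$; these are expansions of the same argument rather than a different strategy.
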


\begin{proof}
Let $G\rightrightarrows M$ be \'etale. If $\Omega \rightrightarrows A$ is also \'etale, then in particular we have that in the vector bundle morphism: 
$$\xymatrix{ \Omega  \ar[d] _{} \ar[r] ^{\tilde s}  & A \ar[d] \\
                G \ar[r]_s & M}
$$
both $s$ and $\tilde s$ are \'etale, so $\tilde s$ must be a fibrewise isomorphism, which is equivalent to $(\tilde s,s)$ being a pullback morphism of vector bundles. Conversely, if $(\Omega, G, A, M)$ is vacant then the fibres of $\Omega$ and $A$ (as vector bundles over $G$ and $M$ respectively) must have the same dimension, but as $G\rightrightarrows M$ is \'etale $G$ and $M$ have the same dimension, and so $\Omega$ and $A$ have equal dimensions, which implies that $\Omega \rightrightarrows A$ is \'etale. 

Now assume that $G\rightrightarrows M$ is \'etale and proper. If $\Omega \rightrightarrows A$ is also \'etale and proper then by the above $(\Omega, G, A, M)$ is vacant. Conversely, if $(\Omega, G, A, M)$ is  vacant, then $\Omega \rightrightarrows A$ is \'etale, and by Theorem 4.11 in \cite{Ma1} there is an isomorphism of Lie groupoids $\Omega \rightrightarrows A \cong A \rtimes G$ for some action of $G$ on $A$, and action groupoids of actions of proper groupoids are proper. 
\end{proof}

\subsection{Applications to Lie algebroids over stacks}

We can apply the results of \ref{Actions of Lie groupoids on Lie algebroids} and \ref{Lie algebroid groupoids section} to the study of Lie algebroids over differentiable stacks.

\subsubsection{Lie algebroids over stacks as $\mathcal{LA}$-groupoids}

Combining  Proposition \ref{Proposition on Lie algebroids via atlases} in \ref{Lie algebroids in terms of atlases} with Theorem \ref{LA groupoid theorem} above we have:

\begin{Theorem} \label{Mehta theorem}
Let $\mathfrak X$ be a differentiable stack and $X \to \mathfrak X$ be an atlas. Then there are equivalences of categories
$$ \mathcal{LA}_\mathfrak X  \simeq  \mathcal{LA}_{X \times_\mathfrak X X \rightrightarrows X} \simeq 
(\text{\emph{!-}}\mathbf{Gpd}\mathcal{LA})_{G\rightrightarrows X} $$
between the categories of Lie algebroids over $\mathfrak X$, Lie algebroids over the Lie groupoid $X \times_\mathfrak X X \rightrightarrows X$, and \emph{!-vacant} $\mathcal{LA}$-groupoids with base $X \times_\mathfrak X X \rightrightarrows X$.
\end{Theorem}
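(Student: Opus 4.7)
The plan is to prove Theorem \ref{Mehta theorem} by assembling two equivalences that have already been established earlier in the paper, with a short bookkeeping step to match the two notational conventions for the middle category.

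First I would invoke Proposition \ref{Proposition on Lie algebroids via atlases}, which gives directly the first equivalence
\[
\mathcal{LA}_\mathfrak X \;\simeq\; \mathcal{LA}_{X \times_\mathfrak X X \rightrightarrows X},
\]
where by definition $\mathcal{LA}_{X\times_\mathfrak X X \rightrightarrows X} = \mathrm{Des}(X \times_\mathfrak X X \rightrightarrows X, \mathcal{LA})$. The content of that proposition is that the atlas $X \to \mathfrak X$ induces an equivalence between morphisms $\mathfrak X_\text{sub} \to \mathcal{LA}$ and descent data for the covering $X \to \mathfrak X$, which is precisely a Lie algebroid $A$ over $X$ together with an isomorphism $\psi : s^! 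A \to t^! A$ satisfying the cocycle condition.

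Second, I would observe that the description of $\mathcal{LA}_{X\times_\mathfrak X X \rightrightarrows X}$ as pairs $(A,\psi)$ with base-preserving morphisms of such pairs is \emph{definitionally identical} to the category $(\text{LieGpd}\ltimes \mathcal{LA})_{X\times_\mathfrak X X \rightrightarrows X}$ introduced in Section \ref{Actions of Lie groupoids on Lie algebroids}: both have objects consisting of a Lie algebroid $A$ over $X$ with a !-cocycle $\psi : s^!A \to t^!A$, and morphisms $(\mathrm{id}_G,\mathrm{id}_X,\rho)$ in the second picture correspond under the descent-data description exactly to a morphism $\rho : A \to A'$ of Lie algebroids over $X$ compatible with $\psi, \psi'$. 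So there is a tautological isomorphism of categories
\[
\mathcal{LA}_{X \times_\mathfrak X X \rightrightarrows X} \;=\; (\text{LieGpd}\ltimes \mathcal{LA})_{X\times_\mathfrak X X \rightrightarrows X}.
\]

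Finally, I would apply Theorem \ref{LA groupoid theorem} with $G\rightrightarrows X = X\times_\mathfrak X X \rightrightarrows X$ to obtain
\[
(\text{LieGpd}\ltimes \mathcal{LA})_{X\times_\mathfrak X X \rightrightarrows X} \;\simeq\; (\text{!-}\mathbf{Gpd}\mathcal{LA})_{X\times_\mathfrak X X \rightrightarrows X},
\]
implemented by the functors $\mathcal F_1$ and $\mathcal F_2$, and then compose the three equivalences. There is no serious obstacle here: the work was done in establishing Proposition \ref{Proposition on Lie algebroids via atlases} (which depends essentially on the submersive descent property of $\mathcal{LA}$ proved in Theorem \ref{submersive descent}) and in constructing the quasi-inverse pair $(\mathcal F_1,\mathcal F_2)$ in Theorem \ref{inverse functor}. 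The only thing to be slightly careful about is the identification in the second step, making explicit that the cocycle condition on $\psi$ arising from descent agrees with the cocycle condition formulated via the natural isomorphisms $c_{\phi_2,\phi_1}$ of \ref{LA as a weak presheaf of categories}; this is just a chase of the universal property of Lie algebroid pullbacks and requires no new ideas.
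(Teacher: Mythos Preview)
Your proposal is correct and matches the paper's own proof exactly: the paper simply states that the theorem follows by combining Proposition \ref{Proposition on Lie algebroids via atlases} with Theorem \ref{LA groupoid theorem}. Your additional remark about the tautological identification $\mathcal{LA}_{X\times_\mathfrak X X \rightrightarrows X} = (\text{LieGpd}\ltimes \mathcal{LA})_{X\times_\mathfrak X X \rightrightarrows X}$ just makes explicit the bookkeeping step that the paper leaves implicit.
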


It was suggested in sections 5.4 and 7 of \cite{Meh1} that vacant $\mathcal{LA}$-groupoids could be used to define Lie algebroids over orbifolds (see \ref{Some classes of differentiable stacks} for the terminology), though as discussed there one needs to prove that this is a Morita invariant notion. Theorem \ref{Mehta theorem} shows that !-vacant $\mathcal{LA}$-groupoids do arise naturally in the description of Lie algebroids over differentiable stacks, and Propositions \ref{Prop on vacancy and etaleness 1} and \ref{Prop on vacancy and etaleness 2} above show that in the case of \'etale stacks, and in particular orbifolds, the relevant $\mathcal{LA}$-groupoids are in fact vacant.

\subsubsection{Total stacks of Lie algebroids over stacks}  \label{Total stacks of Lie algebroids over stacks}

We can use the construction of $\mathcal{LA}$-groupoids of Theorem \ref{s! LA groupoids} to construct the `total stack' of a Lie algebroid over a differentiable stack, and a morphism of stacks from this total stack to the tangent stack of $\mathfrak X$. 

Let $\mathbf A: \mathfrak X_\text{sub} \to \mathcal{LA}$ be a Lie algebroid over a differentiable stack $\mathfrak X$, and let $X \to \mathfrak X$ be an atlas. We'll write $X_1$ for $X \times _\mathfrak X X$. Then we have a Lie algebroid $A_X$ over $X$, together with an action of $X_1 \rightrightarrows X$, and Theorem \ref{s! LA groupoids} gives an $\mathcal{LA}$-groupoid:
$$\xymatrix{  
s^! A_X \ar[d] _{\tilde \pi}  \ar@<-.5ex>[r]\ar@<+.5ex>[r]^{\tilde s, \tilde t}  &  A \ar[d] ^\pi \\
X_1  \ar@<.5ex>[r] \ar@<-.5ex>[r] _{s,t} & X
}$$
In particular, we have a Lie groupoid $s^! A_X \rightrightarrows A$, and a morphism $(\tilde \pi,\pi)$ of Lie groupoids from $s^! A_X \rightrightarrows A$ to $X_1 \rightrightarrows X$.
Recall from \ref{subsubsection of LA-groupoids} that the anchor maps $\tilde a$ and $a$ of $s^! A$ and $A$ give a morphism of $\mathcal{LA}$ groupoids:
$$\xymatrix{
& & TX_1 \ar[d]  \ar@<.5ex>[r]\ar@<-.5ex>[r] & TX \ar[d] \\
s^! A_X \ar[d] \ar[urr]^{\tilde a} \ar@<.5ex>[r]\ar@<-.5ex>[r]  & A_X \ar[d] \ar[urr]^<<<<<<<<<<{a} & X_1 \ar@<.5ex>[r]\ar@<-.5ex>[r]  & X \\
X_1 \ar@<.5ex>[r]\ar@<-.5ex>[r] \ar[urr]^>>>>>>>>>{\mathrm{id}} & X  \ar[urr]_{\mathrm{id}}\\
}$$
If we pass from Lie groupoids to their classifying stacks we therefore have a (strictly) commutative diagram of morphisms of differentiable stacks:
$$\xymatrix{
[s^! A_X \rightrightarrows X]  \ar[dr]  \ar[r] ^{\mathbf a} &  [TX_1 \rightrightarrows TX]  \ar[d]  \\
&  [X_1 \rightrightarrows X]
}$$
where $\mathbf a$ is the morphism of stacks induced by the anchor maps. Let $\mathfrak A = [s^! A \rightrightarrows A]$, identify $[X_1 \rightrightarrows X]$ with $\mathfrak X$ (they are equivalent stacks), and recall from \cite{BeGiNoXu1} that $[TX_1 \rightrightarrows TX]$ is the tangent stack $T\mathfrak X$ of $\mathfrak X$, then we have a diagram:
$$\xymatrix{
\mathfrak A \ar[dr] \ar[r] ^ {\mathbf a} &  T\mathfrak X \ar[d]  \\
& \mathfrak X
}$$
The morphism $\mathfrak A \to \mathfrak X$ is the `stacky analogue' of the vector bundle projection of a Lie algebroid over a manifold, and $\mathbf a : \mathfrak A \to T\mathfrak X$ is the analogue of the anchor map of a Lie algebroid. In general neither of these maps will be representable. In particular, as expected from the discussion in section \ref{Lie algebroids over stacks are not vector bundles} and the case of the tangent stack, $\mathfrak A \to \mathfrak X$ is not a vector bundle in the usual sense (for example see Definition 3.1 and Proposition 3.2 of \cite{BeGiNoXu1}).

Propositions \ref{Prop on vacancy and etaleness 1} and \ref{Prop on vacancy and etaleness 2} above show that if $\mathfrak X$ is an \'etale stack and $X \to \mathfrak X$ is an \'etale atlas then $\mathfrak A$ will be an \'etale stack, and similarly if $\mathfrak X$ is an orbifold then so will $\mathfrak A$ be.

\newpage

\section{Examples and applications}  \label{Examples}

In this section we describe some examples of Lie algebroids over stacks.

\subsection{Tangent bundle Lie algebroids}

We'll define the tangent bundle Lie algebroid of an arbitrary stack and show that it has several properties analogous to those of the tangent bundle of a manifold. We then sketch a description of its representations and cohomology.

\subsubsection{The definition of $\mathbf T \mathfrak X$ for a stack $\mathfrak X$}

For any stack $\mathfrak X$ over $\mathbf{Man}$ there is a Lie algebroid $\mathbf T \mathfrak X : \mathfrak X_\text{sub} \to \mathcal{LA}$ defined as follows. For each object $(U,u)$ of $\mathfrak X_\text{sub}$ we set $$\mathbf T \mathfrak X _{U,u} =  TU$$
and for each morphism $(f,\alpha):(U,u) \to (V,v)$ we associate the canonical isomorphism
$$TU \to f^! TV$$
given by the formula
$$ v \mapsto (v, f_\ast v)$$
for $v \in TU$. 

\subsubsection{Some properties of $\mathbf T \mathfrak X$} \label{Some properties of TX}

For any manifold $U$, $TU$ is a final object in the category $\mathcal{LA}_U$ of Lie algebroids over $U$. It follows from the description of morphisms between Lie algebroids given in \ref{Lie algebroids via test spaces} that $\mathbf T \mathfrak X$ is a final object in the category $\mathcal{LA}_\mathfrak X$ of Lie algebroids over $\mathfrak X$: if $\mathbf A$ is a Lie algebroid over $\mathfrak X$ then the anchor maps 
$$a_{U,u} : A_{U,u} \to TU$$
form the unique morphism
$$ \mathbf A \to \mathbf T\mathfrak X$$

Recall from \ref{pullbacks along representable submersions} that we can pull back Lie algebroids along representable submersions. If $\phi: \mathfrak X \to \mathfrak Y$ is a representable submersion of stacks over $\mathbf{Man}$, then we have 
$$\left( \phi^! \mathbf T \mathfrak Y \right) _{U,u} = \mathbf T \mathfrak Y_{U, \phi \circ u} = TU $$
and therefore
$$\phi^! \mathbf T \mathfrak Y = \mathbf T \mathfrak X$$

\subsubsection{Representations of $\mathbf T \mathfrak X$, local systems, and representations of $\pi_1 (\mathfrak X)$}  \label{local systems...}

We'll sketch the relationship between representations of $\mathbf T \mathfrak X$, local systems, and representations of $\pi_1 (\mathfrak X)$. By a local system over a manifold we'll mean a locally constant sheaf of $\mathbb R$-vector spaces. We'll denote the category of local systems over $X$ by $\mathrm{Loc}_X$.

First, recall the relationship between local systems and flat vector bundles: if $X$ is a manifold and $(E,\nabla)$ is a vector bundle $E$ over $X$ together with a flat connection $\nabla$, then the sheaf of flat sections of $E$, denoted $\mathcal E^\nabla$, is a local system on $X$. Conversely, if $\mathcal F$ is a local system on $X$, then the sheaf $\mathrm C _X ^\infty \otimes _{\mathbb R} \mathcal F$ is a locally free sheaf of $\mathrm C_X ^\infty$-modules, so is the sheaf of sections of some vector bundle $F$, and there is a flat connection $\nabla$ on $\mathrm C _X ^\infty \otimes _{\mathbb R} \mathcal F$ defined by
$$ \nabla _v (f \otimes \xi) = v(f) \otimes \xi$$
for $v$ a local section of $TX$. These constructions give an equivalence of categories:
$$ \mathrm{Flat}_X \simeq \mathrm{Loc}_X$$
where $\mathrm{Flat}_X$ is the category of flat vector bundles over $X$.

Let $\phi:X \to Y$ be a submersion. Then flat vector bundles and local systems can be pulled back along $\phi$, giving functors
\begin{align*}
\phi^\ast: \mathrm{Flat}_Y & \to \mathrm{Flat}_X \\
\phi^\ast : \mathrm{Loc}_Y & \to \mathrm{Loc}_X
\end{align*}
and if $(E,\nabla)$ is a flat vector bundle of $Y$ then there is a canonical isomorphism of local systems
$$\phi^\ast (\mathcal E ^\nabla)  \cong \left( \phi^\ast(E,\nabla) \right) ^{\phi^\ast \nabla}$$
given by pulling back flat sections of $E$. The result is that there are weak presheaves $\mathrm{Flat}$ and $\mathrm{Loc}$ over $\mathbf{Man}_\text{sub}$, and a equivalence
$$ \mathrm{Flat} \to \mathrm{Loc}$$
Both $\mathrm{Flat}$ and $\mathrm{Loc}$ satisfy descent for submersions. 

Recall that for any manifold $X$, representations of the Lie algebroid $TX$ are exactly flat vector bundles over $X$, so in a natural way $\mathrm{Flat}$ is a substack of $\mathrm{Rep}$ (the stack of representations of Lie algebroids, see \ref{The stack Rep}). Let $\mathfrak X$ be a stack over $\mathbf{Man}$. It follows from the way we defined representations (Definition \ref{definition of representations}) that a representation $\mathbf T \mathfrak X \to \mathrm{Rep}$ takes values in the substack $\mathrm{Flat}$, and we have:
$$\mathrm{Rep}_{\mathbf T \mathfrak X} = 
\mathrm{Hom}_{\mathrm{PSh}(\mathbf{Man}_\text{sub})} (\mathfrak X _ \text{sub} , \mathrm{Flat} )
\simeq 
\mathrm{Hom}_{\mathrm{PSh}(\mathbf{Man}_\text{sub})} (\mathfrak X _ \text{sub} , \mathrm{Loc} )
= \mathrm{Loc}_\mathfrak X
$$
where the last equality we take as the definition of the category $\mathrm{Loc}_\mathfrak X$ of local systems on $\mathfrak X$. 

Now assume that $\mathfrak X$ is a differentiable stack and $X \to \mathfrak X$ is an atlas. Since $\mathrm{Loc}$ satisfies descent for submersions there is an equivalence of categories:
$$\mathrm{Loc}_\mathfrak X \simeq \mathrm{Loc}_{X \times _\mathfrak X X \rightrightarrows X}$$
where $\mathrm{Loc}_{X \times _\mathfrak X X \rightrightarrows X}$ is the category of local systems over $X$ together with a left action of the groupoid $X \times _\mathfrak X X \rightrightarrows X$. Recall that a sheaf over $X$ is a local system if and only if its \'etale space is a covering space over $X$. Then the same proof as that of Theorem 3.18 in \cite{MoMr2} shows that if $\mathfrak X$ is connected (which is equivalent to the quotient space $X / X \times _\mathfrak X X$ being connected) then there is an equivalence of categories:
$$\mathrm{Loc}_{X \times _\mathfrak X X \rightrightarrows X} \simeq \mathbb R [ \pi_1(\mathfrak X)] \text{-mod}$$
where the right hand side is the category of finite dimensional real representations of the fundamental group $\pi_1(\mathfrak X)$ of $\mathfrak X$. (In \cite{MoMr2} the fundamental group of a connected differentiable stack $\mathfrak X$ is defined to be the isotropy group of the fundamental groupoid of a Lie groupoid representing $\mathfrak X$. See \cite{MoMr2} for details, and \cite{No1},\cite{No2} for the homotopy theory of topological and differentiable stacks).  Summarising the discussion above, we have:

\begin{Proposition}
If $\mathfrak X$ is a stack over $\mathbf{Man}$ then there is an equivalence of categories 
$$\mathrm{Rep}_{\mathbf T \mathfrak X} \simeq \mathrm{Loc}_\mathfrak X$$
If $\mathfrak X$ is a connected differentiable stack then there is also an equivalence
$$ \mathrm{Loc}_\mathfrak X
\simeq \mathbb R [ \pi_1(\mathfrak X)] \text{\emph{-mod}}
$$
\end{Proposition}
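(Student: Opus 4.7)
The plan is to assemble the two equivalences from ingredients that are essentially all in place already: the stackiness of $\mathrm{Flat}$, $\mathrm{Loc}$, and $\mathrm{Rep}$ established just above, descent for $\mathrm{Loc}$ along submersions, and the classical identification of equivariant local systems with fundamental group representations.

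For the first equivalence, I would begin by observing that for each manifold $U$ the category of representations of the Lie algebroid $TU$ is by definition the category $\mathrm{Flat}_U$ of flat vector bundles, and that under this identification the pullback functor $f^! : \mathrm{Rep}_V \to \mathrm{Rep}_U$ associated to a submersion $f:U \to V$ becomes the usual pullback of flat connections. This means $\mathrm{Flat}$ embeds naturally as a substack of $\mathrm{Rep}$ lying over the composition $\mathbf{Man}_\mathrm{sub} \hookrightarrow \mathcal{LA}$ sending $U$ to $TU$. Now by construction $\mathbf T \mathfrak X$ is precisely this composition after pulling back along $\mathfrak X_\mathrm{sub} \to \mathbf{Man}_\mathrm{sub}$, so the commuting triangle in Definition \ref{definition of representations} forces any morphism $\mathfrak X_\mathrm{sub} \to \mathrm{Rep}$ over $\mathbf T \mathfrak X$ to factor through $\mathrm{Flat}$. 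Hence $\mathrm{Rep}_{\mathbf T \mathfrak X} \simeq \mathrm{Hom}(\mathfrak X_\mathrm{sub},\mathrm{Flat})$, and postcomposing with the equivalence of weak presheaves $\mathrm{Flat} \simeq \mathrm{Loc}$ (given objectwise by $(E,\nabla) \mapsto \mathcal E^\nabla$ and $\mathcal F \mapsto \mathrm C^\infty_{(-)} \otimes_\mathbb R \mathcal F$, which is compatible with submersion pullback) identifies this with $\mathrm{Loc}_\mathfrak X$.

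For the second equivalence, I would fix an atlas $X \to \mathfrak X$ giving the Lie groupoid $X_1 \rightrightarrows X$ with $X_1 = X \times_\mathfrak X X$. Since $\mathrm{Loc}$ is a stack for the submersion topology, the same atlas-descent argument proving Proposition \ref{Proposition on Lie algebroids via atlases} applies verbatim and yields an equivalence $\mathrm{Loc}_\mathfrak X \simeq \mathrm{Loc}_{X_1 \rightrightarrows X}$, where the right-hand side is the category of local systems on $X$ equipped with a compatible isomorphism $s^\ast \mathcal F \to t^\ast \mathcal F$ satisfying the cocycle condition over $X \times_\mathfrak X X \times_\mathfrak X X$. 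Passing to \'etale spaces identifies $\mathrm{Loc}_{X_1 \rightrightarrows X}$ with the category of finite-rank $X_1$-equivariant covering spaces of $X$, and Theorem 3.18 of \cite{MoMr2} then gives the desired equivalence with $\mathbb R[\pi_1(\mathfrak X)]\text{-mod}$ under the connectedness hypothesis.

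None of the steps is genuinely difficult because each constituent result is already isolated; the main point requiring care is the first one, namely checking that the equivalence $\mathrm{Flat} \simeq \mathrm{Loc}$ is compatible with submersion pullback (so that it genuinely promotes to an equivalence of weak presheaves over $\mathbf{Man}_\mathrm{sub}$), and that the substack inclusion $\mathrm{Flat} \hookrightarrow \mathrm{Rep}$ makes the triangle of Definition \ref{definition of representations} strictly commute rather than only up to isomorphism. The former is standard, and the latter is automatic from the way $\mathbf T \mathfrak X$ was defined in terms of the tangent functor. Everything else is a clean application of previously established descent.
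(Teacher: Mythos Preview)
Your proposal is correct and follows essentially the same route as the paper: identify $\mathrm{Rep}_{\mathbf T\mathfrak X}$ with $\mathrm{Hom}(\mathfrak X_\mathrm{sub},\mathrm{Flat})$ via the observation that representations of $TU$ are flat bundles, compose with the presheaf-level equivalence $\mathrm{Flat}\simeq\mathrm{Loc}$, and then for the second part use submersion descent for $\mathrm{Loc}$ together with Theorem~3.18 of \cite{MoMr2}. The paper's argument is presented as a sketch in the discussion preceding the proposition rather than as a formal proof, but your outline matches it point for point.
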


\subsubsection{Cohomology of $\mathbf T \mathfrak X$}

For any manifold $U$ the de Rham complex $\Omega^\bullet (TU)$ of the Lie algebroid $TU$ is the usual de Rham complex of differential forms over $U$. Therefore the de Rham complex 
$$\Omega^\bullet _{\mathbf T \mathfrak X} \in  \mathrm D ^ +  (\mathrm{Sh}_{\mathbb R }(\mathfrak X_\text{sub}))$$
of the the tangent Lie algebroid is the de Rham complex $\Omega_\mathfrak X$ of $\mathfrak X$ as defined in \cite{BeXu1} and \cite{BuShSp1} and 
$$\mathrm H^\bullet (\mathbf T \mathfrak X) = \mathrm H ^\bullet _\mathrm {dR} (\mathfrak X) \text{,}$$ 
the de Rham cohomology of $\mathfrak X$. In \cite{BeXu1} and \cite{BuShSp1} it is shown that the de Rham complex of a differentiable stack is a resolution of the constant sheaf $\mathbb R_\mathfrak X$, and one therefore has
$$ \mathrm H ^\bullet ( \mathbf T \mathfrak X ) = \mathrm H^\bullet (\mathfrak X_\text{sub} , \mathbb R_\mathfrak X)$$
The same argument can be used in the case of cohomology with coefficients. If $(\mathbf E, \nabla)$ is a representation of $\mathbf T \mathfrak X$ then from the discussion in \ref{local systems...} we get a map $\mathfrak X_\text{sub} \to \mathrm{Loc}$. This induces a sheaf $\mathcal E^\nabla$ over the site $\mathfrak X_\text{sub}$ with the property that for any object $(U,u)$ the induced small sheaf $\mathcal E ^\nabla _{U,u}$ over $U$ is locally constant, and the complex of sheaves $\Omega^\bullet _U \otimes _{\mathrm C_U ^\infty} \mathcal  E$ is a resolution of it. We then have
$$ \mathrm H ^\bullet \left( \mathbf T \mathfrak X , (\mathbf E, \nabla)  \right) = \mathrm H^\bullet (\mathfrak X_\text{sub} , \mathcal E ^\nabla)$$

\subsubsection{$\mathbf T \mathfrak X$ and the tangent stack $T \mathfrak X$}

If $X \to \mathfrak X$ is an atlas of a differentiable stack then the $\mathcal{LA}$ groupoid corresponding to $\mathbf T \mathfrak X$ under the construction of Theorem \ref{s! LA groupoids} in section \ref{Lie algebroids over Lie groupoids} is 
$$\xymatrix{  
s^! TX \ar[d]   \ar@<-.5ex>[r]\ar@<+.5ex>[r]  &  TX \ar[d] \\
X_1 \ar@<.5ex>[r]\ar@<-.5ex>[r]  & X
}$$
which is canonically isomorphic to 
$$\xymatrix{  
TX_1 \ar[d]   \ar@<-.5ex>[r]\ar@<+.5ex>[r]  &  TX \ar[d] \\
X_1 \ar@<.5ex>[r]\ar@<-.5ex>[r]  & X
}$$
The total stack (see \ref{Total stacks of Lie algebroids over stacks}) of $\mathbf T \mathfrak X$ is therefore equal to the tangent stack $T\mathfrak X$ of $\mathfrak X$.

\subsection{Poisson structures on \'etale stacks} \label{Poisson}

Throughout this and the following section (\ref{Poisson} and \ref{Etale examples}) all stacks will be assumed to be \'etale, and we'll consider Lie algebroids, vector bundles and other structures over an \'etale stack $\mathfrak X$ as morphisms out of the \'etale site $\mathfrak X_\text{\'et}$ of $\mathfrak X$ (see \ref{Use of etale site}).  In particular, the restriction of $\mathbf T\mathfrak X$ to $\mathfrak X_\text{\'et}$ is a vector bundle in this sense.  We'll use vector bundle pullbacks instead of Lie algebroid pullbacks where its convenient, see Proposition \ref{equivalence over etale category}.

\subsubsection{Poisson structures on manifolds}  \label{Poisson structures on manifolds}

We recall the relevant results concerning Poisson structures, see \cite{Va1} and \cite{LaPiVa1} for details. If $X$ is a manifold then a Poisson bracket on $X$ is a Lie bracket
$$\{ , \}  : \mathrm C^\infty (X)  \times \mathrm C ^\infty (X) \to \mathrm C^\infty (X)$$
that satisfies the Leibniz identity
$$ \{f,gh\} =  \{f,g\}h +  g\{f,h\}$$
for all smooth functions $f,g,h$ on $X$.
There is a bijection between Poisson brackets on $X$ and bivector fields
$$\Pi \in \Gamma  \left( {\bigwedge}^2 TX \right)$$
for which
$$ \left[ \Pi , \Pi \right] = 0 $$
where $[,]$ is the Schouten-Nijenhuis bracket on $\Gamma  \left( {\bigwedge}^\bullet TX \right)$. The Poisson bracket $\{,\}_\Pi$ corresponding to such a bivector field $\Pi$ is given by
$$ \{ f, g\}_\Pi = \Pi (\mathrm df ,\mathrm dg )$$
Either of these objects is called a Poisson structure on $X$, and $(X,\{,\})$ or $(X,\Pi)$ is then called a Poisson manifold. Poisson brackets are automatically local, so that one in fact has a Lie bracket
$$\{ , \}  : \mathrm C^\infty _X  \times \mathrm C ^\infty _X \to \mathrm C^\infty _X $$
on the sheaf $\mathrm C ^\infty _X$.  If $X$ and $Y$ are Poisson manifolds then a smooth map $\phi: X \to Y$ is called a Poisson map if and only if 
$$ \phi^\ast : \mathrm C ^\infty (Y) \to \mathrm C ^\infty (X) $$
is a morphism of Lie algebras.

Associated to a Poisson structure $\Pi$ on $X$ is a Lie algebroid structure $T_\Pi ^\ast X$ on $T^\ast X$. The anchor is given by the map
\begin{align*}
\Pi^\sharp : T^\ast _\Pi X  & \to TX \\
\alpha & \mapsto \Pi(\alpha , -)
\end{align*}
and the Lie bracket is given locally by
$$ [ \mathrm df , \mathrm dg ] =  \mathrm d \{f,g\} $$
If $\phi:X \to Y$ is an \'etale map between Poisson manifolds then $\phi$ is a Poisson map if and only if the associated bundle map
$$\xymatrix{
T^\ast _{\Pi_X} X  \ar[d] \ar[r]  ^ {\phi_\ast} &  T^\ast _{\Pi_Y} Y \ar[d] \\
X \ar[r] _\phi &  Y
}$$
is a morphism of Lie algebroids over $X$. The Lie algebroid cohomology $\mathrm H ^\bullet (T_\Pi ^\ast)$ is called the Poisson cohomology of $X$.

One class of examples of Poisson manifolds arises from Lie algebroids. If $A$ is a Lie algebroid over $X$, with vector bundle projection $\pi:A \to X$, then there is a Poisson structure on the total space of the vector bundle $A^\ast$ that is determined by: 
\begin{align*}
\Big \{ \widetilde \xi ,\widetilde \nu \Big \}   &   \equiv \widetilde{  [\xi,\nu] }    \\
\{ \pi ^\ast f , \pi ^\ast g  \}  &  \equiv  0 \\
\Big \{ \widetilde \xi, \pi ^\ast f \Big \}   &  \equiv \pi ^\ast (\xi (f))
\end{align*}
where $\xi,\nu \in \Gamma(A)$, $f,g \in \mathrm C^\infty (X)$, and for any $\xi \in \Gamma(A)$, $\widetilde \xi$ is the corresponding fibrewise linear function on $A^\ast$. (This is enough to determine the Poisson bracket on $A^\ast$ because the differentials of the functions of the form $\widetilde \xi$ and $\pi^\ast f$ span the cotangent spaces of $A^\ast$). This Poisson structure is fibrewise linear, in the sense that the space of fibrewise linear functions on $A^\ast$ is closed under the bracket, and this construction gives a bijection between such Poisson structures and Lie algebroid structures on $A$.

\subsubsection{Defining Poisson structures on \'etale stacks}

It follows from the fact that a Poisson bracket on $\mathrm C^\infty (X)$ induces a Poisson bracket on the sheaf $\mathrm C^\infty _X$ that for any manifold $X$ the assignment 
$$U \mapsto \mathrm{Poiss}_U \equiv \{ \text{Poisson brackets on } \mathrm C^\infty (U) \} $$
is a sheaf over $X$. If $\phi:X \to Y$ is an \'etale map, then a Poisson structure $\Pi$ on $Y$ induces a Poisson structure $\phi^\ast \Pi$ on $X$. There is therefore a presheaf
\begin{align*}
\mathrm{Poiss} : \mathbf{Man}_\text{\'et} &  \to \mathbf{Set} \\
X & \mapsto \mathrm{Poiss}_U
\end{align*}
which is a sheaf with respect to the open cover (and therefore the \'etale) topology. If $\phi : X \to Y$ is an \'etale map between Poisson manifolds then $\phi$ is a Poisson map if and only if $\phi^\ast \Pi_Y = \Pi_X$. If $\phi: X \to Y$ is \'etale and $\Pi$ is a Poisson structure on $Y$ then the natural map 
$$ T^\ast _{\phi^\ast \Pi} X \to \phi^\ast \left(  T^\ast _\Pi Y \right) $$
is an isomorphism of Lie algebroids. This shows that there is a morphism of sheaves over $\mathbf{Man}_\text{\'et}$:
\begin{align*}
\mathbf T^\ast : \mathrm{Poiss} & \to \mathcal{LA}  \\
\mathrm{Poiss}_X \ni \Pi & \mapsto T^\ast _\Pi X 
\end{align*}
We can now define:

\begin{Definition} \label{Definition of Poisson structures on stacks}
A Poisson structure $\mathbf \Pi$ on an \'etale stack $\mathfrak X$ is a morphism 
$$\mathbf \Pi : \mathfrak X_\text{\emph{\'et}} \to \mathrm{Poiss}$$
of stacks over $\mathbf{Man}_\text{\'et}$, and a pair $(\mathfrak X, \mathbf \Pi)$ is a Poisson \'etale stack. Associated to such a Poisson structure $\mathbf \Pi$ is a Lie algebroid $\mathbf T ^\ast _{\mathbf \Pi} \mathfrak X$ defined as the composition
$$\xymatrix{
\mathfrak X_\text{\emph{\'et}} \ar[r] ^{\mathbf \Pi} &  \mathrm{Poiss}  \ar[r]^{\mathbf T^\ast} &  \mathcal{LA}
}$$
We define the Poisson cohomology of $(\mathfrak X, \mathbf \Pi)$ to be the Lie algebroid cohomology of $\mathbf T ^\ast _{\mathbf \Pi} \mathfrak X$. 
\end{Definition}

Explicitly, a morphism $\mathfrak X_\text{\'et} \to \mathrm{Poiss}$ is determined by the choice of a Poisson structure $\Pi_{U,u}$ for every object $(U,u)$ of $\mathfrak X_{\text{\'et}}$, such that for every morphism  $(f,\alpha):(U,u) \to (V,v)$ in $\mathfrak X_\text{\'et}$ we have $f^\ast \Pi_{V,v} = \Pi_{U,u}$. The following shows that this is a reasonable generalisation of the notion of Poisson structures on manifolds:
\begin{Proposition}
If $X$ is a manifold then there is a bijection between Poisson structures on $X$ and Poisson structures on the \'etale stack $\underline X$. If $X$ is Hausdorff then this bijection induces isomorphisms between Poisson cohomology groups.
\end{Proposition}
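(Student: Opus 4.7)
The plan is to split the proposition into two essentially independent statements and handle each using machinery already developed in the excerpt.

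First, for the bijection I would argue as follows. Since $\mathrm{Poiss}$ is a sheaf over $\mathbf{Man}_\text{\'et}$ (as noted right before Definition \ref{Definition of Poisson structures on stacks}), and $\underline X_\text{\'et}$ is the representable sheaf on $\mathbf{Man}_\text{\'et}$ corresponding to $X$, the ordinary Yoneda lemma gives a bijection
$$\mathrm{Hom}_{\mathrm{PSh}(\mathbf{Man}_\text{\'et})}(\underline X_\text{\'et}, \mathrm{Poiss}) \cong \mathrm{Poiss}(X),$$
sending a morphism $\mathbf \Pi$ to the evaluation $\Pi_{X,\mathrm{id}_X}$ on the identity object of $\underline X_\text{\'et}$. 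In one direction the inverse sends a Poisson bracket $\Pi$ on $X$ to the data $\Pi_{U,u} = u^\ast \Pi$ for each \'etale map $u:U\to X$, with the compatibility $f^\ast \Pi_{V,v} = \Pi_{U,u}$ for a morphism $(f,\alpha):(U,u)\to(V,v)$ following from functoriality of the pullback of Poisson brackets along \'etale maps.

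Next, I would check that under this bijection the associated Lie algebroids agree. Unwinding Definition \ref{Definition of Poisson structures on stacks}, the Lie algebroid $\mathbf T^\ast_{\mathbf \Pi} \underline X$ assigns to each \'etale $u:U\to X$ the Lie algebroid $T^\ast_{u^\ast \Pi} U$, and to each 2-commutative triangle the morphism induced by $f$. Using the canonical isomorphism $T^\ast_{u^\ast \Pi} U \cong u^\ast T^\ast_\Pi X$ of Lie algebroids over \'etale maps (recalled in \ref{Poisson structures on manifolds}), this is precisely the morphism $\underline X_\text{\'et} \to \mathcal{LA}$ corresponding under the equivalence $\mathcal{LA}_{\underline X} \simeq \mathcal{LA}_X$ (via the \'etale description of \ref{Use of etale site}) to the standard Poisson Lie algebroid $T^\ast_\Pi X$.

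Finally, for the cohomology statement, I would apply Proposition \ref{Manifold cohomology} directly. That proposition says exactly that for a Hausdorff manifold $X$ the canonical equivalence $\mathcal{LA}_{\underline X} \simeq \mathcal{LA}_X$ induces isomorphisms between the stack-theoretic Lie algebroid cohomology (Definition \ref{Lie algebroid cohomology over stacks}) and the ordinary Lie algebroid cohomology. By the previous step, the Lie algebroid corresponding to $\mathbf \Pi$ under this equivalence is $T^\ast_\Pi X$, so
$$\mathrm H^\bullet(\mathbf T^\ast_{\mathbf \Pi} \underline X) \cong \mathrm H^\bullet(T^\ast_\Pi X),$$
which by Definition \ref{Definition of Poisson structures on stacks} (Poisson cohomology of the stack) and the classical definition (\ref{Poisson structures on manifolds}) is the stated isomorphism of Poisson cohomology groups.

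No step here is expected to be a serious obstacle; the substantive work was already done in Theorem \ref{submersive descent} (giving descent, which lets one pass freely between Lie algebroid pullbacks and the \'etale description), Propositions \ref{equivalence over etale category} and \ref{equivalence over open category} (matching Lie algebroid pullbacks with vector bundle pullbacks in the \'etale case), and Proposition \ref{Manifold cohomology} (the cohomology comparison). The only point requiring slight care is verifying functoriality of $U \mapsto T^\ast_{u^\ast \Pi} U$ on the \'etale site so that it genuinely assembles into a morphism $\underline X_\text{\'et} \to \mathcal{LA}$, but this is immediate from the sheaf-theoretic description of the Lie algebroid structure on \'etale pullbacks in \ref{etale pullbacks}.
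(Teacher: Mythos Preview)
Your proposal is correct and follows exactly the same approach as the paper: invoke the Yoneda lemma for the bijection $\mathrm{Hom}_{\mathrm{PSh}(\mathbf{Man}_\text{\'et})}(\underline X, \mathrm{Poiss}) \cong \mathrm{Poiss}_X$, and then apply Proposition \ref{Manifold cohomology} to the Lie algebroid $\mathbf T^\ast_{\mathbf \Pi} \underline X$ for the cohomology comparison. The paper's proof is two sentences and leaves implicit the verification that $\mathbf T^\ast_{\mathbf \Pi} \underline X$ corresponds to $T^\ast_\Pi X$ under the equivalence $\mathcal{LA}_{\underline X} \simeq \mathcal{LA}_X$; you spell this out, which is a reasonable expansion but not a different argument.
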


\begin{proof}
The bijection is given by the Yoneda lemma:
$$\mathrm{Hom}_{\mathrm{PSh}(\mathbf{Man}_\text{\'et})} \left( \underline X , \mathrm{Poiss} \right) 
\cong \mathrm{Poiss}_X$$
The isomorphisms between cohomology groups follow from Proposition \ref{Manifold cohomology} applied to the Lie algebroid $\mathbf T ^\ast _\Pi \mathfrak X$.
\end{proof}

\subsubsection{Poisson structures in terms of atlases}

If $X \to \mathfrak X$ is an \'etale atlas of an \'etale stack $\mathfrak X$ then since $\mathrm{Poiss}$ is a sheaf with respect to the \'etale topology there is a bijection:
$$ \{ \text{Poisson structures on } \mathfrak X  \}  = 
\mathrm{Hom}_{\mathrm{PSh}(\mathbf{Man}_\text{\'et})}(\mathfrak X_\text{\'et} , \mathrm{Poiss} ) 
\cong \{ \Pi \in \mathrm{Poiss}_X  |  s^\ast \Pi = t^\ast \Pi \}
$$
where $s,t$ are the source and target maps of the Lie groupoid $X_1 \rightrightarrows X$. In general, we'll call such Poisson structures on the base of an \'etale groupoid `invariant'. These structures have been studied in  \cite{Ta1}. (In fact, if $G \rightrightarrows Y$ is an \'etale groupoid then there is a natural left action of $G$ on the bundle ${\bigwedge}^2 TY$, and a Poisson structure is invariant in the sense above exactly if it is a $G$-invariant section of ${\bigwedge}^2 TY$). It is easy see that a Poisson structure $\Pi$ on $X$ is invariant if and only if there exists a (necessarily unique) Poisson structure $\underline \Pi$ on $X_1$ such that the structure maps of $X_1 \rightrightarrows X$ are all Poisson maps.

\subsubsection{Associated $\mathcal{LA}$-groupoids and the total stack}

Now let $\mathbf \Pi$ be a Poisson structure on $\mathfrak X$, and fix an \'etale atlas $X \to \mathfrak X$, so we have a $X_1 \rightrightarrows X$-invariant Poisson structure $\Pi$ on $X$, and a Poisson structure 
$$\underline \Pi = s^\ast \Pi = t^\ast \Pi$$ 
on $X_1$. The $\mathcal{LA}$-groupoid associated to the Lie algebroid $\mathbf T^\ast _{\mathbf \Pi} \mathfrak  X$ is 
$$\xymatrix{  
s^\ast \left(T^\ast _\Pi X \right) \ar[d]   \ar@<-.5ex>[r]\ar@<+.5ex>[r]  &  T^\ast _\Pi X \ar[d] \\
X_1 \ar@<.5ex>[r]\ar@<-.5ex>[r] & X
}$$
which is isomorphic to 
$$\xymatrix{  
T^\ast _{\underline \Pi} X  \ar[d]   \ar@<-.5ex>[r]\ar@<+.5ex>[r]  &  T^\ast _\Pi X \ar[d] \\
X_1 \ar@<.5ex>[r]\ar@<-.5ex>[r] & X
}$$
It was suggested in \cite{Me1}, Example 5.14, that $\mathcal{LA}$-groupoids of this form might be used to describe Poisson structures over orbifolds. 
By definition, the classifying stack of the Lie groupoid $[T^\ast X_1 \rightrightarrows T^\ast X]$ is the cotangent stack $T^\ast \mathfrak X$ of $\mathfrak X$. Therefore the total stack (see \ref{Total stacks of Lie algebroids over stacks}) $T^\ast _{\mathbf \Pi} \mathfrak X$ of the Lie algebroid $\mathbf T^\ast _\mathbf \Pi \mathfrak X$ is $T^\ast \mathfrak X$ (as expected from the manifold case), and we have a diagram
$$\xymatrix{
T^\ast _{\mathbf \Pi} \mathfrak X \ar[dr]  \ar[r] & T\mathfrak X \ar[d]  \\
& \mathfrak X
}$$

\subsubsection{Poisson structures on duals of Lie algebroids}

We can construct some examples of Poisson structures as follows. Let $\mathbf A$ be a Lie algebroid over $\mathfrak X$, let $X \to \mathfrak X$ be an \'etale atlas, and $A_X$ be the induced Lie algebroid over $X$, so we have the $\mathcal{LA}$-groupoid
$$\xymatrix{  
A_{X_1}  \ar[d]   \ar@<-.5ex>[r]\ar@<+.5ex>[r]  &  A_X \ar[d] \\
X_1 \ar@<.5ex>[r]\ar@<-.5ex>[r] & X
}$$
where $A_{X_1}=s^\ast A_X$. Since $A_{X_1} \rightrightarrows A_X$ is an \'etale groupoid we can dualise its structure maps to give a groupoid object in the category of vector bundles:
$$\xymatrix{  
A_{X_1}^\ast  \ar[d]   \ar@<-.5ex>[r]\ar@<+.5ex>[r]  &  A_X^\ast \ar[d] \\
X_1 \ar@<.5ex>[r]\ar@<-.5ex>[r] & X
}$$
Recall from \ref{Poisson structures on manifolds} that the manifolds $A_X^\ast$ and $A_{X_1}^\ast$ carry natural fibrewise linear Poisson structures, and it follows from the fact that $(A_{X_1},X_1,A,X)$ is an $\mathcal{LA}$-groupoid that the structure maps of the \'etale groupoid $A_{X_1}^\ast \rightrightarrows A_X^\ast$ are Poisson maps. In particular, the Poisson structure on $A_X$ is invariant and therefore determines a Poisson structure on the \'etale stack $[A_{X_1}^\ast \rightrightarrows A_X^\ast]$.

\subsubsection{Symplectic structures}

Another class of examples of Poisson structures are those arising from symplectic forms. (See \cite{Can1} for the relevant aspects of symplectic geometry, and \cite{Va1} and \cite{LaPiVa1} for the relationship to Poisson geometry). Recall that a symplectic form on a manifold $X$ is a 2-form $\omega \in \Omega^2(X)$ which is closed and nondegenerate.  Associated to a symplectic form $\omega$ is a Poisson structure $\{,\}_\omega$ given by
$$\{ f, g \}_\omega = \omega (v_f, v_g) $$
where, for a function $f \in \mathrm C^\infty (X)$, $v_f$ is the vector field obtained by the composition of the maps
$$\xymatrix{
\mathrm C^\infty (X)  \ar[r] ^d  &  \Omega^1(X)  \ar[r] ^{\omega^\sharp}  & \Gamma(TX) 
}$$
The corresponding bivector $\Pi_\omega$ is the `inverse' $\omega^{-1}$ of $\omega$. Since $\omega$ is nondegenerate the anchor map of the Lie algebroid $T^\ast_{\omega^{-1}} X$ is an isomorphism 
$$T^\ast_{\omega^{-1}} X \to TX$$
It follows that the Poisson cohomology of $(X, \{,\}_{\omega})$ is canonically isomorphic to the de Rham cohomology of $X$.

Since the exterior derivative commutes with pullbacks, closed 2-forms form a presheaf over $\mathbf {Man}$:
\begin{align*}
\Omega^2 _{\mathrm {cl}} : \mathbf{Man} & \to \mathbf{Set} \\
X & \mapsto \Omega^2 _{\mathrm{cl}} (X)
\end{align*}
where $\Omega^2 _{\mathrm{cl}}(X)$ is the set of closed 2-forms on $X$. Nondegeneracy is in general not preserved by pullbacks, but is preserved under pullbacks along \'etale maps. It follows that there is a presheaf over $\mathbf {Man}_\text{\'et}$:
\begin{align*}
\mathrm{Symp}: \mathbf{Man}_\text{\'et} & \to \mathbf{Set} \\
X & \mapsto \mathrm{Symp}_X
\end{align*}
where $\mathrm{Symp}_X$ is the set of symplectic forms on $X$. Since being closed and nondegenerate is a local condition, $\mathrm{Symp}$ is a sheaf with respect to the open cover and \'etale topologies on $\mathbf{Man}_\text{\'et}$. The construction of a Poisson structue $\{,\}_\omega$ from a symplectic structure $\omega$ commutes with \'etale pullbacks of symplectic and Poisson structures, so there is a natural morphism of sheaves
$$\mathrm{Symp} \to \mathrm{Poiss}$$

\begin{Definition}
A symplectic form $\mathbf \omega$ on an \'etale stack $\mathfrak X$ is a morphism 
$$\mathbf \omega : \mathfrak X_\text{\'et} \to \mathrm{Symp}$$
of stacks over $\mathbf{Man}_\text{\'et}$. A pair $(\mathfrak X, \mathbf \omega)$ is called a symplectic \'etale stack. Associated to a symplectic form $\omega$ on $\mathfrak X$ is a Poisson structure $\mathbf \Pi_\omega$ defined as the composition
$$\xymatrix{
\mathfrak X_ \text{\'et} \ar[r]^\omega  &  \mathrm{Symp}  \ar[r] &  \mathrm{Poiss} 
}$$
\end{Definition}
Explicitly, a morphism $\mathfrak X_\text{\'et} \to \mathrm{Symp}$ is determined by the choice of a symplectic form $\omega_{U,u}$ for every object $(U,u)$ of $\mathfrak X_{\text{\'et}}$, such that for every morphism  $(f,\alpha):(U,u) \to (V,v)$ in $\mathfrak X_\text{\'et}$ we have $f^\ast \omega_{V,v} = \omega_{U,u}$. 

Let $X \to \mathfrak X$ be an \'etale atlas of an \'etale stack $\mathfrak X$. Then because $\mathrm{Symp}$ is a sheaf with respect to the \'etale topology there is a bijection
$$ \{ \text{Symplectic forms on }   \mathfrak X  \}  
\cong \{ \omega \in \mathrm{Symp}_X  |  s^\ast \omega = t^\ast \omega \}
$$
In particular, if $X$ is a manifold then there is a bijection between symplectic forms on the \'etale stack $\underline X$ and symplectic forms in the usual sense on $X$.

Consider the Lie algebroid $\mathbf T^\ast _{\mathbf \Pi_\omega} \mathfrak X$ over $\mathfrak X$ associated to a symplectic form $\omega$ on $\mathfrak X$ (recall Definition \ref{Definition of Poisson structures on stacks}).
Then for every object $(U,u)$ of $\mathfrak X_\text{\'et}$ we have that the anchor map
$$ \left( \Pi_{\omega_{U,u}} \right) ^\sharp : \left( \mathbf T^\ast _{\mathbf \Pi_\omega} \mathfrak X \right) _{U,u} \to TU $$
is an isomorphism because $\Pi_{\omega_{U,u}}$ is the Poisson structure associated to the symplectic form $\omega_{U,u}$. The anchor maps therefore give an isomorphism:
$$\mathbf T^\ast _{\mathbf \Pi_\omega} \mathfrak X \to \mathbf T \mathfrak X$$
It then follows from the functoriality of Lie algebroid cohomology (see the remark after Definition \ref{Lie algebroid cohomology over stacks}) that there is an isomorphism

$$\mathrm H ^\bullet \left( \mathbf T^\ast _{\mathbf \Pi_\omega} \mathfrak X \right)  
\cong \mathrm H ^\bullet _\text{dR} ( \mathfrak X )$$
Therefore, as in the case of manifolds, the Poisson cohomology of a Poisson \'etale stack $(\mathfrak X , \mathbf \Pi_\omega)$, where $\mathbf \Pi_\omega$ is the Poisson structure associated to a symplectic form $\omega$ on $\mathfrak X$, is isomorphic to the de Rham cohomology of $\mathfrak X$.

\subsection{Lie algebroid structures on line bundles over \'etale stacks} \label{Etale examples}

For any manifold $X$ there is a bijection between Lie algebroid structures on the trivial line bundle $\mathbb R \times X \to X$ and the set of vector fields $\Gamma(TX)$: the anchor map 
$$\xymatrix{
\mathbb R \times X \ar[dr]  \ar[r]^a  &  TX  \ar[d]  \\
&  X 
}$$
is determined by the vector field $v=a(1)$, where $1$ is the constant section $x \mapsto 1$ of $\mathbb R \times X$, and identifying sections of $\mathbb R \times X$ with smooth functions on $X$, the Lie bracket is then forced to be
$$[f,g] = fv(g) - gv(f)$$
and so the Lie algebroid structure is completely determined by the anchor map. If $a:\mathbb R \times X \to TX$ and $a' : \mathbb R \times X \to TX$ are two different Lie algebroid structures, then a simple calculation shows that a morphism of vector bundles $\phi: \mathbb R \times X \to \mathbb R \times X$ is a morphism of Lie algebroids if and only $a = a' \circ \phi$.

More generally let $L$ be a Lie algebroid over $X$ that has rank 1 as a vector bundle. Then the anchor map $a:L \to TX$ is determined by a section 
$$ \chi \in \Gamma \left( L^\ast \otimes  TX  \right) $$
If $\xi$ is a local trivialising section of $L$, then $\xi^\ast$ gives a local trivialisation of $L^\ast$, and locally $\chi = \xi^\ast \otimes v$ for some unique local section of $TX$. If $f\xi , g \xi$ are arbitrary local sections of $L$ then we have
\begin{align*}
\left[ f\xi , g\xi  \right]  &  = a(f\xi)(g) \xi -  a(g\xi)(f) \xi + fg[\xi,\xi] \\
& = \left( fv(g) - gv(f) \right) \xi
\end{align*}
and so the Lie bracket is completely determined by the section $\chi$ and we have: 
\begin{Proposition}
If $L$ is a real line bundle over a manifold $X$ then there is a bijection
$$ \{ \text{\emph{Lie algebroid structures on} } L \}  \cong \mathrm{Hom}_{\mathrm{Vect}_X}(L,TX) $$
\end{Proposition}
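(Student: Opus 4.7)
The plan is to write down explicit maps in both directions and check they are mutually inverse, using the excerpt's local computation as the core input.

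In the forward direction, any Lie algebroid structure on $L$ has an anchor map $a : L \to TX$, which is by definition a morphism in $\mathrm{Vect}_X$; this gives the map from Lie algebroid structures to $\mathrm{Hom}_{\mathrm{Vect}_X}(L,TX)$. In the reverse direction, given $\chi \in \mathrm{Hom}_{\mathrm{Vect}_X}(L,TX)$, I would declare $\chi$ to be the anchor and define the bracket locally by the formula established in the excerpt: if $\xi$ is a local trivialising section on $U\subset X$ and $\chi|_U$ corresponds to the vector field $v\in \Gamma(U,TX)$ via $\chi|_U = \xi^* \otimes v$, set
\[
[f\xi,g\xi] \;=\; \bigl(f\,v(g)-g\,v(f)\bigr)\xi
\]
for $f,g\in \mathrm C^\infty(U)$. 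First I would check that this formula is independent of the choice of trivialising section: if $\xi' = h\xi$ on an overlap, then the corresponding vector field transforms as $v' = h v$ in the appropriate sense, and a direct computation shows the bracket formula is unchanged. Consequently the locally defined brackets agree on overlaps and glue to a well-defined $\mathbb R$-bilinear bracket on $\Gamma(L)$.

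Next I would verify the Lie algebroid axioms. Antisymmetry is immediate from the formula, and the Leibniz rule $[f\xi, g(h\xi)] = \chi(f\xi)(g)\,(h\xi) + g[f\xi, h\xi]$ is a direct expansion. The key point is the Jacobi identity. Rather than compute it by hand, I would invoke the standard fact that whenever an $\mathbb R$-bilinear, antisymmetric bracket on $\Gamma(L)$ satisfies the Leibniz rule with respect to some anchor, its Jacobiator $J(\xi,\nu,\sigma) = [[\xi,\nu],\sigma] + \text{cyclic}$ is $\mathrm C^\infty(X)$-trilinear, and hence defines a section of $\bigwedge^3 L^* \otimes L$. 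Since $\mathrm{rank}(L) = 1$, the bundle $\bigwedge^3 L^* \otimes L$ is the zero bundle, so $J\equiv 0$ automatically. Finally, the anchor is a Lie algebra morphism on sections because locally $\chi([f\xi,g\xi]) = (fv(g)-gv(f))v = [fv,gv] = [\chi(f\xi),\chi(g\xi)]$.

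To finish, I would check that the two assignments are mutually inverse. Starting from a Lie algebroid structure with anchor $a$, its associated bracket via the formula above agrees with the original bracket because the Leibniz rule together with $[\xi,\xi]=0$ (antisymmetry) forces $[f\xi,g\xi] = (fa(\xi)(g) - g a(\xi)(f))\xi$, which is exactly the prescription with $v = a(\xi)$; conversely, starting from $\chi$ and extracting the anchor of the constructed structure returns $\chi$ on the nose. The main obstacle I anticipated, the Jacobi identity, dissolves entirely because of the rank-one hypothesis, so the whole argument reduces to a clean local computation plus the $\bigwedge^3 L^* = 0$ observation.
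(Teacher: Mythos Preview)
Your approach matches the paper's: the paper only explicitly shows injectivity (the Leibniz rule and antisymmetry force $[f\xi,g\xi]=(fv(g)-gv(f))\xi$, so the bracket is determined by the anchor) and leaves surjectivity implicit, so your write-up is in fact more detailed than the original.

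There is one slip. The ``standard fact'' you invoke---that antisymmetry plus the Leibniz rule alone make the Jacobiator $\mathrm C^\infty(X)$-trilinear---is not true as stated. A direct computation gives
\[
J(\xi,\nu,f\sigma) \;=\; f\,J(\xi,\nu,\sigma) \;+\; \bigl(a([\xi,\nu]) - [a(\xi),a(\nu)]\bigr)(f)\,\sigma,
\]
so tensoriality of $J$ requires that the anchor preserve brackets. You do verify this, but only \emph{after} appealing to tensoriality. The fix is simply to swap the order: first check $\chi([f\xi,g\xi]) = (fv(g)-gv(f))v = [fv,gv] = [\chi(f\xi),\chi(g\xi)]$ locally, and then your $\bigwedge^3 L^\ast \otimes L = 0$ argument goes through cleanly. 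Alternatively, in rank one you can verify Jacobi by hand: writing all three sections as $f\xi,g\xi,h\xi$ in a local frame, the Jacobiator is a cyclic sum in $\mathrm C^\infty(U)$ that cancels term by term.
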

A similar calculation to that mentioned above shows that if $L,L'$ are two rank 1 Lie algebroids over $X$ with anchor maps $a:L \to TX$ and $a':L' \to TX'$, then a morphism of vector bundles $\phi:L \to L'$ is a morphism of Lie algebroids if and only if $a = a' \circ \phi$.

If $Y \to X$ is an \'etale map and $L$ is a rank $1$ Lie algebroid on $X$, with anchor $a:L \to TX$, then $\phi^\ast L$ is a rank 1 Lie algebroid over $Y$ with anchor given by the composition
$$\xymatrix{
\phi^\ast L  \ar[r]^{\phi^\ast} &  \phi^\ast TX   \ar[r] ^\cong & TY
}$$
(see \ref{etale pullbacks}).

Now let $\mathfrak X$ be an \'etale stack, and $\mathbf L : \mathfrak X_\text{\'et} \to \mathrm{Vect}$ be a line bundle over $\mathfrak X$. For each object $(U,u)$ of $\mathfrak X_\text{\'et}$ we have a line bundle $L_{U,u}$, and for each morphism $(f,\alpha):(U,u) \to (V,v)$ an isomorphism of line bundles 
$$L_{f,\alpha} : L_{U,u} \cong f^\ast L_{V,v}$$
A Lie algebroid structure on $\mathbf L$ is then determined by anchor maps
$$a_{U,u} :  L _{U,u} \to TU$$
that are compatible with the isomorphisms $L_{f,\alpha}$, which is exactly the condition that they determine a morphism of vector bundles
$$\mathbf L \to \mathbf T \mathfrak X$$
We therefore have:
\begin{Proposition}
Let $\mathbf L : \mathfrak X_\text{\emph{\'et}} \to \mathrm{Vect}$ be a line bundle over an \'etale stack $\mathfrak X$. Then there is a bijection
$$ \{ \text{\emph{Lie algebroid structures on} } \mathbf L \}  \cong  \mathrm{Hom}_{\mathrm{Vect}_\mathfrak X} (\mathbf L,\mathbf T\mathfrak X) $$
\end{Proposition}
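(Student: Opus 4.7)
The plan is to lift the manifold-level bijection (established in the excerpt for rank $1$ Lie algebroids over a single manifold) objectwise to the \'etale site $\mathfrak X_\text{\'et}$, and then check that the two directions of the correspondence respect the compatibility data that encode a morphism out of $\mathfrak X_\text{\'et}$. Throughout I would use the equivalence from Proposition~\ref{equivalence over etale category} so that pullbacks of Lie algebroids along \'etale maps may be identified with vector bundle pullbacks equipped with the anchor $(\phi_\ast)^{-1} \circ a$.

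First, suppose $\mathbf L$ is a line bundle over $\mathfrak X$ equipped with a Lie algebroid structure, i.e.\ a lift of $\mathbf L : \mathfrak X_\text{\'et} \to \mathrm{Vect}$ through $\mathcal{LA} \to \mathrm{Vect}$. For each object $(U,u)$, we get a rank $1$ Lie algebroid $L_{U,u}$ over $U$ with anchor $a_{U,u} : L_{U,u} \to TU$, and for each morphism $(f,\alpha) : (U,u) \to (V,v)$ the transition isomorphism $\bar{\mathbf L}_{f,\alpha} : L_{U,u} \to f^\ast L_{V,v}$ is required to be an isomorphism of Lie algebroids. By the characterization of morphisms of rank $1$ Lie algebroids recalled just above the proposition, this is equivalent to the equality $a_{U,u} = (f_\ast)^{-1} \circ (f^\ast a_{V,v}) \circ \bar{\mathbf L}_{f,\alpha}$, which is exactly the condition that the collection $\{a_{U,u}\}$ defines a morphism $\mathbf L \to \mathbf T \mathfrak X$ in the sense of~\ref{Lie algebroids via test spaces}, since $\mathbf T\mathfrak X$ assigns to $(V,v)$ the bundle $TV$ with transition maps $v \mapsto (v,f_\ast v)$.

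Conversely, suppose $\mathbf a : \mathbf L \to \mathbf T\mathfrak X$ is a vector bundle morphism over $\mathfrak X$. For each $(U,u)$ the manifold-level proposition equips $L_{U,u}$ with a unique Lie algebroid structure whose anchor is $\mathbf a_{U,u}$, and I would need to check that the transition isomorphisms $\bar{\mathbf L}_{f,\alpha}$, originally just vector bundle isomorphisms, automatically promote to Lie algebroid isomorphisms. This promotion is again the content of the rank $1$ characterization: the commuting square that expresses $\mathbf a$ as a natural transformation is precisely the identity $a_{U,u} = (f_\ast)^{-1} \circ (f^\ast a_{V,v}) \circ \bar{\mathbf L}_{f,\alpha}$, so $\bar{\mathbf L}_{f,\alpha}$ is a morphism of Lie algebroids. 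One also checks the cocycle/compatibility condition of~\ref{Lie algebroids via test spaces} for composable morphisms, but since the underlying data $\bar{\mathbf L}_{f,\alpha}$ satisfied it as vector bundle maps and Lie algebroid structure on a rank $1$ bundle is uniquely determined by the anchor, this is automatic.

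The two constructions are visibly inverse: in both directions the anchor data $\{a_{U,u}\}$ and the underlying vector bundle isomorphisms $\{\bar{\mathbf L}_{f,\alpha}\}$ are preserved, and the Lie bracket on each fibre is forced by the manifold-case formula $[f\xi,g\xi] = (fv(g)-gv(f))\xi$. The only potential obstacle I anticipate is bookkeeping with the canonical isomorphisms $\phi^\ast TX \cong TY$ for \'etale $\phi$ and the resulting identification of $\phi^!$ with $\phi^\ast$ on anchors; once Proposition~\ref{equivalence over etale category} is invoked, everything reduces to a diagram chase over $\mathfrak X_\text{\'et}$ and no further analysis is required.
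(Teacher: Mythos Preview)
Your proposal is correct and follows essentially the same approach as the paper: both arguments lift the manifold-level bijection objectwise over $\mathfrak X_\text{\'et}$ and then observe that the compatibility condition for the transition isomorphisms $L_{f,\alpha}$ to be Lie algebroid morphisms is precisely the naturality condition for the anchors $\{a_{U,u}\}$ to assemble into a morphism $\mathbf L \to \mathbf T\mathfrak X$. Your version is more explicit in spelling out both directions of the bijection and verifying they are mutually inverse, whereas the paper's argument (the paragraph immediately preceding the proposition) is terser and leaves these checks implicit.
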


One can prove an equivalent statement by choosing an atlas $X \to \mathfrak X$ and checking that a map $L_X \to TX$ defines a Lie algebroid structure satisfying the conditions of Proposition \ref{G-sheaves of Lie algebras...} if and only if the map is $X\times_\mathfrak X X \rightrightarrows X$-equivariant, in which case it corresponds to a morphism of vector bundles over $\mathfrak X$.

\subsection{Trivial transitive Lie algebroids}

Let $\ast$ be a one point space, considered as a presheaf over $\mathbf{Man}_\text{sub}$, then by the Yoneda lemma we have an equivalence
$$ \mathrm{Hom}_{\mathrm{PSh}(\mathbf {Man}_\text{sub})} (\ast , \mathcal{LA})
\simeq 
\mathcal{LA}_\ast
$$
and $\mathcal{LA}_\ast$ is just the category of finite dimensional $\mathbb R$-Lie algebras. For each such Lie algebra $\mathfrak g$ we therefore have a Lie algebroid $\underline {\mathfrak g}$ over $\ast$. As $\ast$ is a terminal object in $\mathrm{PSh}(\mathbf {Man}_\text{sub})$ there is, for any stack $\mathfrak X$ over $\mathbf{Man}$ a unique morphism $p:\mathfrak X _\text{sub} \to \ast$.
Therefore, for any finite dimensional $\mathbb R$-Lie algebra $\mathfrak g$ we have a morphism
$$\xymatrix{
\mathfrak X_\text{sub} \ar[r] ^ p &  \ast \ar[r] ^{\mathfrak g}  & \mathcal{LA} 
}$$
which defines a Lie algebroid $\underline {\mathfrak g} \oplus \mathbf T \mathfrak X$ over $\mathfrak X$. If $(U,u)$ is an object in $\mathfrak X_\text{sub}$ then we have
$$( \underline{\mathfrak g} \oplus \mathbf T \mathfrak X)_{U,u} 
= \underline{\mathfrak g} _{U, p \circ u} 
\cong {(p \circ u)}^! \mathfrak g 
\cong (\mathfrak g \times U) \oplus TU
$$ 
where $(\mathfrak g \times U) \oplus TU$ is the Lie algebroid over $U$ with underlying vector bundle $(\mathfrak g \times U) \oplus TU$, anchor map given by the projection onto $TU$, and Lie bracket
$$ \left[ (\xi,v) , (\xi',v') \right]  = \left(  [\xi,\xi'] + v(\xi') - v'(\xi') ,  [v,v']  \right) $$
for sections $\xi,\xi'$ of $\mathfrak g \times U$ and $v,v'$ of $TU$. This is the `trivial transitive Lie algebroid with isotropy Lie algebra $\mathfrak g$', see \cite{Ma2}. In particular, if $\mathfrak g = 0$, then 
$$\underline 0 = \mathbf T \ast$$ 
and 
$$\underline {0} \oplus \mathbf T \mathfrak X \cong \mathbf T \mathfrak X$$

\newpage

\section{Notation}

\begin{itemize}
\item $\underline x = \mathrm{Hom}_\mathscr C ( - , x)$ - representable presheaf corresponding to object $x$ in a category $\mathscr C$
\item $x \mapsto \underline x$ - Yoneda embedding
\item $\mathrm{PSh}(\mathscr C)$ - category of presheaves of sets / weak presheaves of groupoids over site $\mathscr C$
\item $\mathrm{Sh}(\mathscr C)$ - category of sheaves of sets over site $\mathscr C$
\item $\mathrm{St}(\mathscr C)$ - 2-category of stacks over site $\mathscr C$
\\
\item $\mathbf{Man}$ - category of smooth manifolds and smooth maps
\item $\mathbf{Man}_\mathrm{sub}$ - category of smooth manifolds and submersions
\item $\mathbf{Man}_\text{\'et}$ - category of smooth manifolds and \'etale 
\item $\mathrm{St}(\mathbf {Man})$ - 2-category of stacks over $\mathbf{Man}$
\item $\mathrm{St}(\mathbf {Man})_\mathrm{sub}$ - 2-category of stacks over $\mathbf{Man}$ and representable submersions between them
\item $\mathrm{St}(\mathbf {Man})_\text{\'et}$ - 2-category of stacks over $\mathbf{Man}$ and representable \'etale maps between them
\\
\item $X,Y,Z,U,V$ - smooth manifolds
\item $\mathfrak X, \mathfrak Y, \mathfrak Z, \mathfrak A, \mathfrak B$ - stacks over $\mathbf{Man}$
\item $\mathfrak X _ \text{sub}$ - submersion site of stack $\mathfrak X$
\item $\mathfrak X _ \text{\'et}$ - \'etale site of stack $\mathfrak X$
\item $G\rightrightarrows X$  - Lie groupoid
\item $s,t$ - source and target maps of Lie groupoid
\item $m,u,i$ - other structure maps (multiplication, unit, inverse)
\item $[ G \rightrightarrows X ]$ - classifying stack of $G\rightrightarrows X$
\item $X_\bullet , Y_\bullet, Z_\bullet$ - simplicial manifolds
\\
\item $E,F$ - vector bundles
\item $\mathcal E,F$ - sheaves of sections of vector bundles $E,F$
\item $\phi^\ast E$ - pullback of vector bundle $E$ along a map $\phi$
\\
\item $\mathrm{Vect}$ - category of smooth real vector bundles
\item $\mathrm{Vect}_X$ - category of smooth real vector bundles over a manifold $X$
\\
\item $A,B$ - Lie algebroids
\item $a,b$ - anchor maps of Lie algebroids $A,B$
\item $\mathcal A, \mathcal B$ - sheaves of sections of $A,B$
\item $\Omega^\bullet(A)$ - de Rham algebra of Lie algebroid $A$
\item $\Omega_A$ - sheaf of de Rham algebras of Lie algebroid $A$

\item $\mathrm{Rep}_A$ - category of representations of a Lie algebroid $A$ 
\item $\phi^! A$ - Lie algebroid pullback of Lie algebroid $A$ along a map $\phi$
\\
\item $\mathcal{LA}$ - category of Lie algebroids over smooth manifolds
\item $\mathcal{LA}_X$ - category of Lie algebroids over a manifold $X$
\item $\mathrm{Rep}$ - the stack of representations of Lie algebroids
\item $\mathrm{Rep}_X$ - the category of representations of Lie algebroids over a manifold $X$
\\
\item $\mathbf A$ - Lie algebroid over a stack
\item $\mathbf E$ - vector bundle over a stack
\item $\mathrm{Rep}_\mathbf A$ - category of representations of a Lie algebroid $\mathbf A$ over a stack
\\
\item $(A,\psi)$ - Lie algebroid over a Lie groupoid
\item $\mathrm{Rep}_{(A,\psi)}$ - category of representations of a Lie algebroid $(A,\psi)$ over a Lie groupoid
\\
\item $\mathrm{Flat}_X$ - Category of flat vector bundles over $X$
\item $\mathrm{Flat}$ - Stack over $\mathbf{Man}_\text{sub}$ of flat vector bundles
\item $\mathrm{Loc}_X$ - Category of local systems over $X$
\\
\item $\mathrm{Loc}$ - Stack over $\mathbf{Man}_\text{sub}$ of local systems
\item $\mathrm{Poiss}_X$ - Set of Poisson structures on $X$
\item $\mathrm{Poiss}$ - Sheaf over $\mathbf{Man}_\text{\'et}$ of Poisson structures
\item $\mathrm{Symp}_X$ - Set of symplectic forms on $X$
\item $\mathrm{Symp}$ - Sheaf over $\mathbf{Man}_\text{\'et}$ of symplectic forms
\end{itemize}

\newpage

\end{document}